\documentclass[11 pt]{amsart}
\usepackage{amssymb,amsmath,amsthm,amsbsy,mathrsfs}
\usepackage[all]{xy}
\usepackage{color}
\usepackage[dvipsnames]{xcolor}
\usepackage{hyperref}

\addtolength{\topmargin}{-0.6cm} \addtolength{\textheight}{1.2cm}
\addtolength{\evensidemargin}{-0.6cm}
\addtolength{\oddsidemargin}{-0.6cm} \addtolength{\textwidth}{1.4cm}

\setcounter{tocdepth}{2}
\makeatletter
\def\l@subsection{\@tocline{2}{0pt}{2.5pc}{5pc}{}}
\makeatother


\newcommand{\comment}[1]{}



\numberwithin{equation}{section}



\newtheorem{thm}{Theorem}[section]
\newtheorem{prop}[thm]{Proposition}
\newtheorem{lemma}[thm]{Lemma}
\newtheorem{cor}[thm]{Corollary}
\newtheorem*{thm*}{Theorem}

\theoremstyle{definition}
\newtheorem{defi}[thm]{Definition}

\theoremstyle{remark}
\newtheorem{remark}[thm]{Remark}
\newtheorem{ex}[thm]{Example}

\newcommand{\la}[4]{
\xymatrix{#1 \ar[r] \ar@<2pt>[d] \ar@<-2pt>[d] & #2 \ar@<2pt>[d] \ar@<-2pt>[d] \\
#3 \ar[r] & #4}}

\newcommand{\pdiff}[2]{\frac{\partial #1}{\partial #2}}

\newcommand{\defequal}{:=}

\newcommand{\reals}{{\mathbb R}}

\newcommand{\vect}{\mathfrak{X}}
\DeclareMathOperator{\sym}{S}
\newcommand{\bigS}{\mbox{{\large{$\sym$}}}}

\renewcommand{\tilde}[1]{\widetilde{#1}}
\renewcommand{\hat}[1]{\widehat{#1}}

\newcommand{\st}[1]{{\scriptscriptstyle{#1}}}

\newcommand{\llbracket}{[\![}
\newcommand{\rrbracket}{]\!]}

 \DeclareMathOperator{\rank}{rank}

 \DeclareMathOperator{\End}{End}
 \DeclareMathOperator{\codim}{codim}
\DeclareMathOperator{\VB2}{VB2}
\DeclareMathOperator{\NMan}{2Man}
\DeclareMathOperator{\Span}{span}

\newcommand{\N}{\ensuremath{\mathbb N}}

\newcommand{\R}{\ensuremath{\mathbb R}}

\newcommand{\g}{\ensuremath{\mathfrak{g}}}
\newcommand{\A}{\ensuremath{\mathfrak{a}}}
\newcommand{\h}{\ensuremath{\mathfrak{h}}}


\newcommand{\cI}{\mathcal{I}}

\newcommand{\cL}{\mathcal{L}}

\newcommand{\cF}{\mathcal{F}}

\newcommand{\cM}{\mathcal{M}}
\newcommand{\cN}{\mathcal{N}}

\newcommand{\cT}{\mathcal{T}}
\newcommand{\cD}{\mathcal{D}}

\newcommand{\J}{\mathbb{J}}

\newcommand{\tdim}{\mbox{totdim}}

\newcommand{\fl}{\ensuremath{I}}

\newcommand{\Equot}{\ensuremath{E_{quot}}}
\newcommand{\Ered}{\ensuremath{E_{red}}}
\newcommand{\Mred}{\ensuremath{\underline{N}}}
\newcommand{\cMred}{\ensuremath{\underline{\cN}}}

\newcommand{\SP} [1]     {{\left\langle {{#1}} \right\rangle}}
\newcommand{\Cour}[1]      {[\![#1]\!]}
\newcommand{\tolabel}[1]{\stackrel{#1}{\to}}

\title[Graded geometry and generalized reduction]{Graded geometry and generalized reduction}

\author{H.~Bursztyn}
\address{Instituto de Matem\'atica Pura e Aplicada,
Estrada Dona Castorina 110, Rio de Janeiro, 22460-320, Brazil }
\email{henrique@impa.br}

\author{A.S.~Cattaneo}
\address{Institut f\"ur Mathematik,
Universit\"at Z\"urich-Irchel, Winterthurerstr. 190, CH-8057
Z\"urich, Switzerland} \email{cattaneo@math.uzh.ch}

\author{R.A.~Mehta}
\address{Department of Mathematical Sciences,
Smith College,
44 College Lane,
Northampton, MA 01063, U.S.A.}\email{rmehta@smith.edu}

 \author{M.~Zambon}
\address{KU Leuven, Department of Mathematics, Celestijnenlaan 200B box 2400, BE-3001 Leuven, Belgium.}
\email{marco.zambon@kuleuven.be}


\begin{document}
\begin{abstract}
 We present general reduction procedures for Courant, Dirac and generalized complex structures, in particular when a group of symmetries is acting. We do so by taking the graded symplectic viewpoint on Courant algebroids and carrying out graded symplectic reduction, both in the coisotropic and hamiltonian settings. Specializing the latter to the exact case, we recover in a systematic way the reduction schemes of Bursztyn-Cavalcanti-Gualtieri.
\end{abstract}

\maketitle
\setcounter{tocdepth}{2}
\tableofcontents

\section{Introduction}


Courant algebroids, introduced in \cite{lwx}, have drawn much attention in recent years due to their many connections with (higher) geometry and mathematical physics
in such topics as Poisson--Lie theory, T-duality, sigma models, vertex algebras, shifted symplectic structures, see, e.g., \cite{Bressler07,TdualityCG,pymsaf,rogers,roytenberg:aksz,severa:TdualCA}.
A key feature of Courant algebroids is that they provide a unifying framework for various geometric structures, especially by means of their Dirac structures. 
In their original setting, Dirac structures in the standard Courant algebroid \cite{courant} were used to unify Poisson and presymplectic structures.
In the same vein, Courant algebroids are central ingredients in generalized complex geometry \cite{Gu2,Hitchin}, where a similar ``unification'' phenomenon allows  
complex and symplectic structures to be treated on equal footing.


A reduction procedure for Courant algebroids in the presence of symmetries was developed in \cite{bcg,HKquot} as the foundational step for the reduction of
Dirac and generalized complex structures (for the latter see also  \cite{Hu,LT1,SX}). An important aspect of such Courant reduction is that symmetries were described by a new notion of 
``extended action'' by ``Courant algebras,'' rather than usual actions by Lie algebras or Lie groups. While this reduction scheme was motivated and guided by examples, the general procedure 
lacked a clear conceptual framework.


The goal of the present work is to give a broad and systematic approach to the reduction of Courant algebroids and related geometric structures, and our main tool is 
the viewpoint to these objects in terms of {\em graded symplectic geometry}. Specifically, our starting point is the correspondence between Courant algebroids and degree 2
symplectic $\N$-manifolds  with a self-commuting degree $3$ function \cite{royt:graded,severa:sometitle}.

\bigskip

{\small{
\begin{center}
\begin{tabular}{ |c| c |}
\hline

pseudo-euclidean vector bundle $(E,\SP{\cdot,\cdot})$ 
& 
symplectic $\mathbb{N}$-manifold of degree 2  $(\cM,\omega)$ 
\\ 
\hline
 Courant structure $\rho$, $\Cour{\cdot,\cdot}$ & $\Theta\in C(\cM)_3$, $\{\Theta,\Theta\}=0$     \\  
 \hline 
\end{tabular}
\medskip
\end{center}
}}

\medskip

Our strategy consists in expressing the graded analogues of usual reduction procedures in symplectic geometry in Courant-geometric  terms via this correspondence. This involves, as a key step, adding new lines to the above dictionary so as to include, on the graded symplectic side, usual ingredients 
of symplectic reduction such as coisotropic submanifolds, hamiltonian actions and momentum maps. Their classical geometric counterparts lead to natural ``coisotropic'' and  
``hamiltonian'' frameworks in which 
to carry out the reduction of Courant and related structures, and these are shown to include the constructions in \cite{bcg,HKquot,Za} 
as special cases.
 
\begin{remark} During the long writing process of this paper, some related topics were explored in separate works. 
In the context of $\mathbb N$-manifolds of degree 1, the 
analogous perspective of graded symplectic reduction leads to general reduction procedures for Poisson manifolds described in \cite{CaZa:graded,CaZa:supergeometric};
in \cite{Me:homotopy}, a more general framework for graded symplectic reduction is studied, based on actions by homotopy Poisson Lie groups; in the graded setting, coisotropic reduction relies on a version of the Frobenius theorem (on the integrability of involutive distributions) for $\mathbb{N}$-manifolds discussed in \cite{HenMiqRaj}.
\end{remark}

\subsection*{Statements of results and outline of the paper}
\subsubsection*{Geometrization of $\N$-manifolds of degree 2} We collect in \S \ref{sec:deg2man} foundational results about $\N$-manifolds of degree 2, starting with their basic
geometric description in terms of ordinary vector bundles.
Any $\N$-manifold $\cM$ of degree 2, with body $M$, is completely determined by a pair of vector bundles $E_1$ and $\tilde{E}$ over $M$ (whose duals codify functions of degrees 1 and 2) together with a vector-bundle map $\phi_E\colon\tilde{E} \twoheadrightarrow \wedge^2 E_1$ (whose dual accounts for the multiplication of degree 1 functions). Such triples $(E_1, \tilde{E}, \phi_E)$ can be naturally regarded as objects of a category, denoted by $\VB2$, which is shown to be equivalent to the category $\NMan$ of  $\N$-manifolds of degree 2,
\begin{equation}\label{eq:VB22Man}
\VB2 \; \rightleftharpoons \; \NMan,
\end{equation}
see Theorem~\ref{thm:equivcat}. We build on this equivalence to obtain classical geometric descriptions of submanifolds of $\N$-manifolds of degree 2 (Proposition~\ref{prop:subobjects} and Theorem~\ref{thm:submanifold}), as well as regular values of maps (Lemma~\ref{lem:regular}) and their inverse images (Proposition~\ref{prop:invc}).

In \S \ref{sec:symplectic} we focus on  \emph{symplectic}  $\N$-manifolds of degree $2$, and relate their known correspondence with pseudo-euclidean vector bundles \cite{royt:graded} (stated in Theorem \ref{thm:pseudoeuc})  with the equivalence in \eqref{eq:VB22Man}: any vector bundle carrying a pseudo-euclidean metric gives rise to an object in $\VB2$ by means of its Atiyah sequence; the corresponding $\N$-manifold of degree 2 carries a natural symplectic structure,
and any degree 2 symplectic $\N$-manifold can be identified with one of this type, see \eqref{eq:compdiag}.

\subsubsection*{Coisotropic reduction}
We consider coisotropic submanifolds of symplectic $\N$-mani\-folds of degree $2$ 
in \S\ref{subsec:coisosubm}, providing their geometric characterization
 in Theorem~\ref{thm:coisotropic}.
For a  pseudo-euclidean vector bundle $(E,\SP{\cdot,\cdot})$ over $M$, coisotropic submanifolds of the corresponding symplectic $\N$-manifold of degree $2$ are identified with quadruples $(N, K, F, \nabla)$ consisting of  a submanifold $N$ supporting an isotropic subbundle $(K\to N) \subset (E\to M)$ and an involutive subbundle $F\subseteq TN$, along with a flat metric $F$-connection $\nabla$ on the quotient vector bundle $(K^\perp/K)\to N$.
We refer to such quadruples $(N, K, F, \nabla)$ as  \emph{geometric coisotropic data} (Definition \ref{def:quadruples}). 
As a special case, we recover the correspondence between graded lagrangian submanifolds and  lagrangian subbundles $K\to N$ of $E$ (Corollary~\ref{cor:lag}) stated in \cite{severa:sometitle}. 
Geometric coisotropic data are the fundamental objects allowing constructions involving graded coisotropic submanifolds to be expressed in classical geometric terms.

In usual symplectic geometry, coisotropic submanifolds are often used to construct new symplectic manifolds via {\em reduction}: concretely, any coisotropic submanifold carries a ``null'' foliation (tangent to the kernel of the restriction of the symplectic form) whose leaf space, whenever smooth, is naturally equipped with a  symplectic structure. A more fundamental fact underlying this construction is that the {\em basic} functions (i.e., leafwise constant) of a coisotropic submanifold always form a {\em Poisson algebra}.  For coisotropic submanifolds of symplectic $\N$-manifolds of  degree $2$,  Proposition~\ref{prop:Cbas} gives a geometric description of their sheaves of basic functions, together with their Poisson brackets, in terms of the data $(N, K, F, \nabla)$.

In $\S$ \ref{sec:coisoreduction}, we consider the reduction
of a coisotropic submanifold $\cN$ of a   symplectic $\N$-manifold $\cM$ of  degree $2$, 
yielding (under suitable conditions) a new  symplectic $\N$-manifold $\cMred$ of degree $2$.
The classical geometric counterpart of this construction is explained in
Theorem \ref{thm:coisored}. The set-up is that of a pseudo-euclidean vector bundle $E$ with geometric coisotropic data 
$(N, K, F, \nabla)$ (corresponding to $\cM$ and $\cN$, respectively), with the additional requirements that $F$ defines a {\em simple foliation} on $N$ and that the $F$-connection $\nabla$ on $K^\perp/K$ has {\em trivial holonomy} (which are the conditions for $\underline{\cN}$ to exist). In this case, $N$ has a smooth leaf space $\underline{N}$ and $\nabla$ 
gives rise to a linear action on $K^\perp/K$ whose quotient is a 
 a pseudo-euclidean vector bundle $\Ered \to \Mred$ (see Lemma~\ref{lem:hol}),
$$
\xymatrix{ \frac{K^\perp}{K}  \ar[r]^{} \ar[d]_{} & \Ered \ar[d]^{} \\
N \ar[r]_{} &  \Mred.}
$$
It is shown in Theorem \ref{thm:coisored} that $\Ered$ is the pseudo-euclidean vector bundle
corresponding to the reduced symplectic $\N$-manifold of degree 2   $\cMred$.
 \bigskip

{\small{
\begin{center}
\begin{tabular}{ |c| c |}
\hline

pseudo-euclidean vector bundle $(E,\SP{\cdot,\cdot})$ 
& 
symplectic $\mathbb{N}$-manifold of degree 2 $(\cM,\omega)$
\\ 
\hline
$(N, K, F, \nabla)$ geometric coisotropic data  & $\cN \hookrightarrow \cM$  coisotropic submanifold  \\  
 \hline 
$\Ered\to \Mred$ 
& 
coisotropic reduction $\cMred$\\
\hline
\end{tabular}
\medskip
\end{center}
}}

\smallskip

\begin{remark}\label{rem:constrank}
The classical geometric description of graded coisotropic reduction can be extended to more general graded submanifolds with null distributions of ``constant rank,'' but we restrict ourselves to the coisotropic setting for simplicity.
\end{remark}

Building on graded coisotropic reduction, we obtain in $\S$ \ref{sec:redCA} a
reduction procedure for Courant algebroids. As previously recalled, if $E$ is a pseudo-euclidean vector bundle with corresponding symplectic  $\N$-manifold of degree 2 $\cM$, a Courant algebroid structure on $E$ is equivalent to a {\em Courant function} on $\cM$, i.e., a degree 3 function $\Theta$ satisfying $\{\Theta,\Theta\}=0$. 
A function $S$ on $\cM$ is called {\em reducible} with respect to a coisotropic submanifold $\cN$ if it satisfies $\{S,\cI\}\subseteq \cI$, where $\cI$ is the vanishing ideal of $\cN$ (i.e., if the hamiltonian vector field of $S$ is tangent to $\cN$); in the context of graded coisotropic reduction
$$
\cN \to \cMred,
$$ 
reducibility ensures that $S$ descends to a function $S_{red}$ on $\cMred$. 
Courant reduction is based on the fact that whenever a Courant function $\Theta$ on $\cM$ is reducible, $\Theta_{red}$ is a Courant function of $\cMred$, which in turn corresponds to a Courant algebroid structure on the pseudo-euclidean vector bundle $\Ered\to \Mred$.

For a Courant algebroid structure on $E$ with anchor $\rho$ and bracket $\Cour{\cdot,\cdot}$, we show in Theorem~\ref{thm:reducible} that the 
reducibility of the corresponding Courant function with respect to a coisotropic submanifold defined by $(N, K, F, \nabla)$ amounts to the following conditions:
{\small
\begin{equation}\label{eq:reducible}
\rho(K^\perp)\subseteq TN, \quad \rho(K)\subseteq F, \quad [\rho(\Gamma^\st{flat}_\st{E,K^\perp}),\Gamma_\st{TM,F}]\subseteq \Gamma_\st{TM,F}, \quad 
\Cour{\Gamma^\st{flat}_\st{E,K^\perp},\Gamma^\st{flat}_\st{E,K^\perp}}
\subseteq \Gamma^\st{flat}_\st{E,K^\perp},
\end{equation}}
\hspace{-1.5mm}where  $\Gamma^\st{flat}_\st{E,K^\perp}$ denotes the sheaf of sections of $E$ whose restriction to $N$ lie in $K^\perp$ and project to a $\nabla$-flat section of $K^\perp/K$, and
$\Gamma_\st{TM,F}$ is the sheaf of vector fields on $M$ whose restriction to $N$ lie in $F$. In the case of a graded lagrangian submanifold, reducibility amounts to $K$ being a {\em Dirac structure} in $E$ supported on $N$ (see Def.~\ref{def:diracsupp}).

The reduction of Courant algebroids corresponding to graded coisotropic reduction of Courant functions is presented in Theorem~\ref{cor:courred}, and summarized below.

\begin{thm*}[Coisotropic Courant reduction]
  Let $E\to M$ be a Courant algebroid equipped with geometric coisotropic data $(N, K, F, \nabla)$ such that $F$ is simple and $\nabla$ has trivial holonomy. If the reducibility conditions \eqref{eq:reducible} hold,
then the pseudo-euclidean vector bundle $\Ered\to \Mred$ inherits a natural Courant algebroid structure.  \end{thm*}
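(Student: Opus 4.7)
The plan is to reduce everything, via the equivalence in \eqref{eq:VB22Man} and its symplectic refinement, to the statement about coisotropic reduction of a self-commuting degree 3 function on a symplectic degree 2 $\N$-manifold. Concretely, let $(\cM,\omega)$ be the symplectic degree 2 $\N$-manifold associated with the pseudo-euclidean bundle $(E,\SP{\cdot,\cdot})$, and let $\Theta\in C(\cM)_3$ be the Courant function encoding $(\rho,\Cour{\cdot,\cdot})$, so that $\{\Theta,\Theta\}=0$. Translate the geometric coisotropic data $(N,K,F,\nabla)$ into the coisotropic submanifold $\cN\hookrightarrow\cM$ via Theorem \ref{thm:coisotropic}, and let $\cI\subset C(\cM)$ be its vanishing ideal.

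Next, the hypotheses that $F$ is simple and $\nabla$ has trivial holonomy are precisely the conditions identified in Theorem \ref{thm:coisored} for the graded coisotropic reduction to exist, and the output is a symplectic degree 2 $\N$-manifold $\cMred$ whose associated pseudo-euclidean bundle is the quotient $\Ered\to\Mred$ described in Lemma \ref{lem:hol}. Recall, moreover, that by Proposition \ref{prop:Cbas}, the reduced algebra of functions $C(\cMred)$ is canonically identified with the algebra of basic functions on $\cN$, and its Poisson bracket is induced from that of $C(\cM)$.

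With these pieces in place, I would read the reducibility conditions \eqref{eq:reducible} through Theorem \ref{thm:reducible}, which asserts their equivalence with the graded reducibility condition $\{\Theta,\cI\}\subseteq \cI$; that is, the hamiltonian vector field of $\Theta$ is tangent to $\cN$. Consequently $\Theta$ restricts to a basic function on $\cN$ and descends to a well-defined $\Theta_{red}\in C(\cMred)$. Since the reduction map $C(\cM)\supseteq C(\cM)^\cI \twoheadrightarrow C(\cMred)$ preserves degree, $\Theta_{red}$ is of degree 3; since it is a morphism of Poisson algebras on basic functions (Proposition \ref{prop:Cbas}), we get $\{\Theta_{red},\Theta_{red}\}=0$ from $\{\Theta,\Theta\}=0$. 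By the Roytenberg--{\v S}evera correspondence recalled in the introduction, $\Theta_{red}$ is then a Courant function on $\cMred$, which translates back to a Courant algebroid structure on $\Ered\to\Mred$.

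The main technical obstacle has already been absorbed into the preliminary results: proving that the four conditions in \eqref{eq:reducible} really do capture the single graded condition $\{\Theta,\cI\}\subseteq\cI$ (this is Theorem \ref{thm:reducible}, and requires identifying generators of $\cI$ adapted to $(N,K,F,\nabla)$ and computing their brackets with a degree 3 function in local Darboux-type coordinates), and showing that basic functions form a Poisson subalgebra whose quotient is $C(\cMred)$ (Proposition \ref{prop:Cbas}, which uses the Frobenius-type theorem needed to realize $\cMred$ smoothly). Once these are granted, the remaining argument is the formal Poisson-algebra manipulation sketched above, and no further computation with the bracket $\Cour{\cdot,\cdot}$ or the anchor $\rho$ is needed.
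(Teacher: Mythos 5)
Your proposal is correct and follows essentially the same route as the paper's own proof: translate the data into a Courant function $\Theta$ on $\cM$ and a coisotropic submanifold $\cN$, invoke Theorem~\ref{thm:reducible} to identify the conditions \eqref{eq:reducible} with $\{\Theta,\cI\}\subseteq\cI$, and let $\Theta$ descend through the Poisson-algebra identification $C_{\cMred}\cong p_*(C_\cN)_{bas}$ of Theorem~\ref{thm:coisored} to a self-commuting degree~3 function on $\cMred$, hence a Courant structure on $\Ered$. The only item the paper adds is the explicit derived-bracket computation of the reduced anchor and bracket, which is not required for the statement as quoted.
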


This result extends reductions in \cite{LBMeinrenken,Za}, see Example~\ref{ex:casescoisored}(ii) and Example~\ref{ex:JSGK}.

In $\S$ \ref{sec:redGCDirac}, we use coisotropic Courant reduction as the basis for reduction schemes for {\em generalized complex}  and {\em Dirac} structures:
\begin{itemize}
\item Generalized complex structures can be viewed as quadratic functions on symplectic  $\N$-manifolds of degree 2 suitably compatible with a Courant function, see  \cite{grabowski}; the geometric characterization of their reducibility (Lemma~\ref{lem:Jreducible}) leads to a reduction procedure for generalized complex structures with respect to geometric coisotropic data, see Theorem~\ref{thm:redGCS}.

\item Dirac structures are lagrangian submanifolds of symplectic $\N$-mani\-folds of degree 2 for which the Courant function is reducible, see Cor.~\ref{prop:lagdirac}. 
Just as in usual symplectic geometry, graded lagrangian submanifolds can be reduced 
to coisotropic quotients upon a clean intersection condition (Theorem~\ref{thm:lagred}). By means of a geometric characterization of such clean intersections in the graded setting (Prop.~\ref{lem:clean}) we obtain a reduction procedure for Dirac structures with respect to geometric coisotropic data in Theorem~\ref{thm:diracred}.
\end{itemize}

In the process of expressing reduction constructions from graded geometry in classical geometric terms, a recurrent issue is that of relating the Atiyah algebroid of a given (pseudo-euclidean) vector bundle $A$ with the Atiyah algebroid of a quotient of $A$.  Appendix \ref{app:A} addresses this issue and presents the results needed in the paper.

\subsubsection*{Hamiltonian reduction}
In $\S$ \ref{sec:momap}, we consider graded symplectic reduction in the hamiltonian setting; its formulation in classical terms leads to a hamiltonian version of Courant reduction that recovers constructions in \cite{bcg,HKquot}. 

In ordinary symplectic geometry, a hamiltonian action of a Lie algebra $\g$ on a symplectic manifold $M$ is given by a Lie algebra map
$\mu^*\colon\g \to C^\infty(M)$,
known as a {\em comomentum map} (the dual map $\mu\colon M\to \g^*$ is the {\em momentum map}).
In the context of $\N$-manifolds of degree 2, we consider a graded Lie algebra $\tilde{\g}$ concentrated in degrees $-2, -1$ and $0$, 
$$
\tilde{\g}=\h[2]\oplus \A[1]\oplus \g,
$$
with a hamiltonian action on a symplectic $\N$-manifold of degree 2 $\cM$ given by a comomentum map
\begin{equation}\label{eq:comom}
\tilde{\mu}^\sharp\colon \tilde{\g}\to C(\cM)[2],
\end{equation}
see  $\S$ \ref{sec:hamred}.
With this setup one may construct new symplectic  $\N$-manifolds of degree 2 by means of 
a graded version of Marsden--Weinstein reduction. For simplicity, we will focus on reduction at momentum level zero, which can be regarded as an instance of coisotropic reduction.
Whenever $\cM$ is equipped with a Courant function $\Theta$, $C(\cM)[2]$ becomes a differential graded Lie algebra (DGLA)
with differential $\{\Theta,\cdot\}$; in this case  we also assume that $\tilde{\g}$ is a DGLA and that the comomentum map is a DGLA morphism. This ensures that $\Theta$ is reducible with respect to the coisotropic submanifold defined by the zero level of the momentum map, and hence defines a reduced Courant function $\Theta_{red}$ on the Marsden--Weinstein quotient $\cM_{red}$. 

For the classical geometric description of graded hamiltonian actions and reduction, we first notice that a DGLA $\tilde{\g}$ as above is equivalent to an ordinary Lie algebra $\g$, together with $\g$-modules $\A$ and $\h$, a symmetric bilinear map $\varpi\colon \mathfrak{a} \otimes \mathfrak{a} \to \h$ and operators
$\h \stackrel{\delta}{\to} \mathfrak{a} \stackrel{\delta}{\to} \g$ satisfying the conditions described in Propositions~\ref{prop:gla} and \ref{prop:dgla} (see ${\bf (B)}$ in $\S$ \ref{subsec:hamCAred}). The notion of a {\em hamiltonian action} on a Courant algebroid $E\to M$ in Definition~\ref{def:infHact} arises as the classical geometric counterpart of a comomentum map \eqref{eq:comom}; it is given in terms of $(\g,\A,\h, \varpi, \delta)$  and  consists  of 
a linear $\g$-action on $E$ and $\g$-equivariant maps
$$
\mu\colon M\to \h^*, \qquad \varrho\colon \A \to \Gamma(E)
$$
with suitable compatibility conditions (see ${\bf (C)}$ in $\S$ \ref{subsec:hamCAred}). Reduction procedures for Courant, Dirac and GC structures in this hamiltonian setting are presented in Theorem~\ref{thm:geomhamred}, all based on graded Marsden--Weinstein reduction. (Since the zero level of the graded momentum map is coisotropic, these procedures rely on coisotropic Courant reduction.)

\bigskip

{\small{
\begin{center}
\begin{tabular}{ |c| c |}
\hline

Hamiltonian action on Courant algebroid $E$
& 
Hamiltonian action on $\cM$, $\Theta \in C(\cM)_3$
\\ 
\hline
$\g$-action on $E$, $\mu\colon M\to \h^*$, $\varrho\colon \A\to \Gamma(E)$  & comomentum map $\tilde{\g}\to C(\cM)[2]$ \\  
 \hline 
Courant reduction $\Ered\to \mu^{-1}(0)/G$ 
& 
Marsden--Weinstein reduction $\cM_{red}$, $\Theta_{red}$\\
\hline
\end{tabular}
\medskip
\end{center}
}}

\medskip

We make contact with \cite{bcg,HKquot} by restricting our attention to hamiltonian actions of {\em exact} DGLAs on {\em exact} Courant algebroids. We show in Prop.~\ref{lem:exact} that exact DGLAs  bijectively correspond to the (exact) Courant algebras introduced in \cite{bcg} (up to a minor technical difference with no noticeable effect). Further we prove in Prop.~\ref{prop:bcg} (and Remark~\ref{rem:bcg}) that their hamiltonian actions on exact Courant algebroids are equivalent to the ``extended actions with momentum maps,'' used in the reduction schemes in \cite{bcg,HKquot}, of the corresponding Courant algebras.

\bigskip

{\small{
\begin{center}
\begin{tabular}{ |c| c |}
\hline
Exact DGLA $\tilde{\g}=\h[2]\oplus \A[1]\oplus \g$
& 
Exact Courant algebra $\A \to \g$
\\ 
\hline
Hamiltonian $\tilde{\g}$-action on (exact) $E\to M$   & Extended action of $\A\to \g$ on (exact) $E\to M$ \\  
 \hline 
\end{tabular}
\medskip
\end{center}
}}

\medskip

This shows that our hamiltonian framework for Courant reduction, naturally derived from standard constructions in graded symplectic geometry, provides  a systematic approach to the reductions of Courant, Dirac and generalized complex structures, which  recovers the results in \cite{bcg,HKquot} when specialized to the exact case.

\subsection*{Notation and conventions}\label{subsec:notation}

For a $\mathbb{Z}$-graded vector space $V$, we define its degree shift $V[k]$ by $(V[k])_i=V_{k+i}$.

We will work in the category of smooth manifolds, usually denoted by $M$ and $N$.

For a sheaf of rings $\mathcal{A}$ over $M$, we denote its evaluation on
open subsets $U$ by $\mathcal{A}(U)$, its restriction by $\mathcal{A}|_U$, and its stalk at
$x$ by $\mathcal{A}|_x$.

For a manifold $M$, we denote by $C^ \infty_M$ its sheaf of algebras of (real-valued) smooth functions, and $C^\infty(M)=C^\infty_M(M)$. For a submanifold $N\subseteq M$, $I_N$ denotes its sheaf of vanishing ideals. 

For a smooth vector bundle $E\to M$, the  sheaf of sections is denoted by $\Gamma_E$, and $\Gamma(E)=\Gamma_E(M)$. Vector bundle maps cover the identity map unless stated otherwise.
Given a subbundle $K\to N$ of $E\to M$, we write $\Gamma_{\st{E,K}}$ for the subsheaf of sections of $E$ whose restriction to $N$ lie in $K$.
Derivations of $E\to M$ are denoted by $(X,D)$, with $D: \Gamma(E)\to \Gamma(E)$ and $X\in \mathfrak{X}(M)=\Gamma(TM)$ its symbol.

For a pseudo-euclidean vector bundle $E\to M$ (i.e., $E$ is equipped with a fiberwise pseudo-Riemannian metric), we denote by $\mathbb{A}_E$ its Atiyah algebroid (whose sections are derivations of $E$ that are compatible with the metric). 
We write $\Gamma^{\st{N}}_{\mathbb{A}_E}$ for the subsheaf of sections of ${\mathbb{A}_E}$ whose symbols are tangent to a submanifold $N$. For a subbundle $K\to N$ of $E\to M$, we denote by $\Gamma^{\st{N,K}}_{\mathbb{A}_E}$ the subsheaf of $\Gamma_{\mathbb{A}_E}$ whose sections  $(X,D)$ satisfy $X|_N\in TN$ and $D(\Gamma_{\st{E,K}})\subseteq \Gamma_{\st{E,K}}$. When $N=M$, we simplify notation to $\Gamma^{\st{K}}_{\mathbb{A}_E}$ for the sheaf of derivations 
$(X,D)$ with $D(\Gamma_K)\subseteq \Gamma_K$. We use a similar notation $\Gamma^{\st{K,L}}_{\mathbb{A}_E}$ when we have an additional subbundle $L\to M$ of $E\to M$, and impose the conditions $D(\Gamma_K)\subseteq \Gamma_K$ and $D(\Gamma_L)\subseteq \Gamma_L$.

We will make use of Einstein's convention for sums whenever there is no risk of confusion.

\noindent{\bf Acknowledgements:} H.B. acknowledges the continual financial support of CAPES, CNPq and FAPERJ.
A.S.C. acknowledges partial support of SNF Grant No. 200020-192080, of the Simons Collaboration on Global Categorical Symmetries, and of the COST Action 21109 (CaLISTA).  A.S.C.'s research was (partly) supported by the NCCR SwissMAP, funded by the Swiss National Science Foundation. R.M. acknowledges partial support from CNPq.
M.Z.~acknowledges partial support by Forschungskredit U. Z\"urich, by  Ciencia 2007 (Portugal), SB2006-0141, MICINN RYC-2009-04065
and Severo Ochoa  SEV-2011-0087 (Spain), 
CNPq-PVE grant \\
 88881.030367/2013-01 (Brazil), 
 Methusalem grants METH/15/026 and  METH/21/03 - long term structural funding of the Flemish Government,  the FWO research projects G083118N, G0B3523N and G014726N, the FWO and FNRS under EOS projects G0H4518N and G0I2222N (Belgium). We are thankful to several institutions for hosting us during various stages of this project, including  Erwin Schr\"odinger Institut, IMPA, KU Leuven and U. Z\"urich. We thank A. Cabrera, M. Cueca, F. del Carpio-Marek, P. H. Carvalho Silva, and F. Valach  for fruitful discussions related to this work. We are grateful to the anonymous referees for their many valuable comments and suggestions.

\section{$\N$-manifolds of degree 2}\label{sec:deg2man}

In this section we present foundational results about  $\N$-manifolds in degree 2. We will consider $\N$-manifolds as in \cite{BonavolontaPoncin,rajthesis} (see also \cite{royt:graded,severa:sometitle}). Some parallel results in the theory of supermanifolds can be found, e.g., in  \cite{fiori,leites80}.

\subsection{$\N$-manifolds of degree 2 and their morphisms}
An \textit{$\N$-manifold of degree 2} $\cM$ of dimension $m_0|m_1|m_2$
is a pair $(M, C_{\cM})$, where $M$ (the \emph{body} of $\cM$) is a
dimension $m_0$ manifold and $C_{\cM}$ is a  sheaf of graded-commutative
algebras on $M$ satisfying the following local
property: any $x\in M$ admits an open neighborhood $U\subseteq M$
such that
\begin{equation}\label{eq:localtriv}
C_\cM|_U \cong C^\infty_U\otimes \wedge \reals^{m_1} \otimes \bigS
\reals^{m_2},
\end{equation}
where the  elements of $\reals^{m_q}$ are of degree $q$; the right-hand side of \eqref{eq:localtriv} denotes the sheafification of the pre-sheaf $V\mapsto C^\infty(V)\otimes \wedge \reals^{m_1} \otimes \bigS
\reals^{m_2}$, for  $V\subseteq U$ open. 
Denoting by $(C_{\cM})_q$ the subsheaf  of degree $q$ sections of $C_{\cM}$, by the local condition \eqref{eq:localtriv} we have that
$$
(C_{\cM})_q(U) = \oplus_{l+2k=q} C^\infty(U) \otimes \wedge^l \reals^{m_1} \otimes \bigS^k
\reals^{m_2},
$$
so there exist sections $\{x^i, e^\mu, p^I\}$ of $C_{\cM}$ over
$U$, where $\{x^i\}$ are usual coordinates on $U$, the elements
$\{e^\mu\}$ and $\{p^I\}$ are respectively of degree $1$ and $2$,
such that every homogeneous section of $C_{\cM}$ over $U$ can be expressed as a sum of
functions that are smooth in $x^i$ and polynomial in $e^\mu$ and
$p^I$. We refer to the local sections
$$
\{x^1,\ldots,x^{m_0}, e^1,\ldots,e^{m_1},p^1,\ldots, p^{m_2}\}
$$
as \textit{local coordinates} on $\cM$, 
and we think of $C_{\cM}$ as
the sheaf of ``smooth functions'' on $\cM$.

Note  that $(C_{\cM})_0$ is canonically isomorphic to
$C^\infty_M$. We use the notation $ \dim(\cM) = m_0|m_1|m_2$ and refer to $ \mathrm{totdim}(\cM) = m_0 + m_1 + m_2$ as the {\em total dimension}.

\begin{remark}\label{rem:lowerdegs}
$\mathbb{N}$-manifolds of degree 2 with dimensions of special type $m_0 | 0 | 0$ are naturally identified with ordinary manifolds of dimension $m_0$. Those of dimension type $m_0 | m_1 | 0$ are referred to as {\em $\mathbb{N}$-manifolds of degree 1}. \hfill $\diamond$
\end{remark}

\begin{remark}
Another approach to $\N$-manifolds regards them as supermanifolds (i.e., $\mathbb{Z}_2$-graded) carrying an action of the multiplicative monoid of real numbers in such a way that the degrees of the coordinates correspond to the weight of the action \cite{severa:sometitle,royt:graded}; see \cite[Sec.~4]{voronov}.
\end{remark}

If $\cM$ and $\cN$ are  $\N$-manifolds of degree 2, a \textit{morphism}
$\Psi \colon  \cN\to \cM$ is a pair $(\psi,\psi^\sharp)$, where $\psi\colon N \to
M$ is a smooth map and
$$
\psi^\sharp \colon  C_{\cM}\to \psi_* C_{\cN}
$$
is a morphism of sheaves over $M$; in particular, for each open
subset $U\subseteq M$, $\psi^\sharp_U \colon  C_{\cM}(U)\to
C_{\cN}(\psi^{-1}(U))$ is a morphism of graded algebras. 
 $\N$-manifolds of degree 2 and their morphisms define a category denoted by $\NMan$.

A standard
argument (see, e.g., \cite[Sec.~4.1]{varadarajan}) shows that the
restriction of $\psi^\sharp$ to degree zero functions agrees with
the pullback map
$$
\psi^* \colon C_M^\infty \to \psi_*C_N^\infty.
$$
An \textit{isomorphism} $\Psi \colon \cN\to \cM$ is a morphism that admits
an inverse; equivalently, $\psi$ is a diffeomorphism and
$\psi^\sharp$ induces a bijection of stalks at each $x \in M$.

Let $U\subset \mathbb{R}^{m_0}$ be an open set, and denote by
$\mathcal{U}$ the $\N$-manifold of degree 2 with body $U$ and structure
sheaf $C^\infty_U\otimes \wedge \mathbb{R}^{m_1}\otimes \bigS
\mathbb{R}^{m_2}$, described by coordinates $\{x^i,e^\mu,p^I\}$. If
$\cN$ is any $\N$-manifold of degree 2, then it is a simple matter to
check that any morphism $\cN \to \mathcal{U}$ is completely
determined by the choice of a map $\psi:N\to U$ as well as elements
$f^\mu, g^I \in C_{\cN}(N)$, of degrees 1 and 2, respectively;
indeed, the conditions
$$
\psi^\sharp(x^i)= x^i\circ \psi,\;\; \psi^\sharp(e^\mu)=f^\mu,\;\;
\psi^\sharp(p^I)=g^I,
$$
uniquely determine a morphism of sheaves $\psi^\sharp:
C_{\mathcal{U}}\to \psi_*C_{\cN}$.

\subsection{Vector-bundle description}\label{subsec:decomp}
As we explain in this section, $\N$-manifolds of degree 2 can be
completely described in terms of classical vector bundles. We will formulate this fact in Theorem~\ref{thm:equivcat} as an equivalence between a suitable category defined by vector-bundle data and the category of $\N$-manifolds of degree 2.
(A generalization of this result for $\N$-manifolds of higher degrees can be found in \cite{HenMiqRaj}.)

\begin{defi}
We denote by $\VB2$ the category whose objects are triples $(E_1,\widetilde{E},\phi_E)$, where $E_1\to M$ and $\widetilde{E}\to M$ are vector bundles and $\phi_E: \widetilde{E}\to \wedge^2E_1$ is a surjective vector-bundle map covering the identity on $M$, and morphisms $(E_1,\tilde{E},\phi_E)\to
(F_1,\tilde{F},\phi_F)$ are given by pairs $(\psi_1,\widetilde{\psi})$ of vector-bundle maps $\psi_1 \colon E_1\to
F_1$, $\widetilde{\psi} \colon \tilde{E}\to \tilde{F}$, covering a smooth map $\psi:
M\to N$, such that
\begin{equation}\label{eq:compat}
\wedge^2 \psi_1 \circ \phi_E=\phi_F\circ \widetilde{\psi}.
\end{equation}
\hfill$\diamond$
\end{defi}

Given an object $(E_1,\tilde{E},\phi_E)$ in $\VB2$, 
we set $E_2=\mathrm{ker}(\phi_E)$, so that we have a short exact sequence
\begin{equation}\label{deg2exact}
    0\longrightarrow E_2 {\longrightarrow} \tilde{E} \stackrel{\phi_E}{\longrightarrow} \wedge^2 E_1 \longrightarrow 0.
\end{equation}
We will use the injective map 
$$
\iota_E:= \phi_E^* \colon \wedge^2 E_1^* \to \widetilde{E}^*
$$ 
to view  $\wedge^2 E_1^*$ as a subbundle of $\widetilde{E}^*$, in such a way that $E_2^* = \widetilde{E}^*/\wedge^2 E_1^*$. 

We will see how to associate an $\N$-manifold of degree 2 to an object $(E_1,\widetilde{E},\phi_E)$. Consider the sheaves of graded algebras 
$\Gamma_{\wedge^\bullet E_1^*}$ and $\Gamma_{\mathrm{S}^\bullet\tilde{E}^*} =  \oplus_{l=0}^\infty \Gamma_{\mathrm{S}^l\tilde{E}^*}$, and define the sheaf of homogeneous ideals $I$ in $\Gamma_{\wedge^\bullet E_1^*}\otimes
\Gamma_{\mathrm{S}^\bullet\tilde{E}^*}$ generated by 
$$
1\otimes
\iota_E(T)- T\otimes 1,
$$ 
with $T$ a local section of  $\Gamma_{\wedge^2 E_1^*}$.
\begin{lemma}\label{lem:functor}\
\begin{itemize}
\item For an object $(E_1,\widetilde{E},\phi_E)$ in $\VB2$, the sheaf of graded algebras over $M$ given by
\begin{equation}\label{eq:CM}
C_\cM := (\Gamma_{\wedge^\bullet E_1^*}\otimes
\Gamma_{\mathrm{S}^\bullet\tilde{E}^*})/I.
\end{equation}
defines an $\N$-manifold of degree 2. 
\item If $\cM$ and $\cN$ are the $\N$-manifolds of degree 2 corresponding to the objects
$(E_1,\tilde{E},\phi_E)$ and $(F_1,\tilde{F},\phi_F)$, then any morphism 
$(E_1,\tilde{E},\phi_E)\to
(F_1,\tilde{F},\phi_F)$ uniquely 
 determines a morphism
$\cM \to \cN$. 
\end{itemize}
\end{lemma}
\begin{proof}
The choice of a splitting of \eqref{deg2exact} induces an isomorphism $\tilde{E}^*\cong   \wedge^2 E^*_1 \oplus E_2^*$ and identifies $C_\cM$ in \eqref{eq:CM} with $\Gamma_{\wedge^\bullet E_1^*}\otimes
\Gamma_{\mathrm{S}^\bullet E_2^*}$. Then local frames of $E_1^*$ and $E_2^*$ over an open subset $U\subseteq M$ establish an isomorphism as in \eqref{eq:localtriv}, showing that $\cM = (M,C_\cM)$ is an $\N$-manifold of degree 2.

Consider a morphism $(\psi_1, \tilde{\psi})$ from $(E_1,\tilde{E},\phi_E)$ to 
$(F_1,\tilde{F},\phi_F)$
covering a smooth map $\psi:
M\to N$. The vector-bundle maps $\psi^*F_1^*\to E_1^*$ and $\psi^* \tilde{F}^*\to \tilde{E}^*$, dual to  $\psi_1 \colon E_1\to F_1$, and $\widetilde{\psi} \colon \tilde{E}\to \tilde{F}$, define morphisms (of sheaves of $C^\infty_M$-modules)
$\psi^\sharp_1  \colon \psi^*\Gamma_{F_1^*}\to \Gamma_{E_1^*}$ and $\psi^\sharp_2  \colon \psi^*\Gamma_{\tilde{F}^*}\to \Gamma_{\tilde{E}^*}$,
which are equivalent  (by
adjunction) to morphisms 
$$
\psi^\sharp_1 \colon \Gamma_{F_1^*}\to
\psi_*\Gamma_{E_1^*}, \qquad \psi^\sharp_2 \colon \Gamma_{\tilde{F}^*}\to
\psi_*\Gamma_{\tilde{E}^*}.
$$ 

Let $\cM$ and $\cN$ be the $\N$-manifolds of degree 2 defined by the objects
$(E_1,\tilde{E},\phi_E)$ and $(F_1,\tilde{F},\phi_F)$
The morphisms $\psi_1^\sharp$ and $\psi_2^\sharp$ naturally
extend to morphisms $\Gamma_{\wedge F_1^*}\to \psi_*C_\cM$ and
$\Gamma_{\mathrm{S}\tilde{F}^*}\to \psi_*C_\cM$, leading to a
morphism $\Gamma_{\wedge F_1^*}\otimes
\Gamma_{\mathrm{S}^{\bullet}\tilde{F}^*}\to \psi_*C_\cM$. The compatibility
condition \eqref{eq:compat} between $\psi_1^\sharp$ and $\psi_2^\sharp$ 
guarantees that this morphism descends to a morphism $\psi^\sharp: C_\cN \to \psi_*C_\cM$.
\end{proof}

Lemma \ref{lem:functor} defines a functor from $\VB2$ to the category of $\N$-manifolds of degree 2, denoted by
\begin{equation}\label{eq:F}
\mathcal{F}\colon \VB2 \to \NMan.
\end{equation} 
Note that, for $\cM = \mathcal{F}(E_1, \widetilde{E}, \phi_E)$, we have 
\begin{equation}\label{eq:vb}
(C_\cM)_1=\Gamma_{E_1^*}, \;\mbox{ and }\;
(C_\cM)_2=\Gamma_{\tilde{E}^*},
\end{equation}
and hence local coordinates for $\cM$ are given by 
local frames of $E_1^*$ and $E_2^*$ over some open subset of $M$.

\begin{thm}\label{thm:equivcat}
The functor $\mathcal{F}\colon \VB2 \to \NMan$ is an equivalence of categories. 
\end{thm}

\begin{proof}
Let  $\cM$ be an $\N$-manifold of degree 2.  By \eqref{eq:localtriv},
the subsheaf $(C_{\cM})_1$
of degree $1$ functions is a locally free sheaf of
$C^\infty_M$-modules, so it is isomorphic to the sheaf of sections $\Gamma_{E_1^*}$
of the dual of some vector bundle $E_1 \to M$.
In coordinates $\{x^i, e^\mu,
p^I\}$ on $\cM$, we may take $\{e^\mu\}$ as a local frame for $E_1^*$.
Similarly, $(C_{\cM})_2$ can be identified with
$\Gamma_{\tilde{E}^*}$ for some vector bundle $\tilde{E} \to M$, and
a local frame for $\tilde{E}^*$ is given by $\{p^I, e^\mu e^\nu :
\mu < \nu \}$. Since $(C_\cM)_1 . (C_\cM)_1\subseteq (C_\cM)_2$,
there is a natural inclusion map
\begin{equation}\label{eq:iE}
\iota_E \colon \wedge^2 E_1^* \hookrightarrow \tilde{E}^*,
\end{equation}
whose dual map $\phi_E:=\iota_E^* \colon \tilde{E}\twoheadrightarrow \wedge^2
E_1$ is surjective. Note that $\cM \cong \mathcal{F}(E_1,\tilde{E},\phi_E)$, showing that $\mathcal{F}$ is essentially surjective.

For $\N$-manifolds $\cM = \mathcal{F}(E_1,\tilde{E},\phi_E)$ and $\cN = \mathcal{F}(F_1,\tilde{F},\phi_F)$, a morphism $(\psi,\psi^\sharp): \cM \to \cN$ defines, by restriction,
morphisms of sheaves of $C_N^\infty$-modules $\psi_i^\sharp \colon (C_\cN)_i\to \psi_*(C_\cM)_i$, $i=1,2$, such that
$\psi_2^{\sharp}
(f f')=
\psi_1^{\sharp}(f)
\psi_1^{\sharp}(f')$ for any local sections $f$, $f'$ of $(C_\cN)_1$. With the identifications \eqref{eq:vb}, the morphisms $\psi^\sharp_1$ and $\psi^\sharp_2$ correspond, through duality, to vector-bundle maps $\psi_1: E_1\to F_1$ and $\tilde{\psi}: \tilde{E}\to \tilde{F}$ satisfying \eqref{eq:compat}, in such a way that $(\psi,\psi^\sharp)=\mathcal{F}(\psi_1,\tilde{\psi})$.
This construction establishes a bijection between the set of morphisms from $(E_1,\tilde{E},\phi_E)$ to $(F_1,\tilde{F},\phi_F)$ and morphisms $\cM\to \cN$, which shows that $\mathcal{F}$ is fully faithful. 
\end{proof}

See  \cite[Section~6]{MR3689898} for a different perspective on Theorem~\ref{thm:equivcat}.

\begin{remark}
 $\mathbb N$-manifolds of degree $2$ admit an alternative geometric description as special types of double vector bundles; see \cite{delCarpioThesis,Jotz_Lean_2018,Li-BlandThesis}. 
Following \cite{delCarpioThesis}, the connection with Theorem~\ref{thm:equivcat} can be understood via the short exact sequences \eqref{deg2exact}, which are closely related to the ``DVB exact sequences'' characterizing double vector bundles given in \cite{CLS14}.
 \hfill $\diamond$
\end{remark}

We illustrate the above correspondence with simple examples.

\begin{ex}\label{ex:1Man}
Any vector bundle $E\to M$ defines an object in $\VB2$ with $E_1=E$, $\widetilde{E}=\wedge^2 E_1$ and $\phi_E = \mathrm{Id}_{\wedge^2 E_1}$. 
In this way the category of vector bundles sits in $\VB2$ as a (full) subcategory. The restriction of $\cF$ to this subcategory recovers the well-known equivalence between vector bundles and $\mathbb{N}$-manifolds of degree 1. The image of $E\to M$ under this equivalence is denoted by $\cM = E[1]$ and satisfies $C_\cM = \Gamma_{\wedge E^*}$.  
\hfill $\diamond$
\end{ex}

\begin{ex}[The split case]\label{ex:split}
A pair of vector bundles $E_1\to M$ and $E_2\to M$ defines an object in $\VB2$ with $\tilde{E} = E_2\oplus \wedge^2 E_1$ and $\phi_E\colon \tilde{E} \to \wedge^2 E_1$ the natural projection.  We refer to an object of this type as {\em split}, and keep the same terminology for its corresponding $\mathbb{N}$-manifold of degree 2  via $\cF$; in this case the sheaf $C_\cM$ (see \eqref{eq:CM}) has a simpler description in terms of $E_1$ and $E_2$ as 
$$
C_\cM = \Gamma_{\wedge E_1^*} \otimes
\Gamma_{\mathrm{S}E_2^*}.
$$
Given an arbitrary object $(E_1, \tilde{E}, \phi_E)$, 
the fact that one can always choose a splitting of the exact sequence \eqref{deg2exact} implies that any $\mathbb{N}$-manifold of degree 2 is isomorphic to one that is split, though this identification is generally noncanonical, see \cite{BonavolontaPoncin}.
\hfill $\diamond$
\end{ex}

\begin{ex}[{The tangent bundle}]\label{ex:tangent}
As in clasical geometry, there is a tangent functor on $\NMan$, that associates to 
each $\mathbb{N}$-manifold $\cM$ of degree 2 and body $M$ its tangent bundle $T\cM$, which is
an $\mathbb{N}$-manifold of degree 2 and body $TM$, see e.g. \cite[Prop.~2.3.23]{rajthesis} and \cite[Ex.~2.13]{cuecaschnitzer}. Following Theorem~\ref{thm:equivcat}, we will describe the counterpart of the tangent functor in $\VB2$.

Let  $(E_1,\widetilde{E},\phi_E)$ be an object in $\VB2$, corresponding to $\cM$.  To define the object in $\VB2$ representing its tangent bundle, we consider the canonical surjective map (of vector bundles over $TM$)
$$
q: \wedge^2_{TM} TE_1\rightarrow T (\wedge^2 E_1),
$$
obtained by differentiating the natural skew-symmetric bilinear map $E_1 \times _M E_1 \rightarrow \wedge^2 E_1$, $(e,e')\mapsto e\wedge e'$, and define the vector bundle $\widetilde{E}_T \to TM$ as the fibred product
$$
\widetilde{E}_T :=  T\tilde{E}\times_{T (\wedge^2 E_1)} \wedge^2_{TM} TE_1
$$
with respect to $T\phi_E$ and $q$.
Note that the natural projection $\mathrm{pr}_2: \widetilde{E}_T\to  \wedge^2_{TM} TE_1$ is surjective, and therefore the triple  
\begin{equation}\label{eq:tangtriple}
(TE_1, \widetilde{E}_T, \mathrm{pr}_2)
\end{equation}
is an object in $\VB2$. One can see from the short exact sequence
\begin{equation*}\label{eq:Tseq}
   0 \longrightarrow TE_2 \longrightarrow T\tilde{E}\overset{T\phi_E}{\longrightarrow} T (\wedge^2 E_1) \longrightarrow 0 
\end{equation*}
 of vector bundles over $TM$ (obtained from \eqref{deg2exact} via the tangent functor) that $\widetilde{E}_T$ fits into the short exact sequence
\begin{equation}\label{eq:seqT}
0 \longrightarrow TE_2 \longrightarrow \widetilde{E}_T \overset{\mathrm{pr}_2}{\longrightarrow} \wedge^2_{TM} TE_1 \longrightarrow 0.    
\end{equation}
Moreover, any splitting  $s$  of $\phi_E\colon \tilde{E}\to \wedge^2 E_1$
canonically induces a splitting of $\mathrm{pr}_2: \widetilde{E}_T\to \wedge^2_{TM}TE_1$ given by
$$
\wedge^2_{TM} TE_1 \rightarrow \widetilde{E}_T = T\widetilde E \times_{T (\wedge^2 E_1)} \wedge^2_{TM} TE_1, \quad \;\; v \mapsto (Ts(q(v)),v).
$$  

In particular, when $\cM$ is split as in Example~\ref{ex:split}, the object $(TE_1, \widetilde{E}_T, \mathrm{pr}_2)$ is also split, defined by $TE_1\to TM$ and $TE_2\to TM$. This implies that, in the split case, $(TE_1, \widetilde{E}_T, \mathrm{pr}_2)$ corresponds to $T\cM$, and the same holds in general since any $\mathbb{N}$-manifold of degree 2 is isomorphic to a split one.
\hfill $\diamond$
\end{ex}

\begin{ex}\label{ex:gradedVS}
Any graded vector space $V=V_{-2}\oplus V_{-1} \oplus V_0$, concentrated in degrees $-2$, $-1$ and $0$, may be regarded as a split $\mathbb{N}$-manifold of degree 2 corresponding to the vector bundles
$$
E_1= (V_{-1}\times V_0) \to V_0, \qquad E_2= (V_{-2}\times V_0) \to V_0.
$$
\hfill $\diamond$
\end{ex}

\subsection{Submanifolds}\label{subsec:Nsub}

Let $\cM$ be an $\N$-manifold of degree 2 with dimension $m_0|m_1|m_2$.  
A \textit{submanifold} of $\cM$ is defined by an $\N$-manifold of degree 2
$\cN$ of dimension $n_0|n_1|n_2$ with $N\subseteq M$, and a morphism
$(\iota,\iota^\sharp):\cN \to \cM$ such that $\iota:N\hookrightarrow
M$ is the inclusion map and the following local condition holds: any
$x\in N$ admits a neighborhood $U$ in $M$ with local coordinates
$\{x^i, e^\mu, p^I\}$ with respect to which the map
\begin{equation}\label{eq:localisharp}
\iota^\sharp|_U \colon C_{\cM}|_U \to C_{\cN}|_{U\cap N}
\end{equation}
is given by
\begin{align}
& \iota^\sharp (x^i)=\iota^*(x^i) =0, \;\;\; 1\leq i\leq  r_0, \label{eq:sub1}\\
& \iota^\sharp(e^\mu)=0,\;\qquad \qquad \; 1\leq \mu \leq r_1,\label{eq:sub2}\\
& \iota^\sharp (p^I)=0,\;\qquad\qquad \; 1\leq I\leq
r_2,\label{eq:sub3}
\end{align}
where $r_l=m_l-n_l$, $l=0,1,2$, and in such a way that
$$
\{\iota^\sharp(x^i),\iota^\sharp(e^\mu),\iota^\sharp(p^I)\},\;\;
r_0<i\leq m_0,\; r_1<\mu \leq m_1, \;r_2<I \leq m_2,
$$
are local coordinates for $\cN$ over $U\cap N$. Such local
coordinates $\{x^i,e^\mu,p^I\}$ of $\cM$ on $U$ are called
\textit{adapted to $\cN$}. In particular, note that $N\subseteq M$
is an embedded submanifold. We refer to $r_0|r_1|r_2$ as the
\textit{codimension} of $\cN$.

Let $(\iota,j^\sharp):\cN'\to \cM$ be another submanifold with $N' =
N$. We consider the submanifolds $\cN$ and $\cN'$ to be
\textit{equivalent} if there is an isomorphism of sheaves
$\psi^\sharp \colon C_{\cN}\to C_{\cN'}$ for which the induced isomorphism
$\psi^\sharp \colon \iota_*C_{\cN}\to \iota_*C_{\cN'}$ satisfies
$\psi^\sharp\circ \iota^\sharp = j^\sharp$. We will make no
distinction between equivalent submanifolds, and we will keep the
term \textit{submanifold} to refer to an equivalence class\footnote{Even though we assume that submanifolds satisfy $N\subseteq M$, the notion of equivalence is needed since, in contrast with ordinary geometry, the map of sheaves is not determined by the inclusion map $\iota: N\to M$ of bodies.}.

There is a geometric description of submanifolds in light of Theorem~\ref{thm:equivcat}.
Let $(E_1,\tilde{E},\phi_E)$ be an object in $\VB2$. By a \textit{subobject}
of $(E_1,\tilde{E},\phi_E)$  we mean an object $(F_1,\tilde{F},\phi_F)$
in $\VB2$, where $F_1$ and $\tilde{F}$ are vector bundles over $N$, equipped with
a morphism into $(E_1,\tilde{E},\phi_E)$, defined by $j_1 \colon F_1\to
E_1$ and ${\tilde{j}} \colon \tilde{F}\to \tilde{E}$, so that $j_1$ and ${\tilde{j}}$ 
are fiberwise injective and cover an embedding $\iota \colon N\to M$. We will naturally identify subobjects of $(E_1,\tilde{E},\phi_E)$ which are isomorphic through an isomorphism that commutes with their respective morphisms into $(E_1,\tilde{E},\phi_E)$. In this way, we can always assume that a subobject $(F_1,\tilde{F},\phi_F)$ is such that $F_1\subseteq E_1|_N$, $\tilde{F}\subseteq \tilde{E}|_N$, and $\phi_E(\tilde{F})=\wedge^2 F_1\subseteq \wedge^2 E_1$, so that $\phi_F = \phi_E |_{\tilde{F}}$.

Suppose that $\cM$ corresponds to $(E_1,\tilde{E},\phi_E)$ via the functor \eqref{eq:F}.

\begin{prop}\label{prop:subobjects}
Submanifolds of $\cM$ are equivalent to either of the following geometric data:
\begin{itemize}
    \item Subobjects of
 $(E_1,\tilde{E},\phi_E)$ in $\VB2$, i.e., pairs of vector subbundles $F_1\subseteq E_1|_N$, $\tilde{F}\subseteq \tilde{E}|_N$, such that $\phi_E(\tilde{F})=\wedge^2 F_1\subseteq \wedge^2 E_1$; 
    \item Pairs of vector subbundles $K_1\subseteq E_1^*$ and $\tilde{K}\subseteq \tilde{E}^*$ over a submanifold $N\subseteq M$ satisfying $\tilde{K}\cap \wedge^2E_1^*|_N = K_1\wedge E_1^*|_N$.
\end{itemize}
The explicit correspondence between the two sets of geometric data is given by $K_1=\mathrm{Ann}(F_1)$ and $\tilde{K}=\mathrm{Ann}(\tilde{F})$.
\end{prop}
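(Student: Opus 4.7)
The plan is to use the categorical equivalence of Proposition~\ref{prop:equivcat}, concretely in the form of Lemma~\ref{lem:morphism}, to translate the sheaf-theoretic definition of submanifold into vector-bundle data, and then dualize to obtain the annihilator picture.

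First I would observe that a submanifold $(\iota,\iota^\sharp)\colon \cN \to \cM$ is in particular a morphism of degree 2 $\N$-manifolds, so Lemma~\ref{lem:morphism} provides a pair of vector-bundle maps $j_1\colon F_1 \to E_1$ and $\widetilde{j}\colon \widetilde{F} \to \widetilde{E}$ covering the inclusion $\iota\colon N \hookrightarrow M$, satisfying $\wedge^2 j_1 \circ \phi_F = \phi_E \circ \widetilde{j}$; here $F_1$ and $\widetilde{F}$ are the vector bundles encoding $\cN$ via \eqref{eq:vb}. The next step is to show that the local adapted-coordinate condition \eqref{eq:sub1}--\eqref{eq:sub3} is equivalent to $j_1$ and $\widetilde{j}$ being fiberwise injective, so that $F_1,\widetilde{F}$ may be identified with subbundles of $E_1|_N$ and $\widetilde{E}|_N$. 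One direction uses that $\iota^\sharp(e^\mu),\iota^\sharp(p^I)$ with $\mu>r_1$, $I>r_2$ provide local coordinates on $\cN$, which dualizes to surjectivity of the induced maps $E_1|_N^*\twoheadrightarrow F_1^*$ and $\widetilde{E}|_N^*\twoheadrightarrow \widetilde{F}^*$; the converse is proved by constructing adapted coordinates from local frames of $E_1^*$ and $\widetilde{E}^*$ that are simultaneously compatible with $\Ann(F_1)$, $\Ann(\widetilde{F})$, and with a local splitting of \eqref{deg2exact}.

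Once $F_1\subseteq E_1|_N$ and $\widetilde{F}\subseteq \widetilde{E}|_N$ are identified as subbundles, the compatibility $\wedge^2 j_1 \circ \phi_F = \phi_E \circ \widetilde{j}$ together with surjectivity of $\phi_F$ forces $\phi_E(\widetilde{F})=\wedge^2 F_1$. Conversely, given any pair of subbundles satisfying $\phi_E(\widetilde{F})=\wedge^2 F_1$, the triple $(F_1, \widetilde{F}, \phi_E|_{\widetilde{F}})$ is an object of $\VB2$ whose inclusions yield a morphism into $(E_1,\widetilde{E},\phi_E)$; applying the functor $\cF$ of \eqref{eq:F} produces the desired submanifold. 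For the annihilator description, I would set $K_1=\Ann(F_1)\subseteq E_1^*|_N$ and $\widetilde{K}=\Ann(\widetilde{F})\subseteq \widetilde{E}^*|_N$ and show that $\phi_E(\widetilde{F})=\wedge^2 F_1$ is equivalent, via the inclusion $\phi_E^*\colon \wedge^2 E_1^*\hookrightarrow \widetilde{E}^*$, to $\widetilde{K}\cap \wedge^2 E_1^*|_N = \Ann(\wedge^2 F_1)$. The remaining linear-algebra identity $\Ann(\wedge^2 F_1)=K_1\wedge E_1^*|_N$ is verified fiberwise by choosing a basis of $E_1^*|_x$ whose first $r_1$ elements span $K_1|_x$.

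The main obstacle will be the careful construction of local adapted coordinates in the converse direction: one needs local frames of $\widetilde{E}^*$ that are simultaneously compatible with $\Ann(\widetilde{F})$ and with the subbundle $\wedge^2 E_1^*|_N\subseteq \widetilde{E}^*|_N$, which forces the use of the compatibility $\phi_E(\widetilde{F})=\wedge^2 F_1$ together with a local splitting of \eqref{deg2exact} to produce consistent choices for the degree-2 coordinates $p^I$.
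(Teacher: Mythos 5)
Your proposal is correct and follows essentially the same route as the paper: the paper's proof simply invokes the equivalence of Proposition~\ref{prop:equivcat} as a ``direct verification'' and then records the annihilator translation, and your plan fills in exactly that verification (via Lemma~\ref{lem:morphism}, the equivalence of the adapted-coordinate condition with fiberwise injectivity, and the fiberwise identity $\mathrm{Ann}(\wedge^2 F_1)=K_1\wedge E_1^*|_N$). The obstacle you flag about choosing degree-2 coordinates compatibly with $\mathrm{Ann}(\widetilde{F})$ and $\wedge^2 E_1^*|_N$ is real and is handled in the paper by the reformulation in Lemma~\ref{lem:Ktil}.
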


\begin{proof}
It is a direct verification that, through the equivalence in Theorem~\ref{thm:equivcat}, subobjects of
 $(E_1,\tilde{E},\phi_E)$ in $\VB2$ are in one-to-one correspondence with submanifolds of $\cM$.
 
Given a pair of vector subbundles $F_1\subseteq E_1$ and $\tilde{F}\subseteq \tilde{E}$ over a submanifold $N\subseteq M$, let
$K_1=\mathrm{Ann}(F_1)$ and $\tilde{K}=\mathrm{Ann}(\tilde{F})$.
Then the condition $\phi_E(\tilde{F})=\wedge^2 F_1$  holds 
if and only if  $\tilde{K}\cap \wedge^2E_1^*|_N = K_1\wedge E_1^*|_N$.
 \end{proof}

In the special case of an object defined by a vector bundle $E\to M$ as in Example~\ref{ex:1Man}, the previous proposition recovers the correspondence between vector subbundles $F\to N$ of $E$ and submanifolds $\cN=F[1]$ of $\cM= E[1]$. For a split  $\mathbb{N}$-manifold of degree 2  $\cM$ corresponding to a pair of vector bundles $E_1$ and $E_2$ as in Example~\ref{ex:split}, a corresponding pair of vector subbundles defines a split submanifold of $\cM$, but not every submanifold of $\cM$ is split.

\subsubsection{\underline{Submanifolds and regular ideals}}\label{subsubsec:ideal}\
Let $\cM$ be an $\mathbb{N}$-manifold of degree 2.
Any submanifold $(\iota,\iota^\sharp)\colon \cN\to
\cM$ gives rise to a subsheaf of homogeneous
ideals
\begin{equation}\label{eq:ideal}
\mathcal{I}=\ker(\iota^\sharp)\subseteq C_{\cM},
\end{equation}
called the {\em sheaf of vanishing ideals} of $\cN$.
Note that  $\cI_0 = \cI \cap (C_{\cM})_0 \subseteq C^\infty_M$ coincides with $I_N$, the vanishing ideal of $N\subseteq M$, the body of $\cN$. 
The sheaf of functions on $\cN$ is recovered from $\cI$ via  
$$
C_\cN = \iota^{-1}(C_\cM/\cI).
$$

We will now characterize the sheaves of ideals in $C_\cM$ that arise as vanishing ideals of submanifolds. 

Given a subsheaf of ideals $\mathcal{I} \subseteq C_{\cM}$, we call it \textit{regular}  
if 
\begin{equation}\label{eq:vanI}
\cI_0 := \cI \cap (C_{\cM})_0 =   I_N
\end{equation} 
for a 
subset $N\subseteq M$ and the following local condition holds: There exist $r_j \in
\{1,\ldots,m_j\}$, $j=0, 1, 2$, such that any $x \in N$ has a
neighborhood $U\subset M$ with local coordinates $\{ x^i, e^\mu,
p^I\}$ satisfying the property that $\{ x^1,\ldots, x^{r_0},
e^1,\ldots, e^{r_1}, p^1,\ldots, p^{r_2}\}$ generates the sheaf of
ideals $\cI|_U$. Note that $N$ is not uniquely specified by $\cI$, since different subsets of $M$ may have the same vanishing ideal. But the local condition implies that any $N \subseteq M$ satisfying \eqref{eq:vanI} is an embedded submanifold.

\begin{prop}\label{prop:1-1}
For a submanifold $\cN$ of $\cM$, its sheaf of vanishing ideals is regular.
Moreover, given a submanifold $N\subseteq M$, this correspondence establishes a  bijection between
submanifolds of $\cM$ with body $N$ and
regular sheaves of ideals $\cI\subseteq C_{\cM}$ 
 such that
$\cI_0 =   I_N$.
\end{prop}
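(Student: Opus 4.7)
The plan is to prove the two directions of the correspondence separately, using the local description \eqref{eq:localtriv} of $C_\cM$ together with the notion of adapted coordinates. For the forward direction, given a submanifold $(\iota,\iota^\sharp)\colon\cN\to\cM$, set $\cI=\ker(\iota^\sharp)$; then $\cI_0=I_N$ follows from the discussion of degree-zero parts. To verify regularity, around any $x\in N$ pick coordinates $\{x^i,e^\mu,p^I\}$ on $U\subseteq M$ adapted to $\cN$, so that $\{x^1,\ldots,x^{r_0},e^1,\ldots,e^{r_1},p^1,\ldots,p^{r_2}\}\subseteq\cI|_U$. Any local section $f\in C_\cM(V)$, $V\subseteq U$, is uniquely a polynomial in the $e^\mu$ and $p^I$ with smooth coefficients in the $x^i$'s. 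Monomials involving at least one of the ``first'' $e^\mu$ or $p^I$ lie in the ideal generated by the generators above; the remaining terms involve only the ``surviving'' $e^\mu$ ($\mu>r_1$) and $p^I$ ($I>r_2$), whose images under $\iota^\sharp$ are algebraically independent local coordinates of $\cN$ on $V\cap N$. If $f\in\cI(V)$, each coefficient of such a ``surviving'' monomial must vanish on $N$, hence by the classical Hadamard lemma it belongs to the ideal generated by $x^1,\ldots,x^{r_0}$. This yields the required local generation of $\cI|_U$.

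For the backward direction, suppose $\cI\subseteq C_\cM$ is a regular ideal sheaf with $\cI_0=I_N$. Define a sheaf of graded algebras on $N$ by
\[
C_\cN \defequal \iota^{-1}(C_\cM/\cI),
\]
and let $\iota^\sharp\colon C_\cM\to \iota_*C_\cN$ be the composition of the projection $C_\cM\twoheadrightarrow C_\cM/\cI$ with the adjunction unit $C_\cM/\cI\to \iota_*\iota^{-1}(C_\cM/\cI)$. In local coordinates $\{x^i,e^\mu,p^I\}$ on $U$ in which $\cI|_U$ is generated by the ``first'' coordinates, the quotient $C_\cM|_U/\cI|_U$ is freely generated as a $(C^\infty_U/I_N|_U)$-algebra by the remaining coordinates; restricting to $U\cap N$ yields the local model
\[
C_\cN|_{U\cap N}\iso C^\infty_{U\cap N}\otimes \wedge\reals^{m_1-r_1}\otimes \bigS\reals^{m_2-r_2},
\]
so $\cN=(N,C_\cN)$ is a degree $2$ $\N$-manifold and $(\iota,\iota^\sharp)\colon\cN\to\cM$ is a submanifold whose vanishing ideal is $\cI$ by construction.

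To conclude the bijection, one verifies that the two constructions are mutually inverse up to the equivalence of submanifolds defined in \S\ref{subsec:Nsub}. Starting from $\cI$ and building $\cN$ as above, $\ker(\iota^\sharp)=\cI$ is immediate. Starting from a submanifold $\cN$ with vanishing ideal $\cI$, the induced morphism $C_\cM/\cI\to \iota_*C_\cN$ is injective by definition and surjective thanks to the local form \eqref{eq:sub1}--\eqref{eq:sub3}; applying $\iota^{-1}$ gives the required isomorphism $\iota^{-1}(C_\cM/\cI)\iso C_\cN$ realizing the equivalence. The main obstacle is the local generation step in the forward direction: one has to carefully separate the polynomial expansion in the ``first'' versus the ``surviving'' positive-degree generators, and only then reduce the vanishing of the remaining smooth coefficients on $N$ to the classical Hadamard lemma in the base coordinates.
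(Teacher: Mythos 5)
Your proof is correct, but it follows a genuinely different route from the paper's. The paper deduces Proposition~\ref{prop:1-1} from the geometric dictionary: Lemma~\ref{lem:geomideal} identifies regular sheaves of ideals with $\cI_0=I_N$ with pairs of subbundles $K_1\subseteq E_1^*|_N$, $\tilde{K}\subseteq \tilde{E}^*|_N$ satisfying $\tilde{K}\cap\wedge^2E_1^*|_N=K_1\wedge E_1^*|_N$, and Proposition~\ref{prop:subobjects} identifies submanifolds with the annihilators of such pairs, so the bijection falls out by matching the two geometric descriptions. You instead argue entirely at the level of the structure sheaf: for the forward direction you expand a section of $\cI$ in the monomial basis of adapted coordinates, observe that only the coefficients of ``surviving'' monomials obstruct membership in the generated ideal, and dispose of them with Hadamard's lemma; for the backward direction you reconstruct the submanifold directly as $\iota^{-1}(C_\cM/\cI)$ and check the local model, then verify the two constructions are mutually inverse up to equivalence of submanifolds. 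Your argument is more self-contained and elementary (it never invokes the $\VB2$ equivalence), at the cost of redoing by hand the coordinate computation that the paper packages once into Lemma~\ref{lem:geomideal} and then reuses elsewhere (e.g.\ in Lemma~\ref{lem:Ktil} and Theorem~\ref{thm:submanifold}); the paper's route buys a reusable geometric description of regular ideals, yours buys a proof readable without the categorical machinery. The only points worth spelling out slightly more in your write-up are (i) that Hadamard's lemma is applied on a neighborhood that is, say, convex in the adapted base coordinates (harmless, since regularity is a local condition), and (ii) in the backward direction, the identification of the locally generated ideal with the set of polynomials whose ``surviving-only'' coefficients lie in $I_N$, which is what makes the quotient $C_\cM|_U/\cI|_U$ free over $C^\infty_U/I_N|_U$ on the remaining coordinates; both are routine and implicit in what you wrote.
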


This result will follow from the geometric description of regular sheaves of ideals in the next lemma. 

Suppose that $\cM$ corresponds to the object $(E_1, \widetilde{E}, \phi_E)$ in $\VB2$.

\begin{lemma}\label{lem:geomideal}
For a given submanifold $N\subseteq M$,
regular sheaves of ideals  $\cI\subseteq C_{\cM}$ with $\cI_0=I_N$ are equivalent to the following geometric data: vector subbundles $K_1\subseteq E_1^*$ and
$\tilde{K}\subseteq \tilde{E}^*$ over $N$ satisfying $\tilde{K}\cap \wedge^2E_1^*|_N = K_1\wedge E_1^*|_N$.
\end{lemma}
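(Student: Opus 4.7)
My plan is to prove both directions separately, using the local coordinate description of regularity; the forward direction amounts to reading off the subbundles from the local generators, while the backward direction requires constructing adapted local coordinates from the given subbundles.

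For the forward direction, given a regular sheaf of ideals $\cI$ with $\cI_0 = I_N$, I would define for each $x \in N$
$$ K_1|_x := \{\sigma(x) \in E_1^*|_x \colon \sigma \in \cI_1\}, \qquad \tilde{K}|_x := \{\tau(x) \in \tilde{E}^*|_x \colon \tau \in \cI_2\}. $$
In adapted local coordinates $\{x^i, e^\mu, p^I\}$, a degree-counting argument shows that $\cI_1 = I_N \cdot (C_\cM)_1 + \sum_{\mu \leq r_1} C^\infty_M \cdot e^\mu$, and analogously $\cI_2$ is generated as a $C^\infty_M$-module by $I_N \cdot (C_\cM)_2$, by $\{e^\mu \cdot t : \mu \leq r_1,\, t \in (C_\cM)_1\}$, and by $\{p^I : I \leq r_2\}$. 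Restricting to $N$ kills the first $r_0$ coordinates, so $K_1$ is locally spanned by $\{e^\mu|_N\}_{\mu \leq r_1}$ and $\tilde{K}$ is locally spanned by $\{e^\mu e^\nu|_N : \mu \leq r_1\} \cup \{p^I|_N\}_{I \leq r_2}$; in particular, these are vector subbundles. The intersection $\tilde{K} \cap \wedge^2 E_1^*|_N$ consists of the elements of $\tilde{K}$ whose $p^I$-components vanish, which is exactly $K_1 \wedge E_1^*|_N$.

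For the backward direction, given subbundles $K_1 \subseteq E_1^*|_N$ and $\tilde{K} \subseteq \tilde{E}^*|_N$ satisfying the compatibility, I would define $\cI$ as the sheaf of ideals in $C_\cM$ generated by $I_N$, $\Gamma_{\st{E_1^*, K_1}}$, and $\Gamma_{\st{\tilde{E}^*, \tilde{K}}}$. The compatibility condition then lets me define a subbundle $K_2 \subseteq E_2^*|_N$ of some rank $r_2$ as the image of $\tilde{K}/(K_1 \wedge E_1^*|_N)$ under the quotient map $\tilde{E}^*|_N \twoheadrightarrow E_2^*|_N$ induced by \eqref{deg2exact}. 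Near a point $x_0 \in N$, I would then choose: a submanifold chart $\{x^i\}$ for $N \hookrightarrow M$ with $N$ locally cut out by $\{x^i = 0\}_{i \leq r_0}$; a local frame $\{e^\mu\}$ of $E_1^*$ whose first $r_1$ elements restrict to a frame of $K_1$; a local splitting of \eqref{deg2exact}; and a local frame $\{p^I\}$ of $E_2^*$ whose first $r_2$ elements restrict to a frame of $K_2$. These yield local coordinates of $\cM$ in which $\cI|_U$ is generated by $\{x^1, \ldots, x^{r_0}, e^1, \ldots, e^{r_1}, p^1, \ldots, p^{r_2}\}$, proving regularity.

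That the two constructions are mutually inverse is immediate from their local descriptions. The main technical point is the backward direction: the compatibility condition $\tilde{K} \cap \wedge^2 E_1^*|_N = K_1 \wedge E_1^*|_N$ is exactly what ensures that $K_2 \hookrightarrow E_2^*|_N$ is a well-defined subbundle, so that the chosen $p^I$'s are independent modulo $\wedge^2 E_1^*$ from the products $e^\mu e^\nu$ (with $\mu \leq r_1$) arising from the degree 1 generators, and the local count of generators matches the prescribed codimension $(r_0|r_1|r_2)$.
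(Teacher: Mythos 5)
Your proposal is correct and follows essentially the same route as the paper: the forward direction reads off $K_1$ and $\tilde K$ from the local generators of a regular ideal exactly as in the paper's argument, and the converse constructs adapted frames from the given subbundles (the paper merely says ``reverse the arguments'' here, so your explicit construction of $K_2$ and the adapted coordinates is a welcome elaboration of the same idea). The one point worth making fully explicit is that the compatibility condition $\tilde K\cap\wedge^2E_1^*|_N=K_1\wedge E_1^*|_N$ guarantees $\Gamma_{\st{E_1^*,K_1}}\cdot(C_\cM)_1\subseteq\Gamma_{\st{\tilde E^*,\tilde K}}$, so that the degree-$2$ part of the ideal you generate is exactly $\Gamma_{\st{\tilde E^*,\tilde K}}$; this is implicit in your closing remark but deserves a sentence.
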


The explicit correspondence is determined by the conditions  
\begin{equation}\label{eq:I123}
\cI_0 = \fl_N, \quad \cI_1 = \Gamma_\st{E_1^*,K_1}, \; \mbox{ and } \;\; \cI_2 =
\Gamma_\st{\tilde{E}^*,\widetilde{K}},
\end{equation} 
where $\mathcal{I}_k = \mathcal{I}\cap (C_\cM)_k$, for $k=0, 1, 2$. Recall from the end of Section~\ref{subsec:notation}  that  $\Gamma_\st{E_1^*,K_1}$ denotes the sheaf of sections of the vector bundle $E_1^*$ which restrict to sections of $K_1$ over $N$, and similarly for $\Gamma_\st{\tilde{E}^*,\widetilde{K}}$. Note also that a regular sheaf of ideals 
$\cI$ coincides with the sheaf of ideals
generated by $\cI_0 + \cI_1 + \cI_2$ as a result of its local property.
 
\begin{proof}[Proof of Lemma~\ref{lem:geomideal}]
We first check that any  regular sheaf of ideals $\cI$ with $\cI_0=I_N$ is given as in \eqref{eq:I123}.  The local property of $\cI$
can be rephrased as follows: any $x \in
N$ has a neighborhood $U\subseteq M$ where $C_{\cM}$ has local
coordinates $$
\{ x^1,\ldots,x^{m_0},
e^1,\ldots,e^{m_1},p^1,\ldots,p^{m_2}\}
$$ such that the sheaves of
$C^\infty_M$-modules $\cI_0$, $\cI_1$ and $\cI_2$ are generated by
\begin{align}
&\{x^1,\ldots,x^{r_0}\},\label{eq:0-gen}\\
&\{e^1,\ldots,e^{r_1}, x^ie^\mu   \}, \;\; i=1,\ldots, r_0,\label{eq:1-gen}\\
&\{p^1,\ldots,p^{r_2}, e^\mu e^\gamma, x^ip^I, x^i e^\mu e^\nu\},
\;\; i=1,\ldots, r_0,\; \; \gamma=1,\ldots, r_1,\label{eq:2-gen}
\end{align}
respectively, where $\mu$, $\nu$, and $I$ range through all indices.

The pullback
sheaf $\iota^{*}(C(\cM)_1)$ with respect to the inclusion  $\iota \colon  N\hookrightarrow M$ is the sheaf of sections
$\Gamma_{E_1^*|_{N}}$, and the local description \eqref{eq:1-gen} of $\cI_1$ around
points in $N$ shows that $\iota^{*}(\cI_1) = \Gamma_{K_1}$
for a vector subbundle $K_1\subseteq E_1^*|_{N}$.
Note that a section of $E_1$  over a small neighborhood $U\subseteq M$ of any $x\in N$ vanishes along $N\cap U$ if and only if it is a local section of $\cI_1$, so  $\cI_1 = \Gamma_\st{E_1^*,K_1}$. By   similar arguments,
there is a vector subbundle $\widetilde{K}\subseteq \widetilde{E}^*|_N$ such that
$\cI_2 = \Gamma_\st{\widetilde{E}^*,\widetilde{K}}$.
The description of $\cI_2$ around points in $N$ given by
the generators \eqref{eq:2-gen} shows that  $\tilde{K}\cap
\wedge^2 {E}^*|_N = K_1\wedge E_1^*|_N$. 
Conversely, we can reverse the arguments to show that, given $K_1$ and $\widetilde{K}$ as in the statement,  we obtain  a corresponding regular sheaf of ideals by means of the conditions \eqref{eq:I123}. 
\end{proof}

To conclude the proof of Proposition~\ref{prop:1-1}, note that if a submanifold $\cN$ of $\cM$ corresponds to a subobject $(F_1, \widetilde{F}, \phi_F)$, then its   sheaf of vanishing ideals $\cI$ corresponds, as in Lemma~\ref{lem:geomideal}, to the subbundles $K_1=\mathrm{Ann}(F_1)$ and  $\tilde{K}=\mathrm{Ann}(\tilde{F})$. Hence
Proposition~\ref{prop:1-1} follows directly from the equivalences in Prop. \ref{prop:subobjects}.

\subsubsection{\underline{Another geometric characterization of submanifolds}}\

We will describe yet another geometric characterization of submanifolds of $\N$-manifolds of degree 2; compared to the data in Lemma~\ref{lem:geomideal}, we will express the subbundle $\tilde{K}\subseteq \tilde{E}^*$
in terms of a subbundle of the vector bundle $E_2^*=\tilde{E}^*/\wedge^2 E_1^*$ (see  \eqref{deg2exact}), whose sheaf of sections can be interpreted as functions of ``pure'' degree $2$. This alternative description will be convenient for the geometric characterization of coisotropic submanifolds in $\S$ \ref{subsubsec:coisogeom} (see Theorem~\ref{thm:coisotropic}).

\begin{lemma}\label{lem:Ktil}
Let  $K_1\subseteq E_1^*$ and
$\tilde{K}\subseteq \tilde{E}^*$ be vector subbundles over a submanifold $N\subseteq M$. Consider
the following diagram of natural projections:
\begin{equation}\label{eq:diag}
\xymatrix{ \tilde{E}^*|_{N}  \ar[rr]^{\pi} \ar[dr]_{\pi''} & &
E_2^*|_{N}\\
&\frac{\tilde{E}^*|_{N}}{K_1\wedge E_1^*|_{N}} \ar[ur]_{\pi'}. & }
\end{equation}
The following are equivalent:
\begin{itemize}

\item[$(a)$] $\tilde{K}\cap \wedge^2E_1^*|_N = K_1\wedge E_1^*|_N$.

\item[$(b)$] There is a vector subbundle $K_2\subseteq
E_2^*|_N$ and a vector-bundle map $\phi \colon K_2 \to
\frac{\tilde{E}^*|_{N}}{K_1\wedge E_1^*|_{N}}$ satisfying $\pi'\circ
\phi = Id$ such that $\widetilde{K}=(\pi'')^{-1}(\phi(K_2))$.

\end{itemize}
\end{lemma}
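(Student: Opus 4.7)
The plan is to prove the equivalence by constructing $K_2$ and $\phi$ from $\tilde{K}$ in one direction, and conversely recovering condition (a) from the pullback description of $\tilde{K}$. The key observation is that, since $\ker(\pi) = \wedge^2 E_1^*|_N$ and $\ker(\pi'') = K_1\wedge E_1^*|_N$, both conditions control the interaction of $\tilde{K}$ with the filtration $K_1\wedge E_1^*|_N \subseteq \wedge^2 E_1^*|_N \subseteq \tilde{E}^*|_N$.

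First I would prove $(a) \Rightarrow (b)$ by setting $K_2 := \pi(\tilde{K}) \subseteq E_2^*|_N$. The key subtlety is that $K_2$ must be a genuine subbundle, not merely a fiberwise collection of subspaces; this follows from noticing that $\ker(\pi|_{\tilde{K}}) = \tilde{K} \cap \wedge^2 E_1^*|_N$ equals $K_1\wedge E_1^*|_N$ by (a), which is a subbundle, and hence $\pi|_{\tilde{K}}$ has constant rank. I would then define $\phi \colon K_2 \to \tilde{E}^*|_N / (K_1\wedge E_1^*|_N)$ by $\phi(\xi) := \pi''(\tilde{\xi})$ for any $\tilde{\xi} \in \tilde{K}$ with $\pi(\tilde{\xi})=\xi$; well-definedness is exactly the content of (a), since any two lifts differ by an element of $\tilde{K}\cap \wedge^2 E_1^*|_N = K_1\wedge E_1^*|_N = \ker(\pi'')$. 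The relation $\pi' \circ \phi = \mathrm{Id}$ follows from $\pi' \circ \pi'' = \pi$, and the inclusion $\tilde{K} \subseteq (\pi'')^{-1}(\phi(K_2))$ is immediate from the definition of $\phi$; for the reverse inclusion one uses that (a) forces $K_1\wedge E_1^*|_N \subseteq \tilde{K}$, so adding any element of $\ker(\pi'')$ to a representative in $\tilde{K}$ stays in $\tilde{K}$.

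For $(b) \Rightarrow (a)$, given $\tilde{K} = (\pi'')^{-1}(\phi(K_2))$, I would compute $\tilde{K}\cap \wedge^2 E_1^*|_N$ directly. An element $\tilde{\xi} \in \wedge^2 E_1^*|_N$ satisfies $\pi(\tilde{\xi})=0$, hence $\pi'(\pi''(\tilde{\xi}))=0$; if moreover $\tilde{\xi}\in \tilde{K}$, then $\pi''(\tilde{\xi}) = \phi(\eta)$ for some $\eta\in K_2$, and $0=\pi'(\phi(\eta))=\eta$ forces $\pi''(\tilde{\xi})=0$, i.e.\ $\tilde{\xi}\in K_1\wedge E_1^*|_N$. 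The reverse inclusion $K_1\wedge E_1^*|_N \subseteq \tilde{K} \cap \wedge^2 E_1^*|_N$ is obvious since $K_1\wedge E_1^*|_N = \ker(\pi'') \subseteq (\pi'')^{-1}(\phi(K_2)) = \tilde{K}$.

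The main obstacle is the subbundle property in $(a)\Rightarrow(b)$: one must check that $K_2 = \pi(\tilde{K})$ has locally constant rank so that $\phi$ is a bona fide vector-bundle map. This is resolved by the constant-rank lemma applied to $\pi|_{\tilde{K}}\colon \tilde{K}\to E_2^*|_N$, whose kernel $K_1\wedge E_1^*|_N$ is a subbundle of $N$ thanks to (a). Everything else is a diagram chase using only the definitions of $\pi$, $\pi'$, $\pi''$ and exactness of \eqref{deg2exact}.
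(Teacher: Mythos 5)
Your proof is correct and follows essentially the same route as the paper's: both take $K_2=\pi(\tilde{K})$, build $\phi$ as the inverse of $\pi'$ restricted to $\pi''(\tilde{K})$ (equivalently, by lifting along $\pi$), and identify condition $(a)$ with the injectivity of that restriction together with $\ker(\pi'')\subseteq\tilde{K}$. Your treatment of the constant-rank issue for $\pi(\tilde{K})$ is a welcome explicit detail that the paper only states in passing.
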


\begin{proof}
Note that $\tilde{K}$ is of the form $(\pi'')^{-1}(\phi(K_2))$ as in
$(b)$ if and only if \begin{itemize}
    \item $\pi(\tilde{K})$ has constant rank (so we can
set $K_2=\pi(\tilde{K})$), 
\item $\ker(\pi'')=K_1\wedge E_1^*|_N \subseteq
\tilde{K}$, and 
\item $\pi'|_{\pi''(\tilde{K})}$ is injective (so that it
is an isomorphism onto $K_2$, and $\phi$ is its inverse).
\end{itemize}
The injectivity of $\pi'|_{\pi''(\tilde{K})}$ amounts to the condition
$$
\pi'(\pi''(k))=\pi(k)=0 \implies k\in \ker(\pi'')= K_1\wedge
E_1^*|_N,
$$
for all $k\in \tilde{K}$; i.e., $\ker(\pi)\cap
\tilde{K}=\wedge^2E_1^*|_N\cap \tilde{K} = K_1\wedge E_1^*|_N$. It
directly follows that $(a)$ and $(b)$ are equivalent.
\end{proof}

From Lemma~\ref{lem:Ktil} and the results in the previous subsection (see 
Prop.~\ref{prop:1-1} and Lemma~\ref{lem:geomideal}), we obtain a geometric characterization of submanifolds that will be useful in Section \ref{subsec:coisosubm}.

\begin{thm} \label{thm:submanifold}
Submanifolds of $\cM$ of codimension $r_0|r_1|r_2$ are equivalent to quadruples $(N,K_1,K_2,\phi)$, where 
\begin{itemize}
\item $N\subseteq M$ is
a submanifold of codimension $r_0$, 
\item $K_1\subseteq E_1^*|_{N}$
and $K_2\subseteq E_2^*|_{N}=\frac{\tilde{E}^*|_N}{\wedge^2 E_1^*|_N
}$
are vector subbundles of ranks $r_1$ and $r_2$, respectively, and
\item $\phi \colon {K_2 \to \cfrac{\tilde{E}^*|_{N}}{K_1\wedge E_1^*|_{N}}}$
is a vector bundle map
such that $\pi'\circ \phi = Id$, for $\pi'$
defined in \eqref{eq:diag}.
\end{itemize}

\end{thm}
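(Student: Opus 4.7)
The plan is to chain together the three bijections established earlier in this subsection. First, by Proposition~\ref{prop:1-1}, submanifolds of $\cM$ with a fixed body $N \subseteq M$ correspond bijectively to regular sheaves of homogeneous ideals $\cI \subseteq C_\cM$ with $\cI_0 = I_N$. Next, Lemma~\ref{lem:geomideal} identifies such sheaves of ideals with pairs of subbundles $(K_1, \widetilde{K})$, where $K_1 \subseteq E_1^*|_N$ and $\widetilde{K} \subseteq \widetilde{E}^*|_N$ satisfy $\widetilde{K} \cap \wedge^2 E_1^*|_N = K_1 \wedge E_1^*|_N$. Finally, Lemma~\ref{lem:Ktil} rewrites this intersection condition as the existence of a subbundle $K_2 \subseteq E_2^*|_N$ together with a splitting $\phi \colon K_2 \to \frac{\widetilde{E}^*|_N}{K_1 \wedge E_1^*|_N}$ of $\pi'$, such that $\widetilde{K} = (\pi'')^{-1}(\phi(K_2))$. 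Concatenating the three bijections, submanifolds of $\cM$ with body $N$ correspond exactly to quadruples $(N, K_1, K_2, \phi)$ of the form described in the statement.

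To finish, I would verify that the codimension indexing $r_0|r_1|r_2$ translates correctly into the rank data. The codimension of $N$ in $M$ is $r_0$ by hypothesis. Working in the adapted local coordinates supplied by the proof of Lemma~\ref{lem:geomideal}, the generators \eqref{eq:1-gen} show that $\cI_1$ picks out precisely $r_1 = m_1 - n_1$ degree-1 directions, so $\rank(K_1) = r_1$; from the generators \eqref{eq:2-gen}, the quotient $\widetilde{K}/(K_1 \wedge E_1^*|_N)$ contributes $r_2 = m_2 - n_2$ additional independent generators. Since $\phi$ is the inverse of the isomorphism $\pi'|_{\pi''(\widetilde{K})}$ exhibited in the proof of Lemma~\ref{lem:Ktil}, we have $K_2 = \pi(\widetilde{K})$ with $\rank(K_2) = r_2$, matching the codimension on the $\cN$ side.

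The main conceptual point — and the reason Lemma~\ref{lem:Ktil} is stated in the form above — is that the pair $(K_1, K_2)$ alone does \emph{not} determine a submanifold: the projection $\pi \colon \widetilde{E}^*|_N \to E_2^*|_N$ admits no canonical splitting, so one must record the datum $\phi$ specifying how $K_2$ is lifted into $\widetilde{E}^*|_N$ modulo $K_1 \wedge E_1^*|_N$. Once this is recognized, no further computation is required: the statement amounts to a clean repackaging obtained by composing the three equivalences, and the only step demanding any care is the rank bookkeeping, which is routine in the local-coordinate model.
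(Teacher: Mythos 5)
Your proposal is correct and follows essentially the same route as the paper: the theorem is obtained exactly by concatenating Proposition~\ref{prop:1-1}, Lemma~\ref{lem:geomideal}, and Lemma~\ref{lem:Ktil}, which is how the text derives it. Your added rank bookkeeping and the remark on why the splitting datum $\phi$ cannot be dispensed with are accurate and consistent with the explicit correspondence the paper records after the theorem statement.
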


We recall how to make the correspondence explicit. For a given quadruple $(N, K_1, K_2, \phi)$, we set 
$$
\tilde{K}= (\pi'')^{-1}(\phi(K_2)),
$$ 
as in Lemma~\ref{lem:Ktil}. Let 
$$
F_1=\mathrm{Ann}(K_1)\subseteq E_1|_N,\;\; \widetilde{F}=\mathrm{Ann}(\widetilde{K})\subseteq \tilde{E}|_N,\;\;\;
\phi_F = \phi_E|_{\widetilde{F}}\colon \widetilde{F}\to \wedge^2 F_1.
$$
Then the submanifold $\cN \hookrightarrow \cM$ corresponding to $(N, K_1, K_2, \phi)$ is the one defined by $(F_1, \widetilde{F}, \phi_F)$ via
$(C_\cN)_1 = \Gamma_{F^*_1}$ and $(C_\cN)_2 =
\Gamma_{\tilde{F}^*}$. 
From another perspective,  the regular sheaf of vanishing ideals $\cI$ representing the submanifold corresponding to
$(N, K_1, K_2, \phi)$ is determined as in \eqref{eq:I123}.

\subsection{Tangent bundle and differential calculus}\label{subsec:tangent}

In this section we collect some basic results on the differential
calculus on $\N$-manifolds of degree 2. Analogous results for
supermanifolds can be found e.g. in
\cite{BruzzoCianci,fiori,varadarajan}.

We will make use of Einstein's convention for sums whenever there is no risk of confusion.

We will need the notion of derivation of a graded algebra (relative to a morphism). Given graded algebras $A$, $B$, and a morphism $\phi: A\to B$, a {\em degree $q$ derivation on $A$ relative to $\phi$} is a linear map $X: A\to B$ of degree $q$ satisfying 
$$
X(a_1a_2) = X(a_1) \phi(a_2) + (-1)^{q|a_1|}\phi(a_1) X(a_2).
$$
The  set of such derivations is an $A$-module denoted by $\mathrm{Der}_\phi(A,B)$. When $\phi$ is the identity map $A\to A$, we use the simplified notation $\mathrm{Der}(A)$. We have analogous definitions when $\mathcal{A}$ and $\mathcal{B}$ are sheaves of graded algebras over $M$ and $\phi: \mathcal{A}\to \mathcal{B}$ is a morphism of sheaves. In this case $\mathrm{Der}_\phi(\mathcal{A},\mathcal{B})$ is a sheaf of $\mathcal{A}$-modules over $M$.

\subsubsection{\underline{Vector fields and tangent vectors}}\

Let $\cM$ be an $\N$-manifold of degree 2. A \textit{degree $q$ vector
field} $X$ on $\cM$ is a degree $q$ derivation of the sheaf
$C_{\cM}$; this means, as recalled above, that for each open subset $U\subseteq M$, $X$ defines a
linear map $C_{\cM}(U)\to C_{\cM}(U)$ such that for any $f,g \in
C_{\cM}(U)$, where $f$ is homogeneous, $|X(f)| = |f| + q$ and
$$
X(fg) = X(f)g + (-1)^{q|f|}f X(g).
$$
The sheaf of derivations $\mathrm{Der}(C_\cM)$ is called the {\em tangent sheaf} of $\cM$
and denoted
by $\mathcal{T}\cM$. We will also use the notation $\mathfrak{X}(\cM)= \mathcal{T}\cM(M)$.

Given two homogeneous vector fields $X$ and
$Y$, of degrees $|X|$ and $|Y|$, their Lie bracket is defined as the graded commutator
$$
[X,Y]
\defequal XY - (-1)^{|X||Y|}YX,
$$
so, besides being a sheaf of $C_\cM$-modules, $\mathcal{T}\cM$ is also a sheaf of graded Lie algebras.

In local coordinates $\{x^i,e^\mu, p^I\}$ on $\cM$, one has the
associated vector fields
\begin{equation}\label{eq:der}
\frac{\partial}{\partial x^i}, \;\; \frac{\partial}{\partial e^\mu},
\;\; \frac{\partial}{\partial p^I},
\end{equation}
 of degrees $0$, $-1$ and $-2$,
respectively; with respect to these coordinates, any degree $q$
vector field $X$ can be written in the form
\begin{equation*}
    X = a^i \pdiff{}{x^i} + b^\mu \pdiff{}{e^\mu} + c^I \pdiff{}{p^I},
\end{equation*}
where $|a^i| = q$, $|b^\mu| = q+1$, and $|c^I| = q+2$, showing that
the sheaf of modules $\mathcal{T}\cM$ is locally free. Note that the space of vector fields of degree $q$ is trivial for $q<-2$.

As discussed e.g. in \cite[Prop.~2.3.23]{rajthesis} and \cite[Ex.~2.13]{cuecaschnitzer}, one can regard the tangent bundle of $\cM$ as a vector bundle in the realm of $\N$-manifolds and define vector fields via its sections; see  Example~\ref{ex:tangent}.

\begin{ex}\label{ex:VFindeg1}
For an $\mathbb{N}$-manifold of degree 1  $\cM= E[1]$ (see Example~\ref{ex:1Man}), so that $C_\cM= \Gamma_{\wedge^\bullet E^*}$, degree 0 vector fields  agree with {\em derivations} of $E\to M$ (or, equivalently, $E^*\to M$, by duality), i.e., pairs $(X,D)$ where $X$ is a (ordinary) vector field on $M$ and $D\colon \Gamma(E)\to \Gamma(E)$ is $\mathbb{R}$-linear and satisfies
$D(fe)= (\pounds_Xf)e + f D(e)$, for $f\in C^ \infty(M)$.
Degree -1 vector fields are $C^ \infty(M)$-linear maps $\Gamma(E^*)\to C^ \infty(M)$, which can be seen as sections of $E$. See \cite[Lemma 1.6]{ZZL} for more details.
\hfill $\diamond$
\end{ex}

\begin{remark}\label{rem:geometricVF}
For an $\mathbb{N}$-manifold of degree 2, the  geometric description of vector fields in degrees $0$, $-1$ and $-2$ in terms of the corresponding object $(E_1, \tilde{E}, \phi_E)$ in $\VB2$ can be found in  \cite[\S 4.3]{HenMiqRaj} (see also \cite[$\S$ 3.5]{delCarpioThesis}).
\hfill $\diamond$
\end{remark}

\begin{remark}\label{rem:Qstructures}
A degree 1 vector field $Q$ on $\cM$ satisfying $[Q,Q]=2Q^2=0$ is called a {\em $Q$-structure}, or {\em homological vector field}. For an $\mathbb{N}$-manifold of degree 1 $\cM= E[1]$ (see Example~\ref{ex:1Man}), 
a $Q$-structure is equivalent to a Lie algebroid structure on $E\to M$ \cite{vaintrob} ($Q$ is just the Lie algebroid differential on $\Gamma(\wedge^\bullet E^*)$). For $\mathbb{N}$-manifolds of degree 2, $Q$-structures (also known as {\em Lie 2-algebroids}) have been described in \cite{delCarpioThesis, JotzMatchedPairs, Li-BlandThesis}. (See \cite{Behrend:2020tn} for more on differential graded manifolds.)
\hfill $\diamond$
\end{remark}

Let $C_\cM|_x$ be the stalk of $C_\cM$ at a point $x\in M$. A
\textit{tangent vector} to $\cM$ at a  point $x\in M$ is
 a derivation of $C_\cM|_x$ relative to the morphism $C_\cM|_x\to \mathbb{R}$, $f|_x \mapsto f_0(x)$ that evaluates the degree zero component of $f|_x$ at $x$. 
In other words, a homogeneous tangent vector at $x$ is a linear map $X_x  \colon C_{\cM}|_x \to \mathbb{R}$, of degree $|X_x|$, satisfying
\begin{equation}\label{eq:derprop}
X_x (f g|_x) = X_x(f|_x) g_0(x) + (-1)^{|X_x||f|}f_0(x)X_x(g|_x).
\end{equation}

Any $X\in \mathcal{T}\cM(U)$ defines a tangent vector $X_x$ at each
$x\in U$ by
\begin{equation}\label{eq:tangent}
X_x(f|_x) = (X(f))_0(x).
\end{equation}
Whenever there is no risk of confusion, we may simplify the notation
and write $f$ instead of $f|_x$. 

In terms of local coordinates $\{x^i,e^\mu, p^I\}$ around $x\in M$,
the derivations \eqref{eq:der} define tangent vectors at $x$,
denoted by
\begin{equation}\label{eq:derx}
\left( \frac{\partial}{\partial x^i} \right)_x, \;\;
\left(\frac{\partial}{\partial e^\mu} \right)_x, \;\;
\left(\frac{\partial}{\partial p^I}\right)_x,
\end{equation}
of degrees $0$, $-1$, and $-2$, respectively. The space of tangent
vectors at $x\in M$ is a graded vector space over $\R$, denoted by
$\mathcal{T}_x\cM$, having \eqref{eq:derx} as a basis. In
particular, note that if $X$ is a homogeneous local section of
$\mathcal{T}\cM$ and $X_x\neq 0$ at some $x$, then $|X|\in
\{-2,-1,0\}$.

\begin{prop}\label{prop:li}
Let $U\subseteq M$ be open and consider
$$
X_1,\ldots,X_{k_0},Y_1,\ldots,Y_{k_1},Z_1,\ldots,Z_{k_2} \in
\mathcal{T}\cM(U),
$$
where $|X_j|=0, |Y_\nu|=-1,|Z_J|=-2$. If $(X_1)_x,\ldots,(Z_{k_2})_x
\in \mathcal{T}_x\cM$ are linearly independent for all $x\in U$,
then $X_1,\ldots,Z_{k_2}$ are linearly independent over $C_\cM(U)$.
\end{prop}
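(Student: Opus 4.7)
The plan is to work in a local coordinate chart and feed the putative relation to the coordinate functions one degree at a time. First, shrink $U$ so that we have coordinates $\{x^i,e^\mu,p^I\}$ and the associated frame \eqref{eq:der} of $\mathcal{T}\cM|_U$. Since $C_\cM$ has no sections in negative degree, the expansion of each vector field in this frame is forced into the ``lower triangular'' shape
$$X_j = a^i_j \pdiff{}{x^i} + b^\mu_j \pdiff{}{e^\mu} + c^I_j \pdiff{}{p^I}, \qquad
Y_\nu = \hat b^\mu_\nu \pdiff{}{e^\mu} + \hat c^I_\nu \pdiff{}{p^I}, \qquad
Z_J = \tilde c^I_J \pdiff{}{p^I},$$
with $a^i_j,\hat b^\mu_\nu,\tilde c^I_J \in C^\infty(U)$ and the remaining coefficients of strictly positive degree. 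Because the tangent vector in \eqref{eq:tangent} extracts only the degree-$0$ part of each coefficient, one has $(X_j)_x = a^i_j(x)(\pdiff{}{x^i})_x$, and similarly for $Y_\nu, Z_J$. The pointwise linear independence hypothesis therefore unpacks as the three full column rank conditions $\rank(a^i_j(x))=k_0$, $\rank(\hat b^\mu_\nu(x))=k_1$, $\rank(\tilde c^I_J(x))=k_2$ for every $x\in U$.

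Now suppose $\sum_j f_j X_j + \sum_\nu g_\nu Y_\nu + \sum_J h_J Z_J = 0$, and apply this derivation to $x^i$. Since $Y_\nu$ and $Z_J$ have negative degree they annihilate $x^i$, leaving $\sum_j f_j\, a^i_j = 0$ in $C_\cM(U)$ for every $i$. Decomposing $f_j = \sum_q f_j^{(q)}$ by degree and expanding each $f_j^{(q)}$ in a local $C^\infty(U)$-basis of $(C_\cM)_q$ turns this into a family of ordinary scalar relations $\sum_j \beta_j\, a^i_j = 0$; the rank condition on $(a^i_j)$ forces every $\beta_j$ to vanish, hence $f_j=0$. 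Applying the remaining relation to $e^\mu$ loses the $Z_J$ contribution (again by degree), giving $\sum_\nu g_\nu \hat b^\mu_\nu = 0$ and so $g_\nu=0$ by the same procedure. Applying it finally to $p^I$ yields $\sum_J h_J \tilde c^I_J = 0$, whence $h_J=0$.

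The one delicate point is bridging the graded pointwise hypothesis with the $C_\cM(U)$-linear conclusion, since the candidate coefficients $f_j,g_\nu,h_J$ can be of mixed degree. What saves the day is that testing the relation on a coordinate of degree $q\in\{0,1,2\}$ discards exactly those vector field terms whose degree is strictly less than $-q$, thus isolating the relevant diagonal block of the coefficient matrix; from there a standard rank argument for ordinary smooth functions, applied degree-by-degree to the remaining coefficient, concludes.
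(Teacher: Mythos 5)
Your proof is correct and follows essentially the same route as the paper's: pass to local coordinates, use the degree constraints to put the vector fields in block-triangular form, translate the pointwise hypothesis into full-rank conditions on the three degree-$0$ coefficient blocks, and then test the relation successively on $x^i$, $e^\mu$, $p^I$, reducing each resulting identity to scalar relations by expanding the unknown coefficients in the monomial basis of $C_\cM(U)$ over $C^\infty(U)$. The only cosmetic difference is that the paper phrases the degree-by-degree reduction as ``it suffices to assume the coefficient is a monomial,'' which is the same expansion you perform.
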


Note that the converse does not hold: the vector field $X=
p\frac{\partial}{\partial p}$ of degree $0$ is linearly independent
over $C_\cM$, but $X_x=0$.

\begin{proof}
To show that $X_1,\ldots,Z_{k_2} \in \mathcal{T}\cM(U)$ are linearly
independent, it suffices to consider $U$ admitting coordinates
$\{x^i,e^\mu,p^I\}$. Since each $(X_j)_x$ (resp. $(Y_\nu)_x$, resp.
$(Z_J)_x$) is a linear combination of $(\partial/\partial x^i)_x$
(resp. $(\partial/\partial e^\mu)_x$, resp. $(\partial/\partial
p^J)_x$), we see that the linear independence of
$(X_1)_x,\ldots,(Z_{k_2})_x$ is equivalent to the conditions
\begin{align}
& f^j (X_j)_x(x^i)=0 \qquad  \forall \;\; i=1,\ldots,m_0,\;  \implies f^j=0,  \label{eq:lix1}\\
& g^\nu (Y_\nu)_x(e^\mu)=0 \qquad  \forall\; \mu=1,\ldots,m_1,\;  \implies g^\nu=0,\label{eq:lix2}\\
& h^J (Z_J)_x(p^I)=0 \qquad  \forall \; I=1,\ldots,m_2,\; \implies
h^J=0,\label{eq:lix3}
\end{align}
where $f^j, g^\nu, h^J \in \mathbb{R}$.

Consider the condition
\begin{equation}\label{eq:li}
a^j X_j + b^\nu Y_\nu + c^J Z_J = 0,
\end{equation}
where $a^j, b^\nu, c^J \in C_\cM(U)$. By 
applying the left-hand
side of this equation on $x^i$, and since there are no functions of negative degrees, we see that
\begin{equation}\label{eq:li1}
a^j (X_j(x^i)) = 0,
\;\;\;
i=1,\ldots,m_0.
\end{equation}
Since $a^j$ is a polynomial in $e^\mu$ and $p^I$ with coefficients
in smooth functions of $x^i$, it suffices to assume that $a^j$ is a monomial, i.e.,  of
the form $f^j(x)e^{\mu_1}\ldots e^{\mu_{r}}p^{I_1}\ldots p^{I_{s}}$,
for fixed $\mu_1\leq\ldots\leq \mu_r$ and $I_1\leq\ldots\leq I_s$.
It follows from \eqref{eq:li1} that
$$
(f^j X_j(x^i)) (x) = f^j(x)(X_j)_x(x^i)=0, \;\;\; \forall x\in U,
$$
and \eqref{eq:lix1} implies that $f^j = 0$, so $a^j=0$. Similarly,
applying \eqref{eq:li} on $e^\mu$ and using \eqref{eq:lix2}, one
concludes that $b^\nu=0$. Finally, applying \eqref{eq:li} on
$p^I$ and using \eqref{eq:lix3} one verifies that $c^J=0$.
\end{proof}

We now give an alternative characterization of $\mathcal{T}_x\cM$
for later use. For a fixed $x\in M$, let us consider the ideal
\begin{equation}\label{eq:I(x)}
\cI_{(x)}:= \{f\in C_\cM|_x\,,\, f_0(x)=0\}\subseteq C_\cM|_x,
\end{equation}
and the graded vector space $\cI_{(x)}/\cI_{(x)}^2$.
The derivation
property \eqref{eq:derprop} for an element $X_x\in \mathcal{T}_x\cM$
implies that $X_x|_{\cI_{(x)}^2}=0$, so there is a natural
degree-preserving map
\begin{equation}\label{eq:natmap}
\mathcal{T}_x\cM \longrightarrow \left
({\cI_{(x)}}/{\cI_{(x)}^2}\right )^*.
\end{equation}

\begin{prop}
The map \eqref{eq:natmap} is an isomorphism of graded vector spaces.
\end{prop}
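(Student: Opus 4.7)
The plan is to prove both injectivity and surjectivity of the natural map \eqref{eq:natmap} by identifying $\cI_{(x)}/\cI_{(x)}^2$ explicitly in local coordinates.

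Fix coordinates $\{x^i,e^\mu,p^I\}$ on a neighborhood of $x$ with $x^i(x)=0$. The key computational step is a graded Hadamard lemma: any germ $f \in C_\cM|_x$ admits a decomposition
$$
f \;\equiv\; f_0(x) + a^i\, x^i + b^\mu\, e^\mu + c^I\, p^I \pmod{\cI_{(x)}^2}
$$
with uniquely determined real coefficients. I would prove this by expanding $f$ as a polynomial in $e^\mu, p^I$ with smooth-function coefficients in $x^i$: any monomial involving a product of two or more of the generators $x^i, e^\mu, p^I$ (such as $e^\mu e^\nu$, $e^\mu p^I$, $p^I p^J$, $x^i e^\mu$, $x^i p^I$, $x^i x^j$) already lies in $\cI_{(x)}^2$, and the $\mathfrak{m}_x^2$-Taylor remainder of the smooth coefficients of the constant, $e^\mu$- and $p^I$-terms is likewise absorbed into $\cI_{(x)}^2$. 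Uniqueness of the coefficients follows by analyzing the relation degree-by-degree and using, in degree $0$, that $\cI_{(x)}^2 \cap (C_\cM|_x)_0 = \mathfrak{m}_x^2$, together with the analogous descriptions of the degree-$1$ and degree-$2$ pieces of $\cI_{(x)}^2$ coming from the vector-bundle identifications $(C_\cM)_1 = \Gamma_{E_1^*}$ and $(C_\cM)_2 = \Gamma_{\widetilde E^*}$ (in degree $2$ one uses in addition the quotient \eqref{eq:CM}, which identifies the contribution of $(C_\cM|_x)_1\cdot(C_\cM|_x)_1$ with $\wedge^2 E_1^*|_x \subset \widetilde E^*|_x$).

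For injectivity I would observe that any $X_x \in \mathcal{T}_x\cM$ vanishes on $\cI_{(x)}^2$ (by \eqref{eq:derprop}, since in any product $fg\in\cI_{(x)}^2$ the values $f_0(x),g_0(x)$ are zero) and on the constant subspace $\mathbb{R}\hookrightarrow C_\cM|_x$ (directly: $X_x(1)=2X_x(1)$ for degree-$0$ tangent vectors, and by degree reasons otherwise); hence if its image under \eqref{eq:natmap} vanishes, then $X_x\equiv 0$. For surjectivity, given $\alpha\in(\cI_{(x)}/\cI_{(x)}^2)^*$, I would define $X_x(f):= \alpha([f - f_0(x)])$, which is well-defined by the Hadamard lemma above, and verify the derivation property \eqref{eq:derprop} by splitting into cases according to whether $|f|$ and $|g|$ vanish: the case $|f|,|g|>0$ yields $fg\in\cI_{(x)}^2$ so both sides of \eqref{eq:derprop} vanish, the mixed case $|f|=0<|g|$ follows from $[fg - f_0(x)g]\in\cI_{(x)}^2$, and the case $|f|=|g|=0$ follows by expanding $f$ and $g$ via the Hadamard decomposition.

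The main obstacle is the careful bookkeeping in the graded Hadamard lemma, in particular verifying that the various ``higher-order'' monomials genuinely lie in $\cI_{(x)}^2$ and that uniqueness of coefficients holds in each graded piece. This ultimately rests on the quotient presentation \eqref{eq:CM} of $C_\cM$ and on the local triviality \eqref{eq:localtriv}, which together control how products of germs in the degree-$0$ maximal ideal with higher-degree germs exhaust the expected ``lower-order corrections'' inside $\cI_{(x)}^2$.
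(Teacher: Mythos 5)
Your proposal is correct and follows essentially the same route as the paper: the paper's proof simply exhibits the inverse map $\alpha \mapsto X_x^\alpha$, $X_x^\alpha(f) = \alpha([f-f_0(x)])$, checks it is a tangent vector, and observes it inverts \eqref{eq:natmap}. The graded Hadamard lemma you develop is sound and gives the explicit coordinate description of $\cI_{(x)}/\cI_{(x)}^2$, but note it is not strictly needed for the isomorphism itself, since well-definedness of $X_x^\alpha$ and the two composite-equals-identity checks already follow from $f - f_0(x) \in \cI_{(x)}$ and the vanishing of tangent vectors on constants and on $\cI_{(x)}^2$.
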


\begin{proof}
Denote by $[f]$ the class of $f\in \cI_{(x)}$ in
$\cI_{(x)}/\cI_{(x)}^2$. Then, for each $\alpha \in
({\cI_{(x)}}/{\cI_{(x)}^2})^*$, 
$$
C_{\cM}|_x \to \mathbb{R},\;\; f \mapsto \alpha([f-f_0(x)]),
$$
defines an element in $\mathcal{T}_x\cM$, denoted by $X_x^\alpha$.
The map $({\cI_{(x)}}/{\cI_{(x)}^2})^*\to \mathcal{T}_x\cM$, $\alpha
\mapsto X^\alpha_x$, is the inverse of \eqref{eq:natmap}.
\end{proof}

\subsubsection{\underline{Vector fields along submanifolds}}\

In classical differential geometry, for a submanifold $\iota: N\hookrightarrow M$, the tangent bundle $TN$ is a subbundle of $TM|_N=\iota^*TM$, and
the natural restriction map 
\begin{equation}\label{eq:restcl}
\mathfrak{X}(M)\to \Gamma(\iota^*TM), \quad X\mapsto X|_N,
\end{equation}
maps  vector fields on $M$ that are tangent to $N$ onto $\mathfrak{X}(N)$.
We will extend these basic facts to the context of $\N$-manifolds of degree 2 for later use in the discussion of coisotropic submanifolds in $\S$ \ref{subsec:coisosubm} (see Prop.~\ref{prop:distr}).

Let $(\iota,\iota^\sharp):\cN \to \cM$ be a 
submanifold of codimension $r_0|r_1|r_2$ with sheaf of vanishing ideals $\cI=\ker(\iota^\sharp)$. 
Consider the morphism
$$
\iota^\flat: \iota^{-1}C_\cM \to C_\cN.
$$
corresponding to $\iota^\sharp: C_\cM\to \iota_* C_\cN$ (by adjunction, see \cite[$\S$ 2.3]{kash-schapira}).

As a sheaf of $C_\cN$-modules over $N$, the pullback $\iota^*\cT\cM$ is identified with the sheaf of derivations of $\iota^{-1}C_\cM$ relative to $\iota^\flat$,
$$
\iota^*\cT\cM = \mathrm{Der}_{\iota^\flat}(\iota^{-1} C_\cM,C_\cN).
$$
Note that,  when $N=\{x\}$, this recovers the tangent space $\cT_x\cM$. 

From the short exact sequence 
$$
0\to \iota^{-1} \cI \to \iota^{-1}C_\cM \stackrel{\iota^\flat}{\to} C_\cN \to 0,
$$
we see that the sheaf of derivations $\mathrm{Der}(C_\cN)=\cT \cN$
is identified with the subsheaf of $\iota^*\cT\cM$ given by sections that vanish on  $\iota^{-1}\cI$ (via $Y \mapsto Y\circ \iota^\flat$); in this way, we view
$$
\cT\cN\subseteq \iota^*\cT\cM.
$$

To define the analogue of the restriction map \eqref{eq:restcl}, it is convenient to regard $\iota^*\cT\cM$ as a sheaf over $M$ supported on $N$ by considering its direct image $\iota_*(\iota^*\cT\cM)$, which is given by the sheaf of derivations of $C_\cM$ relative to $\iota^\sharp: C_\cM\to \iota_*C_\cN$.
The  corresponding ``restriction'' map is given by
\begin{equation}\label{eq:rsharp}
r^\sharp: \cT \cM \to \iota_*(\iota^*\cT\cM) =  \mathrm{Der}_{\iota^\sharp}(C_\cM,\iota_*C_\cN), \quad X\mapsto \iota^\sharp \circ X.
\end{equation}
Its kernel, which consists of vector fields $X: C_\cM \to C_\cM$ satisfying $X(C_\cM)\subseteq \cI$, is denoted by $\cI.\mathcal{T}\cM$. We have an exact sequence
\begin{equation}\label{eq:exseq0}
0 \longrightarrow \cI.\mathcal{T}\cM \longrightarrow \mathcal{T}\cM
\stackrel{r^\sharp}{\longrightarrow} \iota_*(\iota^* \mathcal{T}\cM)
 \longrightarrow 0.
\end{equation}

In analogy with classical geometry, one defines vector fields on $\cM$ that are ``tangent to $\cN$'' by means of the subsheaf of $C_\cM$-modules
$\mathcal{T}_\cI\subseteq \mathcal{T}\cM$ given on open subsets
$U\subseteq M$ by
\begin{equation}\label{eq:TI}
\mathcal{T}_\cI (U) := \{ X\in \mathcal{T}\cM(U)\,|\,
X(\cI(U))\subseteq \cI(U) \}.
\end{equation}
Note that $\mathcal{T}_\cI$ is also a subsheaf of Lie subalgebras of $\cT \cM$,
$$
[\mathcal{T}_\cI,\mathcal{T}_\cI]\subseteq \mathcal{T}_\cI.
$$

In local coordinates $\{x^i,e^\mu,p^i\}$ on $U\subseteq M$ adapted
to $\cN$, note that $ \cI.\mathcal{T}\cM$  is given by the span over $\cI$ of $\{
{\partial}/{\partial x^i}, {\partial}/{\partial e^\mu},
{\partial}/{\partial p^I} \}$, and
\begin{equation}\label{eq:ITM}
\mathcal{T}_\cI |_U = \Span_{C_\cM|_U} \left \{
\frac{\partial}{\partial x^i},\frac{\partial}{\partial
e^\mu},\frac{\partial}{\partial p^I} \right \} + \cI .
\mathcal{T}\cM |_U,
\end{equation}
for $i=r_0+1,\ldots,m_0$, $\mu=r_1+1,\ldots,m_1$,
$I=r_2+1,\ldots,m_2$.

Regarding $\cT\cN$ as a subsheaf of $\iota^*\cT \cM$ as above leads to an identification
\begin{equation*}\label{eq:backTI}
\iota^*\cT_\cI = \cT\cN \subseteq \iota^* \cT \cM.
\end{equation*}

On the other hand, the image of $\cT_\cI$ under the map $r^\sharp$ in \eqref{eq:rsharp} is the subsheaf of 
$\mathrm{Der}_{\iota^\sharp}(C_\cM,\iota_*C_\cN)$ given by derivations $Y: C_\cM\to \iota_*C_\cN$
(relative to $\iota^\sharp$) satisfying $Y(\cI)=0$; recalling that $C_\cM/\cI \cong \iota_*C_\cN$, such derivations are naturally identified with derivations of $\iota_*C_\cN$. 
In conclusion,  \eqref{eq:rsharp} restricts to a map
\begin{equation}\label{eq:rest1}
\mathcal{T}_\cI\stackrel{r^\sharp}{\to} \{Y\in \mathrm{Der}_{\iota^\sharp}(C_\cM,\iota_*C_\cN),\, Y(\cI)=0\} \cong \mathrm{Der}(\iota_*C_\cN) = \iota_*\cT\cN
\end{equation}
that preserves Lie brackets and  fits into the following exact sequence:
\begin{equation}\label{eq:exseq}
0 \longrightarrow \cI.\mathcal{T}\cM \longrightarrow \mathcal{T}_\cI
\stackrel{}{\longrightarrow} \iota_*\mathcal{T}\cN
 \longrightarrow 0.
\end{equation}

For each $x\in N$, the subsheaf $\cT_\cI\subseteq \cT\cM$ defines a graded subspace
$(\cT_\cI)_x \subseteq \cT_x\cM$ given by
$$
(\cT_\cI)_x:= \{X_x \,:\, X\in \cT_\cI(U)\} = \{ X_x \in \cT_x\cM\,:\, X_x(\cI|_x)=0\}, 
$$
where $U$ is a small neighborhood of $x$. The map \eqref{eq:rest1} induces a graded linear map 
\begin{equation}\label{eq:isomTN}
(\cT_\cI)_x \to T_x\cN, \qquad X_x\mapsto (r^\sharp(X))_x,
\end{equation}
which coincides with the natural isomorphism
$$
\{ X_x \in \cT_x\cM\,:\, X_x(\cI|_x)=0\} \cong T_x\cN 
$$
arising from the fact that $C_\cN|_x = C_\cM|_x/\cI|_x$.

\subsubsection{\underline{Tangent maps and local normal forms}\label{subsubsec:tanmap}}\
 Any morphism $\Psi=(\psi,\psi^\sharp):
\cM\to \cN$ induces a \textit{tangent map} $(d\Psi)_x :
\mathcal{T}_x\cM \to \mathcal{T}_{\psi(x)}\cN$ by
$$
(d\Psi)_x(X_x) \colon C_\cN |_{\psi(x)} \to \mathbb{R},\;\;
(d\Psi)_x(X_x)(f) := X_x (\psi^\sharp f).
$$

Let $\{ x^j, e^\nu, p^J \}$ be local coordinates around $x_0$, and
$\{\overline{x}^i, \overline{e}^\mu, \overline{p}^I \}$ be local
coordinates around $\psi(x_0)$. With respect to these coordinates,
the morphism $\Psi$ can be written as
\begin{equation}\label{eq:Psi}
\psi^\sharp \overline{x}^i  = \psi^* \overline{x}^i = \psi^i(x),\;\;
\psi^\sharp\overline{e}^\mu = f^\mu_{\nu}(x) e^\nu,\;\; \psi^\sharp
\overline{p}^I = g_{\mu\nu}^I(x) e^\mu e^\nu + h^I_J(x) p^J.
\end{equation}

By considering the bases of $\mathcal{T}_{x_0}\cM$ and
$\mathcal{T}_{\psi(x_0)}\cN$ relative to the choice of coordinates (as in
\eqref{eq:derx}), one obtains the matrix expression
\begin{equation}\label{eq:dpsi}
(d\Psi)_{x_0}= \left (\begin{matrix} 
\left(\frac{\partial \psi^i}{\partial x^j}(x_0)\right) 
& 0 & 0\\
0 & \left(f^\mu_\nu(x_0)\right)^t & 0\\
0 & 0 & \left(h^I_J(x_0)\right)^t
\end{matrix}\right ).
\end{equation}

Suppose that $\cM$ and $\cN$ correspond to $(E_1, \widetilde{E}, \phi_E)$ and $(F_1, \widetilde{F}, \phi_F)$, respectively, objects in $\VB2$. Let 
$$
\psi_1\colon E_1\to F_1, \qquad \widetilde{\psi}\colon \widetilde{E}\to \widetilde{F}
$$ 
be the vector bundle maps (covering $\psi\colon M\to N$) corresponding to $\Psi$. By the compatibility \eqref{eq:compat}, $\widetilde{\psi}$ takes $E_2=\mathrm{ker}(\phi_E)$ to $F_2=\mathrm{ker}(\phi_F)$, and we denote the restricted map by 
$$
\psi_2\colon E_2\to F_2.
$$ 
By viewing $\{e^\nu\}$ and $\{p^J\}$ as local frames for $E_1^*$ and $E_2^*$, and $\{\overline{e}^\mu\}$ and $\{\overline{p}^I\}$ as local frames for $F_1^*$ and $F_2^*$, respectively,  we obtain the matrix expressions
$(\psi_1)_{x_0}^*= (f_\nu^\mu(x_0))$ and $(\psi_2)_{x_0}^* = (h^I_J(x_0))$, i.e.,
$$
(\psi_1)_{x_0} = (f_\nu^\mu(x_0))^t, \qquad (\psi_2)_{x_0} = (h^I_J(x_0))^t.
$$
Comparing with \eqref{eq:dpsi}, the next result readily follows.

\begin{prop}\label{prop:tmap}
For $x\in M$, the tangent map $(d\Psi)_x :
\mathcal{T}_x\cM \to \mathcal{T}_{\psi(x)}\cN$ is injective (resp. surjective) if and only if the maps $(d\psi)_x\colon T_xM \to T_{\psi(x)}N$, $(\psi_1)_x\colon E_1|_x \to F_1|_{\psi(x)} $ and $(\psi_2)_x\colon E_2|_x \to F_2|_{\psi(x)} $ are injective (resp. surjective). 
\end{prop}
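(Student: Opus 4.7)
The plan is to read off the result directly from the matrix expression \eqref{eq:dpsi} for $(d\Psi)_{x_0}$ derived just above the proposition. That formula shows that, relative to the basis of $\mathcal{T}_{x_0}\cM$ given by the coordinate vector fields of degrees $0$, $-1$, $-2$ (and likewise for $\mathcal{T}_{\psi(x_0)}\cN$), the tangent map is block-diagonal with three blocks, one per degree. The main task is simply to identify these three blocks with the three classical maps appearing in the statement.

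For the degree $0$ block, I would note that the matrix of $(d\psi)_{x_0}\colon T_{x_0}M\to T_{\psi(x_0)}N$ in the coordinates $\{x^j\}$, $\{\overline{x}^i\}$ is precisely the Jacobian $\bigl(\partial\psi^i/\partial x^j(x_0)\bigr)$, which is the top-left entry of \eqref{eq:dpsi}. For the remaining two blocks, the identifications $(\psi_1)_{x_0}=(f^\mu_\nu(x_0))^t$ and $(\psi_2)_{x_0}=(h^I_J(x_0))^t$ recalled immediately before the proposition show that the degree $-1$ and $-2$ blocks of \eqref{eq:dpsi} are exactly the matrices of $(\psi_1)_{x_0}$ and $(\psi_2)_{x_0}$ relative to the local frames of $E_1,F_1,E_2,F_2$ dual to the coordinate generators $\{e^\nu\},\{\overline{e}^\mu\},\{p^J\},\{\overline{p}^I\}$.

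With these identifications in place, the conclusion is a routine piece of linear algebra: a block-diagonal linear map $A_0\oplus A_{-1}\oplus A_{-2}$ between direct sums of finite-dimensional vector spaces is injective (respectively surjective) if and only if each summand $A_q$ is. Applied to the decomposition $(d\Psi)_{x_0}=(d\psi)_{x_0}\oplus(\psi_1)_{x_0}\oplus(\psi_2)_{x_0}$ and varied over $x_0\in M$, this yields the proposition. There is no real obstacle here; all of the substantive work was already carried out in deriving the matrix form \eqref{eq:dpsi} and in translating the morphism data $(f^\mu_\nu,h^I_J)$ into the vector bundle maps $(\psi_1)_x,(\psi_2)_x$.
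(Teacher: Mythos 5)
Your proof is correct and follows exactly the paper's intended argument: the matrix expression \eqref{eq:dpsi} exhibits $(d\Psi)_{x_0}$ as block-diagonal with blocks $(d\psi)_{x_0}$, $(\psi_1)_{x_0}$ and $(\psi_2)_{x_0}$, and the statement is then immediate from linear algebra. The paper itself only says the result ``readily follows'' from \eqref{eq:dpsi}, so your write-up simply makes that same reasoning explicit.
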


By simple diagram chasing, we see that the injectivity of $(\psi_1)_x$ and $(\psi_2)_x$ implies that of $(\widetilde{\psi})_x$, and similarly for surjectivity.

We now show how to use the previous proposition to  derive versions of  the inverse function theorem, and local normal form of submersions and immersions, for $\N$-manifolds of degree 2.

 We call $\Psi$ a \textit{local isomorphism} around $x\in M$ if there
are open neighborhoods of $x$ in $M$ and $\psi(x)$ in $N$, denoted
by $U$ and $V$, so that $\psi:U\to V$ is a diffeomorphism and
$\psi^\sharp \colon C_\cN |_V \to \psi_*(C_\cM |_U)$ is an isomorphism of
sheaves. 

\begin{cor}\label{cor:invfthm}
Let $\Psi=(\psi,\psi^\sharp) \colon \cM\to \cN$ be a morphism and $x_0 \in
M$. Then $(d\Psi)_{x_0} \colon \mathcal{T}_{x_0}\cM \to
\mathcal{T}_{\psi(x_0)}\cN$ is an isomorphism if and only if $\Psi$
is a local isomorphism around $x_0$.
\end{cor}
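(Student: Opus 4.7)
The plan is to leverage Proposition~\ref{prop:tmap}, which converts invertibility of $(d\Psi)_{x_0}$ into invertibility of three ordinary linear maps on the bodies and fibers, and then combine the classical inverse function theorem with the equivalence of categories $\mathcal{F}\colon \VB2\to \NMan$ of Proposition~\ref{prop:equivcat}.

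The ``only if'' direction is immediate from functoriality of the tangent map: if $\Psi$ admits a local inverse near $x_0$, then applying $d(\cdot)_{x_0}$ to both compositions shows $(d\Psi)_{x_0}$ is an isomorphism. For the converse, suppose $(d\Psi)_{x_0}$ is an isomorphism. By Proposition~\ref{prop:tmap}, the three maps $(d\psi)_{x_0}\colon T_{x_0}M\to T_{\psi(x_0)}N$, $(\psi_1)_{x_0}\colon E_1|_{x_0}\to F_1|_{\psi(x_0)}$ and $(\psi_2)_{x_0}\colon E_2|_{x_0}\to F_2|_{\psi(x_0)}$ are all isomorphisms. The usual inverse function theorem then provides an open neighborhood $U_0$ of $x_0$ on which $\psi$ is a diffeomorphism onto an open set $V_0\subseteq N$. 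Since invertibility of a fiberwise linear map between vector bundles of equal rank is an open condition on the base, after shrinking $U_0$ I may assume that $\psi_1$ and $\psi_2$ are vector bundle isomorphisms over $U_0$. Applying the five lemma fiberwise to the commutative diagram relating the short exact sequences \eqref{deg2exact} for $E$ and for $F$ then forces $\widetilde{\psi}$ to be a fiberwise isomorphism over $U_0$ as well.

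At this point the triple $(\psi_1,\widetilde{\psi},\psi_2)$ restricted to $U_0$ is an isomorphism in $\VB2$ between $(E_1,\widetilde{E},\phi_E)|_{U_0}$ and $(F_1,\widetilde{F},\phi_F)|_{V_0}$. Invoking Proposition~\ref{prop:equivcat}, the functor $\mathcal{F}$ carries this $\VB2$-isomorphism to an isomorphism of degree 2 $\N$-manifolds $\cM|_{U_0}\to \cN|_{V_0}$, which by construction coincides with the restriction of $\Psi$; hence $\Psi$ is a local isomorphism around $x_0$. The only mildly technical step is extending fiberwise invertibility of $\psi_1$ and $\psi_2$ from $x_0$ to a neighborhood, but this is just the openness of the set of invertible linear maps; all the substantive work has been absorbed into Propositions~\ref{prop:tmap} and~\ref{prop:equivcat}.
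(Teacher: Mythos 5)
Your proposal is correct and follows essentially the same route as the paper: both reduce the statement via Proposition~\ref{prop:tmap} to the invertibility of $(d\psi)_{x_0}$, $(\psi_1)_{x_0}$ and $(\psi_2)_{x_0}$, apply the classical inverse function theorem, and conclude through the equivalence of categories in Proposition~\ref{prop:equivcat}. The extra details you supply (openness of fiberwise invertibility, the five lemma forcing $\widetilde{\psi}$ to be an isomorphism) are correct elaborations of steps the paper leaves implicit.
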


\begin{proof}
By the equivalence in Thm.~\ref{thm:equivcat}, the condition that $\Psi$
is a local isomorphism around $x_0$ is equivalent to the existence of neighborhoods $U$ of $x_0$ and $V$ of $\psi(x_0)$ such that $\psi:U \to V$ is a diffeomorphism, and $\psi_1\colon E_1|_U \to F_1|_V$ and $\psi_2\colon E_2|_U \to F_2|_V$ are vector bundle isomorphisms. By the inverse function theorem for $\psi$, the existence of such neighborhoods is, in turn, equivalent to $(d\psi)_{x_0}$, $(\psi_1)_{x_0}$ and $(\psi_2)_{x_0}$ being isomorphisms, which is the same as $(d\Psi)_{x_0}$ being an isomorphism by Prop. \ref{prop:tmap}.
\end{proof}

A morphism $\Psi=(\psi,\psi^\sharp) \colon \cM \to \cN$ is a
\textit{submersion at a point $x_0\in M$} if $(d\Psi)_{x_0}:
\mathcal{T}_{x_0}\cM \to \mathcal{T}_{\psi(x_0)}\cN$ is onto, and it is a {\em submersion} if it is a submersion at every point. We will say that $\Psi$ is a {\em surjective submersion} if it is a submersion and $\psi$ is surjective.

\begin{cor}\label{cor:submersion}
Suppose that $\Psi=(\psi,\psi^\sharp) \colon \cM \to \cN$ is a submersion
at $x_0\in M$. Then there exist local coordinates $\{ x^j, e^\nu,
p^J \}$ around $x_0$, and $\{\overline{x}^i, \overline{e}^\mu,
\overline{p}^I \}$ around $\psi(x_0)$, with respect to which $\Psi$
has the form
\begin{equation}\label{eq:sub}
\psi^\sharp \overline{x}^i = \psi^i(x)= x^i,\;\; \psi^\sharp
\overline{e}^\mu = e^\mu,\;\; \psi^\sharp \overline{p}^I = p^I.
\end{equation}
\end{cor}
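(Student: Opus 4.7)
The strategy I would follow is to construct the required coordinates by exploiting the geometric translation in Proposition~\ref{prop:tmap}: the hypothesis that $(d\Psi)_{x_0}$ is surjective is equivalent to the simultaneous surjectivity of $(d\psi)_{x_0}$, $(\psi_1)_{x_0}\colon E_1|_{x_0}\to F_1|_{\psi(x_0)}$, and $(\psi_2)_{x_0}\colon E_2|_{x_0}\to F_2|_{\psi(x_0)}$. By semicontinuity of rank, $\psi_1$ and $\psi_2$ remain fiberwise surjective on a neighborhood of $x_0$, so their duals $\psi_1^*$ and $\psi_2^*$ are fiberwise injective there.

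First I would apply the classical submersion normal form to $\psi\colon M\to N$ to obtain coordinates $\{x^j\}_{j=1}^{m_0}$ on a neighborhood $U$ of $x_0$ and $\{\bar x^i\}_{i=1}^{n_0}$ on a neighborhood $V$ of $\psi(x_0)$ satisfying $\psi^*(\bar x^i) = x^i$ for $i=1,\dots,n_0$.

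Next, for the degree $1$ coordinates, I would shrink $V$ so that $F_1^*$ admits a local frame $\{\bar e^\mu\}_{\mu=1}^{n_1}$, and set $e^\mu := \psi^\sharp(\bar e^\mu)$ for $\mu \leq n_1$. Fiberwise injectivity of $\psi_1^*$ ensures that these sections are pointwise linearly independent in $E_1^*$ near $x_0$, so I can complete them (shrinking $U$ if needed) to a local frame $\{e^\nu\}_{\nu=1}^{m_1}$ of $E_1^*$ by choosing arbitrary sections $e^\nu$ for $\nu > n_1$. For degree $2$ I would perform the analogous construction using lifts: pick a local frame of $F_2^* = \widetilde F^*/\wedge^2 F_1^*$ and lift it to sections $\{\bar p^I\}_{I=1}^{n_2}$ of $\widetilde F^*$, then set $p^I := \psi^\sharp(\bar p^I)\in \Gamma(\widetilde E^*)$ for $I\leq n_2$. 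The compatibility $\wedge^2\psi_1\circ\phi_E = \phi_F\circ\widetilde\psi$ together with fiberwise injectivity of $\psi_2^*$ ensures that the images of these $p^I$ in $E_2^*=\widetilde E^*/\wedge^2 E_1^*$ are pointwise linearly independent near $x_0$, so they extend to a local frame of $E_2^*$ via chosen lifts $p^J\in\Gamma(\widetilde E^*)$ for $J > n_2$.

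By the description following~\eqref{eq:vb}, the data $\{x^j, e^\nu, p^J\}$ then constitute a local coordinate system on $\cM$ around $x_0$, and by construction they satisfy the three equations in~\eqref{eq:sub}. The only place where care is needed, and where I expect the main bookkeeping obstacle, is the degree $2$ step: one must carefully distinguish the coordinates $p^I, p^J$ viewed as sections of $\widetilde E^*$ from the frames they induce on the quotient $E_2^*$, and keep the lifts consistent throughout. Apart from this, the argument reduces to the classical submersion theorem and elementary linear algebra on the fiber maps $\psi_1$ and $\psi_2$.
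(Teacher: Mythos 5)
Your proposal is correct and follows essentially the same route as the paper's proof: translate the submersion hypothesis via Proposition~\ref{prop:tmap}, apply the classical normal form to $\psi$ for the degree-$0$ coordinates, and use the fiberwise injectivity of $\psi_1^*$ and $\psi_2^*$ to pull back frames and extend them. The extra care you take in the degree-$2$ step (distinguishing sections of $\widetilde{E}^*$ from the frames they induce on $E_2^*$ and tracking the lifts) is exactly the bookkeeping the paper compresses into ``the same argument,'' so it is a welcome elaboration rather than a deviation.
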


\begin{proof}
By Prop. \ref{prop:tmap}, the fact that $\Psi$ is a submersion at $x_0$ is the same as the surjectivity of $(d\psi)_{x_0}$, $(\psi_1)_{x_0}$ and $(\psi_2)_{x_0}$. By the usual local normal form of submersions, there are neighborhoods of $x_0$ and $\psi(x_0)$, with coordinates $\{x^j\}$ and $\{\overline{x}^i\}$ satisfying  the first condition in \eqref{eq:sub}. Let us view the coordinates $\{e^\nu\}$ and $\{\overline{e}^\mu\}$ as local frames of $E_1^*$ around $x_0$ and $F_1^*$ around $\psi(x_0)$, respectively, and recall that the map $\psi^\sharp$ in degree 1 is identified with $\psi_1^*\colon \psi^*F_1\to E_1$. We see that the local injectivity of $\psi_1^*$ around $x_0$ (which follows from the surjectivity of $(\psi_1)_{x_0}$) ensures that, given any local frame $\{\overline{e}^\mu\}$, one can extend the set of sections $\{\psi_1^* \overline{e}^\mu\}$ to a local frame $\{ e^\nu\}$ of $E_1^*$, and hence the second condition in \eqref{eq:sub} is satisfied. The same argument using the local injectivity of $\psi_2^*$ shows the analogous result for degree 2 coordinates.
\end{proof}

One can obtain a local normal form for immersions with similar arguments.


\subsubsection{\underline{Regular values}\label{subsubsec:regvalue}}\
We present a graded analogue of the standard result in differential geometry that the inverse image of a regular value of a smooth map is a submanifold.

Let $\Psi=(\psi,\psi^\sharp):\cM \to \cN$ be a morphism of $\N$-manifolds of degree 2.

Just as in classical geometry, we say that $c
\in N$ is a \textit{regular value} of $\Psi$ if $(d\Psi)_x$ is onto for all $x\in M$ 
with $\psi(x)=c$. 

For $c\in N$, let $\cI_c\subseteq C_\cN$ be the subsheaf of
vanishing ideals defined by $c \in N$,
$$
\cI_c(V)=\{f\in C_\cN(V)\;|\; f_0(c)=0\mbox{ if } c\in V\},  \;\;\;
V\subseteq N \mbox{ open}.
$$

Denote by 
$$
\cI:=\SP{\psi^\sharp (\cI_c)}\subseteq C_\cM
$$ 
the sheaf of ideals generated by $\psi^\sharp (\cI_c)$ in $C_\cM$.

\begin{prop}\label{prop:invc}
If $c$ is a regular value of $\Psi$,  then the sheaf of ideals $\cI\subseteq C_\cM$ is regular.
\end{prop}

\begin{proof}
As in the classical case, the proof follows from the local form of submersions (Cor.~\ref{cor:submersion}). 

By Prop.~\ref{prop:tmap}, we know that $c\in N$ is a regular value for $\psi$, which implies that, in degree zero, $\cI$ coincides with the vanishing ideal of the submanifold $S=\psi^{-1}(c)\subseteq M$,
$$
\cI_0 = I_S.
$$

Let $x\in S$.
Since $c$ is a regular value for $\Psi$, by Cor.~\ref{cor:submersion} there exist local coordinates $\{ x^j, e^\nu,
p^J \}$ centered at $x$, and $\{\overline{x}^i, \overline{e}^\mu,
\overline{p}^I \}$ centered at $\psi(x)=c$, such that
\begin{equation}\label{eq:subcoor}
\psi^\sharp \overline{x}^i = \psi^* \overline{x}^i= x^i,\;\; \psi^\sharp
\overline{e}^\mu = e^\mu,\;\; \psi^\sharp \overline{p}^I = p^I.
\end{equation}
In a sufficiently small neighborhood of $c$, a section $f$ of $\cI_c$ has the form
$f= a_i \overline{x}^i + b_\mu \overline{e}^\mu + c_I \overline{p}^I$, so
$$
\psi^\sharp f = (\psi^*a_i) x^i + (\psi^*b_\mu) e^\mu + (\psi^*c_I)p^I.
$$
It follows that, around $x\in S$, $\cI$ is generated by the coordinate functions $\{x^i, e^\mu, p^I\}$, and therefore  $\cI$ is regular.
\end{proof}

By Prop.~\ref{prop:1-1}, $\cI=\SP{\psi^\sharp (\cI_c)}\subseteq C_\cM$ is the sheaf of vanishing ideals of a submanifold of $\cM$ that we denote  by 
$$
\Psi^{-1}(c).
$$

To obtain the geometric characterization of $\Psi^{-1}(c)$ in terms of vector bundles as in $\S$ \ref{subsubsec:ideal}, we keep the notation of the previous subsection, with $\cM$ corresponding to $(E_1, \widetilde{E}, \phi_E)$ and $\cN$ to $(F_1, \widetilde{F}, \phi_F)$,
and observe the following immediate consequence of Prop.~\ref{prop:tmap}.

\begin{lemma} \label{lem:regular}
A point $c \in N$ is a regular value of $\Psi\colon \cM \to \cN$ if and only if
\begin{itemize}
    \item[(a)] $c$ is a regular value for $\psi\colon M\to N$,
    \item[(b)] the maps  $\psi_i\colon E_i|_{\psi^{-1}(c)}\to F_i|_c$, $i=1,2$, are fiberwise surjective. (Equivalently, for each $x\in \psi^{-1}(c)$, the maps $(\psi_i)^*_x\colon F_i^*|_c\to E_i^*|_x$, $i=1,2$, are injective.)
\end{itemize}
\end{lemma}

As mentioned after Prop. \ref{prop:tmap}, the second condition above implies that the map 
$$
\widetilde{\psi}\colon \widetilde{E}|_{\psi^{-1}(c)}\to \widetilde{F}|_c
$$ 
is also fiberwise surjective. 

Given a regular value $c$ of $\Psi$, we can therefore consider
\begin{itemize}
\item the submanifold $S=\psi^{-1}(c) \subseteq N$, and
\item the subbundles $K_1 \subseteq E_1^*$ and ${K'} \subseteq \widetilde{E}^*$ over $S$, given by 
$$
K_1|_x = (\psi_1)_x^*(F_1^*|_c), \quad \mbox{ and } \quad {K'}|_x = (\widetilde{\psi})_x^*(\widetilde{F}^*|_c).
$$
\end{itemize}

Using the special coordinates \eqref{eq:subcoor} around points in the submanifold $\iota: S \hookrightarrow M$, we see that
$$
\Gamma_{K_1}= \iota^* \cI_1, \qquad \Gamma_{\widetilde{K}}=\iota^*\cI_2,
$$
where $\widetilde{K}$ is the vector subbundle of $\widetilde{E}^*|_S$ given by
\begin{equation*}\label{eq:regvalueKt}
\widetilde{K} := K' + K_1\wedge E_1^*|_S. 
\end{equation*}

Following $\S$ \ref{subsubsec:ideal} (see Lemma~\ref{lem:geomideal}), 
and with the notation introduced at the end of Section~\ref{subsec:notation}, 
we conclude

\begin{prop}\label{prop:invcgeom}
Let $c$ be a regular value of $\Psi$. Then the sheaf of vanishing ideals $\cI=\SP{\psi^\sharp (\cI_c)}\subseteq C_\cM$ of the submanifold $\Psi^{-1}(c)$ is determined by the vector bundles $K_1$ and $\tilde{K}$ via
$$
\cI_0 = \fl_S, \quad \cI_1 = \Gamma_\st{E_1^*,K_1}, \; \mbox{ and } \;\; \cI_2 =
\Gamma_\st{\tilde{E}^*,\widetilde{K}}.
$$
\end{prop}

\subsection{Distributions and the Frobenius theorem}\label{subsec:frobenius}

Let $\cM$ be an $\N$-manifold of degree 2 of dimension $m_0|m_1|m_2$. A
\textit{distribution} of rank $d_0|d_1|d_2$ is a graded subsheaf of
$C_\cM$-modules $\mathcal{D}$ of $\mathcal{T}\cM$ satisfying the
following local property: around any point in $M$ there is a
neighborhood $U$ such that $\mathcal{D}|_U$ is generated by vector
fields $X_1,\ldots,X_{d_0}$ of degree $0$, $Y_1,\ldots,Y_{d_1}$ of
degree $-1$, and $Z_1,\ldots,Z_{d_2}$ of degree $-2$, and such that
for each $x\in U$,
$$
(X_1)_x,\ldots,(X_{d_0})_x, (Y_1)_x,\ldots,(Y_{d_1})_x,
(Z_1)_x,\ldots, (Z_{d_2})_x
$$
are linearly independent elements of $\mathcal{T}_x\cM$.
 
A distribution $\mathcal{D}$ is \textit{involutive} if its spaces of
local sections are closed under the Lie bracket on $\mathcal{T}\cM$.
We have the following generalization of the Frobenius
theorem.

\begin{thm}\label{thm:frob}
Let $\mathcal{D}$ be an involutive distribution of rank
$d_0|d_1|d_2$ on $\cM$. Then any point in $M$ has a neighborhood $U$
with local coordinates $\{ x^i,e^\mu,p^I\}$ such that
$\mathcal{D}|_U$ is spanned by
$$
\frac{\partial}{\partial x^1},\ldots,\frac{\partial}{\partial
x^{d_0}},\frac{\partial}{\partial
e^1},\ldots,\frac{\partial}{\partial
e^{d_1}},\frac{\partial}{\partial
p^1},\ldots,\frac{\partial}{\partial p^{d_2}}.
$$
\end{thm}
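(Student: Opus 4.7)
The plan is to straighten the generators in stages, from degree $0$ downward, first using the classical Frobenius theorem on the body $M$, then using fiberwise coordinate changes on the vector bundles $E_1$ and $E_2$ that describe $\cM$ via Proposition~\ref{prop:equivcat}, and finally using the involutivity hypothesis to eliminate the remaining cross terms.

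First, I would show that the symbols of the degree-$0$ generators $X_1,\ldots,X_{d_0}$ form a classical involutive distribution of rank $d_0$ on $M$. Pointwise independence of the $(X_j)_x$ translates into pointwise independence of the symbols, since a degree-$0$ tangent vector at $x$ is determined by its action on the body coordinates. Involutivity is immediate because a bracket of degree-$0$ derivations of $C_\cM$, evaluated on $x^i$, reduces to the ordinary Lie bracket on $M$ of the symbols; combined with the involutivity of $\mathcal{D}$ and the fact that $Y_\mu(x^i)$ and $Z_I(x^i)$ vanish for degree reasons, this ensures closedness under brackets. Applying the classical Frobenius theorem on $M$ and shrinking $U$, one obtains coordinates $\{x^i\}$ such that, after replacing the generators by a $C^\infty(M)$-linear combination, $X_j = \partial/\partial x^j + B^\mu_j\,\partial/\partial e^\mu + C^I_j\,\partial/\partial p^I$ for $1 \le j \le d_0$, with $|B^\mu_j|=1$ and $|C^I_j|=2$.

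Next, I would straighten the $Y_\mu$'s and $Z_I$'s fiberwise. A degree-$(-1)$ vector field has the form $\beta^\nu(x)\,\partial/\partial e^\nu + \gamma^{I,\rho}(x)\, e^\rho\,\partial/\partial p^I$, and the hypothesis on $(Y_\mu)_x$ forces the matrix $(\beta^\nu_\mu(x))$ to have constant rank $d_1$. A local change of frame of $E_1^*$ (depending on $x$ only, which leaves Step 1 intact) turns this into $Y_\mu = \partial/\partial e^\mu + \gamma^{I,\rho}_\mu(x)\, e^\rho\, \partial/\partial p^I$ for $\mu \le d_1$. The analogous argument applied to degree-$(-2)$ vector fields, which have the form $c^I(x)\,\partial/\partial p^I$, yields $Z_I = \partial/\partial p^I$ for $I \le d_2$ after a change of frame of $E_2^*$.

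The decisive step, and the main obstacle, is to use the involutivity of $\mathcal{D}$ to kill the remaining mixed coefficients $B^\mu_j$, $C^I_j$, and $\gamma^{I,\rho}_\mu$. The idea is to subtract from $X_j$ appropriate $C_\cM$-multiples of the already-straightened $Y_\mu$'s and $Z_I$'s (and similarly from the $Y_\mu$'s, multiples of the $Z_I$'s), so that after these modifications the generators become coordinate vector fields. Involutivity enters because brackets such as $[X_j, \partial/\partial e^\nu]$ and $[X_j, \partial/\partial p^I]$ again lie in $\mathcal{D}$ (using that $\partial/\partial e^\nu$ and $\partial/\partial p^I$ already belong to $\mathcal{D}$), which by Proposition~\ref{prop:li} forces the $e$- and $p$-dependence of the remaining coefficients to be controlled by $C_\cM$-linear combinations of the existing generators. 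The main difficulty lies in keeping the previous straightenings coherent while modifying generators, and in arguing purely algebraically within the graded algebra $C_\cM$, since no flow argument is available for negative-degree vector fields; conceptually this is the graded analogue of the final bookkeeping step in the classical Frobenius proof.
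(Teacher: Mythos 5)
The paper gives no internal proof of Theorem~\ref{thm:frob} --- it defers entirely to \cite{HenMiqRaj} and to the supermanifold literature --- so your attempt can only be judged on its own terms. Your Steps 1 and 2 are sound and are the standard opening moves: a degree-$0$ tangent vector is determined by its action on the $x^i$, so pointwise independence of the $(X_j)_x$ is independence of the symbols; $Y_\mu(x^i)$ and $Z_I(x^i)$ vanish for degree reasons, so the symbols form a classical involutive distribution and the classical Frobenius theorem applies; and block normalization by invertible matrices of degree-$0$ functions (equivalently, frame changes of $E_1^*$ and $E_2^*$) brings the generators to the form $\partial/\partial x^j + (\text{cross terms})$, $\partial/\partial e^\mu + (\text{cross terms})$, $\partial/\partial p^I$.

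The gap is in Step 3, and it is a gap of mechanism, not of bookkeeping. After normalization the residual cross terms point along $\partial/\partial e^\nu$ with $\nu>d_1$ and $\partial/\partial p^J$ with $J>d_2$, i.e.\ along coordinate directions that do \emph{not} lie in $\mathcal{D}$; hence subtracting $C_\cM$-multiples of the already-straightened generators (which do lie in $\mathcal{D}$) cannot remove them. Nor do your bracket relations kill them: $[\partial/\partial e^\nu, X_j]\in\mathcal{D}$ for $\nu\le d_1$ only forces the coefficients $B_j^\mu$ with $\mu>d_1$ to be independent of $e^1,\dots,e^{d_1}$, and says nothing about their dependence on the remaining fiber coordinates. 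A minimal counterexample to your mechanism: on $\mathbb{R}^{1|2|0}$ let $\mathcal{D}$ be the rank $1|0|0$ involutive distribution generated by $X=\partial/\partial x + e^1\,\partial/\partial e^2$. There are no negative-degree generators to subtract and the cross term is not in $\mathcal{D}$, yet the theorem holds --- via the coordinate change $\tilde{e}^2=e^2-x\,e^1$, after which $X=\partial/\partial\tilde{x}$. This identifies the missing idea: the final step must be a genuine change of coordinates (new local frames of $E_1^*$ and $E_2^*$, and a shift of the $p^I$ by terms quadratic in the $e$'s), not a change of generating set. The standard way to close the argument is first to use involutivity to show that the normalized generators pairwise commute (any bracket $[V_a,V_b]$ has no component along the distinguished coordinate directions, while an element of $\mathcal{D}$ with vanishing distinguished components is zero, by Proposition~\ref{prop:li}), and then to build the adapted coordinates from these commuting fields --- by flows in the degree-$0$ directions and by an order-by-order algebraic construction in the polynomial degree in $(e,p)$ for the negative-degree directions, which is precisely how the absence of flows is circumvented. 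Without these two steps your outline does not close.
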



This is proven in \cite{HenMiqRaj} (see, e.g.,  \cite{fiori,varadarajan} for the case of supermanifolds).

\section{Symplectic $\N$-manifolds of degree $2$}\label{sec:symplectic}

\subsection{Poisson brackets} 
Let $\cM$ be an $\N$-manifold of degree 2 of dimension $m_0|m_1|m_2$. A
\textit{Poisson structure} on $\cM$ of degree $q$ is an
$\mathbb{R}$-bilinear operation
$$
\{\cdot,\cdot\}:C_\cM\times C_\cM \to C_\cM
$$
such that, for each $U\subseteq M$ open and homogeneous elements
$f\in (C_\cM(U))_k$, $g\in (C_\cM(U))_l$, $h\in (C_\cM(U))_m$, the
following holds:
\begin{itemize}
\item[(br1)] $\{f,g\}\in (C_\cM(U))_{k+l+q}$,
\item[(br2)] $\{f,g\}=-(-1)^{(k+q)(l+q)}\{g,f\}$,
\item[(br3)] $\{f,gh\} = \{f,g\}h + (-1)^{(k+q)l}g\{f,h\}$,
\item[(br4)] $\{f,\{g,h\}\}=\{\{f,g\}, h\}+(-1)^{(k+q)(l+q)}\{g,\{f,h\}\}$
\end{itemize}
 
In particular, any global section $f\in C(\cM)_k$ defines a vector
field on $\cM$,
$$
X_f := \{f, \cdot\},
$$ 
 of degree $k + q$, called the
\textit{hamiltonian vector field} of $f$.  A simple consequence of (br4) is that, for $f\in C(\cM)_k$ and $g\in
C(\cM)_l$, we have
\begin{equation}\label{eq:commutator}
X_{\{f,g\}} = [X_f, X_g].
\end{equation}

For each $x\in M$ and $f\in C_\cM|_x$ of degree $k$, there is
similarly an element $(X_f)_x \in \mathcal{T}_x\cM$ of degree $k+q$
defined by
$$
(X_f)_x(g) = \{f,g\}_0(x), \;\;\, \mbox{ for } g \in C_\cM|_x,
$$
where we keep the notation as in \eqref{eq:tangent}. 
It follows from (br2) and (br3) that if $f\in \cI_{(x)}^2$ (see
\eqref{eq:I(x)}), then $(X_f)_x=0$. So there is an induced
degree-preserving map
\begin{equation}\label{eq:sharp}
(\mathcal{T}_x\cM)^*\cong \cI_{(x)}/\cI_{(x)}^2 \longrightarrow
T_x\cM [q],\; [f]\mapsto (X_f)_x.
\end{equation}
Here ``$[q]$'' denotes the degree shift by $q$, as recalled in $\S$ \ref{subsec:notation}.
We say that a Poisson bracket $\{\cdot,\cdot\}$ is \textit{non-degenerate} 
when the
map \eqref{eq:sharp} is an isomorphism for all $x\in M$.
 
 We will be interested in Poisson brackets of degree $q=-2$.

\begin{prop}\label{prop:nondegpoisson}
Let $\cM$ be equipped with a Poisson bracket $\{\cdot,\cdot\}$ of degree $q=-2$, and 
let $\{x^i,e^\mu,p^I\}$ be local coordinates on $\cM$ around $x\in
M$. Then the map \eqref{eq:sharp} is an isomorphism if 
and only if $m_0=m_2$ and
\begin{equation}\label{eq:nodeg}
\mathrm{det}(\{p^I,x^j\}(x))\neq 0,\;\; \mathrm{det}(\{e^\mu,e^\nu\}(x)) \neq 0.
\end{equation}
\end{prop}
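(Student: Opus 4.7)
The plan is to analyze the map \eqref{eq:sharp} degree by degree, since it preserves the ($q$-shifted) grading. Using local coordinates $\{x^i,e^\mu,p^I\}$ around $x$, the cotangent space $\cI_{(x)}/\cI_{(x)}^2$ has homogeneous bases $\{[x^i-x^i(x)]\}$, $\{[e^\mu]\}$, $\{[p^I]\}$ in degrees $0,1,2$ respectively, of dimensions $m_0,m_1,m_2$. Dually, in $\mathcal{T}_x\cM[-2]$ the same degrees $0,1,2$ correspond to $(\mathcal{T}_x\cM)_{-2}$, $(\mathcal{T}_x\cM)_{-1}$, $(\mathcal{T}_x\cM)_0$, with bases $\{(\partial/\partial p^I)_x\}$, $\{(\partial/\partial e^\mu)_x\}$, $\{(\partial/\partial x^j)_x\}$ of dimensions $m_2,m_1,m_0$. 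The map \eqref{eq:sharp} therefore decomposes into three pieces, each of which is a linear map between finite-dimensional spaces whose matrix can be computed explicitly.

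First I would treat the degree $0$ piece. Since $(X_{x^i})_x\in (\mathcal{T}_x\cM)_{-2}$ is determined by its values on degree $2$ coordinates (the Leibniz rule forces $(X_{x^i})_x(e^\mu e^\nu)=0$, as $\{x^i,e^\mu\}$ would have illegal degree $-1$), one obtains
\[
(X_{x^i})_x=\sum_J \{x^i,p^J\}(x)\bigl(\partial/\partial p^J\bigr)_x.
\]
Thus the degree $0$ component of \eqref{eq:sharp} is an isomorphism iff $m_0=m_2$ and the matrix $\bigl(\{x^i,p^J\}(x)\bigr)$ is invertible; by graded antisymmetry (br2) this matrix is (up to an overall sign) the transpose of $\bigl(\{p^J,x^i\}(x)\bigr)$, so invertibility is equivalent to the first condition in \eqref{eq:nodeg}. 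An analogous direct computation for the degree $1$ piece gives
\[
(X_{e^\mu})_x=\sum_\nu \{e^\mu,e^\nu\}(x)\bigl(\partial/\partial e^\nu\bigr)_x,
\]
so the degree $1$ piece (between spaces of equal dimension $m_1$) is an isomorphism iff $\det(\{e^\mu,e^\nu\}(x))\neq 0$, the second condition in \eqref{eq:nodeg}.

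Finally, for the degree $2$ piece, $(X_{p^I})_x\in (\mathcal{T}_x\cM)_0$ satisfies $(X_{p^I})_x(x^j)=\{p^I,x^j\}(x)$, whereas acting on $e^\mu e^\nu$ gives zero by the same degree obstruction. Hence its matrix is $\bigl(\{p^I,x^j\}(x)\bigr)$, and the degree $2$ piece is an isomorphism iff $m_0=m_2$ and this matrix is invertible — which, as noted, is the same condition as for the degree $0$ piece, so no new requirement appears. Combining the three pieces yields the claim.

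I expect no serious obstacle; the main points requiring care are (i) the bookkeeping of graded antisymmetry to verify that the degree $0$ and degree $2$ conditions are genuinely equivalent, and (ii) the argument (based on the absence of negative-degree functions) that forces the mixed contributions, such as $(X_{x^i})_x(e^\mu e^\nu)$ or $(X_{p^I})_x(e^\mu e^\nu)$, to vanish, so that the matrix entries really are the ones exhibited in \eqref{eq:nodeg}.
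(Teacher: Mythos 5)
Your proof is correct and follows essentially the same route as the paper: both compute the matrix of \eqref{eq:sharp} in the coordinate bases and use the absence of negative-degree functions (together with the degree-$(-2)$ condition) to see that only the blocks $\bigl(\{x^i,p^J\}\bigr)$, $\bigl(\{e^\mu,e^\nu\}\bigr)$, $\bigl(\{p^I,x^j\}\bigr)$ survive, the first and third being transposes of one another up to sign by (br2). The paper simply displays the full $3\times 3$ block matrix \eqref{eq:matrix} at once rather than treating the three graded components separately, which is a purely organizational difference.
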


\begin{proof}
Relative to the bases $[x^i], [e^\mu], [p^I]$ of
$\cI_{(x)}/\cI_{(x)}^2$ and \eqref{eq:derx} of $\mathcal{T}_x\cM$,
with $i=1,\ldots,m_0$, $\mu=1,\ldots,m_1$, $I=1,\ldots,m_2$, the
linear map \eqref{eq:sharp} is given by the matrix
\begin{equation}\label{eq:matrix}
\left ( \begin{matrix} \{x^i,x^j\}_0(x) & \{e^\mu,x^j\}_0(x)&\{p^I,x^j\}_0(x)\\
\{x^i,e^\nu\}_0(x) & \{e^\mu,e^\nu\}_0(x)&\{p^I,e^\nu\}_0(x)\\
\{x^i,p^J\}_0(x) & \{e^\mu,p^J\}_0(x)&\{p^I,p^J\}_0(x)
\end{matrix} \right ),
\end{equation}
which, for $q=-2$, takes the form
$$
\left ( \begin{matrix} 0 & 0 &\{p^I,x^j\}(x)\\
0 & \{e^\mu,e^\nu\}(x)& 0\\
\{x^i,p^J\}(x) & 0 & 0
\end{matrix} \right ).
$$
This matrix is invertible if and only if $m_0=m_2$
and \eqref{eq:nodeg} holds.
\end{proof}

\begin{defi}We will refer to an $\N$-manifold of degree 2 equipped with a
non-degenerate Poisson bracket of degree $-2$ as a
\textit{symplectic} $\N$-manifold of degree 2 (or as a degree 2 symplectic $\N$-manifold). 
A {\em symplectomorphism} between degree 2 symplectic manifolds is an isomorphism of $\N$-manifolds of degree 2 that preserves the Poisson brackets.
\end{defi}

\begin{cor}\label{cor:LI} Let $\cM$ be a symplectic $\N$-manifold of degree 2 and
$\{x^i,e^\mu,p^I\}$ be local coordinates. Then the local vector
fields $\{x^i,\cdot\}$, $\{e^\mu,\cdot \}$, $\{ p^I,\cdot\}$ are
linearly independent.  
\end{cor}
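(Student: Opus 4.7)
The plan is to reduce this to Proposition \ref{prop:li} by verifying linear independence pointwise. First note that the hamiltonian vector fields $\{p^I,\cdot\}$, $\{e^\mu,\cdot\}$, $\{x^i,\cdot\}$ have degrees $0$, $-1$ and $-2$ respectively, since the Poisson bracket has degree $-2$ and $|p^I|=2$, $|e^\mu|=1$, $|x^i|=0$. These are exactly the three degrees covered by Proposition~\ref{prop:li}, so it suffices to check that at each $x\in U$ the corresponding tangent vectors
\[
(\{p^I,\cdot\})_x,\;(\{e^\mu,\cdot\})_x,\;(\{x^i,\cdot\})_x \in \mathcal{T}_x\cM
\]
are linearly independent.

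The second step is to invoke non-degeneracy. By definition, the map \eqref{eq:sharp} is an isomorphism at every $x\in M$, and the classes $[x^i]$, $[e^\mu]$, $[p^I]$ form a basis of $\mathcal{I}_{(x)}/\mathcal{I}_{(x)}^2$ (as they are, up to constants, local coordinates vanishing at $x$ after subtracting their values). Under the isomorphism \eqref{eq:sharp}, this basis is sent to $(\{x^i,\cdot\})_x$, $(\{e^\mu,\cdot\})_x$, $(\{p^I,\cdot\})_x$, which must therefore form a basis of $\mathcal{T}_x\cM$ and in particular be linearly independent. (Equivalently, this is visible from the invertibility of the matrix displayed in the proof of Proposition~\ref{prop:nondegpoisson}, whose off-diagonal blocks $\{p^I,x^j\}(x)$ and $\{e^\mu,e^\nu\}(x)$ are non-degenerate at $x$.)

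Finally, applying Proposition~\ref{prop:li} with $X_I=\{p^I,\cdot\}$, $Y_\mu=\{e^\mu,\cdot\}$, $Z_i=\{x^i,\cdot\}$ yields linear independence over $C_\cM(U)$, which is the claim. There is no serious obstacle here: the only subtle point is keeping track of the degree shift $q=-2$ when identifying which of the three slots in Proposition~\ref{prop:li} each family of hamiltonian vector fields occupies.
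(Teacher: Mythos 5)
Your proof is correct and follows essentially the same route as the paper: establish pointwise linear independence of the tangent vectors $(\{p^I,\cdot\})_x$, $(\{e^\mu,\cdot\})_x$, $(\{x^i,\cdot\})_x$ from the non-degeneracy of the bracket (the paper cites the conditions \eqref{eq:lix1}--\eqref{eq:lix3} directly, while you phrase it via the isomorphism \eqref{eq:sharp}, which is equivalent), and then conclude by Proposition~\ref{prop:li}. The degree bookkeeping you carry out is also correct.
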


\begin{proof}
The nondegeneracy conditions in \eqref{eq:nodeg} imply that, for every $x\in M$ in the domain of the local coordinates, the
tangent vectors $(X_I)_x := \{p^I,\cdot\}(x)$, $(Y_\mu)_x:=\{ e^\mu,
\cdot\}(x)$, $(Z_i)_x:= \{ x^i,\cdot\}(x)$ are linearly independent
in $\mathcal{T}_x\cM$ (see \eqref{eq:lix1}, \eqref{eq:lix2},
\eqref{eq:lix3}). The result now follows from Proposition
~\ref{prop:li}.
\end{proof}

\subsection{Equivalence with pseudo-euclidean vector bundles}\label{subsec:pseudo}

Let $E\to M$ be a vector bundle. A \textit{pseudo-euclidean}
structure on $E$ is a fiberwise bilinear pairing $\SP{\cdot,\cdot}$
which is symmetric and non-degenerate.

Following
\cite[Thm.~3.3]{royt:graded}, pseudo-euclidean vector bundles are
``equivalent'' to symplectic  $\N$-manifolds of degree 2. For a precise formulation of this statement, consider the category $\mathrm{PsEuc}$, whose objects are pseudo-euclidean vector bundles and morphism are vector-bundle isomorphisms preserving the pseudo-euclidean structures, and let $\mathrm{Symp2Man}$ be the category of degree 2 symplectic  $\N$-manifolds, with morphisms given by symplectomorphisms. 

\begin{thm}[cf. \cite{royt:graded}, Thm.~3.3]\label{thm:pseudoeuc}
There is an equivalence of categories 
$$
\mathrm{PsEuc} \stackrel{\sim}{\to} \mathrm{Symp2Man}.
$$
\end{thm}

This yields, in particular, a bijective correspondence between isomorphism classes of pseudo-euclidean vector bundles and symplectic 
$\N$-manifolds of degree 2.

For illustrative purposes, we will show how 
this well-known result relates to the geometric description of $\N$-manifolds of degree 2 in Theorem~\ref{thm:equivcat}. Indeed, as we will now see, 
there is a natural faithful functor
\begin{equation}\label{eq:faithfunc}
\mathcal{A}: \mathrm{PsEuc} \to \VB2,
\end{equation}
such that the equivalence in Thm~\ref{thm:pseudoeuc} can be viewed as a canonical ``lift''
of the equivalence functor $\mathcal{F}$ in \eqref{eq:F},
\begin{equation}\label{eq:compdiag}
\xymatrix{
\mathrm{PsEuc} \ar[r] \ar[d]_{\mathcal{A}} & \mathrm{Symp2Man} \ar[d] \\
\VB2 \ar[r]_{\mathcal{F}} & \NMan,
}
\end{equation}
where the vertical map on the right is the forgetful functor $ \mathrm{Symp2Man}\to \NMan$.

Recall that any
pseudo-euclidean vector bundle $(E, \SP{\cdot,\cdot})$ over $M$ has
an associated \textit{Atiyah Lie algebroid}
$$
\mathbb{A}_E \to M,
$$
see, e.g., \cite{cannasweinstein,mac:lie} (following \cite{AtiyahAlgebroid}).

As a vector bundle, $\mathbb{A}_E$ is characterized by the property that its sheaf of sections  $\Gamma_{\mathbb{A}_E}$ agrees with the sheaf of {\em derivations} (also called {\em covariant differential operators}) of $E$ that preserve the pseudo-euclidean metric. I.e.,
 for each open subset $U\subseteq M$,
elements in $\Gamma_{\mathbb{A}_E}(U) = \Gamma(\mathbb{A}_E|_U)$ are pairs $(X,D)$, where
$X\in \mathfrak{X}(U)$ and $D\colon \Gamma(E|_U)\to
\Gamma(E|_U)$ is an $\mathbb{R}$-linear endomorphism satisfying
\begin{align}\label{eq:CDO}
D(f e) &= (\pounds_X f) e + f D(e),\\
\pounds_X\SP{e,e'}&=\SP{D(e),e'}+\SP{e,D(e')},\label{eq:CDOpairing}
\end{align}
for $f\in C^\infty(U)$, $e,e' \in \Gamma(E|_U)$.
 The Lie bracket on
$\Gamma_{\mathbb{A}_E}$ is given by commutators and the anchor $\sigma$ of $\mathbb{A}_E$ (also called the {\em symbol map}) is the projection
$(X,D)\mapsto X$. 
The Lie algebroid $\mathbb{A}_E$ is transitive and
fits into the exact sequence
\begin{equation}\label{eq:atiyahseq}
0\longrightarrow \mathfrak{so}(E)\cong \wedge^2E^* \longrightarrow
\mathbb{A}_E \stackrel{\sigma}{\longrightarrow} TM \longrightarrow 0,
\end{equation}
where $\mathfrak{so}(E) \subseteq \mathrm{End}(E)$ is the space of
skew-symmetric endomorphisms
$$
T:E\to E,\;\; \SP{Te,e'}=-\SP{e,Te'},
$$
that is naturally identified with $\wedge^2 E^*$. 

One refers to \eqref{eq:atiyahseq} as the  \textit{Atiyah sequence} of $(E,\SP{\cdot,\cdot})$.

We now define the functor $\mathcal{A}$ in \eqref{eq:faithfunc}:
At the level of objects, it  assigns to
 a pseudo-euclidean vector bundle $(E, \SP{\cdot,\cdot})$ the object $(E_1, \widetilde{E}, \phi_E)$ in $\VB2$ given by
\begin{equation}\label{eq:pseuVB2}
E_1=E, \quad\;\; \widetilde{E}= \mathbb{A}_E^*, \quad \;\; \phi_E\colon \widetilde{E}\twoheadrightarrow \wedge^2 E_1,
\end{equation}
where $\phi_E$ is the map dual to the inclusion $ \wedge^2E^* \to
\mathbb{A}_E$ in \eqref{eq:atiyahseq}; At the level of morphisms, any isomorphism of pseudo-euclidean vector bundles (covering a diffeomorphism of the base manifolds) canonically induces an isomorphism of the corresponding Atiyah sequences, and hence an isomorphism between the corresponding objects in $\VB2$.

The next result will relate the functor $\mathcal{A}$ to symplectic structures in degree 2.

\begin{lemma}\label{lem:symp}
Let $\cM$ be an $\N$-manifold of degree 2
corresponding to an object  $(E_1, \widetilde{E}, \phi_E)$ in $\VB2$.
A symplectic structure on $\cM$
is equivalent to the following additional data  
on $(E_1, \widetilde{E}, \phi_E)$:
\begin{itemize}
\item a pseudo-euclidean metric on $E_1^*$ (or, equivalently, on $E_1$), 
\item a vector-bundle isomorphism $\widetilde{E}^* \cong \mathbb{A}_{E_1}$ commuting with the inclusions $\wedge^2 E_1^* \to \widetilde{E}^*$ and $\wedge^2 E_1^* \to \mathbb{A}_{E_1}$.
\end{itemize}
\end{lemma}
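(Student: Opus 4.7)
My plan is to decode a non-degenerate Poisson bracket of degree $-2$ on $\cM$ one degree at a time, extracting from it the two pieces of data in the statement, and then to reverse the procedure for the converse direction.

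First, the restriction $\{\cdot,\cdot\}\colon(C_\cM)_1\times(C_\cM)_1\to (C_\cM)_0=C^\infty_M$ is, after reading the signs in axioms (br2) and (br3) with $q=-2$ and $k=l=1$, symmetric and $C^\infty_M$-bilinear: the Leibniz remainder $\{f,\phi\}g$ involves a bracket of degrees $1$ and $0$, which sits in degree $-1$ and hence vanishes. Via the identification $(C_\cM)_1=\Gamma_{E_1^*}$ this produces a symmetric tensor $\SP{\cdot,\cdot}$ on $E_1^*$, which is non-degenerate by the second condition in \eqref{eq:nodeg}. Next, for $p\in(C_\cM)_2=\Gamma_{\widetilde{E}^*}$, the hamiltonian vector field $\{p,\cdot\}$ has degree $0$; its restriction to $\Gamma_{E_1^*}$ is a first-order differential operator $D_p$ with symbol $X_p:=\{p,\cdot\}|_{C^\infty_M}$, and axiom (br4) applied with $f=p$, $g=e$, $h=e'$ (sign $+1$, since $(k+q)(l+q)=0$) gives
\[
X_p\SP{e,e'}=\SP{D_p(e),e'}+\SP{e,D_p(e')},
\]
i.e., the metric-compatibility identity \eqref{eq:CDOpairing}. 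Thus $(X_p,D_p)$ is a section of $\mathbb{A}_{E_1^*}$, canonically isomorphic to $\mathbb{A}_{E_1}$, and the $C^\infty_M$-linear assignment $p\mapsto(X_p,D_p)$ defines a bundle map $\alpha\colon\widetilde{E}^*\to\mathbb{A}_{E_1}$. To verify that $\alpha$ commutes with the two inclusions of $\wedge^2 E_1^*$, I would compute $\alpha(\phi_E^*(e\wedge e'))=\alpha(ee')$ using (br3) on $\{ee',\cdot\}$; the resulting skew endomorphism is, under the Atiyah identification $\wedge^2 E_1^*\cong\mathfrak{so}(E_1)$ given by the metric, the one canonically attached to $e\wedge e'$.

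For $\alpha$ to be an isomorphism, the first condition in \eqref{eq:nodeg} forces $m_0=m_2$ (so that $\widetilde{E}^*$ and $\mathbb{A}_{E_1}$ have the same rank) and ensures that the $X_{p^I}$ span $TM$ fibrewise, i.e., that $\sigma\circ\alpha\colon\widetilde{E}^*\to TM$ is surjective; a five-lemma comparison of \eqref{deg2exact} and \eqref{eq:atiyahseq} then yields that $\alpha$ is a bundle isomorphism. For the converse direction, starting from a pseudo-metric on $E_1$ and an isomorphism $\alpha\colon\widetilde{E}^*\to\mathbb{A}_{E_1}$ compatible with both inclusions of $\wedge^2 E_1^*$, I would define the bracket on generators by $\{e,e'\}:=\SP{e,e'}$, $\{p,f\}:=X_p(f)$, $\{p,e\}:=D_p(e)$ (where $(X_p,D_p)=\alpha(p)$), and $\{p,p'\}:=\alpha^{-1}\bigl[(X_p,D_p),(X_{p'},D_{p'})\bigr]$, and extend to all of $C_\cM$ by graded Leibniz.

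The main obstacle in the converse is showing that the resulting formula descends to the quotient \eqref{eq:CM} and satisfies graded Jacobi. Well-definedness on the quotient rests precisely on the commutativity of the inclusion square in the hypothesis: it forces $\{\phi_E^*(e\wedge e'),\cdot\}$, read through $\alpha^{-1}$, to coincide with $\{ee',\cdot\}$ computed by Leibniz from $e$ and $e'$. Jacobi is then verified on triples of generators for each degree combination, where it reduces, respectively, to symmetry of $\SP{\cdot,\cdot}$, the compatibility identity \eqref{eq:CDOpairing}, the Lie-algebroid axioms of $\mathbb{A}_{E_1}$, and its action on $E_1$ through the derivations $D_p$—precisely the classical avatars of (br4) in the various degrees. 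Non-degeneracy of the reconstructed bracket is then immediate from Proposition~\ref{prop:nondegpoisson} via the non-degeneracy of $\SP{\cdot,\cdot}$ and the invertibility of $\alpha$.
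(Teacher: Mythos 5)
Your proposal is correct and follows essentially the same route as the paper's proof: the metric is read off from the degree-1 brackets, the map $\widetilde{E}^*\to\mathbb{A}_{E_1}$ from the degree-2 brackets (with (br3) giving compatibility with the inclusions and the nondegeneracy conditions plus a comparison of the exact sequences \eqref{deg2exact} and \eqref{eq:atiyahseq} giving the isomorphism), and the converse by reconstructing the bracket on generators and checking it descends to the quotient \eqref{eq:CM}. Your write-up is somewhat more explicit than the paper's (e.g., deriving metric compatibility from (br4) and spelling out the Jacobi checks), but there is no substantive difference in approach.
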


\begin{remark}
A general Poisson bracket of degree $-2$ (not necessarily symplectic) on $\cM$ gives rise to a (possibly degenerate) symmetric pairing on $E_1^*$, a
Lie-algebroid structure on $\widetilde{E}^*$ and a  morphism from $\widetilde{E}^*$ to the Lie algebroid of derivations of $E_1^*$ that preserve the pairing, see \cite[Thm.~6.1]{delCarpioThesis}. The nondegeneracy condition in the symplectic case makes this morphism into an isomorphism, leading to the simplified formulation of Lemma~\ref{lem:symp}. \hfill $\diamond$
\end{remark}

\begin{proof}
Let $\cM$ be equipped with a symplectic structure given by the Poisson
bracket
$$
\{\cdot,\cdot\} \colon (C_{\cM})_k\times (C_{\cM})_l\to (C_{\cM})_{k+l-2}.
$$
We will obtain the data described in the lemma by restricting  this bracket to functions of degrees 0, 1 and 2.

The restriction to degree 1 functions,
$$
\{\cdot,\cdot \} \colon \Gamma_{E_1^*}\times \Gamma_{E_1^*}\to
C^\infty_M,
$$
is symmetric and $C^\infty_M$-linear by properties
(br2) and (br3), and non-degenerate by the
second condition in \eqref{eq:nodeg}. So it defines a pseudo-euclidean structure on $E_1^*$, or equivalently on $E_1$, through the identification of $E_1$ with $E_1^*$ via the pseudo-euclidean metric.

The Poisson brackets of functions in degrees 2 and 0 define a
map
$$
\Gamma({\widetilde{E}^*})\to \mathfrak{X}(M),\;\;
\widetilde{e}\mapsto \left(\{\widetilde{e},\cdot\}:C^\infty(M)\to
C^\infty(M)\right),
$$
which is  $C^\infty(M)$-linear, i.e.,
it comes from a vector-bundle map
$$
\rho\colon \widetilde{E}^*\to TM.
$$
The Poisson brackets of functions in degrees 2 and 1 now lead to a map
$\Gamma({\widetilde{E}^*}) \to \Gamma({\mathbb{A}_{E_1^*}})$, 
$\widetilde{e}\mapsto (X, D)$,
where $X=\{\widetilde{e},\cdot\} \colon C^\infty(M)\to C^\infty(M)$ and
\begin{equation}\label{eq:XD}
D= \{\widetilde{e},\cdot\}:\Gamma(E_1^*)\to \Gamma(E_1^*).
\end{equation}
It is a direct verification that this map comes from a vector-bundle map $\tilde{E}^* \to \mathbb{A}_{E_1^*}$. By means of the identification $E_1\cong E_1^*$, we alternatively obtain a map
\begin{equation}\label{eq:deg2id}
\tilde{E}^* {\to} \mathbb{A}_{E_1}.
\end{equation}
It follows from condition (br3) that this map commutes with the inclusions $\wedge^2 E_1^* \to \tilde{E}^*$ and $\wedge^2 E_1^* \to \mathbb{A}_{E_1}$.
Finally, the first nondegeneracy condition in \eqref{eq:nodeg} implies 
the exactness of the sequence 
\begin{equation}\label{eq:exact1}
0 {\longrightarrow} \wedge^2E^* \stackrel{\phi_E^*}{\longrightarrow}  \widetilde{E}^*
\stackrel{\rho}{\longrightarrow} TM \longrightarrow 0.
\end{equation}
The fact that the morphism \eqref{eq:deg2id} commutes with the inclusions and projections of the exact sequences \eqref{eq:exact1} and \eqref{eq:atiyahseq} implies that it is an isomorphism. This shows that a symplectic structure on $\cM$ gives rise to the data in the lemma.

For the converse, we reverse the arguments and use the additional data on $(E_1, \tilde{E}, \phi_E)$ to define the nontrivial Poisson-bracket relations involving functions on $\cM$ of degrees 0, 1 and 2.  (Note that the Poisson bracket on degree 2 functions is identified with the Lie bracket on $\Gamma_{\mathbb{A}_{E_1}}$.) Since $C_\cM$ is locally generated by such functions, it follows from  (br3) that there is a unique Poisson bracket on $C_\cM$ of degree $-2$ satisfying these relations, which is moreover non-degenerate. 
\end{proof}

Let $(E, \SP{\cdot,\cdot})$ be a pseudo-euclidean vector bundle. Its image in $\VB2$
under $\mathcal{A}$ is given in \eqref{eq:pseuVB2}, so the corresponding $\N$-manifold $\cM = \mathcal{F}(\mathcal{A}(E, \SP{\cdot,\cdot}))$ satisfies
\begin{equation}\label{eq:Mdeg12}
(C_\cM)_1 = \Gamma_E,\qquad (C_\cM)_2 = \Gamma_{\mathbb{A}_E}.
\end{equation}
It follows from Lemma~\ref{lem:symp} that $\cM$ is equipped with a natural symplectic structure which, as shown in the proof of that lemma, is determined by the following bracket relations:
\begin{equation}\label{eq:brkrel}
\begin{cases}
\{e_1,e_2\}&=\SP{e_1,e_2}, \\
\{\widetilde{e},f\}&=\pounds_Xf,  \\ 
\{\widetilde{e},e\}&=D(e),  \\
\{\widetilde{e}_1, \widetilde{e}_2\}&=([X_1,X_2],[D_1,D_2]), 
\end{cases}
\end{equation}
for $f\in C^\infty(M)$, $e, e_i \in \Gamma(E)$, $\widetilde{e}=(X,D), \widetilde{e}_i=(X_i,D_i) \in \Gamma(\mathbb{A}_E)$, $i=1,2$.

We can now complete the proof of the theorem.

\begin{proof}[Proof of Theorem~\ref{thm:pseudoeuc}]
Consider the functor given by the composition of \eqref{eq:faithfunc} with \eqref{eq:F},
$$
\mathcal{F}\circ \mathcal{A}: \mathrm{PsEuc} \to \NMan.
$$ 
We will check that it admits a lift with respect to the forgetful functor $ \mathrm{Symp2Man}\to \NMan$, as indicated in \eqref{eq:compdiag}.

Given a pseudo-euclidean vector bundle $(E, \SP{\cdot,\cdot})$, 
we saw above that 
$$
\cM = \mathcal{F}(\mathcal{A}(E, \SP{\cdot,\cdot}))
$$
carries a natural symplectic structure determined by \eqref{eq:brkrel}.
Moreover,  any isomorphism $T: E\to E'$ of pseudo-euclidean vector bundles is assigned to an isomorphism $\mathcal{F}(\mathcal{A}(T))$ between the corresponding $\N$-manifolds of degree 2 that naturally preserves
 the bracket relations \eqref{eq:brkrel}, so it is a symplectomorphism.
We conclude that $\mathcal{F}\circ \mathcal{A}$ admits a canonical lift to a  fully-faithful functor 
  $$
 \widehat{\cF}:   \mathrm{PsEuc}\to \mathrm{Symp2Man}.
  $$ 

Since $\mathcal{F}$ is essentially surjective, any object in $\mathrm{Symp2Man}$ is symplectomorphic to one of the form $(\cM, \{\cdot,\cdot\})$, with
$\cM=\mathcal{F}(E_1,\widetilde{E},\phi_E)$. Lemma~\ref{lem:symp} shows that the symplectic structure on $\cM$ induces a pseudo-euclidean metric $\SP{\cdot,\cdot}$ on $E_1$ in such a way that $\cM$ is symplectomorphic to $\mathcal{F}(\mathcal{A}(E,\SP{\cdot,\cdot}))$ with its natural symplectic form, i.e., to an object in the image of $\widehat{\cF}$, so this functor is essentially surjective.
\end{proof}

\begin{ex}\label{ex:cotangent}
Given a vector bundle $A\to M$, one can define a pseudo-euclidean vector bundle $E:=A\oplus A^*$, with metric
$$
\SP{(a_1,\xi_1),(a_2, \xi_2)}=\xi_2(a_1)+ \xi_1(a_2).
$$
The corresponding degree 2 symplectic $\mathbb{N}$-manifold  is denoted by $T^*[2]A[1]$  and plays the role of the cotangent bundle of $A[1]$ (as in Example~\ref{ex:1Man}) in the context of $\mathbb{N}$-manifolds of degree 2. The projection $T^*[2]A[1]\to A[1]$ is defined
by the natural inclusion
$$
\kappa^\sharp\colon C_{A[1]} \to C_{T^*[2]A[1]},
$$
given by the identity map in degree 0 and 
$$
(C_{A[1]})_1=\Gamma_{A^*} \hookrightarrow \Gamma_{0\oplus A^*}\subseteq \Gamma_{A\oplus A^*} =  (C_{T^*[2]A[1]})_1.
$$
For graded cotangent bundles in higher degrees, see \cite{Cueca21}.
\hfill $\diamond$
\end{ex}

\section{Coisotropic submanifolds}\label{subsec:coisosubm}

For a symplectic manifold $M$, a submanifold $ N\hookrightarrow M$ is called {\em coisotropic} if its sheaf of vanishing ideals is closed under the Poisson bracket,
$$
\{I_N,I_N\}\subseteq I_N,
$$
or, equivalently, if $TN^\omega\subseteq TN$, where $TN^\omega$ is the symplectic orthogonal to $TN$. A coisotropic submanifold $N$ carries a foliation with leaves tangent to $TN^\omega$, and a (local) function on $N$ is called {\em basic} if it is constant along the leaves. An important property of $N$ is that the sheaf of basic functions $(C^\infty_N)_{bas}$ inherits a natural Poisson bracket given by
$$
\{f,g\}_{bas}= \{\tilde{f},\tilde{g}\}|_N,
$$
where $f,g$ are (local) basic functions and $\tilde{f}$, $\tilde{g}$ are arbitrary (local) extensions to $M$. In this section we will present versions of these results for symplectic $\mathbb N$-manifolds of degree 2.

\begin{remark}
The previous discussion, as well as the results in this section, can be extended to the broader class of submanifolds $\iota\colon N \hookrightarrow M$ for which $TN\cap TN^\omega$, which coincides with the
kernel distribution of $\iota^*\omega$, has constant rank. We will restrict ourselves to coisotropic submanifolds for simplicity. \hfill $\diamond$
\end{remark}

\subsection{Two viewpoints}\label{subsec:twoviews}
Let $\cM$ be a symplectic  $\N$-mani\-fold of degree 2, with Poisson
bracket $\{\cdot,\cdot\}$. If $(\iota,\iota^\sharp):\cN\to \cM$ is a 
submanifold with sheaf of vanishing ideals $\cI\subseteq
C_\cM$, we denote by $\mathfrak{N}_\cI \subseteq C_\cM$ its sheaf of
\textit{Lie normalizers}, given for each open subset $U\subseteq M$ by
\begin{equation}\label{eq:normalizer}
\mathfrak{N}_\cI(U) := \{ f\in C_\cM(U)\,|\, \{f,\cI(U)\}\subseteq
\cI(U)  \}.
\end{equation}
Note that local sections of
$\mathfrak{N}_\cI$ are closed under the Poisson bracket, so
$\mathfrak{N}_\cI$ is a sheaf of Poisson algebras.

 We say that $\cN$ is \textit{coisotropic} if
$\cI\subseteq \mathfrak{N}_\cI$,;
i.e., if
\begin{equation}\label{eq:coiso}
\{\cI,\cI\}\subseteq \cI.
\end{equation}


 One can also approach coisotropic submanifolds through the usual geometric condition in terms of symplectic orthogonals, 
 ``$\cT\cN^\omega \subseteq \cT \cN$''. 
 To make sense of the symplectic orthogonal distribution $\cT\cN^\omega$, we  follow the classical viewpoint that regards it as the distribution locally generated by hamiltonian vector fields corresponding to functions in $\cI$.

Let $\mathcal{T}_{\cI^\omega}\subseteq \mathcal{T}\cM$ be the
subsheaf of $C_\cM$-modules given, for each open subset $U\subseteq M$, by vector fields $X\in \cT\cM(U)$ with the property that any $x \in U$ admits a neighborhood where $X$ can be written as a combination of hamiltonian vector fields $\{f,\cdot\}$, with $f$ a section of $\cI$ in that neighborhood.
Consider the sheaf of
$C_\cN$-modules given by
\begin{equation}\label{eq:TNw}
\mathcal{T}\cN^\omega := \iota^* \mathcal{T}_{\cI^\omega}\subseteq
\iota^* \mathcal{T}\cM.
\end{equation}

Let $\mathcal{T}_\cI \subseteq \mathcal{T}\cM$ be the subsheaf
defined in \eqref{eq:TI}.

\begin{lemma}\label{lem:taucond}
The submanifold $\cN$ is coisotropic if and only if
$\mathcal{T}_{\cI^\omega}\subseteq \mathcal{T}_\cI$.
\end{lemma}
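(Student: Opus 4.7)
The plan is to prove this lemma by a direct unpacking of the two definitions: coisotropicity $\{\cI,\cI\}\subseteq \cI$ and the description of $\mathcal{T}_{\cI^\omega}$ as the sheaf locally generated (as a $C_\cM$-module) by hamiltonian vector fields of sections of $\cI$. The main observation is that both sides of the claimed equivalence are essentially the statement ``$\{f,h\}\in \cI$ whenever $f,h\in \cI$,'' just phrased in dual ways (via the Poisson bracket vs.\ via vector fields acting on functions).

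For the forward direction, I would assume $\cN$ is coisotropic and take an arbitrary local section $X\in \mathcal{T}_{\cI^\omega}(U)$. Shrinking $U$, we may write $X=\sum_i g^i\{f_i,\cdot\}$ with $g^i\in C_\cM(U)$ and $f_i\in \cI(U)$. For any $h\in \cI(U)$, one computes
\[
X(h)=\sum_i g^i\{f_i,h\},
\]
and the coisotropicity hypothesis gives $\{f_i,h\}\in \cI(U)$, so $g^i\{f_i,h\}\in \cI(U)$ because $\cI$ is an ideal. Hence $X(h)\in \cI(U)$, proving $X\in \mathcal{T}_\cI(U)$. Signs coming from grading play no role since they do not affect ideal membership.

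For the converse, I would use the tautological fact that for any local section $f\in \cI(U)$, the hamiltonian vector field $X_f=\{f,\cdot\}$ is a section of $\mathcal{T}_{\cI^\omega}(U)$ (it is trivially a ``combination'' of such hamiltonian vector fields). The assumption $\mathcal{T}_{\cI^\omega}\subseteq \mathcal{T}_\cI$ then forces $X_f\in \mathcal{T}_\cI(U)$, i.e.\ $\{f,\cI(U)\}\subseteq \cI(U)$. Since this holds for every local $f\in \cI$, we conclude $\{\cI,\cI\}\subseteq \cI$, i.e.\ $\cN$ is coisotropic.

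There is no real obstacle here: the lemma is essentially a reformulation of the definition of coisotropic in terms of vector fields, and the proof amounts to unwinding the sheaf-theoretic notation. The only point that deserves a word of justification is that it suffices to check the coisotropicity condition $\{\cI,\cI\}\subseteq \cI$ locally (which is immediate from the fact that $\cI$ is a sheaf of ideals and $\{\cdot,\cdot\}$ is a local operation), so the argument above, carried out on each open $U$, gives the global statement.
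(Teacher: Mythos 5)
Your proof is correct and follows essentially the same route as the paper's: the forward direction uses the local expression of a section of $\mathcal{T}_{\cI^\omega}$ as a combination of hamiltonian vector fields of sections of $\cI$ together with the ideal property, and the converse uses that $\{f,\cdot\}$ itself lies in $\mathcal{T}_{\cI^\omega}$ for $f\in\cI$. The only difference is the order in which the two implications are presented, which is immaterial.
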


\begin{proof}
Suppose that $\mathcal{T}_{\cI^\omega}\subseteq \mathcal{T}_\cI$,
and let $f\in \cI(U)$, $U\subseteq M$ open. Then $\{f,\cdot\} \in
\mathcal{T}_{\cI^\omega}(U)\subseteq \mathcal{T}_\cI(U)$, which
implies that $\{f,\cI(U)\}\subseteq \cI(U)$. Hence $\cI(U)\subseteq
\mathfrak{N}_\cI(U)$ for all open $U$, i.e., $\cN$ is coisotropic.

Conversely, let $X\in \mathcal{T}_{\cI^\omega}(U)$. To show that
$X\in \mathcal{T}_\cI(U)$, it suffices to verify that any $x\in U$
admits an open neighborhood $W$ in $U$ for which $X|_W\in
\mathcal{T}_\cI(W)$. By the definition of
$\mathcal{T}_{\cI^\omega}$, one can always find such $W$ where
$$
X|_W = a^l \{f_l,\cdot\},
$$
for $f_l\in \cI(W)$ and $a^l \in C^{\infty}(W)$; then the condition $\{\cI,\cI\}\subseteq \cI$
implies that $X|_W\in \mathcal{T}_\cI(W)$.
\end{proof}

\begin{lemma}\label{lem:tauIwlocal}
Let $\{x^i,e^\mu,p^I\}$ be local coordinates on an open subset
$U\subseteq M$ adapted to $\cN$. Then 
$$
\mathcal{T}_{\cI^\omega}(U) =
\Span_{C_\cM(U)}\{\{x^i,\cdot\},\{e^\mu,\cdot\},\{p^I,\cdot\}\},
$$
for $i=1,\ldots,r_0$, $\mu=1,\ldots,r_1$, $I=1,\ldots,r_2$, where
$r_0|r_1|r_2 = \codim(\cN)$.
\end{lemma}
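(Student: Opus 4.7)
The inclusion $\supseteq$ is immediate from the definition: each of $\{x^i,\cdot\},\{e^\mu,\cdot\},\{p^I,\cdot\}$ (in the stated index ranges) is the hamiltonian vector field of a generator of $\cI$, hence belongs to $\cT_{\cI^\omega}(U)$; since $\cT_{\cI^\omega}$ is a sheaf of $C_\cM$-modules, the $C_\cM(U)$-span of these elements is automatically contained in it.

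For the opposite inclusion, I would fix $X \in \cT_{\cI^\omega}(U)$ and work in a small neighborhood of a given point. By definition we may write $X = \sum_\alpha a_\alpha\,\{f_\alpha,\cdot\}$ with $a_\alpha \in C_\cM$ and $f_\alpha \in \cI$. The regularity of $\cI$ in adapted coordinates (as exploited in the proof of Lemma~\ref{lem:geomideal}) provides, for each $f_\alpha$, a $C_\cM$-linear decomposition in terms of the small-index generators:
$$
f_\alpha = \sum_{i=1}^{r_0} b_\alpha^i\, x^i + \sum_{\mu=1}^{r_1} c_\alpha^\mu\, e^\mu + \sum_{I=1}^{r_2} d_\alpha^I\, p^I.
$$
The central computational tool is the graded Leibniz rule (br3) for the Poisson bracket in its first slot: for a generator $g$ among $\{x^i,e^\mu,p^I\}$ (small indices) and a coefficient $b \in C_\cM$, one has $\{bg,\cdot\} = b\,\{g,\cdot\} + (-1)^{|b||g|}\, g\,\{b,\cdot\}$. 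The first summand is manifestly a $C_\cM$-multiple of a generator hamiltonian vector field, hence lies in the claimed span; applying this to every term in the decomposition of each $\{f_\alpha,\cdot\}$ rewrites $X$ as an element of that span plus residual terms of the form $g\,\{b,\cdot\}$, with $g$ a small-index generator and $b\in C_\cM$ arbitrary.

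The main obstacle I expect is controlling these residual terms $g\,\{b,\cdot\}$. The strategy is to invoke Leibniz in the reverse direction: since $bg\in\cI$, we have $\{bg,\cdot\}\in\cT_{\cI^\omega}(U)$, and the identity $g\,\{b,\cdot\} = \{bg,\cdot\} - (-1)^{|b||g|}\, b\,\{g,\cdot\}$ expresses the residual as the difference of a new ideal hamiltonian and a $C_\cM$-multiple of a small-index generator hamiltonian. One then iterates on $\{bg,\cdot\}$. To guarantee that the induction terminates inside $\Span_{C_\cM(U)}\{\{x^i,\cdot\},\{e^\mu,\cdot\},\{p^I,\cdot\}\}_{i\le r_0,\;\mu\le r_1,\;I\le r_2}$, the delicate step combines the non-degeneracy of the Poisson bracket (Prop.~\ref{prop:nondegpoisson})---which by Cor.~\ref{cor:LI} makes the full family of coordinate hamiltonian vector fields a local basis of $\cT\cM$---with the closure of $\cI$ under $C_\cM$-multiplication, so that no contribution escapes the span of the small-index generator hamiltonian vector fields.
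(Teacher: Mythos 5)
Your containment $\supseteq$ and your opening move for $\subseteq$ --- decomposing each $f_\alpha$ over the small-index generators and applying the Leibniz rule in the first slot --- are fine, and you correctly locate the crux in the residual terms $g\{b,\cdot\}$. But your mechanism for disposing of them is circular: the identity you propose is just the Leibniz rule you have already applied, solved for $g\{b,\cdot\}$, so ``iterating on $\{bg,\cdot\}$'' expands it by that same rule and returns the very term $g\{b,\cdot\}$ you set out to eliminate. No quantity decreases and the induction never closes. Worse, Corollary~\ref{cor:LI} --- which you invoke to force termination --- cuts the other way. Take $g=x^1$ and $b=x^{r_0+1}$ (possible whenever $0<r_0<m_0$; analogues exist in degrees $1$ and $2$): then $bg\in\cI_0$, and the residual $x^1\{x^{r_0+1},\cdot\}$ is a multiple of the \emph{large}-index hamiltonian vector field $\{x^{r_0+1},\cdot\}$ by the nonzero function $x^1$; the $C_\cM(U)$-linear independence of the full family of coordinate hamiltonian vector fields (Cor.~\ref{cor:LI} via Prop.~\ref{prop:li}) precisely forbids rewriting such a term as a combination of the small-index ones. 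What is true, and what should be recorded instead, is that every residual $g\{b,\cdot\}$ lies in $\cI.\mathcal{T}\cM$ because $g\in\cI$; these terms are not absorbed into the span but are annihilated by the restriction map $r^\sharp$ when the lemma is fed into Proposition~\ref{prop:distr} to compute $\mathcal{T}\cN^\omega=\iota^*\mathcal{T}_{\cI^\omega}$. Your proposal conflates ``lies in the span'' with ``lies in the span modulo $\cI.\mathcal{T}\cM$,'' and only the latter can be extracted from the Leibniz computation.

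A second, independent omission: your argument is entirely local (``in a small neighborhood of a given point''), whereas the lemma asserts an equality of $C_\cM(U)$-modules over the whole adapted chart, and membership in $\mathcal{T}_{\cI^\omega}(U)$ only supplies expressions on small neighborhoods. The paper's proof devotes essentially all of its effort to this local-to-global passage: it covers $U$ by such neighborhoods, uses Corollary~\ref{cor:LI} to conclude that the coefficients $a^i_k$, $b^\mu_k$, $c^I_k$ of each local expression are unique, deduces that they agree on overlaps, and glues them by the sheaf axiom. That is the actual role of Corollary~\ref{cor:LI} in this lemma --- uniqueness for gluing --- not the termination of an iteration, and this step is absent from your proposal.
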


\begin{proof}
Any point in $U$ admits a neighborhood $W$ such that
$$
\mathcal{T}_{\cI^\omega}(W) =
\Span_{C_\cM(W)}\{\{x^i,\cdot\},\{e^\mu,\cdot\},\{p^I,\cdot\}\},
$$
for $1\leq i\leq r_0$, $1\leq \mu\leq r_1$, $1\leq I\leq r_2$, since
such $x^i, e^\mu, p^I$ are generators of $\cI(U)$. Let $\{W_k\}$ be an open cover of $U$ such that $\mathcal{T}_{\cI^\omega}(W_k) $ is given as above. Let $X \in \mathcal{T}_{\cI^\omega}(U)$. Then, for each $k$, $X|_{W_k} =
a^i_k\{x^i,\cdot\} + b^\mu_k\{e^\mu,\cdot\}+ c^I_k\{p^I,\cdot\}$
for unique sections $a^i_k$, $b^\mu_k$, $c^I_k$ in $C_\cM(W_k)$, see Corollary~\ref{cor:LI}.
It also follows that, on overlaps  $W_l\cap W_k$, we have that  
 $a^i_l=a^i_k$,
$b^\mu_l=b^\mu_k$ and $c^I_l=c^I_k$. So by the
gluing property of sheaves we can find $a^i$, $b^\mu$, $c^I$   such
that $X= a^i\{x^i,\cdot\} + b^\mu\{e^\mu,\cdot\}+ c^I\{p^I,\cdot\}$.
\end{proof}

\begin{prop}\label{prop:distr}
A submanifold $\cN\to \cM$ is coisotropic if and only if 
$$
\mathcal{T}\cN^\omega\subseteq \mathcal{T}\cN. 
$$ 
In this case,
$\mathcal{T}\cN^\omega$ is an involutive distribution of
$\mathcal{T}\cN$ with rank $r_0 | r_1|r_2 = \mathrm{codim}(\cN)$.
\end{prop}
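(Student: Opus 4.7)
The plan is to use Lemma~\ref{lem:taucond} to transfer the coisotropy condition to an inclusion of sheaves on $\cM$, then pull back along $\iota$ via the identifications $\cT\cN^\omega = \iota^*\cT_{\cI^\omega}$ (by definition \eqref{eq:TNw}) and $\cT\cN = \iota^*\cT_\cI$ (by Prop.~\ref{prop:TN}(a)). Involutivity will then follow from the bracket identity \eqref{eq:commutator} combined with $\{\cI,\cI\}\subseteq \cI$, while the rank count will come from the explicit local generators of Lemma~\ref{lem:tauIwlocal} together with the nondegeneracy of the Poisson bracket.

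For the equivalence, the forward implication is immediate: if $\cN$ is coisotropic, then $\cT_{\cI^\omega}\subseteq \cT_\cI$ by Lemma~\ref{lem:taucond}, and applying $\iota^*$ gives $\cT\cN^\omega \subseteq \cT\cN$. For the converse, I would take $X\in \cT_{\cI^\omega}(U)$ and, using the hypothesis together with $\cT\cN = r^\sharp(\cT_\cI)$, locally pick $Y\in \cT_\cI$ with $r^\sharp(X) = r^\sharp(Y)$. Then $X - Y$ lies in $\ker(r^\sharp) = \cI.\cT\cM \subseteq \cT_\cI$ by the exact sequence \eqref{eq:exseq0}, so $X\in \cT_\cI$ locally, and the sheaf property yields $\cT_{\cI^\omega}\subseteq \cT_\cI$. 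Coisotropy of $\cN$ then follows from Lemma~\ref{lem:taucond}. The only mildly delicate point is recovering the inclusion on $\cM$ from its pullback to $\cN$, but this is handled cleanly by the observation $\cI.\cT\cM\subseteq \cT_\cI$.

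Now assuming $\cN$ is coisotropic, involutivity of $\cT\cN^\omega = \iota^*\cT_{\cI^\omega}$ reduces to involutivity of $\cT_{\cI^\omega}$ itself. Locally, every $X\in \cT_{\cI^\omega}(U)$ has the form $X = \sum a^k\{f_k,\cdot\}$ with $f_k\in \cI$, and the graded Leibniz rule together with \eqref{eq:commutator} expresses the Lie bracket of two such expressions as a sum of terms proportional to $\{f_k,\cdot\}$, $\{g_l,\cdot\}$, and $\{\{f_k,g_l\},\cdot\}$. The last of these lies in $\cT_{\cI^\omega}$ because $\{f_k,g_l\}\in \cI$ by coisotropy, so $[\cT_{\cI^\omega},\cT_{\cI^\omega}]\subseteq \cT_{\cI^\omega}$ as desired.

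For the rank, I would apply Lemma~\ref{lem:tauIwlocal} to write, in coordinates $\{x^i,e^\mu,p^I\}$ adapted to $\cN$, the local sheaf $\cT_{\cI^\omega}|_U$ as the $C_\cM|_U$-span of the Hamiltonian vector fields $\{x^i,\cdot\}$, $\{e^\mu,\cdot\}$, $\{p^I,\cdot\}$ for $i\leq r_0$, $\mu\leq r_1$, $I\leq r_2$. The nondegeneracy of the Poisson bracket, via Prop.~\ref{prop:nondegpoisson} and the block structure of the matrix \eqref{eq:matrix}, shows that the associated tangent vectors at every $x\in U$ are linearly independent in $\cT_x\cM$, hence a fortiori so is the subcollection considered above. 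Coisotropy ensures that each of these tangent vectors lies in $\cT_x\cN\subseteq \cT_x\cM$, and Prop.~\ref{prop:TN}(c) lifts them to a pointwise linearly independent family of local sections of $\cT\cN^\omega$. This exhibits $\cT\cN^\omega$ as an involutive distribution of rank matching $\codim(\cN) = r_0|r_1|r_2$, completing the proof.
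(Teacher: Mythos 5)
Your proposal is correct and follows essentially the same route as the paper: Lemma~\ref{lem:taucond} plus the exact sequence \eqref{eq:exseq0} (with $\ker(r^\sharp)=\cI.\mathcal{T}\cM\subseteq\mathcal{T}_\cI$) for the equivalence, Lemma~\ref{lem:tauIwlocal} with nondegeneracy and Prop.~\ref{prop:TN}(c) for the rank, and \eqref{eq:commutator} with coisotropy and the bracket-preserving property of $r^\sharp$ for involutivity. The only cosmetic difference is that you verify involutivity on arbitrary local combinations $\sum a^k\{f_k,\cdot\}$ rather than on the coordinate generators, which amounts to the same computation.
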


The distribution $ \mathcal{T}\cN^\omega\subseteq \mathcal{T}\cN$ is
called the \textit{null distribution} of the coisotropic
submanifold $\cN$.

\begin{proof}

By Lemma~\ref{lem:taucond}, the first assertion amounts to checking
that
$$
\mathcal{T}_{\cI^\omega}\subseteq \mathcal{T}_\cI \iff \cT\cN^\omega \subseteq \cT \cN.
$$

Note that the restriction morphism \eqref{eq:rsharp} maps 
$\mathcal{T}_{\cI^\omega}$ to $\iota_*\cT\cN^\omega$. Since it maps $\mathcal{T}_\cI$ to $\iota_*\cT\cN$, we see that $\mathcal{T}_{\cI^\omega}\subseteq \mathcal{T}_\cI $ implies that 
$\iota_*\cT\cN^\omega \subseteq \iota_*\cT \cN$, and therefore that $\cT\cN^\omega \subseteq \cT \cN$.
For the reverse direction, 
let $X$ be a section of $\mathcal{T}_{\cI^\omega}$ over a small open subset $U\subseteq M$.
Assuming that $\cT\cN^\omega \subseteq \cT \cN$, the  image of $X$ under \eqref{eq:rsharp} lies in
$\iota_* \cT\cN(U)$, and hence there is $Y\in
\mathcal{T}_{\cI}(U)$ such that $X-Y\in \cI . \mathcal{T}\cM(U)$,
see \eqref{eq:exseq0}. Since $\cI .
\mathcal{T}\cM(U)\subseteq \mathcal{T}_{\cI}(U)$, it follows that
$X\in \mathcal{T}_{\cI}(U)$.

  For the second part of the proposition, consider local coordinates of $\cM$
over $U$ adapted to $\cN$, and let $V=U\cap N$. 
Lemma~\ref{lem:tauIwlocal} implies that
$$
\mathcal{T}\cN^\omega |_V = \Span_{C_\cN|_V}\{X_I,Y_\mu,Z_i\},
$$
for $i=1,\ldots,r_0$, $\mu=1,\ldots,r_1$, $I=1,\ldots,r_2$, where
$X_I$, $Y_\mu$ and $Z_i$ are the images of the local sections
$\{p^I,\cdot\}$, $\{e^\mu,\cdot\}$ and $\{x^i,\cdot\}$ of
$\mathcal{T}_{\cI^\omega}|_U\subseteq \mathcal{T}_\cI|_U$ under \eqref{eq:rest1},
respectively. We have already observed that $\{p^I,\cdot\}(x),
\{e^\mu,\cdot\}(x)$ and $\{x^i,\cdot\}(x)$ are linearly independent
in $\mathcal{T}_x\cM$ for all $x\in U$ (see the proof of
Corollary~\ref{cor:LI}); it follows that $(X_I)_x, (Y_\mu)_x, (Z_i)_x \in
\mathcal{T}_x\cN$ are linearly independent for all $x\in V$ as a
consequence of the isomorphism \eqref{eq:isomTN}.
Hence $\mathcal{T}\cN^\omega\subseteq \mathcal{T}\cN$ is a
distribution.

It follows from \eqref{eq:commutator} and the coisotropicity
condition \eqref{eq:coiso} that the Lie bracket of any two of
$\{x^i,\cdot\}$, $\{e^\mu,\cdot\}$ and $\{p^I,\cdot\}$,
$i=1,\ldots,r_0$, $\mu=1,\ldots,r_1$, $I=1,\ldots,r_2$, lies again
in $\mathcal{T}_{\cI^\omega}|_U$. The involutivity of
$\mathcal{T}\cN^\omega$ follows from the fact that \eqref{eq:rest1}
preserves Lie brackets.
\end{proof}

\subsection{Geometric description of coisotropic submanifolds} 
\label{subsec:geomdescrcoiso}
We now pre\-sent a description of coisotropic submanifolds in classical geometric terms which builds on Theorem~\ref{thm:submanifold}, see Theorem~\ref{thm:coisotropic} below. We begin with some
technical observations.

\subsubsection{\underline{Preliminaries}} 

Let $(E,\SP{\cdot,\cdot})$ be a pseudo-euclidean vector bundle over
$M$, and let $\iota \colon N\hookrightarrow M$ be a submanifold.
The restriction $\iota^*E = E|_{N}$ is a pseudo-euclidean vector
bundle over $N$. Let $K\subset E|_N$ be an isotropic subbundle, so  
\begin{equation}\label{eq:Ered}
\Equot:=K^\perp/K
\end{equation}
is also a pseudo-euclidean vector bundle over $N$.
We now collect some results relating the
Atiyah algebroids of $E$, $E|_N$ and $\Equot$, following Appendix~\ref{app:A}.

Let $\Gamma_{\mathbb{A}_E}^\st{N}$ be the subsheaf of $\Gamma_{\mathbb{A}_E}$ given by metric-preserving derivations whose symbols are tangent to $N$. I.e., for each open subset $U\subseteq M$,
$$
\Gamma_{\mathbb{A}_E}^\st{N}(U) = \{ (X,D)\in \Gamma_{\mathbb{A}_E}(U)\,|\, X|_x\in T_xN,\, \forall x\in U\cap N \}.
$$
Then there is a natural restriction map 
\begin{equation}\label{eq:restrict}
    \Gamma_{\mathbb{A}_E}^\st{N}\twoheadrightarrow \iota_* \Gamma_{\mathbb{A}_{E|_\st{N}}}
\end{equation}
that takes $(X,D)\in  \Gamma_{\mathbb{A}_E}^\st{N}(U)$ to $(X_N,D_N) \in \Gamma_{\mathbb{A}_{E|_\st{N}}}(U\cap N)$, where 
$X_N:=X|_{U\cap N}$, and $D_N$ is defined by 
$$
D_N(e|_{U\cap N}):= D(e)|_{U\cap N}.
$$
It is clear that \eqref{eq:restrict} is onto. If we now consider the subsheaf $\Gamma_{\mathbb{A}_E}^\st{N,K}\subseteq \Gamma_{\mathbb{A}_E}^\st{N}$ defined by 
\begin{equation}\label{eq:newONK}
\Gamma_{\mathbb{A}_E}^\st{N,K}(U)=\{(X,D)\in \Gamma_{\mathbb{A}_E}^\st{N}(U)\,|\, D(\Gamma_{\st{E,K}}(U))\subseteq \Gamma_{\st{E,K}}(U)\},
\end{equation}
then the map \eqref{eq:restrict} restricts to a map
\begin{equation}\label{eq:restrictK}
    \Gamma_{\mathbb{A}_E}^\st{N,K} \twoheadrightarrow \iota_* \Gamma^\st{K}_{\mathbb{A}_{{E|_N}}},
\end{equation}
where $\Gamma^\st{K}_{\mathbb{A}_{{E|_N}}}$ is the subsheaf $\Gamma_{\mathbb{A}_{{E|_N}}}$ given by metric-preserving derivations of $E|_N$ that preserve $\Gamma_{K}$. 

Since sections of $\Gamma^\st{K}_{\mathbb{A}_{{E|_N}}}$ also  preserve $\Gamma_{K^\perp}$ (by compatibility with the pseudo-euclidean metric), we have a natural map
\begin{equation}\label{eq:maptoquot}
    \Gamma^\st{K}_{\mathbb{A}_{{E|_N}}}\twoheadrightarrow \Gamma_{\mathbb{A}_{E_{quot}}},
\end{equation}
defined on sections over any open $V\subseteq N$ by $(X,D)\mapsto (X,[D])$, where $[D]\colon \Gamma_{E_{quot}}(V)\to \Gamma_{E_{quot}}(V)$ is given by
$$
[D]([e]) := [D(e)],
$$
with $e \mapsto [e]$ denoting the projection map $\Gamma_{K^\perp}(V)\to \Gamma_{E_{quot}}(V)$. The fact that \eqref{eq:maptoquot} is onto is shown in Prop.~\ref{prop:atiyahonto}, and we have an exact sequence (see \eqref{eq:exactatiyahquot})
\begin{equation}\label{eq:exact}
0\to \Gamma_{K\wedge E|_N} \to \Gamma^\st{K}_{\mathbb{A}_{{E|_N}}}  \to
\Gamma_{\mathbb{A}_{\Equot}} \to 0.
\end{equation}

\subsubsection{\underline{Geometric characterization}}\label{subsubsec:coisogeom}
We now proceed to the geometric description of coisotropic submanifolds of a
degree 2 symplectic  $\N$-manifold $\cM$. As explained in
$\S$ \ref{subsec:pseudo}, $\cM$ corresponds to a pseudo-euclidean vector bundle $E$, so that, in terms of the associated object in $\VB2$, 
we have identifications
$$
E=E_1\cong E_1^*, \;\; \widetilde{E}^* = \mathbb{A}_E, \;\;
E_2^*= TM,
$$
where $E_2 = \mathrm{ker}(\phi_E)$ and $E_1\cong E_1^*$ via the pseudo-euclidean metric.
The description of an arbitrary submanifold $\cN$ of $\cM$ in
Theorem \ref{thm:submanifold} amounts to the following geometric
data: a submanifold $N \subseteq M$, subbundles $K_1
\subseteq E|_{N}$ and $K_2\subseteq TM|_{N}$, as well as a bundle
map
\begin{equation}\label{eq:phimap}
\phi \colon K_2 \to \frac{\mathbb{A}_E|_N}{K_1\wedge E|_N}
\end{equation}
such that $\pi'\circ \phi = Id$. Here the map $\pi'$ is obtained by
factoring the anchor (symbol) map $\sigma\colon \mathbb{A}_E \to TM$, whose kernel is
$\wedge^2 E$ (see \eqref{eq:exact}), as the composition
\begin{equation}\label{eq:composition}
\mathbb{A}_E|_{N} \stackrel{\pi''}{\longrightarrow}
\frac{\mathbb{A}_E|_N}{K_1\wedge E|_N}
\stackrel{\pi'}{\longrightarrow}
TM|_{N}=\frac{\mathbb{A}_E|_N}{\wedge^2 E|_N},
\end{equation}
cf. \eqref{eq:diag}. Recall (see \eqref{eq:I123} and Thm.~\ref{thm:submanifold}) that the sheaf of vanishing ideals $\cI$ corresponding to
$\cN$ is generated by its homogeneous components in degrees $0$, $1$
and $2$, 
\begin{equation}\label{eq:I012}
\cI_0 = \fl_N,\;\; \cI_1 = \Gamma_\st{E,K_1},\;\; \cI_2 =
\Gamma_\st{\mathbb{A}_E,\widetilde{K}},
\end{equation}
where
\begin{equation}\label{eq:Ktilde2}
\widetilde{K}:=(\pi'')^{-1}(\phi(K_2))\subseteq \mathbb{A}_E|_N.
\end{equation}

\begin{thm}\label{thm:coisotropic}
Coisotropic
submanifolds $\cN$ of codimension $r_0|r_1|r_2$ are equivalent to quadruples
$(N,K,F,\nabla)$, where
\begin{itemize}
\item $\iota \colon N\hookrightarrow M$ is a 
submanifold of codimension $r_0$,
\item $K$ is an isotropic subbundle of
$E|_{N}$ of rank $r_1$, 
\item $F\subseteq TN$ is a regular, integrable
distribution of rank $r_2$, 
\item $\nabla$ is a flat  metric 
$F$-connection 
on the vector bundle $\Equot=K^\perp/K \to N$.
\end{itemize}
\end{thm}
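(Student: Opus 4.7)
The plan is to start from the geometric description of arbitrary submanifolds provided by Theorem~\ref{thm:submanifold} and translate the coisotropicity condition \eqref{eq:coiso} into conditions on the defining data. Using Lemma~\ref{lem:symp} together with \eqref{eq:Mdeg12}, the symplectic structure identifies $E_1 = E \cong E^*$ via the pseudo-euclidean metric, $\widetilde{E}^* = \mathbb{A}_E$, and $E_2^* = TM$. A submanifold $\cN \hookrightarrow \cM$ of codimension $r_0|r_1|r_2$ is thus encoded by a quadruple $(N, K_1, K_2, \phi)$, with $\widetilde{K} = (\pi'')^{-1}(\phi(K_2))$ and vanishing ideal as in \eqref{eq:I012}--\eqref{eq:Ktilde2}; we write $K := K_1 \subseteq E|_N$ via the metric identification.

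Since $\cI$ is generated by its components in degrees $0, 1, 2$ and the Poisson bracket has degree $-2$, by the graded Leibniz rule coisotropicity $\{\cI,\cI\}\subseteq \cI$ reduces to checking the four pairs $\{\cI_i,\cI_j\}\subseteq \cI$ with $(i,j)\in\{(1,1),(0,2),(1,2),(2,2)\}$. We analyze each one using the Poisson-bracket formulas for a pseudo-euclidean vector bundle derived in the proof of Lemma~\ref{lem:symp}:
\begin{itemize}
\item $\{\cI_1,\cI_1\}\subseteq\cI_0$ becomes $\SP{\Gamma_{\st{E,K}},\Gamma_{\st{E,K}}}\subseteq I_N$, i.e., $K$ is isotropic.
\item $\{\cI_0,\cI_2\}\subseteq\cI_0$ says every $(X,D)\in \Gamma_{\mathbb{A}_E,\widetilde{K}}$ has $X|_N\in TN$; since $\sigma(\widetilde{K})=\pi'(\phi(K_2))=K_2$, this is $K_2\subseteq TN$, and we set $F:=K_2$.
\item $\{\cI_1,\cI_2\}\subseteq\cI_1$ says such a $D$ preserves $\Gamma_{\st{E,K}}$; metric-compatibility then forces $D$ to preserve $\Gamma_{\st{E,\Kp}}$ and to descend to a metric-preserving operator on $\Equot = \Kp/K$. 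Using the exact sequence \eqref{eq:exact} from Appendix~\ref{app:A}, the splitting $\phi$ encodes exactly a metric partial connection $\nabla:F\to \mathbb{A}_{\Equot}$ whose symbol is the inclusion $F\hookrightarrow TN$, i.e., a metric $F$-connection on $\Equot$.
\item $\{\cI_2,\cI_2\}\subseteq\cI_2$ says Lie brackets $([X_1,X_2],[D_1,D_2])$ of sections of $\Gamma_{\mathbb{A}_E,\widetilde{K}}$ restrict to $\widetilde{K}$ on $N$. Reading symbols gives $[F,F]\subseteq F$ (so $F$ is a regular integrable distribution of rank $r_2$), and descending to $\mathbb{A}_{\Equot}$ via \eqref{eq:restrictK} and \eqref{eq:exact} gives $[\nabla_{X_1},\nabla_{X_2}]=\nabla_{[X_1,X_2]}$, which is flatness of $\nabla$.
\end{itemize}

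For the converse, given a quadruple $(N,K,F,\nabla)$ as in the statement, we set $K_1:=K$, $K_2:=F$ and define $\phi\colon F\to \mathbb{A}_E|_N/(K\wedge E|_N)$ by lifting $\nabla_X\in \Gamma_{\mathbb{A}_{\Equot}}$ to $\Gamma^{K}_{\mathbb{A}_{E|_N}}$ via \eqref{eq:exact}, then further to $\Gamma^{\st{N,K}}_{\mathbb{A}_E}$ via \eqref{eq:restrictK}; the indeterminacy is exactly $\Gamma_{K\wedge E|_N}$, so $\phi$ is well defined. The four conditions then read off in reverse from the hypotheses on $(N,K,F,\nabla)$, producing a coisotropic submanifold of the stated codimension. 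The step we expect to be the main obstacle is the $\{\cI_2,\cI_2\}$ analysis: one must carefully match the Lie bracket in $\mathbb{A}_E$ (on sections whose restriction to $N$ lies in $\widetilde{K}$) with the Lie bracket in $\mathbb{A}_{\Equot}$ through the two successive quotient maps \eqref{eq:restrictK} and \eqref{eq:maptoquot} from Appendix~\ref{app:A}, and verify that the resulting descent is precisely flatness of the $F$-connection.
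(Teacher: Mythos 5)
Your proposal is correct and follows essentially the same route as the paper: encode the submanifold via Theorem~\ref{thm:submanifold} as $(N,K_1,K_2,\phi)$, split coisotropicity into the four bracket conditions on $\cI_0,\cI_1,\cI_2$, and read off isotropy of $K$, $F=K_2\subseteq TN$, the identification of $\phi$ with a metric $F$-connection on $K^\perp/K$ via the exact sequence \eqref{eq:exact}, and finally involutivity of $F$ plus flatness of $\nabla$ from $\{\cI_2,\cI_2\}\subseteq\cI_2$. The only cosmetic difference is that you extract $K_2\subseteq TN$ directly from the symbol condition, whereas the paper first establishes $\cI_2\subseteq\Gamma^{\st{N,K}}_{\mathbb{A}_E}$ and then passes through the Atiyah sequence of $E|_N$; both are valid.
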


Recall that a metric 
$F$-connection on the pseudo-Euclidean vector bundle $\Equot$ can be defined as a vector bundle map  
\begin{equation}\label{eq:nabla}
\nabla\colon F \to \mathbb{A}_{\Equot}
\end{equation}
such that $\sigma\circ \nabla = \mathrm{Id}_F$. Note that usual (metric) connections correspond to the case where $F=TN$; in general, $F$-connections are referred to as {\em partial} connections.
Flatness corresponds to the map \eqref{eq:nabla} being a Lie algebroid morphism (i.e., preserving Lie brackets on spaces of sections). 
 
\begin{defi} \label{def:quadruples}Given a pseudo-euclidean vector bundle $E\to M$, we will refer to quadruples $(N, K, F, \nabla)$ as in Thm.~\ref{thm:coisotropic} as {\em geometric coisotropic data}.
\end{defi}

As the proof of the theorem will show, see below, the correspondence between coisotropic submanifolds and geometric coisotropic data is given explicitly as follows. 

By composing the maps \eqref{eq:restrictK} and \eqref{eq:maptoquot}, we obtain a map
$$
 \Gamma_{\mathbb{A}_E}^\st{N,K} \to \iota_*\Gamma_{\mathbb{A}_{E_{quot}}}, \quad (X,D) \mapsto (X_N, [D_N]),
$$
where $\Gamma_{\mathbb{A}_E}^\st{N,K}$ was defined in \eqref{eq:newONK}.
In one direction, 
for geometric coisotropic data $(N, K, F, \nabla)$, the corresponding coisotropic submanifold $\cN$ has body $N$ 
and sheaf of vanishing ideals $\cI$ 
specified by the conditions
\begin{equation}\label{eq:I1I2}
\cI_0 = \fl_N, \qquad \cI_1 = \Gamma_\st{E,K},
\end{equation}
and $\cI_2$ given,
on each open subset $U\subseteq M$, by 
\begin{equation}\label{eq:I2claim}
\cI_2(U) = \{(X,D)\in \Gamma_{\mathbb{A}_E}^\st{N,K}(U)\,|\, X_N \in \Gamma_F({U\cap N}),\,
[D_N]=\nabla_{X_N}\}.
\end{equation}

\begin{remark}\label{rem:flat}
The condition $[D_N]=\nabla_{X_N}$ in \eqref{eq:I2claim} can be equivalently written as $[D_N](\Gamma_{E_{quot}}^\st{flat})=0$, where $\Gamma_{E_{quot}}^\st{flat}$ is the sheaf of $\nabla$-flat sections of $E_{quot}$; indeed, 
 in this case, the difference $[D_N]-\nabla_{X_N}$ must vanish since it is $C^\infty_N$-linear and vanishes on $\nabla$-flat sections, which locally generate $\Gamma_{E_{quot}}$. \hfill $\diamond$
\end{remark}

In the opposite direction, given a coisotropic submanifold $\cN$, we extract the corresponding  data $(N, K, F, \nabla)$ from its sheaf of vanishing ideals $\cI$ as follows:  $\iota: N \hookrightarrow M$ is the body of $\cN$; the vector bundle $K\to N$ is defined by the condition
$\Gamma_K=\iota^*\cI_1$;
the distribution $F\subseteq TN$ is defined by 
$$
\Gamma_F=\sigma(\iota^*\cI_2),
$$
where $\sigma:\Gamma_{\mathbb{A}_{E|_N}}\to \Gamma_{TN}$ is the symbol map; and the $F$-connection $\nabla$ is defined by 
\begin{equation}\label{eq:Fnabla}
\nabla_{Y}[e|_N] = [D(e)|_N],
\end{equation}
 for all sections $e$ of $E$ with $e|_N\subset {K^\perp}$ and sections   $Y$ of $F$, and where $D$ is such that
$(X,D)$ is a section of $\cI_2\subseteq \Gamma_{\mathbb{A}_E}$ with $X|_N=Y$.

\begin{proof}[Proof of Thm.~\ref{thm:coisotropic}]
 Let $\mathcal{N}$ be a coisotropic submanifold. Let $(N,K_1,K_2,\phi)$ be the geometric data associated with the submanifold $\cN$ as in Theorem~\ref{thm:submanifold} (the precise correspondence was recalled  just before Theorem \ref{thm:coisotropic}). We will check that this quadruple is equivalent to the one in the theorem.
Indeed, the condition for $\cN$ being
coisotropic is equivalent
to
\begin{itemize}
\item[(a)] $\{\cI_1,\cI_1 \}\subseteq \cI_0,$
\item[(b)] $\{\cI_2,\cI_0\}\subseteq \cI_0,$
\item[(c)] $\{\cI_2,\cI_1 \}\subseteq \cI_1,$
\item[(d)] $\{\cI_2,\cI_2 \}\subseteq \cI_2.$
\end{itemize}

Since $\cI_0 = \fl_N$ and $\cI_1=\Gamma_\st{E,K_1}$, (a) means that if $e_1,
e_2 \in \Gamma_\st{E,K_1}(U)$, then
$$
0=\{e_1,e_2\}|_{U\cap N}=\SP{e_1|_{U\cap N},e_2|_{U\cap N}};
$$
i.e., $K_1\subset E|_{N}$ is isotropic with respect to
$\SP{\cdot,\cdot}$. We set $K:=K_1$.

If $(X,D)\in \cI_2(U)\subseteq \Gamma_{\mathbb{A}_E}(U)$ then (b)
says that $\pounds_X(\fl_N(U))\subseteq \fl_N(U)$, while (c)
says that $D(\Gamma_\st{E,K}(U))\subseteq \Gamma_\st{E,K}(U)$. I.e., (b) and (c) are
equivalent to
\begin{equation}\label{eq:I2inO}
\cI_2\subseteq \Gamma_{\mathbb{A}_E}^\st{N,K},
\end{equation}
see \eqref{eq:newONK}. Since $\cI_2=\Gamma_\st{\mathbb{A}_E,\widetilde{K}}$, we see that
\begin{equation}\label{eq:tildeKinO}
\Gamma_{\widetilde{K}} = \iota^*\cI_2 \subseteq
\iota^*\Gamma_{\mathbb{A}_E}^\st{N,K} = \Gamma_{\mathbb{A}_{E|_N}}^\st{K}.
\end{equation}
In particular, $\widetilde{K}\subseteq \mathbb{A}_{E|_N}\subseteq
\mathbb{A}_E|_N$. Since $K_2 = \widetilde{K}/\wedge^2 E|_N$, it follows from the Atiyah sequence for $E|_N$,
$$
0\to \wedge^2 E|_N \to \mathbb{A}_{E|_N}\to TN \to 0,
$$
that $K_2 \subseteq TN\subseteq TM|_N$. Let $F:=K_2$.

By the exact sequence \eqref{eq:exact}, we have an identification 
$$
\Gamma_{\mathbb{A}_{\Equot}} =
\pi''(\Gamma_{\mathbb{A}_{E|_N}}^\st{K})\subseteq
\frac{\Gamma_{\mathbb{A}_{E}|_N}}{\Gamma_{K\wedge E|_N}}.
$$
We claim that the image of the map $\phi$ in \eqref{eq:phimap} lies in $\mathbb{A}_{E_{quot}} \subseteq \frac{\mathbb{A}_E|_N}{K\wedge E|_N}$. Indeed, 
by \eqref{eq:Ktilde2}, we know that $\phi(F) = \pi''(\widetilde{K})$, and $\pi''(\widetilde{K})\subseteq \mathbb{A}_{E_{quot}}$ since  $\Gamma_{\widetilde{K}} \subseteq \Gamma_{\mathbb{A}_{E|_N}}^\st{K}$, see \eqref{eq:tildeKinO}.
 So $\phi$ is
equivalent to a $C^\infty_M$-linear map $\Gamma_F\to
\Gamma_{\mathbb{A}_{\Equot}}$, and this is precisely a metric
 $F$-connection. Let us denote it by $\nabla$, so we write $\phi(Y)=\nabla_Y$ for $Y\in F$. 

We can now describe $\cI_2 = \Gamma_\st{\mathbb{A}_E,\widetilde{K}}$ as follows. For any open $U\subseteq M$, an element $(X,D)\in \Gamma^{\st{N,K}}_{\mathbb{A}_E}(U)$ is in $\cI_2(U)$ if and only if its restriction $(X_N, D_N) \in \Gamma^{\st{K}}_{\mathbb{A}_{E|_N}}(V)$, where $V=U\cap N$, lies in $\Gamma_{\widetilde{K}}(V)$, which, by  \eqref{eq:Ktilde2} and \eqref{eq:composition}, is equivalent to the condition
$$
\pi''(X_N,D_N) = (X_N, [D_N]) = \phi(X_N) = \nabla_{X_N},
$$
from where the description of $\cI_2$ in \eqref{eq:I2claim} follows.

Finally, condition (d) is equivalent to 
$$
[\Gamma_F,\Gamma_F]\subseteq \Gamma_F, \;\mbox{ and }
\nabla_{[X_N,Y_N]}=\nabla_{X_N}\nabla_{Y_N}-\nabla_{Y_N}\nabla_{X_N},
$$
for all $X_N,Y_N\in \Gamma_F(V)$, $V\subseteq N$ open; i.e., 
$F$ is an integrable distribution and $\nabla$ is flat.
\end{proof}

\begin{ex}\label{ex:bott}
For a manifold $M$, we consider the pseudo-euclidean vector bundle $E:=TM\oplus T^*M$ over $M$, cf. Example~\ref{ex:cotangent}. Then a pair $(N,F)$, where $N\subseteq M$ is a submanifold and $F\subseteq TN$ is an involutive subbundle, naturally gives rise to geometric coisotropic data $(N, K, F, \nabla)$ as follows. We set $K=F\oplus \mathrm{Ann}(TN) \subseteq E|_N$, noticing that we have a natural identification
$$
\frac{K^\perp}{K} =  \frac{TN}{F} \oplus \left ( \frac{TN}{F} \right )^*.
$$
Recall that $TN/F$ carries a canonical flat $F$-connection $\nabla^{Bott}$, known as the {\em Bott connection}, given by
$$
\nabla^{Bott}_Z (\overline{Y}) = \overline{[Z, Y]},
$$
where $\overline{Y}$ denotes the projection of a section $Y$ of $TN$ in $TN/F$.
We then set $\nabla:= \nabla^{Bott} \oplus (\nabla^{Bott})^*$, which is a metric, flat $F$-connection on $K^\perp/K$.
\hfill $\diamond$
\end{ex}

The next example describes the graded analogue of the well-known fact that the annihilator of an involutive distribution on a manifold defines a coisotropic submanifold of its cotangent bundle.

\begin{ex}\label{ex:coisocotangent}
Let $A\to M$ be a vector bundle, and consider the degree 2 symplectic $\mathbb{N}$-manifold  $\cM=T^*[2]A[1]$
corresponding to pseudo-euclidean vector bundle $A\oplus A^*$ (see Example~\ref{ex:cotangent}). 
As in the classical case, one can view vector fields on $A[1]$ as ``fiberwise linear'' functions on $T^*[2]A[1]$: we have a natural (injective) morphism of sheaves of $C_{A[1]}$-modules
\begin{equation}\label{eq:fiberlinsh}
\mathcal{T}({A[1]})\to C_{\cM}[2],
\end{equation}
given by (cf. Example~\ref{ex:VFindeg1}) :
\begin{align*}
     &(\mathcal{T}({A[1]}))_{-1}=\Gamma_A \to    \Gamma_{ A \oplus 0}\subset
     \Gamma_{A\oplus A^*}= (C_{\cM})_1,\\
     & (\mathcal{T}({A[1]}))_{0} \to  (C_{\cM})_2,
 \end{align*}
 where the first map is the inclusion, and the second map takes a derivation $(X,D)$ of $A|_U$ to $(X, D\oplus D^*) \in \Gamma({\mathbb{A}_{A\oplus A^*}|_U})= (C_{\cM})_2(U)$. This map is a morphism of graded Lie algebras (with respect to the Poisson bracket on $C_\cM$).

Any involutive distribution $\mathcal{D}$ on $A[1]$ (in the sense of $\S$ \ref{subsec:frobenius})
generates a regular sheaf of ideals $\mathcal{I}$ in $C_\cM$ via \eqref{eq:fiberlinsh} that satisfies $\{\mathcal{I},\mathcal{I}\}\subseteq \mathcal{I}$, so it defines a coisotropic submanifold $\cN_\cD$ of $\cM$; by analogy with the classical case, $\cN_\cD$ should be thought of as the ``annihilator of $\mathcal{D}$'' in $T^*[2]A[1]$.

In classical geometrical terms, as shown in \cite[Lemmas~3.1, 3.2]{ZZL2}, an involutive distribution $\cD$ on $A[1]$ is given by
\begin{itemize}
 \item a subbundle $B\to M$ of $A$,
\item an involutive distribution $F$ on $M$,
\item a flat $F$-connection $\overline{\nabla}$ on the vector bundle  $A/B\to M$.
\end{itemize}
The coisotropic data $(N, K, F, \nabla)$ corresponding to $\cN_\cD$ are $N=M$, the isotropic subbundle $K=B\oplus 0 \subseteq A\oplus A^*$, the involutive distribution $F$, and the metric flat $F$-connection $\nabla$ on $$K^{\perp}/K=(A\oplus \mathrm{Ann}(B))/(B\oplus 0)=(A/B)\oplus (A/B)^*$$ given by $\overline{\nabla}\oplus \overline{\nabla}^*$. Note that Example~\ref{ex:bott} can be seen as a special case of this example when $N=M$.
\hfill $\diamond$
\end{ex}

We will use the following general remark to obtain natural examples of geometric coisotropic data by means of 
equivariant vector bundles. 

\begin{remark}[Equivariant bundles and partial connections]\label{rem:actionconnection}
For a Lie group $G$, the infinitesimal counterpart of a $G$-action on a vector bundle $A\to N$ by vector-bundle automorphisms is a representation of the action Lie algebroid $\g\ltimes N$ on $A$. Assuming that the $G$-action on $N$ is (locally) free, and letting $F\subseteq TN$ be the distribution tangent to the $G$-orbits, we have an identification of $F$ with $\g\ltimes N$ as Lie algebroids. So a representation of $\g\ltimes N$ on $A$ is equivalent to a flat $F$-connection $\nabla$ on $A$. 
\hfill $\diamond$
\end{remark}

\begin{ex}[Coisotropic data from invariant subbundles]\label{ex:coisodataaction} Let $E\to M$ be a $G$-equivariant vector bundle with an invariant pseudo-euclidean structure. Let $K\to N$ be a $G$-invariant isotropic subbundle of $E\to M$. When the $G$-action on $N$ is (locally) free, we obtain geometric coisotropic data $(N, K, F, \nabla)$ as follows. The $G$-action on $E$ also preserves $K^\perp\to N$, hence $E_{quot}=K^\perp/K $ is $G$-equivariant, and its induced pseudo-euclidean structure is $G$-invariant. We then set $F$ to be the orbit distribution on $N$, and $\nabla$ is the flat metric  $F$-connection corresponding to $G$-action on $E_{quot}$, as in Remark~\ref{rem:actionconnection}.
\hfill $\diamond$
\end{ex}


\subsection{Basic functions} \label{subsec:basic}

Let $\cN$ be a coisotropic submanifold of a degree 2 symplectic $\mathbb{N}$-manifold $\cM$, with $\dim(\cN)=n_0|n_1|n_2$ and
$\codim(\cN)=r_0|r_1|r_2$, and vanishing ideal $\cI$.
Consider the subsheaf of algebras 
\begin{equation}\label{eq:cbas}
    (C_\cN)_{bas} := \iota^{-1}(\mathfrak{N}_\cI/\cI)\subseteq C_\cN.
\end{equation}
More concretely, considering local coordinates on $U\subseteq M$
adapted to $\cN$ and the map $\iota^\sharp \colon C_\cM(U)\to C_\cN(V)$,
where $V=U\cap N$, we see that
\begin{equation}\label{eq:bas}
(C_\cN)_{bas}(V)=\iota^\sharp(\mathfrak{N}_\cI(U)).
\end{equation}
We will refer to $(C_\cN)_{bas}$ as the sheaf of {\em basic functions} on $\cN$ relative to the null distribution, as indicated by property (b) of the next result.

\begin{prop}\label{prop:basic} The following holds:
\begin{itemize}
\item[(a)] $(C_\cN)_{bas}$ is a sheaf of Poisson algebras.
\item[(b)]
For $V \subseteq N$ open,
$$
(C_\cN)_{bas}(V) = \{ f\in C_\cN(V)\,|\, X(f)=0 \; \forall\, X\in
\mathcal{T}\cN^\omega(V) \}.
$$
\end{itemize}
\end{prop}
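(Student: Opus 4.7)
For \textbf{part (a)}, the plan is to realize $(C_\cN)_{bas}$ as the inverse image under $\iota$ of a Poisson quotient sheaf. First I would verify that $\mathfrak{N}_\cI$ is itself a sheaf of graded Poisson subalgebras of $C_\cM$: closure under the product is a routine consequence of the Leibniz rule (br3), together with the fact that $\cI$ is a two-sided multiplicative ideal, while closure under the Poisson bracket is an immediate application of the graded Jacobi identity (br4). Coisotropicity of $\cN$ gives $\cI\subseteq \mathfrak{N}_\cI$, and the defining condition of the normalizer yields $\{\mathfrak{N}_\cI,\cI\}\subseteq \cI$, so $\cI$ is both a multiplicative and a Poisson ideal of $\mathfrak{N}_\cI$. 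Therefore $\mathfrak{N}_\cI/\cI$ is naturally a sheaf of graded Poisson algebras on $M$, and its inverse image $(C_\cN)_{bas}=\iota^{-1}(\mathfrak{N}_\cI/\cI)$ inherits the Poisson structure, sitting as a Poisson subsheaf of $C_\cN=\iota^{-1}(C_\cM/\cI)$.

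For \textbf{part (b)}, since both sides of the asserted equality are sheaves in $V$, I would reduce immediately to the case $V=U\cap N$ for $U\subseteq M$ equipped with adapted coordinates $\{x^i,e^\mu,p^I\}$. By Lemma~\ref{lem:tauIwlocal}, $\mathcal{T}_{\cI^\omega}(U)$ is spanned over $C_\cM(U)$ by the Hamiltonian vector fields $\{x^i,\cdot\}$, $\{e^\mu,\cdot\}$ and $\{p^I,\cdot\}$ of the local generators of $\cI(U)$ (for $i\leq r_0$, $\mu\leq r_1$, $I\leq r_2$), so $\mathcal{T}\cN^\omega(V)=r^\sharp(\mathcal{T}_{\cI^\omega}(U))$ is correspondingly spanned over $C_\cN(V)$ by the images of these vector fields. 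By $C_\cN$-linearity, the condition ``$X(f)=0$ for all $X\in \mathcal{T}\cN^\omega(V)$'' then reduces to the finitely many equations
\begin{equation*}
r^\sharp(\{g,\cdot\})(f)=0, \qquad g\in\{x^i,e^\mu,p^I\}\text{ a local generator of }\cI(U).
\end{equation*}

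The key step bridging the two characterizations is Proposition~\ref{prop:TN}(b). Choosing any local extension $\tilde f\in C_\cM(U)$ with $\iota^\sharp(\tilde f)=f$, that proposition gives $r^\sharp(\{g,\cdot\})(f)=\iota^\sharp(\{g,\tilde f\})$, and the displayed conditions translate to $\{g,\tilde f\}\in \cI(U)$ for each local generator $g$. Using the graded skew-symmetry (br2) and the Leibniz rule (br3), this is in turn equivalent to $\{\tilde f,\cI(U)\}\subseteq \cI(U)$, i.e., to $\tilde f\in \mathfrak{N}_\cI(U)$, which by \eqref{eq:bas} is precisely the condition $f\in (C_\cN)_{bas}(V)$. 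Well-definedness with respect to the choice of extension is automatic, since two extensions differ by an element of $\cI\subseteq \mathfrak{N}_\cI$. I do not anticipate a serious obstacle in this plan; the only point that needs a small amount of care is the local-to-global reduction at the start of (b), but this is immediate from the sheaf axiom since $(C_\cN)_{bas}$ is defined as an inverse-image sheaf and the vanishing condition on $f$ is pointwise in $V$.
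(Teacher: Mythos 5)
Your proposal is correct and takes essentially the same route as the paper: part (a) is the paper's construction of the bracket $\{f,g\}_{bas}=\iota^\sharp\{\tilde{f},\tilde{g}\}$ on representatives, merely repackaged as the Poisson quotient $\mathfrak{N}_\cI/\cI$, and part (b) is exactly the paper's local argument combining Lemma~\ref{lem:tauIwlocal} with Proposition~\ref{prop:TN}(b) to translate the vanishing condition into membership in $\mathfrak{N}_\cI$. No gaps.
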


\begin{proof}
By the local characterization of $(C_\cN)_{bas}$ in \eqref{eq:bas},
we may write any section $f$ in $(C_\cN)_{bas}(V)$ as
$\iota^\sharp(\tilde{f})$, for $\tilde{f}\in \mathfrak{N}_\cI(U)$. Using
the fact that $\{\mathfrak{N}_\cI,\cI\}\subseteq \cI$, we see that
\begin{equation}\label{eq:poisbasic}
 \{f,g\}_{bas}:=\iota^\sharp\{\tilde{f},\tilde{g} \}   
\end{equation}
is a
well-defined Poisson bracket (of degree $-2$) on $(C_\cN)_{bas}(V)$.
Since $\iota^\sharp$ is a map of sheaves $C_\cM\to \iota_*C_\cN$, it
is a simple matter to verify that this Poisson bracket globalizes to
the sheaf $(C_\cN)_{bas}$.

To prove (b), it is enough to consider the case where $U\subseteq M$
has local coordinates $\{x^i,e^\mu,p^I\}$ adapted to $\cN$ and
$V=U\cap N$. Let $f=\iota^\sharp(\tilde{f}) \in C_\cN(V)$, for
$\tilde{f}\in C_\cM(U)$. The condition that $X(f)=0$ for all $X\in
\mathcal{T}\cN^\omega(V)$ is equivalent to
$$
\{x^i, \tilde{f}\}\in \cI(U), \;\; \{e^\mu, \tilde{f}\}\in
\cI(U),\;\; \{p^I, \tilde{f}\}\in \cI(U),
$$
for $1\leq i\leq r_0$, $1\leq \mu\leq r_1$, $1\leq I\leq r_2$, which
means that $\tilde{f}\in \mathfrak{N}_\cI(U)$, i.e., $f\in
(C_\cN)_{bas}(V)$.
\end{proof}

\begin{cor}\label{cor:basic}
Around any point in $N$, there are local coordinates
\begin{equation}\label{eq:coord}
\{x^1,\ldots,x^{n_0},e^1,\ldots,e^{n_1},p^1,\ldots,p^{n_2}\}
\end{equation}
on $\cN$ with respect to which sections of $(C_\cN)_{bas}$ are the
sections of $C_\cN$ which depend only on $x^{j>r_0}$, $e^{\nu>r_1}$,
and $p^{J>r_2}$; in particular, $(C_\cN)_{bas}$ is locally generated
by $((C_\cN)_{bas})_0$, $((C_\cN)_{bas})_1$, and
$((C_\cN)_{bas})_2$.
\end{cor}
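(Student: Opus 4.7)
The plan is to combine Proposition~\ref{prop:basic}(b), which characterizes basic functions as those annihilated by the null distribution, with the graded Frobenius theorem (Theorem~\ref{thm:frob}) applied to $\mathcal{T}\cN^\omega$.

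First, recall from Proposition~\ref{prop:distr} that $\mathcal{T}\cN^\omega\subseteq \mathcal{T}\cN$ is an involutive distribution of rank $r_0|r_1|r_2$ on $\cN$. Applying Theorem~\ref{thm:frob} to $\mathcal{T}\cN^\omega$, any point of $N$ admits a neighborhood $V\subseteq N$ endowed with local coordinates \eqref{eq:coord} on $\cN$ with respect to which $\mathcal{T}\cN^\omega|_V$ is spanned by
$$
\frac{\partial}{\partial x^1},\ldots,\frac{\partial}{\partial x^{r_0}},\;\frac{\partial}{\partial e^1},\ldots,\frac{\partial}{\partial e^{r_1}},\;\frac{\partial}{\partial p^1},\ldots,\frac{\partial}{\partial p^{r_2}}.
$$

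By Proposition~\ref{prop:basic}(b), a section $f\in C_\cN(V)$ lies in $(C_\cN)_{bas}(V)$ if and only if $X(f)=0$ for every $X\in \mathcal{T}\cN^\omega(V)$. Since the module $\mathcal{T}\cN^\omega|_V$ is generated by the partial derivatives displayed above, this condition is equivalent to
$$
\frac{\partial f}{\partial x^i}=0\ (1\le i\le r_0),\quad \frac{\partial f}{\partial e^\mu}=0\ (1\le \mu\le r_1),\quad \frac{\partial f}{\partial p^I}=0\ (1\le I\le r_2).
$$
Writing $f$ in these coordinates as a polynomial in $e^\mu$ and $p^I$ with smooth coefficients in $x^j$, the first family of equations forces the coefficients to depend only on $x^{j>r_0}$, while the second and third families force the polynomial to involve only the generators $e^{\nu>r_1}$ and $p^{J>r_2}$. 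Hence $(C_\cN)_{bas}|_V$ consists precisely of the sections of $C_\cN$ depending only on $x^{j>r_0}$, $e^{\nu>r_1}$, and $p^{J>r_2}$.

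The final assertion follows at once: every such section is a polynomial expression in $e^{\nu>r_1}$ and $p^{J>r_2}$ with coefficients in $C^\infty$-functions of the $x^{j>r_0}$, and these three families of generators are respectively degree 0, 1, and 2 sections of $(C_\cN)_{bas}$. The only possible obstacle is verifying that the Frobenius-adapted coordinates can be taken of the form \eqref{eq:coord}, i.e., with $n_l$ coordinates in each degree $l$; but this is built into Theorem~\ref{thm:frob} since $\cN$ has dimension $n_0|n_1|n_2$ and the distribution has rank $r_0|r_1|r_2 \le n_0|n_1|n_2$.
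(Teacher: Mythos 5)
Your proof is correct and follows essentially the same route as the paper: apply the graded Frobenius theorem (Theorem~\ref{thm:frob}) to the involutive null distribution $\mathcal{T}\cN^\omega$ to obtain adapted coordinates, then invoke Proposition~\ref{prop:basic}(b) to identify basic sections with those annihilated by the corresponding coordinate derivations. The extra detail you supply on why annihilation by the first $r_l$ partial derivatives forces dependence only on the remaining coordinates is a correct elaboration of a step the paper leaves implicit.
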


\begin{proof}
By Frobenius' theorem (Theorem~\ref{thm:frob}), there are local
coordinates \eqref{eq:coord} with respect to which the distribution
$\mathcal{T}\cN^\omega$ is spanned by $\partial/\partial x^j$,
$\partial/\partial e^\nu$, $\partial/\partial p^J$,
$j=1,\ldots,r_0$, $\nu=1,\ldots,r_1$, $J=1,\ldots,r_2$. The result
now follows from Prop.~\ref{prop:basic}(b).
\end{proof}

We will now give a description of the sheaf of basic functions in terms of the geometric data $(N, K, F, \nabla)$ corresponding to $\cN$, as in Thm.~\ref{thm:coisotropic}. Consider  the sheaves $I_N$,
$\Gamma_\st{TM,F}$, $\Gamma_\st{E,K^\perp}$, $\Gamma^\st{N,K}_\st{\mathbb{A}_E}$, and
$\Gamma_\st{E,K^\perp}^\st{flat}$, given by
\begin{equation}\label{eq:Snabla}
\Gamma_\st{E,K^\perp}^\st{flat}(U):= \{e\in \Gamma_\st{E,K^\perp}(U)\,|\,\nabla
[e|_{U\cap N}]=0  \},
\end{equation}
for $U\subseteq M$ open. 

We start with a description of the sheaf of Lie normalizers $\mathfrak{N}_\cI$.

\begin{lemma}\label{lem:NI} 
For $U\subseteq M$ open,
$\mathfrak{N}_\cI$ satisfies
\begin{align*}
(\mathfrak{N}_\cI)_0(U) &= \{f\in C_M^\infty(U)\,|\, X(f)\in \fl_N(U)\;\;\forall \,X\in \Gamma_\st{TM,F}(U) \},\\
(\mathfrak{N}_\cI)_1(U) &= \Gamma_\st{E,K^\perp}^\st{flat}(U),\\
(\mathfrak{N}_\cI)_2(U) &= \{(X,D)\in \Gamma^\st{N,K}_{\mathbb{A}_E}(U)\,|\,
[X,\Gamma_\st{TM,F}(U)]\subseteq \Gamma_\st{TM,F}(U),\,\\
&\qquad\qquad \qquad \qquad \qquad \;\;
D(\Gamma_\st{E,K^\perp}^\st{flat}(U))\subseteq \Gamma_\st{E,K^\perp}^\st{flat}(U) \}.
\end{align*}
\end{lemma}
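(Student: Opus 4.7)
The plan is to compute each homogeneous component $(\mathfrak{N}_\cI)_k$ for $k=0,1,2$ separately. Since $\cI$ is generated as a sheaf of ideals by $\cI_0 + \cI_1 + \cI_2$ (Lemma~\ref{lem:geomideal}), for $f$ of degree $k$ the normalizer condition $\{f,\cI\}\subseteq \cI$ reduces to checking $\{f,\cI_j\}\subseteq \cI_{k+j-2}$ for $j=0,1,2$, which by degree reasons (and $q=-2$) amounts to only a handful of conditions in each $k$. A key preparatory identification I will use throughout is that the symbol map realizes $\sigma(\cI_2(U)) = \Gamma_\st{TM,F}(U)$: the inclusion ``$\subseteq$'' is immediate from \eqref{eq:I2claim}, while for ``$\supseteq$'', given $X\in\mathfrak{X}(U)$ with $X|_N\in \Gamma_F(U\cap N)$, the section $\nabla_{X|_N}\in \Gamma_{\mathbb{A}_{\Equot}}(U\cap N)$ lifts successively through the surjections \eqref{eq:maptoquot} and \eqref{eq:restrictK} (gluing local lifts by a partition of unity) to some $(X,D)\in \Gamma^\st{N,K}_{\mathbb{A}_E}(U)$ with $[D_N]=\nabla_{X|_N}$, which lies in $\cI_2(U)$.

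In degree $0$, only $\{f,\cI_2\}\subseteq \cI_0$ is nontrivial, reading $\pounds_X f\in I_N(U)$ for all $(X,D)\in\cI_2(U)$; by the symbol identification this yields the stated description of $(\mathfrak{N}_\cI)_0$. In degree $1$, for $e\in\Gamma_E(U)$, the bracket with $\cI_1$ is the pairing and forces $\SP{e,\Gamma_\st{E,K}}\subseteq I_N$, i.e.\ $e\in\Gamma_\st{E,K^\perp}(U)$; the bracket with $\cI_2$ requires $D(e)\in\Gamma_\st{E,K}(U)$ for all $(X,D)\in\cI_2(U)$. Metric compatibility of $D$ together with $D(\Gamma_\st{E,K})\subseteq \Gamma_\st{E,K}$ forces $D(e)\in\Gamma_\st{E,K^\perp}$ automatically, so the remaining condition $[D_N][e|_N]=0$ in $\Equot$ --- using $[D_N]=\nabla_{X|_N}$ and that $X|_N$ sweeps out all of $\Gamma_F$ --- exactly says $[e|_N]$ is $\nabla$-flat, i.e.\ $e\in\Gamma^\st{flat}_\st{E,K^\perp}(U)$.

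Degree $2$ is the crux. For $(X,D)\in\Gamma_{\mathbb{A}_E}(U)$, the brackets with $\cI_0$ and $\cI_1$ force $X|_N\in TN$ and $D(\Gamma_\st{E,K})\subseteq \Gamma_\st{E,K}$, placing $(X,D)\in \Gamma^\st{N,K}_{\mathbb{A}_E}(U)$. The remaining condition $\{(X,D),(Y,D')\}=([X,Y],[D,D'])\in\cI_2$ for all $(Y,D')\in\cI_2(U)$ splits into (i) the symbol requirement $[X,Y]|_N\in F$ --- which, as $Y$ ranges over $\Gamma_\st{TM,F}(U)$, is precisely $[X,\Gamma_\st{TM,F}(U)]\subseteq\Gamma_\st{TM,F}(U)$ --- and (ii) the identity $[[D,D']_N]=\nabla_{[X,Y]|_N}$ on $\Equot$. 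The operator $T:=[[D,D']_N]-\nabla_{[X,Y]|_N}$ has vanishing symbol (both terms have symbol $[X,Y]|_N$), hence is $C^\infty$-linear on $\Gamma_{\Equot}$, so its vanishing is detected on the locally generating family of $\nabla$-flat sections. Using $[D'_N]=\nabla_{Y|_N}$ together with $\nabla_{Y|_N}[e|_N]=0$ for flat $[e|_N]$, a short calculation gives
$$T[e|_N] \;=\; -\nabla_{Y|_N}\bigl([D_N][e|_N]\bigr);$$
as $Y$ varies over $\Gamma_F$ and $[e|_N]$ over flat sections, $T=0$ becomes the condition that $[D_N]$ preserves $\nabla$-flat sections, which by the degree-$1$ analysis applied to $D(e)$ is equivalent to $D(\Gamma^\st{flat}_\st{E,K^\perp}(U))\subseteq \Gamma^\st{flat}_\st{E,K^\perp}(U)$.

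I expect the main obstacle to be this last equivalence in degree $2$: cleanly separating the symbol-level normalizer condition on $X$ from the derivation-level preservation of flat sections by $D$, and invoking flatness of $\nabla$ twice --- once to make $T$ be $C^\infty$-linear (so its vanishing is captured on flat sections) and once to kill $\nabla_{[X,Y]|_N}[e|_N]$ on those flat sections.
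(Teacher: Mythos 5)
Your proposal is correct and follows essentially the same route as the paper's proof: you establish the surjectivity of the symbol map $\cI_2(U)\to\Gamma_F(U\cap N)$ first (the paper's map \eqref{eq:surj}), reduce the normalizer condition to brackets against the generators $\cI_0+\cI_1+\cI_2$, and in each degree arrive at the same characterizations, with your degree-$2$ operator $T$ simply making explicit the paper's equivalence between $[[D_N],\nabla_Y]=\nabla_{[X_N,Y]}$ and $[D_N]$ preserving $\nabla$-flat sections.
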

  
\begin{proof} 
Recall from Theorem~\ref{thm:coisotropic} that $\cI_0=\fl_N$,
$\cI_1=\Gamma_\st{E,K}$, and $\cI_2$ is given by \eqref{eq:I2claim}. To show
that a local section $g$ of $C_\cM$ belongs to $\mathfrak{N}_\cI$,
it suffices to check that
$\{g,\cI_i\}\subseteq \cI$, for $i=0,1,2$. Note that, as a consequence of the surjectivity of \eqref{eq:restrictK} and \eqref{eq:maptoquot}, the map
\begin{equation}\label{eq:surj}
\cI_2\to \Gamma_F,
\end{equation}
given on sections over $U\subseteq M$ by $(X,D) \mapsto
X_N=X|_{U\cap N}$, is onto.

If $f\in C^\infty_M(U)$ and $(X,D)\in \cI_2(U)$, then
$\{f,(X,D)\}\in \cI(U)$ means that $X(f)\in \fl_N(U)$. But, by the
surjectivity of \eqref{eq:surj}, $X\in \Gamma_{TM}(U)$ is the symbol
of an element in $\cI_2(U)$ if and only if $X\in \Gamma_\st{TM,F}(U)$. This
provides the desired description of $(\mathfrak{N}_\cI)_0$.

If $e\in \Gamma_E(U)$, then $\{e,\cI_1(U)\}\subseteq \cI(U)$ is
equivalent to $\SP{e,e'}=\fl_N(U)$ for all $e'\in \Gamma_\st{E,K}(U)$, i.e.,
$e\in \Gamma_\st{E,K^\perp}(U)$. The condition $\{e,\cI_2(U)\}\subseteq
\cI(U)$ amounts to $D(e)\in \Gamma_\st{E,K}(U)$ whenever $(X,D)\in \cI_2(U)$.
This is equivalent to $[D_N]([e|_{U\cap N}])=
\nabla_{X_N}([e|_{U\cap N}]) =0$, which in turn means that $e \in
\Gamma_\st{E,K^\perp}^\st{flat}(U)$, since \eqref{eq:surj} is onto.

Let $(X,D)\in \Gamma_{\mathbb{A}_E}(U)$. The conditions $\{ (X,D),
\cI_i(U)\}\subseteq \cI(U)$, $i=0,1$, simply say that $(X,D)\in
\Gamma_{\mathbb{A}_E}^\st{N,K}(U)$. Condition $\{ (X,D), \cI_2(U)  \}\subseteq \cI(U)$ is
equivalent, using the surjectivity of \eqref{eq:surj}, to
\begin{equation}\label{eq:lastcond}
[X,\Gamma_\st{TM,F}(U)]\subseteq \Gamma_\st{TM,F}(U),\;\;
[[D_N],\nabla_{Y}]=\nabla_{[X_N,Y]},
\end{equation}
for all $Y\in \Gamma_F(V)$, where $V=U\cap N$. .
Now note that the second condition in \eqref{eq:lastcond}
is equivalent to $[D_N]:\Gamma_{\Equot}(U)\to \Gamma_{\Equot}(U)$
preserving $\nabla$-flat sections, which is the same as
$D(\Gamma_\st{E,K^\perp}^\st{flat}(U))\subseteq \Gamma_\st{E,K^\perp}^\st{flat}(U)$.
\end{proof}

We will now use the previous result to obtain a geometric description of $(C_\cN)_{bas}$. For that, let us consider the sheaf of basic functions on $N$ with respect to $F\subseteq TN$, denoted by $(C_N^\infty)_{bas}\subseteq C_N^\infty$; i.e.,  
for each open subset $V\subseteq N$, 
\begin{equation}\label{eq:basicfns}
(C_N^\infty)_{bas}(V)=\{f\in C^\infty_N(V)\;|\; X(f)=0\, \, \forall
X\in \Gamma_F(V)\}.
\end{equation}
Recall that $\Gamma^\st{flat}_{E_{quot}}$ denotes the sheaf of $\nabla$-flat sections of $E_{quot}$, which is naturally a sheaf of $(C_N^\infty)_{bas}$-modules.

We also need to consider the vector bundle
\begin{equation}\label{eq:Anabla}
\mathbb{A}_{E_{quot}}^\nabla:= \mathbb{A}_{E_{quot}}/\nabla(F)
\end{equation}
over $N$,  where the vector subbundle $\nabla(F)\subseteq \mathbb{A}_{E_{quot}}$ is the image of \eqref{eq:nabla}.
For a section $(Y,\Delta)$ of  $\mathbb{A}_{E_{quot}}$, we denote its class in $\mathbb{A}_{E_{quot}}^\nabla$ by $\overline{(Y,\Delta)}$.

As explained in $\S$ \ref{subsec:atiyahquot}, the flat $F$-connection  $\nabla$ induces a flat $F$-connection $\overline{\nabla}$ on $\mathbb{A}_{E_{quot}}^\nabla$ by
$$
\overline{\nabla}_Z(\overline{(Y,\Delta)})= \overline{([Z,Y],[\nabla_Z, \Delta])}, 
$$
where $Z$ is a section of $F$, in such a way that a section $\overline{(Y,\Delta)}$ is flat if and only if   
 \begin{equation}\label{eq:flat}
 [Y,\Gamma_F]\subseteq \Gamma_F, \qquad \Delta(\Gamma^\st{flat}_{E_{quot}})\subseteq \Gamma^\st{flat}_{E_{quot}}.
 \end{equation}

\begin{prop}\label{prop:Cbas} For a coisotropic submanifold $\cN$ corresponding to $(N, K, F, \nabla)$, the sheaf $(C_\cN)_{bas}$ is such that 
$$
((C_\cN)_{bas})_0 = (C_N^\infty)_{bas}, \quad ((C_\cN)_{bas})_1 = \Gamma^\st{flat}_{E_{quot}}, \quad ((C_\cN)_{bas})_2 = \Gamma^\st{flat}_{\mathbb{A}^\nabla_{E_{quot}}}.
$$
\end{prop}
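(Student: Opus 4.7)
The overall strategy is to use Lemma~\ref{lem:NI}, which describes the Lie normalizer sheaf $\mathfrak{N}_\cI$ in degrees $0$, $1$, $2$, together with the geometric description of $\cI_0$, $\cI_1$, $\cI_2$ from Theorem~\ref{thm:coisotropic} and \eqref{eq:I2claim}, and compute $(C_\cN)_{bas} = \iota^{-1}(\mathfrak{N}_\cI/\cI)$ degree by degree. In each degree the desired sheaf on $N$ will be obtained by first restricting $\mathfrak{N}_\cI$ to $N$ via the obvious restriction maps and then identifying what the vanishing ideal kills. The key point is that the sheaves appearing on the right-hand side of the proposition (sheaves of flat sections with respect to $\nabla$ or $\overline{\nabla}$) are precisely what remains after one imposes the Lie-normalizer conditions of Lemma~\ref{lem:NI} and quotients by $\cI$.

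For degree $0$, Lemma~\ref{lem:NI} gives $(\mathfrak{N}_\cI)_0(U) = \{f\in C^\infty_M(U) : X(f)\in I_N(U)\ \forall X\in \Gamma_\st{TM,F}(U)\}$. Since any local section of $F$ extends to a section of $\Gamma_\st{TM,F}$, and since $X(f)\in I_N$ depends only on $X|_N$ and $f|_N$, the quotient $\iota^\sharp((\mathfrak{N}_\cI)_0)$ is precisely the sheaf of smooth functions on $N$ annihilated by every section of $F$, that is, $(C^\infty_N)_\st{bas}$. For degree $1$, Lemma~\ref{lem:NI} gives $(\mathfrak{N}_\cI)_1 = \Gamma^\st{flat}_\st{E,K^\perp}$ and by Theorem~\ref{thm:coisotropic}, $\cI_1 = \Gamma_\st{E,K}$. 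Restricting a section of $\Gamma^\st{flat}_\st{E,K^\perp}$ to $N$ and projecting to $E_{quot} = K^\perp/K$ is, by definition of the flatness condition \eqref{eq:Snabla}, a $\nabla$-flat section of $E_{quot}$; conversely, any $\nabla$-flat section of $E_{quot}$ lifts locally to a section of $K^\perp$ over a neighbourhood in $M$ (extending arbitrarily off $N$), yielding an element of $\Gamma^\st{flat}_\st{E,K^\perp}$. The kernel of this restriction-and-projection is exactly $\Gamma_\st{E,K} = \cI_1$, so the degree-$1$ case follows.

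The main obstacle is degree $2$. Here one composes the restriction map \eqref{eq:restrictK}, sending $(X,D)\in \Gamma^\st{N,K}_{\mathbb{A}_E}$ to $(X_N,D_N)\in \Gamma^\st{K}_{\mathbb{A}_{E|_N}}$, with the surjection \eqref{eq:maptoquot} sending $(X_N,D_N)$ to $(X_N,[D_N])\in \Gamma_{\mathbb{A}_{E_{quot}}}$, and then projects to $\mathbb{A}^\nabla_{E_{quot}} = \mathbb{A}_{E_{quot}}/\nabla(F)$. I would show that this composition carries $(\mathfrak{N}_\cI)_2$ \emph{onto} $\Gamma^\st{flat}_{\mathbb{A}^\nabla_{E_{quot}}}$, with kernel equal to $\cI_2$. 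For the image: if $(X,D)\in (\mathfrak{N}_\cI)_2$, then the two bracket conditions of Lemma~\ref{lem:NI} restrict to $[X_N,\Gamma_F]\subseteq \Gamma_F$ and $[D_N](\Gamma^\st{flat}_{E_{quot}})\subseteq \Gamma^\st{flat}_{E_{quot}}$, which are exactly the flatness conditions \eqref{eq:flat} for $\overline{(X_N,[D_N])}\in \mathbb{A}^\nabla_{E_{quot}}$. For surjectivity: given a $\overline{\nabla}$-flat section $\overline{(Y,\Delta)}$, first use the surjectivity of \eqref{eq:maptoquot} and \eqref{eq:restrictK} to lift it locally to some $(X,D)\in \Gamma^\st{N,K}_{\mathbb{A}_E}$; the fact that $\overline{(Y,\Delta)}$ is flat (rather than just any section of $\mathbb{A}_{E_{quot}}$) ensures that $(X,D)$ can be chosen to satisfy the normalizer conditions, up to correction by an element of $\cI_2$ (which corresponds exactly to the ambiguity $\nabla(F)$). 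For the kernel: $\overline{(X_N,[D_N])} = 0$ means $(X_N,[D_N])$ lies in $\nabla(F)$, i.e.\ $X_N\in \Gamma_F$ and $[D_N] = \nabla_{X_N}$, which by \eqref{eq:I2claim} is precisely the characterization of $\cI_2$. Combining these, $\iota^{-1}((\mathfrak{N}_\cI)_2/\cI_2) = \Gamma^\st{flat}_{\mathbb{A}^\nabla_{E_{quot}}}$, completing the proof.
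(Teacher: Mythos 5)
Your proposal is correct and follows essentially the same route as the paper: read off degrees $0$ and $1$ from Lemma~\ref{lem:NI} together with the descriptions of $\cI_0$, $\cI_1$, and in degree $2$ identify $(\mathfrak{N}_\cI)_2/\cI_2$ with $\Gamma^\st{flat}_{\mathbb{A}^\nabla_{E_{quot}}}$ via restriction to $N$ followed by the projections \eqref{eq:maptoquot} and $\mathbb{A}_{E_{quot}}\to\mathbb{A}^\nabla_{E_{quot}}$, with surjectivity coming from the surjectivity of \eqref{eq:restrictK} and \eqref{eq:maptoquot} and the kernel matching \eqref{eq:I2claim}. The only cosmetic difference is that the paper first records $((C_\cN)_{bas})_2$ as a quotient of restricted operators and then exhibits the bijection \eqref{eq:basic2}, whereas you perform the whole composition at once; the content is identical.
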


\begin{proof}
Using Lemma \ref{lem:NI} as well as \ref{eq:I1I2} and \eqref{eq:I2claim} (see also Remark~\ref{rem:flat}), we directly obtain the equalities in degrees 0 and 1, and a description of basic functions in degree 2 as follows: for $V\subseteq N$ open, 
$((C_\cN)_{bas})_2(V)$ is given by operators 
$(Y,D) \in \Gamma^{\st{K}}_{\mathbb{A}_{E|_N}}(V)$ satisfying
$$
[Y, \Gamma_F(V)]\subseteq \Gamma_F(V), \quad  
[D](\Gamma^\st{flat}_{E_{quot}}(V))\subseteq \Gamma^\st{flat}_{E_{quot}}(V), 
$$
modulo those such that $Y\in \Gamma_F(V)$ and $[D](\Gamma_{\Equot}^\st{flat}(V))=0$ (or, equivalently, $[D]=\nabla_{Y}$). We denote the equivalence class of $(Y,D)$ by $\overline{(Y,D)}$.
We then have a natural map 
\begin{equation}\label{eq:basic2}
((C_\cN)_{bas})_2 \to \Gamma^\st{flat}_{\mathbb{A}^\nabla_{E_{quot}}}, \quad \overline{(Y,D)} \mapsto \overline{(Y, [D])},
\end{equation}
which is readily  seen to be injective. Its surjectivity follows from the surjectivity of \eqref{eq:maptoquot}, so we have an isomorphism.
\end{proof}

The Poisson bracket relations in $(C_\cN)_{bas}$ involving functions
of degree 0, 1 and 2 (cf. Prop.~\ref{prop:basic}, part (a)) are
computed to be as follows: for $f\in (C^\infty_N)_{bas}(V)$, $e, e'
\in \Gamma^\st{flat}_{\Equot}(V)$, and
$\overline{(Y, \Delta)}, \overline{(Y', \Delta')} \in \Gamma^\st{flat}_{\mathbb{A}^\nabla_{E_{quot}}}$, we have
\begin{equation}\label{eq:basicprel}
\begin{aligned}
& \{e,e'\} = \SP{e,e'},\\
& \{ \overline{(Y, \Delta)}, f  \} = \pounds_Y(f), \\
& \{ \overline{(Y, \Delta)}, e \} = \Delta(e), \\
& \{ \overline{(Y, \Delta)}, \overline{(Y', \Delta')} \} =
\overline{([Y, Y'],[\Delta, \Delta'])}.
\end{aligned}
\end{equation}

\subsection{Lagrangian submanifolds}\label{subsec:lagrangians}

A submanifold $\cN$ of a degree 2 symplectic $\N$-manifold
$\cM$ is called \textit{lagrangian} if
\begin{equation}\label{eq:lagdist}
\mathcal{T}\cN^\omega = \mathcal{T}\cN.
\end{equation}

\begin{thm}\label{thm:lag}
The following are equivalent:
\begin{itemize}
\item[(a)] $\cN$ is lagrangian;
\item[(b)] $\cN$ is coisotropic and $(C_\cN)_{bas} = \{ f\in C_N^\infty \,|\, f \mbox{ is locally constant }
\}$;
\item[(c)] $\cN$ is coisotropic, and its corresponding quadruple $(N,K, F, \nabla)$ satifies $K=K^\perp$ and $F=TN$;
\item[(d)] $\cN$ is coisotropic and 
$\tdim(\cN) = m_0 + \frac{m_1}{2} =\frac{1}{2}\tdim(\cM)$.\end{itemize}
\end{thm}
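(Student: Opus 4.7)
The plan is to exploit the fact that each of (a)--(d) entails coisotropicity of $\cN$: this is stated outright in (b), (c), and (d), while for (a) it follows from $\cT\cN^\omega \subseteq \cT\cN$ combined with Prop.~\ref{prop:distr}. So I may assume throughout that $\cN$ is coisotropic with associated quadruple $(N,K,F,\nabla)$ and codimension $r_0|r_1|r_2$, and reduce the theorem to the three equivalences (a)$\Leftrightarrow$(c), (a)$\Leftrightarrow$(d), and (a)$\Leftrightarrow$(b).

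The key computational ingredient, used in all three equivalences, is a local rank count refining Prop.~\ref{prop:distr}. In local coordinates adapted to $\cN$, the null distribution $\cT\cN^\omega$ is spanned (locally on $N$) by the hamiltonian vector fields of $x^i$ ($i\le r_0$), $e^\mu$ ($\mu\le r_1$), and $p^I$ ($I\le r_2$). Since the Poisson bracket has degree $-2$, these vector fields have degrees $-2$, $-1$ and $0$ respectively, so at each $x\in N$ the graded fiber $(\cT\cN^\omega)_x$ has dimensions $r_2$, $r_1$, $r_0$ in degrees $0$, $-1$, $-2$, while $\cT_x\cN$ has dimensions $m_0-r_0$, $m_1-r_1$, $m_2-r_2$ in those degrees. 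Using $m_0=m_2$ from Prop.~\ref{prop:nondegpoisson}, the lagrangian equality $\cT\cN^\omega=\cT\cN$ is thus equivalent to the two rank identities
\[
r_1 = \tfrac{m_1}{2} \quad \text{and} \quad r_0 + r_2 = m_0.
\]

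The equivalences (a)$\Leftrightarrow$(c) and (a)$\Leftrightarrow$(d) follow directly from here via Thm.~\ref{thm:coisotropic}. For (c): $K$ is isotropic of rank $r_1$ in fibers of $E$ of rank $m_1$, so $K=K^\perp$ is precisely $r_1=m_1/2$; and $F\subseteq TN$ has rank $r_2$ with $\dim N = m_0-r_0$, so $F=TN$ is precisely $r_0+r_2=m_0$. For (d): the isotropy of $K$ and the inclusion $F\subseteq TN$ give the inequalities $r_1 \le m_1/2$ and $r_0+r_2 \le m_0$, whence $r_0+r_1+r_2 \le m_0 + m_1/2$ with equality iff (a) holds; passing to total dimensions via $\tdim(\cN) = 2m_0+m_1 - (r_0+r_1+r_2)$, this equality reads $\tdim(\cN) = m_0+m_1/2 = \tfrac12 \tdim(\cM)$, which is (d) (reading the printed $\tdim(N)$ as $\tdim(\cN)$).

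For (a)$\Leftrightarrow$(b), the plan is to invoke Prop.~\ref{prop:Cbas}, which gives
\[
((C_\cN)_{bas})_0 = (C^\infty_N)_{bas}, \quad ((C_\cN)_{bas})_1 = \Gamma^\st{flat}_{\Equot}, \quad ((C_\cN)_{bas})_2 = \Gamma^\st{flat}_{\mathbb{A}^\nabla_{\Equot}}.
\]
Assuming (a), hence (c), we have $\Equot = K^\perp/K = 0$, which kills the degree $1$ and $2$ pieces (noting that $\mathbb{A}^\nabla_{\Equot} = TN/F = 0$ as well), while $F=TN$ reduces $(C^\infty_N)_{bas}$ to locally constant functions. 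Conversely, (b) forces each of these three sheaves to reduce to locally constant functions (in degree $0$) or to zero (in higher degrees). I expect the main obstacle of the whole proof to be deducing $\Equot = 0$ from the vanishing $\Gamma^\st{flat}_{\Equot} = 0$: this requires showing that a flat $F$-connection on a nontrivial bundle admits local flat sections spanning the fibers, which I would handle by exhibiting a local flat frame on a Frobenius chart for $F$ via parallel transport along leaves, so that nonvanishing of $\Equot$ at a point would contradict (b). The companion fact that $(C^\infty_N)_{bas}$ being the locally constant functions forces $F=TN$ is easier: the standard observation via Frobenius' theorem on $N$ that a proper involutive distribution always admits local non-constant basic functions transverse to its leaves.
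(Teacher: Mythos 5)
Your proof is correct, and while it reuses the paper's two main ingredients, it reorganizes the logic in a genuinely different way. The paper's chain is (a)$\Leftrightarrow$(b)$\Leftrightarrow$(c)$\Leftrightarrow$(d): it gets (a)$\Leftrightarrow$(b) almost for free from Prop.~\ref{prop:basic}(b) together with the fact that a function killed by \emph{all} vector fields is locally constant, then extracts (c) from (b) via Prop.~\ref{prop:Cbas} (using Lemma~\ref{lem:NI} to kill the degree-$2$ part of $(C_\cN)_{bas}$ when $K=K^\perp$ and $F=TN$), and finally proves (c)$\Leftrightarrow$(d) by exactly the inequality argument you use ($\rank(K)\le m_1/2$, $\rank(F)\le \dim N$, equality in the sum forces equality in each). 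What you do differently is to characterize (a) \emph{directly} by the pointwise rank count of $\cT\cN^\omega$ against $\cT\cN$, exploiting the degree shift of hamiltonian vector fields under the degree $-2$ bracket; this makes (a)$\Leftrightarrow$(c) and (a)$\Leftrightarrow$(d) pure arithmetic, independent of the basic-function machinery, and relegates Prop.~\ref{prop:Cbas} to the single equivalence with (b). Your observation that $\mathbb{A}^\nabla_{\Equot}=TN/F=0$ when $\Equot=0$ and $F=TN$ is also a small simplification over the paper's appeal to Lemma~\ref{lem:NI} for the degree-$2$ component. Two points you should make explicit: first, the backward implication ``equal ranks in each degree $\Rightarrow$ $\cT\cN^\omega=\cT\cN$'' needs the (easy) fact that a graded distribution of full rank in every degree is all of $\cT\cN$ — cleanest via the graded Frobenius theorem (Thm.~\ref{thm:frob}), which applies since $\cT\cN^\omega$ is involutive by Prop.~\ref{prop:distr}; second, the existence of local flat frames for the flat $F$-connection, which you correctly flag as the crux of (b)$\Rightarrow$(c), is indeed available (the paper uses it repeatedly, e.g.\ in Remark~\ref{rem:flat} and the proof of Thm.~\ref{thm:reducible}), so your deduction of $\Equot=0$ from $\Gamma^{\st{flat}}_{\Equot}=0$ goes through.
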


We obtain the following known geometric description of lagrangian submanifolds (see  \cite{severa:sometitle}) as a direct consequence of part (c) of the theorem. 

Let $E\to M$ be the pseudo-euclidean vector bundle corresponding to $\cM$.  

\begin{cor}\label{cor:lag}
Lagrangian submanifolds of $\cM$ are equivalent to 
lagrangian subbundles $K\to N$ of $E\to M$. 
Explicitly, the lagrangian submanifold corresponding to $K\to N$ has body $N$ and sheaf of vanishing ideals determined by $\cI_0=\fl_N$, $\cI_1=\Gamma_\st{E,K}$ and $\cI_2 = \Gamma_{\mathbb{A}_E}^\st{N,K}$.
\end{cor}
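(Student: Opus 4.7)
The plan is to deduce this directly from Theorem~\ref{thm:lag}(c), which already identifies lagrangian submanifolds with coisotropic data $(N,K,F,\nabla)$ satisfying $K=K^\perp$ and $F=TN$, and then to observe that under these two conditions the remaining data $F$ and $\nabla$ become trivial/forced, so that only the pair $(N,K)$ survives.

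First I would spell out what $K=K^\perp$ and $F=TN$ imply about the remaining pieces of the quadruple. Since $K$ is lagrangian we have $K^\perp/K = 0$, so $E_{quot}$ is the zero bundle over $N$. A metric $F$-connection on the zero bundle is the zero morphism $\nabla\colon F\to \mathbb{A}_{E_{quot}}=0$, which is automatically flat. Thus the only non-redundant information in $(N,K,F,\nabla)$ is $(N,K)$, with $K\subseteq E|_N$ lagrangian. Conversely, any lagrangian subbundle $K\to N$ canonically extends to a quadruple $(N,K,TN,0)$ that fulfills the hypotheses of Theorem~\ref{thm:coisotropic} (the integrability and flatness conditions are vacuous), and by Theorem~\ref{thm:lag}(c) the resulting coisotropic submanifold is lagrangian. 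This establishes the stated bijective correspondence.

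Next I would derive the explicit description of the vanishing ideal by specializing \eqref{eq:I1I2} and \eqref{eq:I2claim}. The equalities $\cI_0 = I_N$ and $\cI_1 = \Gamma_\st{E,K}$ hold for any coisotropic submanifold, so nothing needs to be checked there. For $\cI_2$, recall that \eqref{eq:I2claim} describes it as the set of $(X,D)\in \Gamma_{\mathbb{A}_E}^\st{N,K}(U)$ satisfying $X_N\in \Gamma_F(U\cap N)$ and $[D_N]=\nabla_{X_N}$. Since $F=TN$, the first condition is automatic for elements of $\Gamma_{\mathbb{A}_E}^\st{N,K}$ (whose symbols are already tangent to $N$). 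Since $E_{quot}=0$, the second condition $[D_N]=\nabla_{X_N}=0$ holds vacuously. Hence $\cI_2 = \Gamma_{\mathbb{A}_E}^\st{N,K}$, as claimed.

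The only point requiring any care is verifying that when $K$ is lagrangian the forced choice $F=TN$ is consistent with Theorem~\ref{thm:coisotropic}: one must check that the resulting sheaf of ideals is indeed regular and recovers a submanifold. But this follows because lagrangian subbundles of a pseudo-euclidean vector bundle have constant rank equal to $\tfrac{1}{2}\rank(E)$, so the induced subbundles $K_1=K$ and $\widetilde{K}$ (arising as in \eqref{eq:Ktilde2} from $K_2=TN$ and the zero map $\phi$) have constant rank, fitting the framework of Theorem~\ref{thm:submanifold}; the dimension count then matches part (d) of Theorem~\ref{thm:lag}, confirming that the construction is well-posed. I do not anticipate any serious obstacle: the whole argument is essentially unwinding definitions once Theorem~\ref{thm:lag}(c) is in place.
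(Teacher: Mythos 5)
Your proposal is correct and follows exactly the route the paper intends: Corollary~\ref{cor:lag} is stated there as a direct consequence of Theorem~\ref{thm:lag}(c), and your unwinding of the quadruple $(N,K,F,\nabla)$ and of \eqref{eq:I1I2}--\eqref{eq:I2claim} supplies precisely the omitted routine details. One small imprecision: since $E_{quot}=0$, the Atiyah algebroid $\mathbb{A}_{E_{quot}}$ is not the zero bundle but is canonically $TN$ (its Atiyah sequence degenerates to $0\to 0\to \mathbb{A}_{E_{quot}}\to TN\to 0$), so $\nabla$ is the identity section rather than the zero map --- but it is still uniquely forced and automatically flat, so your conclusion that only $(N,K)$ carries information, and that the condition $[D_N]=\nabla_{X_N}$ in \eqref{eq:I2claim} is vacuous, stands unchanged.
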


\begin{proof}[Proof of Theorem~\ref{thm:lag}]

The equivalence between (a) and (b) is a consequence of
Prop.~\ref{prop:basic}(b) and the fact, verified by a direct computation, that, for any  $\N$-manifold of degree 2 $\cN$ and $f\in C_\cN(V)$ (with
$V$ an open subset in the body),  the condition $X(f)=0$ for all $X\in \mathcal{T}\cN(V)$ holds
if and only if $f\in C^\infty(V)$ and is locally constant.

From Prop.~\ref{prop:Cbas} we see that $F=TN$ and $K=K^\perp$ if and
only if $((C_\cN)_{bas})_0\subseteq C_N^\infty$ is the space of
locally constant functions and $((C_\cN)_{bas})_1$ is zero. So (b)
implies (c). If we assume that (c) holds, then Lemma~\ref{lem:NI}
shows that $(\mathfrak{N}_\cI)_2= \Gamma_{\mathbb{A}_E}^\st{N,K} = \cI_2$. It follows that
$((C_\cN)_{bas})_2$ is zero, and (b) holds since $(C_\cN)_{bas}$ is locally generated in degrees 0, 1 and 2, see
Cor.~\ref{cor:basic}.

If $n_0| n_1| n_2 = \dim(\cN)$, then (c) implies that $n_1=
m_1-\rank(K)=m_1/2$ and $n_2=m_2-n_0=m_0-n_0$. So $\tdim(\cN)=m_0+
m_1/2 = \tdim(\cM)/2$,  which is the condition in (d) . Conversely, note that
$$n_0+n_1+n_2 =
n_0+
m_1-\rank(K) + m_0-\rank(F),
$$
and suppose that this agrees with $m_0+ m_1/2$. Since $\rank(K)\leq
m_1/2$ (as $K$ is isotropic), we have
$$
n_0+ m_1/2 + m_0-\rank(F)\leq n_0+ m_1-\rank(K) +
m_0-\rank(F)=m_0+m_1/2,
$$
hence $\rank(F)\geq n_0$. But since $F\subseteq TN$, we must have
$F=TN$. Hence $\tdim(\cN)=m_1-\rank(K)+m_0 = m_1/2 + m_0$. This
implies that $\rank(K)=m_1/2$, so $K=K^\perp$.
 Hence (d) implies (c).
\end{proof}

\begin{ex}\label{ex:conormal}
For a vector bundle $A\to M$, consider the graded cotangent bundle $T^*[2]A[1]$ (Example~\ref{ex:cotangent}), corresponding to the pseudo-euclidean vector bundle $E=A\oplus A^*$.
Given  a vector subbundle $B\to N$ of $A$, we obtain a lagrangian subbundle $K = B\oplus \mathrm{Ann}(B)\subseteq E$. 
The corresponding lagrangian submanifold of $T^*[2]A[1]$ is denoted by $\nu^*[2] B[1]$ and plays the role of the conormal bundle of the submanifold $B[1]\subseteq A[1]$. 
\hfill $\diamond$
\end{ex}

\section{Reduction of coisotropic submanifolds}\label{sec:coisoreduction}
The classical setting of coisotropic reduction is that of  a symplectic manifold $(M, \omega)$ and a coisotropic submanifold $\iota\colon N \to M$  whose null foliation is  simple, i.e., its leaf space is a smooth manifold $\underline{N}$ such that the quotient map $p\colon N \to \underline{N}$ is a surjective submersion. Then $\underline{N}$ acquires a unique symplectic structure $\omega_{red}$ such that $p^* \omega_{red}=\iota^*\omega$. Equivalently, recalling that  $C^\infty(N)_{bas}$ is a Poisson algebra, $\omega_{red}$ is characterized (in terms of its Poisson bracket) by the fact that the identification 
$$
p^*\colon C^\infty(\underline{N})\to C^\infty(N)_{bas}
$$ 
is a Poisson isomorphism.

In this section we present coisotropic reduction for degree 2 symplectic  $\mathbb{N}$-manifolds. We start by recalling general facts about quotients of vector bundles along surjective submersions from \cite[$\S$ 2.1]{mac:lie}.

\subsection{Quotients of vector bundles along submersions}\label{subsec:redvb}
Any surjective submersion  $p:N\to \Mred$ gives rise to a {\em submersion groupoid} $\mathcal{R}_p :=  N\times_{\Mred} N$, given by the equivalence relation that sets $x, y \in N$ as equivalent if $p(x)=p(y)$. The pullback $p^*\underline{A}$ of any vector bundle $\underline{A}\to \Mred$ carries a natural representation of $\mathcal{R}_p$. Conversely, as proven in \cite[Thm.~2.1.2]{mac:lie}, if a vector bundle $A\to N$ is equipped with a representation of $\mathcal{R}_p$, then there exists a vector bundle $\underline{A} \to \Mred$, unique up to isomorphism, such that $A \cong p^*\underline{A}$ through a vector-bundle isomorphism that preserves the representations of $\mathcal{R}_p$.

Let $F\subseteq TN$ be a distribution on a manifold $N$.
We will refer to $F$ as \textit{simple} if $F$ is integrable and its underlying foliation
is simple, in the sense that
 there is a manifold $\Mred$ and a
surjective submersion $p:N\to \Mred$ with connected fibres so that
$\ker(dp)_x = F_x$ for all $x\in N$ (so $\Mred$ is identified with the leaf space of the
foliation). The pull-back map $ C_{\Mred}^\infty \to
p_*C_N^\infty$, defined for each local section $f$ of $
C_{\Mred}^\infty$ by $p^*f=f\circ p$, provides an identification
\begin{equation}\label{eq:basicid}
C_{\Mred}^\infty \cong p_*(C_N^\infty)_{bas},
\end{equation}
where $(C_N^\infty)_{bas}\subseteq C_N^\infty$ is as in \eqref{eq:basicfns}.

Let $F$ be a simple distribution  on $N$ with surjective submersion $p\colon N\to \Mred$, and let $A\to N$ be a vector bundle.
A representation of $\mathcal{R}_p$ on $A$ gives rise, through differentiation, to a representation of $F\to N$ (viewed as a Lie algebroid) on $A$, which is equivalent to a flat $F$-connection $\nabla$ on $A$. It is not always possible to integrate such $\nabla$ to a representation of $\mathcal{R}_p$; when this is the case, we say that $\nabla$ has {\em trivial holonomy}. 
The first part of the following lemma is a consequence of \cite[Thm.~2.1.2]{mac:lie} mentioned above.

\begin{lemma}\label{lem:hol}
Let $F$ be a simple distribution on $N$ with surjective submersion $p\colon N\to \Mred$, and let $A\to N$ be a vector
bundle equipped with a flat $F$-connection $\nabla$.
Then $\nabla$ has trivial holonomy if and only if there is
a vector bundle $\underline{A}\to \Mred$ (unique, up to isomorphism) fitting into the pull-back
diagram
$$
\xymatrix{ A  \ar[r]^{} \ar[d]_{} & \underline{A} \ar[d]^{} \\
N \ar[r]_{p} &  \Mred }
$$
and such that the natural pull-back map $\Gamma_{\underline{A}}\to
p_*\Gamma_A$ is an isomorphism onto $p_*\Gamma^\st{flat}_A$. 
Moreover,
if $A$ is pseudo-euclidean and $\nabla$ is metric, then
$\underline{A}$ inherits a pseudo-euclidean structure for which the
pull-back map is an isometry.
\end{lemma}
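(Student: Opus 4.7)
The plan is to reduce the lemma to the general result \cite[Thm.~2.1.2]{mac:lie} on descent of vector bundles along surjective submersions, invoked earlier in \S\ref{subsec:redvb}, by interpreting trivial holonomy as an integrated action of the submersion groupoid $\mathcal{R}_p = N\times_{\Mred}N$.

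First I would make explicit the dictionary: differentiating along the source-fibres converts representations of $\mathcal{R}_p$ on a vector bundle $A\to N$ into representations of the Lie algebroid $F$ on $A$, i.e., flat $F$-connections. By the definition of trivial holonomy recalled in the text, $\nabla$ is obtained this way if and only if it has trivial holonomy. Granting this, I would apply \cite[Thm.~2.1.2]{mac:lie} directly: the $\mathcal{R}_p$-representation on $A$ yields a vector bundle $\underline{A}\to\Mred$, unique up to isomorphism, together with an isomorphism $A\cong p^*\underline{A}$ intertwining the canonical and given $\mathcal{R}_p$-representations, producing the desired pull-back diagram. Conversely, if such a diagram exists then the canonical $\mathcal{R}_p$-representation on $p^*\underline{A}$ differentiates to a flat $F$-connection which must coincide with $\nabla$ (as shown below via the flat-sections characterisation), so $\nabla$ has trivial holonomy.

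Next I would verify the sheaf-theoretic statement. For $W\subseteq \Mred$ open, the pull-back map $p^*\colon \Gamma_{\underline{A}}(W)\to \Gamma_A(p^{-1}(W))$ lands in $\Gamma_A^{\st{flat}}(p^{-1}(W))$, since sections of the form $p^*s$ are invariant under the $\mathcal{R}_p$-action and hence $\nabla$-flat. For the reverse inclusion, given a $\nabla$-flat section $\sigma$ on $p^{-1}(W)$, trivial holonomy means $\sigma$ is $\mathcal{R}_p$-invariant on each connected fibre, and connectedness of fibres (built into our definition of simple) ensures that $\sigma$ descends to a well-defined section of $\underline{A}$ over $W$. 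Injectivity of $p^*$ on $\Gamma_{\underline{A}}$ is immediate from surjectivity of $p$. It follows that $p^*\colon \Gamma_{\underline{A}}\to p_*\Gamma_A^{\st{flat}}$ is an isomorphism of sheaves on $\Mred$.

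Finally, for the pseudo-euclidean part, the hypothesis that $\nabla$ is metric means that the $F$-connection preserves the fibrewise pairing, so integration of $\nabla$ yields an $\mathcal{R}_p$-action by isometries on the fibres of $A$. Since $\underline{A}_{p(x)}$ is realised as the quotient of $A_x$ by the identifications induced by $\mathcal{R}_p$, a well-defined pseudo-euclidean pairing on $\underline{A}$ is obtained by transporting the one on $A$, and by construction the isomorphism $A\cong p^*\underline{A}$ is a fibrewise isometry, so $p^*$ is an isometry. The main obstacle is the careful sheaf-theoretic passage in the middle step; everything else is a direct packaging of \cite[Thm.~2.1.2]{mac:lie} together with the equivalence between trivial holonomy and integrability of $\nabla$ to an $\mathcal{R}_p$-representation.
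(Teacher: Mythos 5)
Your proposal is correct and follows essentially the same route as the paper: the paper also defines trivial holonomy as integrability of $\nabla$ to a representation of the submersion groupoid $\mathcal{R}_p$ and then deduces the first part directly from \cite[Thm.~2.1.2]{mac:lie}, identifying $\Gamma_{\underline{A}}$ with $p_*\Gamma_A^{\st{flat}}$ exactly as you do. For the metric statement the paper's one-line argument — that $\SP{\Gamma_A^{\st{flat}},\Gamma_A^{\st{flat}}}\subseteq (C_N^\infty)_{bas}$, so the pairing descends through the identifications $C^\infty_{\Mred}\cong p_*(C_N^\infty)_{bas}$ and $\Gamma_{\underline{A}}\cong p_*\Gamma_A^{\st{flat}}$ — is equivalent to your fibrewise-isometry argument (both rely on connectedness of the fibres of $p$).
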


Regarding the second assertion in the lemma, note that if $A$ has a pseudo-euclidean structure $\SP{\cdot,\cdot}$ and $\nabla$ is metric, then
\begin{equation}\label{eq:metric}
\SP{\Gamma_A^\st{flat},\Gamma_A^\st{flat}}\subseteq (C_N^\infty)_{bas}.
\end{equation}
By means of the identifications $C^\infty_{\Mred}\cong
p_*(C_N^\infty)_{bas}$ and
\begin{equation}\label{eq:p1}
  \Gamma_{\underline{A}}\cong
p_* \Gamma_A^\st{flat},
\end{equation}
we see that $\SP{\cdot,\cdot}$ induces a pseudo-euclidean structure on $\underline{A}$.

\begin{defi}\label{def:vbquot}
  We refer to $\underline{A}$ in the previous lemma as the {\em quotient} of $A\to N$ with respect to $F$ and $\nabla$. 
\end{defi}

\begin{ex}[Quotients of tangent and cotangent bundles]\label{ex:tanquot}
For a manifold $N$ with an involutive distribution $F$, consider $A=TN/F$ equipped with the Bott connection $\nabla^{Bott}$, see Example~\ref{ex:bott}. In this case, flat sections of $A$ are given by images of ``projectable'' vector fields in $N$ (i.e., vector fields $Y$ satisfying $[Y,\Gamma(F)]\subseteq \Gamma(F)$), see \eqref{eq:flat}.
Whenever $F$ is simple, with quotient map $p: N\to \underline{N}$, the quotient of $A$ with respect to $F$ and $\nabla^{Bott}$ is well defined and given by $\underline{A} =  T \underline{N}$. 
Dually, the quotient of $\mathrm{Ann}(F)=(TN/F)^*$ with respect to $F$ and $(\nabla^{Bott})^*$ is $T^*\underline{N}$. \hfill $\diamond$
\end{ex}

The following special case will be useful in Section~\ref{sec:momap}.
 
\begin{ex}[Quotients of vector bundles by group actions]\label{ex:equivquot}
Let $G$ be a Lie group,  and let $A\to N$ be a $G$-equivariant vector bundle such that the $G$-action on $N$ is free and proper. Then the quotient map $p\colon N \to N/G$ is a surjective submersion, and there is a well-defined vector bundle 
$$
A/G \to N/G
$$  
with the property that $p^*(A/G)=A$ and $\Gamma(A/G)$ is identified with the space of $G$-invariant sections of $A$. In this setting,  the $G$-action on $A$ is equivalent to a representation of the action groupoid $G\ltimes N$ on $A$, and $G\ltimes N$ is naturally identified with the submersion groupoid $\mathcal{R}_p = N \times_{N/G} N$.

Let $F\subseteq TN$ be the distribution tangent to the $G$-orbits.  As recalled in Remark~\ref{rem:actionconnection}, the infinitesimal $\g$-action on $A$ is defined by a representation of $\g\ltimes N = F$ on $A$, which is the same as a flat $F$-connection $\nabla$. When $G$ is connected, $N/G$ is the leaf space of $F$, and $A/G$ is the quotient of $A$ with respect to $F$ and $\nabla$, as in Def. ~\ref{def:vbquot}. \hfill $\diamond$
\end{ex}

Let us recall from $\S$ \ref{subsec:atiyahquot} how the Atiyah algebroids $\mathbb{A}_A$ and $\mathbb{A}_{\underline{A}}$ are related, and that we denote by $\nabla(F)$ the image of the $F$-connection, viewed as a vector-bundle map $\nabla\colon F \to \mathbb{A}_A$. The vector bundle 
$$
\mathbb{A}_{A}^\nabla= \mathbb{A}_A/\nabla(F)
$$
carries a flat $F$-connection $\overline{\nabla}$ induced from $\nabla$, see \eqref{eq:nablainduced}, with flat sections described in Lemma~\ref{lem:flatcharac}. Then
 $\mathbb{A}_{\underline{A}}$ is the quotient of  $\mathbb{A}^\nabla_A$ with respect to $F$ and $\overline{\nabla}$,
 $$
\xymatrix{ {\mathbb{A}^\nabla_A}  \ar[r]^{} \ar[d]_{} & \mathbb{A}_{\underline{A}} \ar[d]^{} \\
N \ar[r]_{p} &  \Mred,}
$$
and the identification 
\begin{equation}\label{eq:p2}
\Gamma_{\mathbb{A}_{\underline{A}}} \cong p_*\Gamma^\st{flat}_{\mathbb{A}^\nabla_A}
\end{equation}
is given as follows (see Prop.~\ref{prop:Ared}): a section  $\overline{(Y,\Delta)}$ of $\Gamma^\st{flat}_{\mathbb{A}^\nabla_A}(V)$ corresponds to a 
section $(\underline{Y}, \underline{\Delta})$ of $\Gamma_{\mathbb{A}_{\underline{A}}}(\underline{V})$, for $\underline{V}\subseteq \Mred$ open and $V=p^{-1}(\underline{V})$, if and only if $\underline{Y}=p_*(Y)$ and the restriction of $\Delta$ to flat sections, $\Delta\colon \Gamma^\st{flat}_A(V)\to \Gamma^\st{flat}_A(V)$, corresponds to $\underline{\Delta}\colon \Gamma_{\underline{A}}(\underline{V})\to \Gamma_{\underline{A}}(\underline{V})$ upon the identification $\Gamma_{\underline{A}}\cong
p_* \Gamma_A^\st{flat}$.

\subsection{Coisotropic reduction}\label{subsec:coisored}
Let $\cM$ be a degree 2 symplectic  $\mathbb{N}$-manifold corresponding to the pseudo-euclidean vector bundle $E\to M$.
Let $\cN$ be a coisotropic submanifold of $\cM$ corresponding to the geometric data $(N,K,F,\nabla)$, as in Theorem~\ref{thm:coisotropic}. 
Consider $(C_\cN)_{bas}$, as in \eqref{eq:cbas} and Prop.~\ref{prop:basic}.

\begin{thm}\label{thm:coisored}
The following are equivalent:
\begin{itemize}
\item[(a)] $F$ is simple and $\nabla$ has trivial holonomy;
\item[(b)] there is a  degree 2 symplectic $\N$-manifold $\cMred$ and a
surjective submersion $(p,p^\sharp):\cN \to \cMred$ such that
$p^\sharp \colon C_{\cMred}\to p_*(C_\cN)_{bas}$ is an isomorphism of
sheaves of Poisson algebras.
\end{itemize}

In this equivalence, $\Mred$ (the body of $\cMred$)
is the leaf space of $F$, and the pseudo-euclidean vector bundle corresponding to $\cMred$ is the quotient of $\Equot = K^\perp/K$ with respect to $F$ and $\nabla$.
\end{thm}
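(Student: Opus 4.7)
The plan is to prove the two implications separately. For (a)$\Rightarrow$(b), the recipe is to construct $\cMred$ from the quotient data using Lemma~\ref{lem:hol} applied to the pseudo-euclidean bundle $\Equot = K^\perp/K$ equipped with $\nabla$: this gives a pseudo-euclidean vector bundle $\underline{E}\to \underline{N} = \Mred$, to which I associate via Thm.~\ref{thm:pseudoeuc} the symplectic degree 2 $\N$-manifold $\cMred$. By the identifications \eqref{eq:Mdeg12} applied to $\cMred$, we have $(C_{\cMred})_1 = \Gamma_{\underline{E}}$ and $(C_{\cMred})_2 = \Gamma_{\mathbb{A}_{\underline{E}}}$.

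Next, I define the morphism $p^\sharp\colon C_{\cMred}\to p_*(C_\cN)_{bas}$ in low degrees as follows. By Prop.~\ref{prop:Cbas}, the basic functions on $\cN$ in degrees $0$, $1$ and $2$ are given, respectively, by $(C_N^\infty)_{bas}$, $\Gamma^\st{flat}_{\Equot}$ and $\Gamma^\st{flat}_{\mathbb{A}^\nabla_{\Equot}}$. On the other hand, the isomorphisms \eqref{eq:basicid}, \eqref{eq:p1} and \eqref{eq:p2} from Lemma~\ref{lem:hol} (and its Atiyah-algebroid extension in $\S$\ref{subsec:redvb}) provide identifications $C^\infty_{\Mred}\cong p_*(C_N^\infty)_{bas}$, $\Gamma_{\underline{E}}\cong p_*\Gamma^\st{flat}_{\Equot}$ and $\Gamma_{\mathbb{A}_{\underline{E}}}\cong p_*\Gamma^\st{flat}_{\mathbb{A}^\nabla_{\Equot}}$. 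Taking $p^\sharp$ to be these isomorphisms in degrees $0$, $1$, $2$ gives a morphism of graded sheaves of $C^\infty_\Mred$-modules; it extends uniquely to a morphism of graded-commutative algebras $C_{\cMred}\to p_*(C_\cN)_{bas}$ via Lemma~\ref{lem:morphism}, because the compatibility in $\VB2$ between the maps $\wedge^2\underline{E}\hookrightarrow \mathbb{A}_{\underline{E}}$ and $\wedge^2 \Equot\hookrightarrow \mathbb{A}_{\Equot}$ is preserved by the pullback identifications (which follows from the naturality of the Atiyah-algebroid constructions developed in Appendix~\ref{app:A}).

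Then I verify Poisson compatibility: by the proof of Lemma~\ref{lem:symp} and Thm.~\ref{thm:pseudoeuc}, the nontrivial Poisson bracket relations on $\cMred$ in degrees $0$, $1$, $2$ are determined by the pseudo-euclidean pairing on $\underline{E}$ and the Lie-algebroid structure on $\mathbb{A}_{\underline{E}}$, whereas the basic bracket relations on $\cN$ are exactly those listed in \eqref{eq:basicprel}; the identifications from Lemma~\ref{lem:hol} are isometric on $\underline{E}$ and preserve Lie brackets on $\mathbb{A}_{\underline{E}}\cong p_*\Gamma^\st{flat}_{\mathbb{A}^\nabla_{\Equot}}$ (this is the content of Prop.~\ref{prop:Ared} in Appendix~\ref{app:A}), so the two sets of relations agree. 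The fact that $(p,p^\sharp)$ is a surjective submersion then follows from Prop.~\ref{prop:tmap}: $p$ is a surjective submersion between the bodies by construction, and the vector-bundle maps associated with $p^\sharp$ in degrees $1$ and $2$ factor through the surjective compositions $E|_N \supseteq K^\perp \twoheadrightarrow \Equot = p^*\underline{E}$ and $\mathbb{A}_E|_N \supseteq (\cdot) \twoheadrightarrow \mathbb{A}_{\Equot} \twoheadrightarrow \mathbb{A}^\nabla_{\Equot} = p^*\mathbb{A}_{\underline{E}}$.

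For (b)$\Rightarrow$(a), let $\underline{E}\to \Mred$ be the pseudo-euclidean vector bundle corresponding to $\cMred$. In degree $0$, $p^\sharp$ yields an isomorphism $C^\infty_\Mred \cong p_*(C_N^\infty)_{bas}$, which, together with $p\colon N\to \Mred$ being a surjective submersion, forces the leaves of $F$ to coincide with the connected components of the fibres of $p$, so $F$ is simple with leaf space $\Mred$. In degree $1$, $p^\sharp$ yields an isomorphism $\Gamma_{\underline{E}}\cong p_*\Gamma^\st{flat}_{\Equot}$; invoking Lemma~\ref{lem:hol} in the reverse direction, this implies that $\Equot = p^*\underline{E}$ as pseudo-euclidean vector bundles (with compatible connections), so $\nabla$ has trivial holonomy. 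The main technical obstacle throughout is the well-definedness of the morphism $p^\sharp$ as an algebra homomorphism: the definition is forced on generators in low degrees, but one must check that it respects the identification \eqref{eq:CM} relating $\wedge^\bullet E_1^*\otimes \mathrm{S}^\bullet \tilde{E}^*$ modulo $\iota_E$-relations on both the $\underline{E}$-side and on the basic-functions side. This verification ultimately reduces to the naturality statements established in Appendix~\ref{app:A}.
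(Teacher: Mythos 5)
Your proposal follows essentially the same route as the paper's proof: construct $\cMred$ from the quotient of $\Equot=K^\perp/K$ via Lemma~\ref{lem:hol}, define $p^\sharp$ in degrees $0$, $1$, $2$ by the identifications \eqref{eq:basicid}, \eqref{eq:p1} and \eqref{eq:p2}, extend it to an algebra morphism via Lemma~\ref{lem:morphism}, match the Poisson brackets against \eqref{eq:basicprel}, and run the degree-$0$ and degree-$1$ identifications in reverse for (b)$\Rightarrow$(a). The one point where you are less explicit is the multiplicative compatibility $p_1^\sharp(e)\,p_1^\sharp(e') = p_2^\sharp(e\,e')$, which the paper verifies by a short direct computation using that $\nabla$ is metric (Appendix~\ref{app:A} does not literally contain this statement, only the identification of Prop.~\ref{prop:Ared}); the check is routine, so this is a matter of explicitness rather than a gap.
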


 Note  that the properties of $\cMred$ in (b) define it uniquely, up to symplectomorphisms.

\begin{defi}\label{def:coisored}
We will refer to $\cMred$ as the \textit{reduction} of $\cN$.
\end{defi}

\begin{proof}[Proof of Thm.~\ref{thm:coisored}] 
Assume  that (a) holds.  Let $\Mred$ be the leaf space of
$F$, with quotient map $p\colon N\to \Mred$, and  let $\Ered \to \Mred$ be the
quotient of $\Equot=K^\perp/K\to N$ with respect to $F$ and $\nabla$, as in 
Lemma~\ref{lem:hol}. Since $\nabla$ is metric, $\Ered$ inherits a pseudo-euclidean structure.

Let now $\cMred$ be the degree 2 symplectic  $\N$-manifold corresponding to the pseudo-euclidean vector bundle $\Ered$ (as in Theorem~\ref{thm:pseudoeuc}), determined by
\begin{equation}\label{eq:funs}
(C_{\cMred})_0=C^\infty_{\Mred},\; (C_{\cMred})_1= \Gamma_{\Ered},\;
(C_{\cMred})_2= \Gamma_{\mathbb{A}_{\Ered}}.
\end{equation}
We will define a surjective submersion $(p,p^\sharp):\cN \to \cMred$. For that, it suffices to determine $p^\sharp$ in degrees 1 and 2. We set
\begin{align} \label{eq:map2}
& p^\sharp_1 \colon \Gamma_{\Ered}  \stackrel{\sim}{\to}  p_*
\Gamma_{\Equot}^\st{flat} = p_*((C_\cN)_{bas})_1 \subseteq
p_*(C_\cN)_1,\\ \label{eq:map3}
& p_2^\sharp\colon \Gamma_{\mathbb{A}_{\Ered}}\stackrel{\sim}{\to}  p_*\Gamma^\st{flat}_{\mathbb{A}^\nabla_{E_{quot}}} =  p_*((C_\cN)_{bas})_2 \subseteq
p_*(C_\cN)_2,
\end{align}
to be the identifications in \eqref{eq:p1} and \eqref{eq:p2}.
In order for $p_1^\sharp$ and $p_2^\sharp$ to determine a map $p^\sharp \colon C_{\cMred}\to p_*C_\cN$, we still need to check that they satisfy
\begin{equation}\label{eq:deg1comp}
p^\sharp_1(e) . p^\sharp_1(e') =
p^\sharp_2(e . e').
\end{equation}

Consider the natural inclusion
$$
\wedge^2\Gamma^\st{flat}_{\Equot} \to \Gamma^\st{flat}_{\mathbb{A}^\nabla_{E_{quot}}}, \quad e_1\wedge e_2 \mapsto (0, e_1\wedge e_2),
$$
that corresponds to the map $p_*((C_\cN)_{bas})_1.p_*((C_\cN)_{bas})_1 \hookrightarrow p_*((C_\cN)_{bas})_2$  (note that the image of $\wedge^2\Gamma^\st{flat}_{\Equot}$ coincides with the flat sections of $\wedge^2 E_{quot} \subseteq \mathbb{A}^\nabla_{E_{quot}}$ with respect to the connection $\overline{\nabla}$). To verify \eqref{eq:deg1comp} we recall that  $\overline{(Y,\Delta)}= p_2^\sharp((\underline{Y},\underline{\Delta}))$ satisfies $\Delta \circ p_1^\sharp = p_1^\sharp \circ \underline{\Delta}$. It follows that, for local sections $e_1$, $e_2$ and $e$ of $E_{red}$,
\begin{align*}
p_2^\sharp(e_1\wedge e_2)(p_1^\sharp(e))&=p_1^\sharp((e_1\wedge e_2) (e)) = p_1^\sharp(\SP{e_1,e}e_2 - \SP{e_2,e}e_1) \\
& = p^*\SP{e_1,e} p_1^\sharp(e_1) - p^*\SP{e_2,e}p_1^\sharp(e_1) 
\\& = (p_1^\sharp(e_1)\wedge p_1^\sharp(e_2))(p_1^\sharp(e)),
\end{align*}
where we used that, since $\nabla$ is metric, we have
\begin{equation}\label{eq:brk1}
\SP{p^\sharp_1(e),p^\sharp_1(e')}=p^*\SP{e,e'}.
\end{equation}
Therefore \eqref{eq:deg1comp} holds, and
 \eqref{eq:map2} and \eqref{eq:map3} uniquely determine a
morphism 
$
(p,p^\sharp):\cN \to \cMred.
$ 

Since $p^*$ defines an isomorphism
\begin{equation} \label{eq:map1}
p^* \colon C_{\Mred}^\infty  \stackrel{\sim}{\to} p_*(C_N^\infty)_{bas}
= p_*
((C_\cN)_{bas})_0,
\end{equation}
and $p^\sharp_1$ and
$p^\sharp_2$ are isomorphisms onto  $p_*((C_\cN)_{bas})_1$
and $p_*((C_\cN)_{bas})_2$, respectively, we conclude that 
$p^\sharp \colon C_{\cMred}\to p_*(C_\cN)_{bas}$ is an isomorphism, see Cor.~\ref{cor:basic}. It is
also a consequence of Cor.~\ref{cor:basic} that there are local
coordinates in $\cN$ and $\cMred$ with respect to which
$(p,p^\sharp)$ is in the normal form of Cor.~\ref{cor:submersion},
so it is a surjective submersion.

Comparing with \eqref{eq:basicprel}, we see that the fact that $p^\sharp$ preserves Poisson brackets is equivalent to
\eqref{eq:brk1}, the identities
$$
p^*(\pounds_{\underline{Y}}f) = \pounds_Y(p^*f), \qquad {\Delta}(p_1^\sharp e) = p_1^\sharp(\underline{\Delta}(e)),
$$
when $\overline{(Y,\Delta)}=p_2^\sharp(\underline{Y},\underline{\Delta})$, and the straightforward fact that $p_2^\sharp$ preserves commutators.
This completes the proof that (a) implies (b).

For the converse, suppose that (b) holds, and let $\Ered\to \Mred$ be the
pseudo-euclidean vector bundle corresponding to $\cMred$. Then 
\eqref{eq:map1} and \eqref{eq:map2} hold, and \eqref{eq:map1}
implies that the fibres of $p$ are the leaves of $F$, so $F$ is
simple. On the other hand, \eqref{eq:map2} induces a bundle map
$\Equot\to \Ered$ covering $p:N\to \Mred$  which defines an
identification $\Equot\cong p^*\Ered$. The fact that  $\nabla$ has trivial holonomy then follows from Lemma~\ref{lem:hol}.
\end{proof}

\begin{ex}\label{ex:coisoredaction}
Consider a $G$-equivariant, pseudo-euclidean vector bundle $E\to M$. Let $K\to N$ be a $G$-invariant isotropic subbundle, and
suppose that the $G$-action on $N$ is free and proper. Consider the associated coisotropic geometric data $(N, K, F, \nabla)$, as in Example~ \ref{ex:coisodataaction}. In this case the foliation $F$ is simple and $\nabla$ has trivial holonomy, following Example~\ref{ex:equivquot}; when $G$ is connected, coisotropic reduction yields the pseudo-euclidean vector bundle $E_{red}= E_{quot}/G$, for $E_{quot}=K^\perp/K$. \hfill $\diamond$
\end{ex}

\section{Coisotropic reduction of Courant algebroids}\label{sec:redCA}

\subsection{Courant algebroids}\label{subsec:CA}
Let $\cM$ be a degree 2 symplectic  $\N$-manifold, and let
$(E,\SP{\cdot,\cdot})$ be the corresponding pseudo-euclidean vector bundle, as in Thm.~\ref{thm:pseudoeuc}. Suppose that
$\cM$ is equipped with a global section of $C_\cM$ of degree 3, $\Theta
\in C(\cM)_3$, satisfying
\begin{equation}
\{\Theta,\Theta\}=0.
\end{equation}
We will refer to such $\Theta$ as a {\em Courant function}.
The terminology is motivated by the fact that, in terms of the
pseudo-euclidean vector bundle $E$, a Courant function $\Theta$ is equivalent to a \textit{Courant algebroid} structure on $E$ \cite{lwx}, see
\cite[Thm.~4.5]{royt:graded}. In other words, $\Theta$ is equivalent to a
bundle map $\rho \colon E\to TM$, referred to as the {\em anchor}, and a bilinear bracket
$\Cour{\cdot,\cdot} \colon \Gamma(E)\times \Gamma(E)\to \Gamma(E)$
satisfying
\begin{enumerate}
\item[(C1)] $\Cour{e_1,\Cour{e_2,e_3}}=\Cour{\Cour{e_1,e_2},e_3} + \Cour{e_2,
\Cour{e_1,e_3}}$,
\item[(C2)] $\Cour{e_1,fe_2}=f \Cour{e_1,e_2} + (\pounds_{\rho(e_1)}f)e_2$,
\item[(C3)] $\pounds_{\rho(e_1)}\SP{e_2,e_3} = \SP{\Cour{e_1,e_2},e_3} +
\SP{e_2,\Cour{e_1,e_3}}$,
\item[(C4)] $\rho(\Cour{e_1,e_2}) = [\rho(e_1), \rho(e_2)]$,
\item[(C5)] $\SP{e_1,\Cour{e_2,e_2}} = \frac{1}{2}\pounds_{\rho(e_1)} \SP{e_2,e_2}$,
\end{enumerate}
for  all $e_1,e_2, e_3 \in \Gamma(E)$ and $f\in C^\infty(M)$.
 Explicitly, the relationship between $\Theta$ and $\rho$ and
$\Cour{\cdot,\cdot}$ is given by the following derived-bracket
formulas:
\begin{equation}\label{eq:courstr}
\pounds_{\rho(e)} f = \{\{\Theta, e\},f\},\;\;\;\;
\Cour{e_1,e_2}=\{\{\Theta, e_1\},e_2\},
\end{equation}
for $e, e_1, e_2 \in \Gamma(E)=C(\cM)_1$ and $f\in C^\infty(M)$.

\begin{remark}\label{rem:infCAaut}
Conditions (C2) and (C3) say that, for each $e\in \Gamma(E)$, 
$$
\mathrm{ad}_e\colon = \Cour{e,\cdot}\colon \Gamma(E)\to \Gamma(E)
$$
defines a metric-preserving derivation of $E$, i.e., $(\rho(e),\Cour{e,\cdot}) \in \Gamma(\mathbb{A}_E)$ (cf. \eqref{eq:CDO} and \eqref{eq:CDOpairing}). By (C1),  $\mathrm{ad}_e$ is also a derivation for the bracket, so it defines an infinitesimal automorphism of the Courant algebroid. \hfill $\diamond$
\end{remark}

\begin{ex}[Doubles]\label{ex:LAdouble}
Any Lie algebroid gives rise to a Courant algebroid, referred to as its {\em double}, as we now recall.

Following Example~\ref{ex:coisocotangent}, let $A\to M$ be a vector bundle, and let $\cM=T^*[2]A[1]$. Consider the injective morphism  of $C({A[1]})$-modules and graded Lie brackets,
\begin{equation}\label{eq:fiberlin}
\mathfrak{X}({A[1]})\to C({\cM})[2],
\end{equation}
defined by \eqref{eq:fiberlinsh}. In analogy with the classical case, functions in the image of \eqref{eq:fiberlin} are referred to as ``fiberwise linear'' functions on $T^*[2]A[1]$.

Now suppose that $A\to M$ is a Lie algebroid, and denote by $Q=d_A$ its Lie algebroid differential, regarded as a degree $1$ vector field on $A[1]$ (see Remark \ref{rem:Qstructures}). By means of \eqref{eq:fiberlin}, $Q$ defines a fiberwise linear degree 3 function on 
$\cM$, which is a Courant function (since $[Q,Q]=0$). 

Denoting by  $\rho_A$ and $[\cdot,\cdot]_A$ the Lie-algebroid anchor and bracket, the corresponding Courant algebroid structure on the pseudo-euclidean vector bundle $E = A\oplus A^*$ has anchor $\rho(a,\xi) = \rho_A(a)$ and bracket
$$
\Cour{(a_1,\xi_1),(a_2,\xi_2)} = ([a_1,a_1], \pounds_{a_1}\xi_2 - i_{a_2}d_A \xi_1),
$$
where $\pounds_a = d_Ai_a + i_ad_A$. These Courant algebroids are special cases of ``doubles'' of Lie (quasi-)bialgebroids \cite{lwx,royt:quasi}. For instance, given a $d_A$-closed element $\chi \in \Gamma(\wedge^3 A^*)$, one obtains a more general Courant bracket on $A\oplus A^*$ given by 
$$
\Cour{(a_1,\xi_1),(a_2,\xi_2)} = ([a_1,a_1], \pounds_{a_1}\xi_2 - i_{a_2}d_A \xi_1 + i_{a_2}i_{a_1}\chi).
$$
\hfill $\diamond$
\end{ex}


\begin{ex}[Exact Courant algebroids]\label{ex:standardCA}
For a manifold $M$, by considering the tangent Lie algebroid $A=TM\to M$ in the previous example, one obtains the {\em standard Courant algebroid} structure on $E=TM\oplus T^*M$, with anchor $\rho$ the natural projection onto $TM$ and 
$$
\Cour{(X,\alpha),(Y, \beta)} = ([X,Y], \pounds_X\beta-i_Y d \alpha).
$$
The degree 2 symplectic  $\mathbb{N}$-manifold corresponding to $TM\oplus T^*M$ is denoted by $T^*[2]T[1]M$, and the Courant function $\Theta$ in this case corresponds to the de Rham differential on $M$ via \eqref{eq:fiberlin}. Any set of local coordinates $\{x^i\}$, $i=1,\dots, n$, on $M$ induces local frames on $TM$ and $T^*M$, which give rise to local coordinates 
$$
\{x^1,\ldots,x^n, v^1,\ldots,v^n,\xi_1,\ldots,\xi_n, p_1,\ldots,p_n\}
$$ 
on $T^*[2]T[1]M$, of respective degrees $0,1,1$ and $2$,
satisfying
$$
\{ p_j, x^i\}= \delta_{ij},\;\; \{v^i,\xi_j\}=\delta_{ij} ,\;\; $$
while the other brackets vanish. In these coordinates, $\Theta = v^ip_i$.
More generally, for a closed $\chi \in \Omega^3(M)$, locally written as $\frac{1}{6}\chi_{ijk} dx^idx^jdx^k$, we have that  $\Theta_\chi = v^ip_i + \frac{1}{6}\chi_{ijk} v^iv^jv^k$ is a Courant function that defines the $\chi$-twisted Courant bracket on $TM\oplus T^*M$ \cite{sw}; these types of Courant algebroids are known as {\em exact}. 

We recall that $\chi$-twisted Courant brackets admit interesting symmetries called gauge transformations \cite{sw} (also known as B-field transforms \cite{Gu2}). Any 2-form $B \in \Omega^2(M)$ gives rise to a vector-bundle automorphism 
\begin{equation}\label{eq:Bfields}
TM\oplus T^*M \to TM\oplus T^*M, \qquad (X,\alpha)\mapsto (X, \alpha + i_XB) 
\end{equation}
that preserves the $\chi$-twisted Courant structure if and only if $dB=0$. From a graded-geometric viewpoint, as explained in \cite[\S 4]{royt:quasi}, $B$ is viewed as a degree 2 function on $T^*[2]T[1]M$ (since $\Omega^2(M)\subseteq \Gamma(\wedge^2 E)\subseteq \Gamma(\mathbb{A}_E)$), so it defines a degree 0 hamiltonian vector field $\{B,\cdot\}$ whose time 1 flow corresponds to the pseudo-euclidean automorphism \eqref{eq:Bfields}; the condition 
$\{B,\Theta_\chi\}=0$ (saying that the hamiltonian flow of $B$ preserves the Courant function $\Theta_\chi$) is equivalent to $dB=0$. 
\hfill $\diamond$
\end{ex}

\subsection{Reducible Courant functions}\label{subsec:reducible}

Let $\cN$ be a coisotropic submanifold of $\cM$, and
suppose that $\cMred$ is the reduction of $\cN$ (in the sense of Def.~\ref{def:coisored}). Any function $S$ on $\cM$ that is a section 
of the subsheaf $\mathfrak{N}_{\cI} \subseteq C_\cM$ defines a section of $(C_\cN)_{bas}$ (see \eqref{eq:normalizer} and \eqref{eq:bas}), and hence a
function $S_{red}$ of the reduction $\cMred$ (via the identification in Theorem~\ref{thm:coisored}, part (b)). 
\begin{defi}\label{def:reducfn}
We refer to functions on $\cM$ that are sections of $\mathfrak{N}_\cI$
as \textit{reducible}.
\end{defi}

Let $\Theta$ be a Courant function on $\cM$, with corresponding Courant algebroid defined as in \eqref{eq:courstr}.
If $\Theta$ is reducible, then $\Theta_{red}$ is a Courant function on $\cMred$, which is in turn equivalent to  a Courant algebroid over $\Mred$. Our next goal is to express the reducibility condition for a Courant function in geometric terms, so as to obtain a geometric reduction procedure for Courant algebroids.

Let $\cN$ be a coisotropic submanifold of $\cM$ with sheaf of vanishing
ideals $\cI$ and corresponding geometric data given by $(N,K,F,\nabla)$ (as in Thm.~\ref{thm:coisotropic}).
Recall the notation $\Gamma^\st{flat}_\st{E, K^\perp}$ from \eqref{eq:Snabla}.

\begin{thm}\label{thm:reducible}
A Courant function $\Theta \in C(\cM)_3$ is reducible (i.e., $\{\Theta, \cI\} \subseteq \cI$) if and only if
the following conditions hold:
\begin{enumerate}
\item[(R1)] $\rho(K^\perp)\subseteq TN$,
\item[(R2)] $\rho(K)\subseteq F$,
\item[(R3)] $[\rho(\Gamma^\st{flat}_\st{E,K^\perp}),\Gamma_\st{TM,F}]\subseteq \Gamma_\st{TM,F}$,
\item[(R4)] 
$\Gamma^\st{flat}_\st{E,K^\perp}$ is involutive with respect to the Courant bracket.
\end{enumerate}
\end{thm}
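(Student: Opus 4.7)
The plan is to decompose the reducibility condition $\{\Theta,\cI\}\subseteq\cI$ according to the degree of the generators of $\cI$, then translate each piece into classical geometry via the derived-bracket formulas \eqref{eq:courstr} and the geometric descriptions $\cI_0=I_N$, $\cI_1=\Gamma_\st{E,K}$, and $\cI_2$ from \eqref{eq:I2claim}. Since $\{\Theta,\cdot\}$ is a graded derivation of the product on $C_\cM$ and $\cI$ is the ideal generated by $\cI_0+\cI_1+\cI_2$, reducibility is equivalent to $\{\Theta,\cI_k\}\subseteq\cI$ for each $k=0,1,2$, which I would analyze in turn.

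For $k=0$: any $f\in I_N$ gives $\{\Theta,f\}\in\Gamma(E)$ characterized by $\SP{\{\Theta,f\},e}=\pounds_{\rho(e)}f$; membership in $\cI_1=\Gamma_\st{E,K}$ is dual, via the nondegeneracy of $\SP{\cdot,\cdot}$ and $(K^\perp)^\perp=K$, to $(\pounds_{\rho(e')}f)|_N=0$ for all $e'\in\Gamma_\st{E,K^\perp}$, which is precisely (R1). For $k=1$: any $e\in\Gamma_\st{E,K}$ gives $\{\Theta,e\}=(\rho(e),\Cour{e,\cdot})\in\Gamma(\mathbb{A}_E)$, and by \eqref{eq:I2claim} its membership in $\cI_2$ is equivalent to the three conditions (i) $\rho(e)|_N\in F$, (ii) $\Cour{e,\cdot}|_N$ preserves $\Gamma(K)$, and (iii) the induced operator on $E_{quot}=K^\perp/K$ equals $\nabla_{\rho(e)|_N}$. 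Varying $e$, (i) is exactly (R2); conditions (ii) and (iii) will be derived from (R1), (R2), (R4), and the Courant axioms (C1)--(C5), exploiting that $\Gamma^\st{flat}_\st{E,K^\perp}$ locally spans $\Gamma(K^\perp)$ over $N$ together with the relation $\mathcal{D}(I_N)|_N\subseteq K$ under (R1), where $\mathcal{D}f:=\{\Theta,f\}$ satisfies $\SP{\mathcal{D}f,\cdot}=\pounds_{\rho(\cdot)}f$.

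For $k=2$: for $\mathfrak{D}=(X,D)\in\cI_2$, the degree-3 element $\{\Theta,\mathfrak{D}\}$ can be expanded in local coordinates adapted to $\cN$ via the derived-bracket formulas. Its membership in $\cI$ translates, through axiom (C4) ($\rho\Cour{e',e''}=[\rho(e'),\rho(e'')]$) and the Jacobi identity (C1) for $\Cour{\cdot,\cdot}$, into two independent stability conditions: the Lie bracket $[\rho(e'),Y]$ must lie in $\Gamma_\st{TM,F}$ for all $e'\in\Gamma^\st{flat}_\st{E,K^\perp}$ and $Y\in\Gamma_\st{TM,F}$, yielding (R3); and $\Gamma^\st{flat}_\st{E,K^\perp}$ must be closed under $\Cour{\cdot,\cdot}$, yielding (R4). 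Combining the three degrees gives the claimed equivalence.

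The principal technical obstacle is establishing condition (iii) in the $k=1$ case: the equality $[\Cour{e,\cdot}|_N]=\nabla_{\rho(e)|_N}$ on $E_{quot}$ is strictly stronger than the closure (R4), since (R4) only guarantees the class of $\Cour{e,e'}|_N$ in $E_{quot}$ to be $\nabla$-flat rather than zero when $e'\in\Gamma^\st{flat}_\st{E,K^\perp}$. Closing this gap will require the polarized identity $\Cour{e_1,e_2}+\Cour{e_2,e_1}=\mathcal{D}\SP{e_1,e_2}$ from (C5), the metric compatibility (C3), and $\mathcal{D}(I_N)|_N\subseteq K$ under (R1), combined in a pairing argument exploiting local spanning of $K^\perp$ by $\nabla$-flat sections to force $\Cour{\Gamma_\st{E,K},\Gamma^\st{flat}_\st{E,K^\perp}}|_N\subseteq K$.
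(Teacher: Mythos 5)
Your proposal follows essentially the same route as the paper's proof: decompose $\{\Theta,\cI\}\subseteq\cI$ by the degree of the generators of $\cI$, translate each piece via the derived-bracket formulas and the geometric descriptions of $\cI_0,\cI_1,\cI_2$, and isolate the auxiliary condition $\Cour{\Gamma_\st{E,K},\Gamma^\st{flat}_\st{E,K^\perp}}\subseteq\Gamma_\st{E,K}$ as the one point not immediately among (R1)--(R4). Your proposed resolution of that point --- a pairing argument against $\nabla$-flat local frames of $K^\perp$ using (C3), (C5) and $\rho(K^\perp)\subseteq TN$ --- is exactly how the paper shows this condition is redundant given (R4), so the plan is correct and matches the paper's argument.
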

Notice that when $F$ is simple, (R3) is saying that $\rho(\Gamma^\st{flat}_\st{E,K^\perp})|_N$
consists of vector fields that are projectable with respect to the quotient map $N\to \underline{N}$.

For the proof, we need some additional observations about the vanishing ideal $\cI$ of $\cN$.
First, we have the following alternative description of $\cI_2$:
\begin{lemma}\label{lem:I2}
For $U\subseteq M$ open,
\begin{equation}\label{eq:I2equiv}
\cI_2(U) = \{ (X,D) \in \Gamma_{\mathbb{A}_E}(U)\,|\, X\in \Gamma_\st{TM,F}(U),
\, D(\Gamma^\st{flat}_\st{E,K^\perp}(U))\subseteq \Gamma_\st{E,K}(U) \}.
\end{equation}
\end{lemma}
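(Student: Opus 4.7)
The plan is to show the equivalence by direct comparison with the description of $\cI_2$ already established in \eqref{eq:I2claim}. The main preliminary observation is that any section of $\Gamma_\st{E,K}$ is automatically a $\nabla$-flat section of $\Gamma_\st{E,K^\perp}$: indeed, since $K$ is isotropic we have $K\subseteq K^\perp$, so a section $k$ of $E$ restricting to $K$ on $N$ restricts in particular to $K^\perp$, and its class $[k|_N]$ in $\Equot=K^\perp/K$ is zero, hence trivially flat. Thus
$$
\Gamma_\st{E,K}\subseteq \Gamma^\st{flat}_\st{E,K^\perp}.
$$

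First I would treat the forward inclusion ``$\subseteq$''. Take $(X,D)\in \cI_2(U)$, and use \eqref{eq:I2claim}: then $X|_{U\cap N}\in \Gamma_F(U\cap N)$, so $X\in \Gamma_\st{TM,F}(U)$; and $D$ preserves $\Gamma_\st{E,K}$ and (by metric-compatibility) $\Gamma_\st{E,K^\perp}$. For $e\in \Gamma^\st{flat}_\st{E,K^\perp}(U)$ we then have $D(e)|_{U\cap N}\in \Gamma_{K^\perp}$, and
$$
[D(e)|_{U\cap N}]=[D_N(e|_{U\cap N})]=\nabla_{X_N}[e|_{U\cap N}]=0,
$$
so $D(e)|_{U\cap N}\in \Gamma_K$, i.e., $D(e)\in \Gamma_\st{E,K}(U)$.

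For the reverse inclusion, let $(X,D)\in \Gamma_{\mathbb{A}_E}(U)$ satisfy $X\in \Gamma_\st{TM,F}(U)$ and $D(\Gamma^\st{flat}_\st{E,K^\perp}(U))\subseteq \Gamma_\st{E,K}(U)$. The inclusion noted above immediately gives $D(\Gamma_\st{E,K}(U))\subseteq \Gamma_\st{E,K}(U)$, so $(X,D)\in \Gamma^\st{N,K}_{\mathbb{A}_E}(U)$ (since $X|_{U\cap N}\in \Gamma_F\subseteq TN$). In particular $D$ also preserves $\Gamma_\st{E,K^\perp}$, and the induced operator $[D_N]$ on $\Gamma_{\Equot}(U\cap N)$ is well defined. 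The hypothesis then says that $[D_N]$ vanishes on the sheaf $\Gamma^\st{flat}_{\Equot}$ of $\nabla$-flat sections (which one checks by extending any local flat section of $\Equot$ to a section in $\Gamma^\st{flat}_\st{E,K^\perp}$ near $N$). By Remark~\ref{rem:flat}, this is equivalent to $[D_N]=\nabla_{X_N}$, and hence $(X,D)\in \cI_2(U)$ by \eqref{eq:I2claim}. The main subtle point, which I would expect to be the most delicate, is the lift of flat sections of $\Equot$ to local sections of $\Gamma^\st{flat}_\st{E,K^\perp}$ defined on open subsets of $M$; this follows from the surjectivity of the maps \eqref{eq:restrictK} and \eqref{eq:maptoquot} and the fact that flat sections locally generate $\Gamma_{\Equot}$ as a $C^\infty_N$-module.
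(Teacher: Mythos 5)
Your argument is correct and follows essentially the same route as the paper: the paper's (very terse) justification likewise rests on the observation that $D(\Gamma^\st{flat}_\st{E,K^\perp})\subseteq \Gamma_\st{E,K}$ forces $D(\Gamma_\st{E,K})\subseteq \Gamma_\st{E,K}$ (via $\Gamma_\st{E,K}\subseteq \Gamma^\st{flat}_\st{E,K^\perp}$), identifies the condition with $[D_N](\Gamma^\st{flat}_{E_{quot}})=0$, and invokes Remark~\ref{rem:flat} to convert this into $[D_N]=\nabla_{X_N}$. Your additional care about lifting flat sections of $E_{quot}$ to local sections in $\Gamma^\st{flat}_\st{E,K^\perp}$ is a legitimate point the paper leaves implicit, and your justification of it is fine.
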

To compare  with \eqref{eq:I2claim}, note that the condition
$D(\Gamma^\st{flat}_\st{E,K^\perp}(U))\subseteq \Gamma_\st{E,K}(U)$ implies that $D(\Gamma_\st{E,K}(U))\subseteq \Gamma_\st{E,K}(U)$, so $(X,D)$
must lie in $\Gamma_{\mathbb{A}_E}^\st{N,K}(U)$. Note also that the same condition says that $[D_N](\Gamma^\st{flat}_{E_{quot}})=0$ (where $[D_N]$ is the image under \eqref{eq:maptoquot} of the restriction of $D$ to $N$). By Remark~\ref{rem:flat},  the descriptions of $\cI_2$ in \eqref{eq:I2claim} and \eqref{eq:I2equiv} are equivalent.

Let $\fl_F$  denote the subsheaf of $C_M^\infty$ consisting  of functions whose
restrictions to $N$ are basic relative to $F$ (i.e., in
$(C^\infty_N)_{bas}$). It now follows from \eqref{eq:I2equiv}  that $\tilde{e}=(X,D) \in \cI_2(U)$ if and only if
\begin{eqnarray}\label{eq:I2a}
\{\tilde{e},\fl_F(U)\} = \pounds_{X}(\fl_F(U)) \subseteq \fl_N(U),\\
\label{eq:I2b} \{\tilde{e},\Gamma^\st{flat}_\st{E,K^\perp}(U)\} =
D(\Gamma^\st{flat}_\st{E,K^\perp})\subseteq \Gamma_\st{E,K}(U).
\end{eqnarray}

We will also need the following characterization of $\cI_3$.

\begin{lemma}\label{lem:I3}
Given an open subset $U\subseteq M$, $T\in \cI_3(U)$ if and only if 
$\{T,\Gamma^\st{flat}_\st{E,K^\perp}(U)\} \subseteq
\cI_2(U)$. 
\end{lemma}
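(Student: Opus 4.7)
\textit{Proof plan.} For the forward direction $(\Rightarrow)$, I would invoke Lemma~\ref{lem:NI}, which gives $\Gamma^\st{flat}_\st{E,K^\perp} = (\mathfrak{N}_\cI)_1$. By the defining property of the Lie normalizer, $\{(\mathfrak{N}_\cI)(U), \cI(U)\} \subseteq \cI(U)$. Hence if $T \in \cI_3(U) \subseteq \cI(U)$ and $s \in \Gamma^\st{flat}_\st{E,K^\perp}(U)$, then $\{s, T\} \in \cI(U)$ lies in degree $|T|+|s|-2 = 2$, i.e.\ in $\cI_2(U)$; graded skew-symmetry then gives $\{T, s\} \in \cI_2(U)$. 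This is immediate and requires no further ingredients.

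For the converse $(\Leftarrow)$, the assertion is local and vacuous off $N$, so the plan is to argue in a neighborhood of an arbitrary $x_0 \in N$ by setting up Darboux-like coordinates that are simultaneously adapted to the geometric data $(N,K,F,\nabla)$. Concretely, on a small open $U \ni x_0$ I would choose a pseudo-orthonormal frame $\{e^\mu\}$ of $E$ whose restriction to $U \cap N$ realizes a Witt decomposition $E|_{U\cap N} = K \oplus H \oplus K^*$ (with $H$ a complement of $K$ in $K^\perp$ and $K^*$ a dual isotropic complement), arranged so that the middle block $\{e^\mu : r_1 < \mu \leq m_1 - r_1\}$ projects to a $\nabla$-flat local frame of $K^\perp/K$. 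In this frame the metric matrix on $N$ takes constant Witt block form
\[
g^{\mu\nu}|_N = \begin{pmatrix} 0 & 0 & B \\ 0 & h & 0 \\ B^T & 0 & 0 \end{pmatrix},
\]
with $B$ and $h$ invertible. I would then complete to adapted coordinates $\{x^i, e^\mu, p^I\}$ by taking $p^I$ to be the frame of $\mathbb{A}_E$ corresponding to the derivations $(\partial/\partial x^I,\ \nabla^0_{\partial/\partial x^I})$, where $\nabla^0$ is the local flat connection for which $\{e^\mu\}$ is parallel. This produces Darboux brackets $\{p^I, x^j\} = \delta^I_j$, $\{p^I, e^\mu\} = 0$, and $\{e^\mu, e^\nu\} = g^{\mu\nu}$.

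Writing $T = c_{\mu\nu\sigma}(x)\, e^\mu e^\nu e^\sigma + d^I_\mu(x)\, e^\mu p^I$ with $c$ antisymmetric in $(\mu,\nu,\sigma)$, the local generators of $\cI$ show that $T \in \cI_3(U)$ iff $c_{\mu\nu\sigma}(x',0) = 0$ for all $\mu,\nu,\sigma > r_1$ and $d^I_\mu(x',0) = 0$ for all $\mu > r_1$, $I > r_2$ (where $x'$ denotes the coordinates along $N$). The Darboux relations yield the clean identity
\[
\{T, e^\lambda\} = 3\, c_{\mu\nu\sigma}(x)\, g^{\sigma\lambda}\, e^\mu e^\nu + d^I_\mu(x)\, g^{\mu\lambda}\, p^I.
\]
The plan is then to test with the flat sections $e^\lambda$ for $\lambda \leq m_1 - r_1$: those with $\lambda \leq r_1$ lie in $\Gamma_\st{E,K} \subseteq \Gamma^\st{flat}_\st{E,K^\perp}$, and those with $r_1 < \lambda \leq m_1 - r_1$ project to the chosen flat frame of $K^\perp/K$. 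Requiring $\{T, e^\lambda\} \in \cI_2(U)$ gives, for all $\mu,\nu > r_1$ and $I > r_2$, the linear relations $\sum_\sigma c_{\mu\nu\sigma}(x',0)\, g^{\sigma\lambda}(x',0) = 0$ and $\sum_\mu d^I_\mu(x',0)\, g^{\mu\lambda}(x',0) = 0$. Letting $\lambda$ range over the $H$-block uses the invertibility of $h$ to kill the coefficients with contracted index in the $H$-range; letting $\lambda$ range over the $K$-block uses the invertibility of $B$ to kill those in the $K^*$-range. Together these cover all indices $> r_1$, delivering the required vanishing and hence $T \in \cI_3(U)$.

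The main obstacle is the simultaneous construction of the normal-form coordinates above: a single local pseudo-orthonormal frame of $E$ near $x_0$ whose restriction to $N$ is Witt-adapted to $K$, whose middle block is $\nabla$-flat on $N$, and which admits a compatible flat connection $\nabla^0$ on $E$ supporting Darboux-type degree-$2$ coordinates $p^I$. Each ingredient is standard on its own (local Witt bases, flat frames for the flat connection $\nabla$, and local trivializations of $E$), but their compatible assembly requires careful setup. Once this is in place, the argument reduces to a linear-algebraic calculation powered by non-degeneracy of the pseudo-euclidean metric on the two blocks $h$ and $B$.
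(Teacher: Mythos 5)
Your proof is correct, and the $(\Rightarrow)$ direction is exactly the paper's (Lemma~\ref{lem:NI} plus the normalizer property). For the converse you take a genuinely heavier route. You build Darboux-type coordinates --- a Witt-adapted, constant-Gram frame of $E$ whose middle block is $\nabla$-flat over $N$, together with degree-$2$ coordinates $p^I$ coming from the flat connection that makes this frame parallel, so that $\{p^I,e^\mu\}=0$ --- and then reduce everything to the invertibility of the blocks $h$ and $B$. This normal form does exist (basic-coefficient Gram--Schmidt keeps a flat frame flat while making its Gram matrix constant, and declaring the frame parallel yields a metric connection precisely because that matrix is constant), so the "main obstacle" you flag is surmountable; but it is the bulk of the work and you only sketch it. The paper reaches the same conclusion from \emph{arbitrary} adapted coordinates: writing $T=s_Ip^I+A_{\mu\nu\eta}e^\mu e^\nu e^\eta$ and expanding $\{T,k^\perp\}=p^I\{s_I,k^\perp\}-\{p^I,k^\perp\}s_I+\{A_{\mu\nu\eta}e^\mu e^\nu e^\eta,k^\perp\}$, the cross-terms $\{p^I,k^\perp\}$ are controlled not by forcing them to vanish but by using that $k^\perp\in(\mathfrak{N}_\cI)_1$, so $\{p^I,k^\perp\}\in\cI_1$ for $I\le r_2$, while for $I>r_2$ the coefficient of $p^I$ first forces $s_I\in\cI_1$; the cubic term is then isolated and nondegeneracy of $\SP{\cdot,\cdot}$ finishes it. This buys a much shorter argument with no normal-form construction. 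One bookkeeping slip in your version: $\{p^I,x^j\}=\delta^I_j$ with $p^{I\le r_2}$ and $x^{j\le r_0}$ both among the generators of $\cI$ contradicts coisotropy ($\{\cI_2,\cI_0\}\subseteq\cI_0$), so the pairing indices must be offset; this is harmless for your computation (which only uses $\{p^I,e^\mu\}=0$) but should be fixed in a full write-up.
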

\begin{proof}
Recall that $\Gamma^\st{flat}_\st{E,K^\perp} = (\mathfrak{N}_\cI)_1$, see Lemma \ref{lem:NI}. So $T\in \cI_3(U)$ implies that $\{T,\Gamma^\st{flat}_\st{E,K^\perp}(U)\} \subseteq
\cI_2(U)$.
Conversely, we now check that $\{T,\Gamma^\st{flat}_\st{E,K^\perp}(U)\}
\subseteq \cI_2(U)$ implies that $T\in \cI_3(U)$. It
suffices to assume that we have local coordinates $\{x^i, e^\mu,
p^I\}$ over $U$, so that $\cI$ is generated by $x^1,\ldots,
x^{r_0}$, $e^1,\ldots, e^{r_1}$, $p^1,\ldots, p^{r_2}$ (where
$r_0=\mathrm{codim}(N)$, $r_1=\mathrm{rank}(K)$, and
$r_2=\mathrm{rank}(F)$). Any $T \in (C_\cM(U))_3$ is written in
coordinates as
\begin{equation}\label{eq:T}
T = s_I p^I + A_{\mu \nu \eta}(x) e^\mu e^\nu e^\eta,
\end{equation}
where $s^I = B^I_\mu(x) e^\mu$. Given $k^\perp \in
\Gamma^\st{flat}_\st{E,K^\perp}(U)$, we have
\begin{equation}\label{eq:bracketT}
\{T,k^\perp\} = p^I\{s_I,k^\perp \} - \{p^I,k^\perp \}s_I + \{A_{\mu
\nu \eta}(x) e^\mu e^\nu e^\eta,k^\perp\} \in \cI_2(U).
\end{equation}
As a consequence, $\{s_I,k^\perp \} \in \cI_0(U)$ for $I>r_2$, i.e.,
$s_I \in \Gamma_\st{E,K}(U)=\cI_1(U)$ for $I>r_2$. Since $p^I \in \cI_2(U)$
for $I\leq r_2$, it follows that $\{p^I,k^\perp \} \in
\Gamma_\st{E,K}(U)=\cI_1(U)$ if $I\leq r_2$. So the first two terms on the
right-hand side of \eqref{eq:bracketT} lie in $\cI_2(U)$, which
forces the term $\{A_{\mu \nu \eta}(x) e^\mu e^\nu e^\eta,k^\perp\}$
to lie in $\cI_2(U)$. But this happens if and only if $A_{\mu \nu
\eta}\in \cI_0(U)$ whenever $\mu, \nu, \eta > r_1$. From the
expression in \eqref{eq:T}, we conclude that $T\in \cI_3(U)$.
\end{proof}

\begin{proof}[Proof of Thm.~\ref{thm:reducible}] 
Recall that $\cI_0 = \fl_N$, $\cI_1 = \Gamma_\st{E,K}$, and $\cI_2$ is given
as in \eqref{eq:I2claim}. The reducibility of $\Theta$ is equivalent
to
\begin{itemize}
\item[$(a)$] $\{\Theta,\cI_0\}\subseteq \cI_1$,
\item[$(b)$] $\{\Theta,\cI_1\}\subseteq \cI_2$,
\item[$(c)$] $\{\Theta,\cI_2\}\subseteq \cI_3$.
\end{itemize}
 For each open $U\subseteq M$, note that $e\in \cI_1(U)=\Gamma_\st{E,K}(U)$ if and
only if $\SP{e,\Gamma_\st{E,K^\perp}(U)}=\{e,\Gamma_\st{E,K^\perp}(U)\}=0$. So $(a)$ is equivalent (on $U$)
to
$$
\{\{\Theta,\cI_0(U)\},e'\}=\pounds _{\rho(e')} \fl_N(U) = 0,
$$
for all $e'\in \Gamma_\st{E,K^\perp}(U)$,
i.e., $\rho(K^\perp)\subseteq TN$, which is condition (R1) in the statement of the theorem.

By \eqref{eq:I2a} and \eqref{eq:I2b}, we see that $(b)$ is equivalent (on $U$) to the following two conditions:
\begin{align*}
&\{\{\Theta,\cI_1(U)\}, f \}\subseteq
\fl_N(U)\, \forall\, f\in C^\infty_M(U) \text{ such that } f|_N \in
(C^\infty_{N})_{bas}(U\cap N),\\ 
&\{\{\Theta,\cI_1(U) \},
\Gamma^\st{flat}_\st{E,K^\perp}(U) \}\subseteq \Gamma_\st{E,K}(U).
\end{align*}
By
\eqref{eq:courstr}, the former is equivalent to   condition (R2), while the
latter is equivalent to  
\begin{equation}\label{eq:extracond(3)}
\Cour{\Gamma_\st{E,K}, \Gamma^\st{flat}_\st{E,K^\perp}}\subseteq \Gamma_\st{E,K}.
\end{equation}
 
From Lemma~\ref{lem:I3}, along with \eqref{eq:I2a} and \eqref{eq:I2b}, we see
that $(c)$ is equivalent to
\begin{align}
\{\{\{\Theta,\cI_2(U)\},\Gamma^\st{flat}_\st{E,K^\perp}(U)\},\fl_F(U)\}
\subseteq
\fl_N(U),\label{eq:c1}\\
 \{
\{\{\Theta,\cI_2(U)\},\Gamma^\st{flat}_\st{E,K^\perp}(U)\},\Gamma^\st{flat}_\st{E,K^\perp}(U)\}
\subseteq \Gamma_\st{E,K}(U).\label{eq:c2}
\end{align}
We first consider \eqref{eq:c1}. Let $\tilde{e}=(X,D)\in \cI_2(U)$, $e\in \Gamma^\st{flat}_\st{E,K^\perp}(U)$
and $f\in \fl_F(U)$. By the graded skew-symmetry and Jacobi identity
for $\{\cdot,\cdot\}$, we have that
\begin{equation*}
\{\{\Theta,\tilde{e}\}, e\} =
\{\Theta,\{\tilde{e},e\}\}-\{\tilde{e},\{\Theta, e\}\} =
\{\{\tilde{e},e\},\Theta\}+\{\{\Theta, e\},\tilde{e}\},
\end{equation*}
and
\begin{align*}
\{\{\{\Theta,\tilde{e}\}, e\},f\} & = \pounds  _{\rho(\{\tilde{e},e\})}f +
\{ \{\Theta,e\}, \{ \tilde{e}, f\} \} - \{\tilde{e},\{ \{\Theta,
e\},
f\}\}\\
&= \pounds_{\rho(\{\tilde{e},e\})}f + \pounds_{\rho(e)}(\pounds_X f) -
\pounds_{X}(\pounds_{\rho(e)}f).
\end{align*}
By \eqref{eq:I2b}, $\{\tilde{e},e\}\in \Gamma_\st{E,K}(U)$, so it follows from   condition
(R2) that $\pounds_{\rho(\{\tilde{e},e\})}f\in \fl_N(U)$.  So
$\{\{\{\Theta,\tilde{e}\}, e\},f\}\in \fl_N(U)$ if and only if
$\pounds_{[\rho(e),X]}f\in \fl_N(U)$, i.e., $[\rho(e),X] \in \Gamma_\st{TM,F}(U)$
for arbitrary $e \in \Gamma^\st{flat}_{K^\perp}(U)$ and $X \in \Gamma_\st{TM,F}(U)$.
So \eqref{eq:c1} is equivalent to condition (R3).

 In view of \eqref{eq:c2}, let us now consider $\tilde{e}=(X,D)\in \cI_2(U)$ and $e_1, e_2 \in
\Gamma^\st{flat}_\st{E,K^\perp}(U)$. Then
\begin{align*} 
\{ \{\Theta,\tilde{e}\},e_1\}, e_2  \}&= \{
\{\Theta,\{\tilde{e},e_1\}\},e_2\} - \{ \{\tilde{e},\{\Theta, e_1
\}\}, e_2\}\\ 
&= \Cour{D(e_1), e_2} - (\{\tilde{e},\{ \{\Theta,e_1 \}, e_2\}\} -\{
\{\Theta, e_1\}, \{\tilde{e},e_2\} \})\\ 
&= \Cour{D(e_1), e_2} - \{\tilde{e},\{ \{\Theta,e_1 \}, e_2\}\} +
\Cour{e_1, D(e_2)}.
\end{align*}
By \eqref{eq:I2b} and \eqref{eq:extracond(3)}, we see that both $\Cour{D(e_1),
e_2}$ and $\Cour{e_1, D(e_2)}$ belong to $\Gamma_\st{E,K}(U)$. So the
condition $\{ \{\Theta,\tilde{e}\},e_1\}, e_2  \}\in \Gamma_\st{E,K}(U)$ is
equivalent to
$$
\{\tilde{e},\{ \{\Theta,e_1 \}, e_2\}\} = D(\Cour{e_1,e_2}) \in
\Gamma_\st{E,K}(U).
$$
This holds for arbitrary $\tilde{e}\in \cI_2(U)$ if and only if
$\Cour{e_1,e_2}\in \Gamma^\st{flat}_\st{E,K^\perp}(U)$. Indeed, note first that
$\Cour{e_1,e_2}\in \Gamma_\st{E,K^\perp}(U)$, since, for any $e_3\in
\Gamma_\st{E,K}(U)$,
$$
\SP{\Cour{e_1,e_2},e_3} = \pounds_{\rho(e_1)}\SP{e_2,e_3} -
\SP{e_1,\Cour{e_2,e_3}}\in \fl_N(U),
$$
as a consequence of $\SP{e_2,e_3}\in \fl_N(U)$,   and conditions (R1) and \eqref{eq:extracond(3)}.
Then
$D(\Cour{e_1,e_2}) \in \Gamma_\st{E,K}(U)$ for all $(X,D)\in \cI_2(U)$ if and
only if the projection of $\Cour{e_1,e_2}|_N$  to $\Gamma_{\Equot}$
is $\nabla$-flat (see \eqref{eq:I2claim}), which is equivalent to $\Cour{e_1,e_2}\in
\Gamma^\st{flat}_\st{E,K^\perp}(U)$. So \eqref{eq:c2} is equivalent to   condition (R4).

Summing up, we have shown that $\Theta$ is reducible if and only if conditions (R1)--(R4) (in the statement of the theorem) as well as \eqref{eq:extracond(3)} hold. We finally observe that \eqref{eq:extracond(3)} is redundant, since it is implied by condition (R4). Indeed, let  $e_1\in \Gamma_\st{E,K}(U)$ and $e_2\in \Gamma^\st{flat}_\st{E,K^\perp}(U)$, and notice that
$\Cour{e_1,e_2}=-\Cour{e_2,e_1}$ by the orthogonality of $K$ and $K^{\perp}$. For any $e_3\in \Gamma^\st{flat}_\st{E,K^\perp}(U)$ we have 
$$
\langle \Cour{e_1,e_2}, e_3\rangle = - \langle \Cour{e_2,e_1}, e_3\rangle=0
$$ 
by   property (C3) in the definition of Courant algebroid and condition (R4). This implies that
$$
\langle \Cour{\Gamma_\st{E,K}, \Gamma^\st{flat}_\st{E,K^\perp}}, \Gamma_\st{E,K^\perp}\rangle =0,
$$
since around every point of $N$ there exists a frame of $K^{\perp}$ consisting of sections that project to $\nabla$-flat sections of $K^\perp/K$ (by the flatness of the $F$-connection $\nabla$).
\end{proof}

\begin{remark}\label{rem:rhoKF}
Regarding the reducibility conditions (R1)--(R4), in case the following stronger version of (R2) holds,
$$
\rho(K)=F,
$$ 
then (R3) becomes redundant: it is a consequence of \eqref{eq:extracond(3)} and the fact that $\rho\colon E\to TM$ preserves brackets. \hfill $\diamond$
\end{remark}

Let us now consider a lagrangian submanifold $\cN$ of $\cM$, defined
by $K\to N$, where $K\subset E|_N$ is a lagrangian
subbundle, see Cor.~\ref{cor:lag}.

The following is  a special case of Thm.~\ref{thm:reducible}.

\begin{cor}\label{prop:lagdirac}
Let $\Theta \in C(\cM)_3$ be a Courant function.
The following are equivalent:
\begin{itemize}
\item[(a)] $\Theta$ is reducible      
\item[(b)] $\rho(K)\subseteq TN$ and, for any sections $e_1$, $e_2 \in \Gamma(E)$ with $e_1|_N, e_2|_N \in \Gamma(K)$, we have
$\Cour{e_1,e_2}|_N\in \Gamma(K)$.
\end{itemize}
\end{cor}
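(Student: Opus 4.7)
The plan is to derive this corollary as a direct specialization of Theorem~\ref{thm:reducible} to the lagrangian setting, and the main task is simply bookkeeping which of the four conditions (R1)--(R4) becomes redundant or trivial.

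First I would invoke Theorem~\ref{thm:lag}(c), which tells us that a lagrangian submanifold corresponds to geometric coisotropic data $(N, K, F, \nabla)$ with $K = K^\perp$ and $F = TN$, so that $K^\perp/K$ is the zero bundle and $\nabla$ is trivially the zero connection. Under this specialization, I would observe two collapses in the data appearing in (R1)--(R4): on one hand, $\Gamma^{\st{flat}}_{\st{E,K^\perp}} = \Gamma_{\st{E,K}}$, since from the definition \eqref{eq:Snabla} every section of $K^\perp = K$ projects to the zero (hence flat) section of the zero quotient; on the other hand, $\Gamma_{\st{TM,F}} = \Gamma_{\st{TM,TN}}$ is the sheaf of vector fields on $M$ whose restrictions to $N$ are tangent to $N$.

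With these identifications in hand, conditions (R1) and (R2) of Theorem~\ref{thm:reducible} both reduce to $\rho(K) \subseteq TN$, while (R4) becomes $\Cour{\Gamma_{\st{E,K}}, \Gamma_{\st{E,K}}} \subseteq \Gamma_{\st{E,K}}$; the conjunction of these is precisely condition (b) of the corollary (interpreted sheaf-theoretically, which matches the statement about arbitrary sections $e_1, e_2 \in \Gamma(E)$ with $e_i|_N \in \Gamma(K)$ by locality of the Courant bracket).

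The only nontrivial verification is that (R3) is automatic in the lagrangian case, which I would handle as follows: assuming (R1), any $e \in \Gamma_{\st{E,K}}(U)$ satisfies $\rho(e) \in \Gamma_{\st{TM,TN}}(U)$, and it is a classical fact that if two vector fields on $M$ each restrict along $N$ to genuine vector fields on $N$, then so does their Lie bracket (both restrict to vector fields on $N$, and the Lie bracket commutes with this restriction). I do not anticipate any serious obstacle; the whole argument is essentially a tabulation of how the conditions (R1)--(R4) degenerate when $F = TN$ and the partial connection $\nabla$ lives on a trivial bundle.
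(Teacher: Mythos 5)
Your argument is correct and is essentially the paper's own proof: specialize (R1)--(R4) of Theorem~\ref{thm:reducible} using $K=K^\perp$, $F=TN$, note $\Gamma^\st{flat}_\st{E,K^\perp}=\Gamma_\st{E,K}$, and observe that (R3) holds automatically because vector fields tangent to $N$ along $N$ are closed under the Lie bracket. Your write-up just spells out that last classical fact, which the paper leaves implicit.
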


\begin{proof}
 Since
$F=TN$, conditions (R1) and (R2) of Thm.~\ref{thm:reducible} reduce to
$\rho(K)\subseteq TN$. Since $K^\perp=K$, we see that
$\Gamma^\st{flat}_\st{E,K^\perp} = \Gamma_\st{E,K}$, so condition (R4) in Thm.~\ref{thm:reducible}  is equivalent to the second condition in
(b). Condition (R3) in Thm.~\ref{thm:reducible} is automatically
satisfied since $F=TN$ and $\rho(K)\subseteq TN$.
\end{proof}

We are led to the following notion,  see, e.g.,
\cite{alekseevxu,bis,severa:sometitle}.

\begin{defi}\label{def:diracsupp}
A lagrangian subbundle $K\to N$ of a Courant algebroid $E\to M$ satisfying condition (b) of Corollary~\ref{prop:lagdirac} is called a \textit{Dirac structure
supported on $N$}.
\end{defi}

 When $N=M$, we have that $K$ is a \textit{Dirac structure} in $E$ in
the sense of \cite{lwx}, extending the original definition of
\cite{courant} when $E=\mathbb{T}M$.

According to the previous corollary, Dirac structures with support are equivalent to lagrangian submanifolds of $\cM$ tangent to the
hamiltonian vector field of the Courant function $\Theta$.

\subsection{Reduction of Courant algebroids} \label{subsec:cred}

Using the equivalence between Courant functions and Courant algebroids, we can derive from Thm. \ref{thm:reducible} a reduction procedure for Courant algebroids. We will express this procedure in classical geometrical terms.

Let $E\to M$ be a Courant algebroid, with pseudo-euclidean structure $\SP{\cdot,\cdot}$, anchor $\rho$ and bracket $\Cour{\cdot,\cdot}$. Let $(N, K, F, \nabla)$ be geometric coisotropic data, i.e., 
\begin{itemize}
\item $N$ is a submanifold of $M$, 
\item $K\subseteq E|_N$ is an isotropic subbundle, 
\item $F\subseteq TN$ is an integrable subbundle, and 
\item $\nabla$ is a metric, flat  $F$-connection on the vector bundle $E_{quot}= K^\perp/K$ over $N$. 
\end{itemize}

When $F$ is simple and $\nabla$ has trivial holonomy, we have a surjective submersion $p\colon N \to \Mred$ onto the leaf space of $F$ and a quotient of $E_{quot}$ with respect to $F$ and $\nabla$ (see Lemma~\ref{lem:hol}), which is a pseudo-euclidean vector bundle $E_{red} \to \Mred$. Then $E_{quot} = p^* E_{red}$, and we denote by $p^\sharp\colon \Gamma_{E_{red}}\to p_*\Gamma_{E_{quot}}$ the pullback map of sections.

\begin{thm}[Coisotropic reduction of Courant algebroids]\label{cor:courred} 
Let $E\to M$ be a Courant algebroid and $(N, K, F, \nabla)$ as above. Suppose that $F$ is simple and
$\nabla$ has trivial holonomy.
If conditions (R1)--(R4) in
Thm.~\ref{thm:reducible}  hold, then the pseudo-euclidean vector bundle $\Ered$ inherits a Courant algebroid structure,  with anchor $\rho_{red}$ and bracket $\Cour{\;,\;}_{red}$, given as follows: for $\underline{e}$, $\underline{e}_1$, $\underline{e}_2$ sections of $E_{red}$ and $f$ of $C^\infty_{\underline{N}}$, 
\begin{equation}\label{eq:courantred}
\rho_{red}(\underline{e}) =p_*(\rho(e)),
\qquad\quad
p^\sharp\Cour{\underline{e}_1,\underline{e}_2,}_{red}= [\Cour{e_1,e_2}|_N ],
\end{equation}
where $e$ is a section of $K^\perp$ such that $[e]=p^\sharp \underline{e}$, and $e_i$ is a section of $\Gamma^\st{flat}_\st{E,K^\perp}$ such that $[e_i|_N]=p^\sharp \underline{e}_i$, $i=1,2$.
\end{thm}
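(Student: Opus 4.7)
The plan is to transfer the statement to the graded symplectic side, where it becomes the essentially tautological fact that graded coisotropic reduction preserves self-commuting degree 3 functions, and then unpack the resulting Courant structure via the derived bracket formulas.

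More precisely, let $\cM$ be the symplectic degree 2 $\N$-manifold corresponding to $(E,\SP{\cdot,\cdot})$ (Thm.~\ref{thm:pseudoeuc}), and let $\Theta\in C(\cM)_3$ be the Courant function associated with the given Courant structure via \eqref{eq:courstr}. Let $\cN\hookrightarrow \cM$ be the coisotropic submanifold corresponding to $(N,K,F,\nabla)$ (Thm.~\ref{thm:coisotropic}). First I would observe that, by the coisotropic reduction theorem (Thm.~\ref{thm:coisored}), the hypotheses that $F$ is simple and $\nabla$ has trivial holonomy guarantee the existence of a symplectic degree 2 $\N$-manifold $\cMred$, corresponding precisely to the pseudo-euclidean vector bundle $\Ered\to\Mred$, together with a surjective submersion $(p,p^\sharp)\colon \cN\to \cMred$ for which $p^\sharp$ identifies $C_{\cMred}$ with $p_*(C_\cN)_{bas}$ as sheaves of Poisson algebras.

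Next, I would apply Theorem~\ref{thm:reducible}: the reducibility conditions (R1)--(R4) are equivalent to $\{\Theta,\cI\}\subseteq \cI$, i.e., $\Theta\in \Gamma(\mathfrak{N}_\cI)_3$. In particular $\iota^\sharp(\Theta)\in ((C_\cN)_{bas})_3$ defines, via the identification $p^\sharp$, a well-defined global section $\Theta_{red}\in C(\cMred)_3$. The key point is that $\{\Theta_{red},\Theta_{red}\}=0$: this follows because the Poisson bracket on $(C_\cN)_{bas}$ is computed (see \eqref{eq:poisbasic}) by choosing extensions to $\mathfrak{N}_\cI$, taking the Poisson bracket in $\cM$, and restricting via $\iota^\sharp$; for two copies of $\Theta$ one may use $\Theta$ itself as the common extension, and $\{\Theta,\Theta\}=0$. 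By Roytenberg's correspondence (the dictionary recalled in $\S$\ref{subsec:CA}), $\Theta_{red}$ determines a Courant algebroid structure on $\Ered$.

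To check the explicit formulas \eqref{eq:courantred}, I would use the derived bracket formulas \eqref{eq:courstr} applied to $\Theta_{red}$, combined with the fact that $p^\sharp$ preserves Poisson brackets. For the anchor: given a section $\underline{e}$ of $\Ered$, pick a section $e$ of $K^\perp$ with $[e]=p^\sharp\underline{e}$ (viewed via the identification $(C_{\cMred})_1 \cong p_*\Gamma^\st{flat}_{\Equot}$ from Prop.~\ref{prop:Cbas}); any extension $\tilde e\in (\mathfrak{N}_\cI)_1(U)=\Gamma^\st{flat}_\st{E,K^\perp}(U)$ to a neighborhood works. For $\underline{f}\in C^\infty(\Mred)$, with $\tilde f\in \fl_F$ an extension, the derived bracket gives
\[
p^*\bigl(\pounds_{\rho_{red}(\underline e)}\underline f\bigr) = \iota^\sharp\{\{\Theta,\tilde e\},\tilde f\} = \iota^\sharp \pounds_{\rho(\tilde e)}\tilde f,
\]
which shows $\rho_{red}(\underline e)=p_*(\rho(e)|_N)$ (condition (R3) is exactly what makes $\rho(e)|_N$ projectable). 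The bracket formula is analogous: choosing flat extensions $\tilde e_1,\tilde e_2\in\Gamma^\st{flat}_\st{E,K^\perp}$,
\[
p^\sharp\Cour{\underline e_1,\underline e_2}_{red} = \iota^\sharp\{\{\Theta,\tilde e_1\},\tilde e_2\} = \iota^\sharp\Cour{\tilde e_1,\tilde e_2} = [\Cour{e_1,e_2}|_N],
\]
where (R4) guarantees that $\Cour{\tilde e_1,\tilde e_2}\in\Gamma^\st{flat}_\st{E,K^\perp}$ so that the right-hand side is indeed a basic degree 1 section.

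The only mildly delicate point, which I expect to be the main obstacle, is the bookkeeping with sheaves of flat sections: one must check that locally one can always choose extensions $\tilde e_i\in\Gamma^\st{flat}_\st{E,K^\perp}$ whose restrictions project to any prescribed flat section of $\Equot$. This is exactly the content of the identification \eqref{eq:map2} established in the proof of Thm.~\ref{thm:coisored}, so no new work is required; once this is invoked, everything reduces to applying the derived bracket formulas on both $\cM$ and $\cMred$ and using that $p^\sharp$ is a Poisson algebra isomorphism.
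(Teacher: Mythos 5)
Your proposal is correct and follows essentially the same route as the paper: both pass to the graded symplectic picture, invoke Theorem~\ref{thm:reducible} to get reducibility of $\Theta$, obtain $\Theta_{red}$ as a Courant function on $\cMred$ (the paper leaves the check $\{\Theta_{red},\Theta_{red}\}=0$ implicit, which you spell out via the common-extension argument), and then read off \eqref{eq:courantred} from the derived-bracket formulas and the definition of the basic Poisson bracket. No gaps; your version is just a more detailed write-up of the paper's ``the proof is immediate'' argument.
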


This theorem can be proven by directly showing that conditions (R1)--(R4) in Thm.~\ref{thm:reducible} guarantee  that the bracket and anchor can be reduced. But using graded geometry, we will see that the proof is immediate.

\begin{proof}
We know that the Courant algebroid $E\to M$ is equivalent to
a Courant function $\Theta$ on a symplectic degree 2 $\mathbb N$-manifold $\cM$, and that the quadruple $(N, K, F, \nabla)$ defines a coisotropic submanifold. Further $\Theta$ is reducible by Thm.~\ref{thm:reducible}, and $\Theta_{red}$ is a Courant function on the reduction $\cMred$,
which is in turn equivalent to a Courant algebroid structure on $E_{red}\to \Mred$. The expressions for the reduced anchor and bracket in \eqref{eq:courantred} follow from the derived-bracket formulas  \eqref{eq:courstr} and the definition of $\{\cdot,\cdot\}_{bas}$, see \eqref{eq:poisbasic} and \eqref{eq:basicprel}. 
\end{proof}

The previous theorem motivates the following notion.

\begin{defi}\label{def:reduc}
We say that a Courant algebroid $(E, \SP{\cdot,\cdot}, \rho, \Cour{\cdot,\cdot})$ is {\em reducible} with respect to geometric coisotropic data $(N, K, F, \nabla)$ (see~Definition \ref{def:quadruples}) if conditions (R1)--(R4) in Thm. \ref{thm:reducible} hold.
\end{defi}

We illustrate Thm. \ref{cor:courred} with some special cases. 
We keep the setup and notation of the theorem.
\begin{ex}
\label{ex:casescoisored} \
\begin{itemize}
    \item[(i)] Let $N\subseteq M$ be a submanifold, and set $K$, $F$ and $\nabla$ to be trivial (i.e., the corresponding coisotropic submanifold in Thm. \ref{thm:coisotropic} has only constraints in degree 0). Then the reducibility conditions (R1)-(R4) reduce to the single condition that $\rho(E)\subseteq TN$. In this case, the theorem boils down to the simple fact that the restriction $E|_N$ inherits a Courant algebroid structure. 
\item [(ii)] Consider a submanifold $N\subset M$ and an isotropic subbundle $K\subset E|_N$, setting $F$ and $\nabla$ to be trivial. The reducibility conditions in the theorem become
 $\rho(K^{\perp})\subseteq TN$, $\rho(K)=0$, and  that $\Gamma_\st{E,K^{\perp}}$ is closed under the Courant bracket.  
 Thm. \ref{cor:courred} then says that there is an induced Courant algebroid structure on $K^{\perp}/K\to N$. This recovers the result in \cite[Prop. 2.1]{LBMeinrenken}, which has as a special case the
 pullback of Courant algebroids to  submanifolds transverse to the anchor \cite[Prop 2.4]{LBMeinrenken}.
\item[(iii)] If we set $K$ to be zero in Thm. \ref{cor:courred},
we obtain the following statement: let $N\subseteq M$ be a submanifold with a regular involutive distribution $F$ and a flat  metric $F$-connection on $E|_N$. Assume that $\rho(E)\subseteq TN$, and that the restricted Courant algebroid  $E|_N$ is such that
 $[\rho(\Gamma^\st{flat}_\st{E|_N}),\Gamma_\st{F}]\subseteq \Gamma_\st{F}$ and 
$\Gamma^\st{flat}_\st{E|_N}$ is involutive.
Then, whenever $F$ is simple and $\nabla$ has trivial holonomy, there is an induced Courant algebroid structure on $E_{red}\to \underline{N}$, the quotient of $E|_N$ with respect to $F$ and $\nabla$. 
\item[(iv)] As a simple special case of (iii), suppose that a (connected) Lie group $G$ acts on the Courant algebroid $E\to M$ by automorphisms. Let $N$ be a $G$-invariant submanifold of $M$  where the action is free and proper, and such that $\rho(E)\subseteq TN$; 
the $G$-action gives rise to a flat  metric partial connection on $E|_N$, see Example~\ref{ex:coisodataaction}, satisfying the conditions in (iii). It follows that $E_{red}= (E|_N)/G \to N/G$ has an induced Courant algebroid structure. 
\end{itemize}
\hfill $\diamond$
\end{ex}

The following example compares quotients of Lie algebroids with reduction of their Courant doubles.

\begin{ex}\label{ex:doublered}
Given a vector bundle $A\to M$, we saw in Example~\ref{ex:coisocotangent} that an involutive distribution $\mathcal{D}$ on $A[1]$ defines a coisotropic submanifold $\mathcal{N}_\cD$ in $\cM=T^*[2]A[1]$. Let $Q$ be a degree 1 homological vector field $Q$ on $A[1]$, codifying a Lie algebroid structure on $A$, with anchor $\rho_A$ and bracket $[\cdot,\cdot]_A$ (Remark~\ref{rem:Qstructures}). Following Example~\ref{ex:LAdouble}, $Q$ defines a Courant function $\Theta$ on $\cM$ via \eqref{eq:fiberlin}, and  this Courant function is reducible with respect to $\cN_\cD$ if and only if $\cD$ is $Q$-invariant,
\begin{equation}\label{eq:projectable}
[Q,\cD]\subseteq \cD.
\end{equation}
As in classical geometry, we interpret this condition as saying that $Q$ is  ``projectable'' with respect to $\cD$; indeed it is proven in \cite[Prop.~3]{ZZL2} that, under suitable regularity conditions, one can obtain a new $\mathbb{N}$-manifold of degree 1 $A_{red}[1]$ as the quotient of $A[1]$ by $\cD$ in such a way that $Q$ projects onto a degree 1 homological vector field $Q_{red}$, thereby defining a quotient Lie algebroid $A_{red}$. In this case, the coisotropic reduction of $\cN_\cD$ agrees with $T^*[2]A_{red}[1]$, and the reduction of $\Theta$ coincides with the Courant function defined by $Q_{red}$.

We now describe this construction in classical geometrical terms. Let $(B, F, \overline{\nabla})$ be the geometric data corresponding to the involutive distribution $\cD$ on $A[1]$, as recalled in Example~\ref{ex:coisocotangent}: $B\to M$ is a subbundle of $A$, $F$ is an involutive distribution on $M$, and $\overline{\nabla}$ is a flat $F$-connection on $A/B\to M$. Then, when $A$ is a Lie algebroid, \cite[Prop.~4]{ZZL2} shows that \eqref{eq:projectable} is equivalent to the following conditions\footnote{Triples $(B, F, \overline{\nabla})$ satisfying conditions (1)--(5) are referred to as {\em infinitesimal ideal systems} in \cite{JotzOrtizFol}, cf. \cite[Def.~4.4.2]{mac:lie}; they arise as infinitesimal counterparts of multiplicative foliations on Lie groupoids \cite{Hawk08,JotzOrtizFol}.}:
\begin{enumerate}
 \item $B$ is a Lie subalgebroid of $A$;
 \item if  $a,a' \in \Gamma^{\st{flat}}(A)$,  then $[a,a']_A\in \Gamma^{\st{flat}}(A)$;
 \item if  $b\in \Gamma(B)$ and $a\in \Gamma^{\st{flat}}(A)$, then $[b,a]_A\in \Gamma(B)$;
 \item the anchor $\rho_A$ maps $B$ into $F$;
 \item if $a \in \Gamma^{\st{flat}}(A)$ then  $[\rho_A(a),\Gamma(F)]\subseteq \Gamma(F)$,
 \end{enumerate}
where $\Gamma^{\st{flat}}(A):=\{a\in \Gamma(A)\,|\, \overline{\nabla}[a]=0 \}$. 
Denoting by $(K, F, \nabla)$ the geometric coisotropic data defined by $(B, F, \overline{\nabla})$ as in Example~\ref{ex:coisocotangent} (recall that $N=M$), one can directly check that conditions (1)--(5) above translate into the Courant reducibility conditions (R1)--(R4) for the double Courant algebroid $E=A\oplus A^*$ of Example~\ref{ex:LAdouble}.

When $F$ is simple and $\overline{\nabla}$ has trivial holonomy, it follows from \cite[Thm.~4.4.3]{mac:lie} that the quotient of $A/B \to M$ with respect to $F$ and $\overline{\nabla}$ (as in Def.~\ref{def:vbquot}), denoted by $A_{red}\to \underline{M}$, inherits a natural Lie algebroid structure. In this case, the reduced Courant algebroid $E_{red}\to \underline{M}$ of Theorem~\ref{cor:courred} coincides with the double $E_{red}=A_{red}\oplus A^*_{red}$ of the quotient Lie algebroid.

\end{ex}

\subsubsection{\underline{The exact case}}

Given a Courant algebroid
$(E,\SP{\cdot,\cdot},\rho,\Cour{\cdot,\cdot})$ over $M$, there is an
associated chain complex
$$
0\to T^*M \stackrel{\rho^*}{\to} E \stackrel{\rho}{\to} TM \to 0,
$$
where we have used the identification $E\cong E^*$ for the map
$\rho^*$. A Courant algebroid is called \textit{exact} if this
sequence is exact. A choice of lagrangian splitting of this sequence identifies $E$ with a twisted Courant algebroid structure on $TM\oplus T^*M$, with respect to a suitable closed 3-form \cite{sw}; this leads to a classification of 
exact Courant algebroids by elements in $H^3(M)$, the so-called {\em {\v S}evera classes} \cite{severaletters} 
 
 One can refine the
coisotropic reduction of Courant algebroid  to see when the reduction of an exact
Courant algebroid is again exact.

\begin{prop}\label{prop:exactCA}
Consider the same setup and notation as  in Thm.~\ref{cor:courred}, but assuming that $E$ is exact.
The
reduced Courant algebroid  $\Ered \to \Mred$ is exact if and only
if:
\begin{itemize}
\item[(a)] $\dim(N)-\rank(\rho(K^\perp)) = \rank(F)-\rank(\rho(K))$,
\item[(b)] $\rho(K^\perp)\cap F = \rho(K)$.
\end{itemize}
In particular, this holds when $\rho(K^\perp)=TN$ and $\rho(K)=F$.
\end{prop}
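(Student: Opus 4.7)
The plan is to decouple the exactness of $\Ered$ into two conditions, $(\alpha)$: $\rank(\Ered) = 2\dim(\Mred)$, and $(\beta)$: $\rho_{red}$ is fiberwise surjective, and then match (a) with $(\alpha)$ and (b) (given $(\alpha)$) with $(\beta)$. The starting observation is that in any Courant algebroid one has $\rho\circ\rho^* = 0$, and $\rho^*$ is injective whenever $\rho$ is surjective; hence, by rank--nullity, such an algebroid is exact precisely when its rank equals twice the base dimension. Applied to $\Ered$, this yields $\Ered$ exact if and only if both $(\alpha)$ and $(\beta)$ hold.

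For the first matching, since $E$ is exact we have $\rank(\Ered) = \rank(K^\perp/K) = 2\dim(M) - 2\rank(K)$ and $\dim(\Mred) = \dim(N) - \rank(F)$, so $(\alpha)$ reads $\dim(M) - \rank(K) = \dim(N) - \rank(F)$. The key input is the pointwise identity
\[
\rank(\rho(K^\perp)) = \dim(M) - \rank(K) + \rank(\rho(K)),
\]
valid for any isotropic $K$ in an exact Courant algebroid, which I would prove by working in a local splitting $E\cong TM\oplus T^*M$: setting $Q := K\cap T^*M$ (so that $\rank(Q) = \rank(K) - \rank(\rho(K))$), a short computation with the standard pairing yields $\rho(K^\perp) = \Ann(Q)$, and hence the formula. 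Substituting into (a) gives exactly $(\alpha)$, so (a)$\,\Leftrightarrow\,(\alpha)$.

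For the second matching, $\rho_{red}$ is fiberwise surjective if and only if $\rho(K^\perp) + F = TN$ pointwise. By (R2) and $K\subseteq K^\perp$ one always has $\rho(K)\subseteq \rho(K^\perp)\cap F$. Using the rank identity
\[
\rank(\rho(K^\perp)+F) = \rank(\rho(K^\perp)) + \rank(F) - \rank(\rho(K^\perp)\cap F)
\]
together with (a) rewritten (via the pointwise formula above) as $\rank(\rho(K^\perp))+\rank(F)-\rank(\rho(K)) = \dim(N)$, one sees that $(\beta)$ holds if and only if $\rho(K^\perp)\cap F = \rho(K)$, i.e.\ (b). Combining gives $\Ered$ exact $\Leftrightarrow$ (a) and (b); the ``in particular'' clause is immediate since $\rho(K^\perp) = TN$ and $\rho(K) = F$ reduce (a) to $0=0$ and (b) to $TN\cap F = F$. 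The one delicate step is the pointwise rank formula for $\rho(K^\perp)$, which is where exactness of $E$ enters essentially; everything else is straightforward bookkeeping.
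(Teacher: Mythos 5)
Your proof is correct, and it shares the paper's overall skeleton: decouple exactness of $\Ered$ into a rank condition (matched with (a)) plus one further condition (matched with (b), assuming (a)). The details, however, follow a genuinely different route. The paper characterizes exactness as ``rank equals twice the base dimension and the kernel of the anchor is isotropic,'' while you use the dual characterization ``rank equals twice the base dimension and the anchor is surjective'' (equivalent because $\ker \rho_{red}=(\im \rho_{red}^*)^\perp$). As a result, for (b) the paper shows that $C_1=(K^\perp\cap\rho^{-1}(F))/K$, the kernel of the reduced anchor, is isotropic iff it equals the lagrangian $C_2=(T^*M\cap K^\perp)/K$, iff the induced map $\rho(K^\perp)/\rho(K)\to TN/F$ is injective; you instead translate surjectivity of $\rho_{red}$ into $\rho(K^\perp)+F=TN$ and close with inclusion--exclusion on ranks. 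For (a), the paper obtains $\rank(K^\perp/K)=2\big(\rank\rho(K^\perp)-\rank\rho(K)\big)$ from the exact sequence $0\to (T^*M\cap K^\perp+K)/K\to K^\perp/K\to \rho(K^\perp)/\rho(K)\to 0$ whose first term is lagrangian, whereas you prove the equivalent pointwise identity $\rank\rho(K^\perp)=\dim M-\rank K+\rank\rho(K)$ directly via $\rho(K^\perp)=\Ann(K\cap T^*M)$. Your version is marginally more elementary (pure rank bookkeeping, no lagrangian subbundles of $K^\perp/K$), but the one step you should spell out is the equality $\rho(K^\perp)=\Ann(K\cap T^*M)$: the inclusion $\subseteq$ is immediate from the pairing, and the reverse inclusion needs the dimension count $\rank\rho(K^\perp)=\rank K^\perp-\rank(K^\perp\cap T^*M)=\dim M-\rank(K\cap T^*M)$. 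The paper's argument keeps the geometric content (where the lagrangian $T^*M$ sits inside $K^\perp/K$) more visible, which is also what it reuses in the second half of its proof.
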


\begin{proof}

We will use the following fact: a Courant algebroid is exact if and only if its rank is twice the
dimension of its base manifold and the kernel of its anchor is
isotropic.

Let $m_0=\dim(M)=\frac{1}{2}\rank(E)$ and
$r_1=\rank(K)$. The rank of $E_{red}$ agrees with the rank of
$K^\perp/K$, which is $2(m_0 - r_1)$. By properties (R1) and (R2) of
Thm.~\ref{thm:reducible}, we know that $\rank(TN)\geq
\rank(\rho(K^\perp))$ and $\rank(F)\geq \rank(\rho(K))$. 
Since $E$
is exact, we have an exact sequence
$$
0\to \frac{T^*M\cap K^\perp + K}{K} \to \frac{K^\perp}{K} \to
\frac{\rho(K^\perp)}{\rho(K)} \to 0.
$$
Noticing that $(T^*M\cap K^\perp + K)/K$ is lagrangian in $K^\perp/K$,
we conclude that $\rank(K^\perp/K)= 2 \rank(\rho(K^\perp)/\rho(K)) =
2 (\rank(\rho(K^\perp))-\rank(\rho(K)))$. It follows that $\rank(K^\perp/K)$ equals $2 \dim(\Mred) = 2 (\rank(TN)-\rank(F))$ if and only if
$$
\rank(\rho(K^\perp))-\rank(\rho(K)) = \rank(TN)-\rank(F),
$$
which is condition (a).

If (a) holds, then $E_{red}$ is exact if and only if 
the kernel of the map $K^\perp/K \to TN/F$, given by $C_1=(K^\perp\cap
\rho^{-1}(F))/K$, is isotropic. We will check that this is equivalent to condition (b).

Note that $C_1$ contains the lagrangian subbundle $C_2= (T^*M\cap K^\perp)/K$, so $C_1$ is isotropic if and only if $C_1=C_2$. Since $C_2$ is the kernel of the map $K^\perp/K \to
\rho(K^\perp)/\rho(K)$, the condition $C_1=C_2$ is equivalent to the map $\rho(K^\perp)/\rho(K)\to TN/F$ induced by the inclusion being injective (in fact, an isomorphism by (a)), which is the same as  condition (b).
\end{proof}

\begin{ex}[Reduction of the standard Courant algebroid]\label{red:stdca}
Consider the standard Courant algebroid $E= TM \oplus T^*M$, let $N\subseteq M$ be a submanifold and $F\subseteq TN$ be an involutive subbundle. Assume that $F$ defines a simple foliation with quotient map $N \to \underline{N}$. Then $E$ is reducible with respect to the geometric coisotropic data defined by $(N,F)$ (see Example~\ref{ex:bott}), with $K= F\oplus \mathrm{Ann}(TN)$ and $\nabla = \nabla^{Bott} \oplus (\nabla^{Bott})^*$. Further,
$$
E_{red} =  T\underline{N}\oplus T^*\underline{N},
$$
see Example~\ref{ex:tanquot}. 
This reduction agrees with the construction in \cite[Example~3.9]{Za} and is a particular instance of Example~\ref{ex:JSGK} below. (When $N=M$, it is also a special case of Example~\ref{ex:doublered}.)
\end{ex}

We now see how the reduction of exact Courant algebroids in \cite{Za} is a special case of Theorem \ref{cor:courred}.

\begin{ex} [Reduction by isotropic, involutive subbundles]\label{ex:JSGK}
Consider an exact Cou\-rant algebroid $E$  
 over $M$. Let $K\to N$ be an isotropic subbundle of $E$, and suppose that 
 \begin{itemize}
 \item[(a)] $\rho(K^{\perp}) = TN$ 
 \item[(b)] $K$ is involutive, in the sense that
$$
 \Cour{\Gamma_\st{E,K},\Gamma_\st{E,K}}\subseteq \Gamma_\st{E,K}.
$$
\end{itemize}
We can use $K\to N$ to produce geometric coisotropic data $(N, K, F, \nabla)$  as follows. 
 By the exactness of $E$, we have that
\begin{equation}\label{eq:annTN}
K\cap T^*M= \mathrm{Ann}(\rho(K^\perp)) = \mathrm{Ann}(TN),
\end{equation}
which implies that $\rho(K) \subseteq TN$ has constant rank (since $\mathrm{ker}(\rho|_K)=K\cap T^*M$). It follows from the involutivity condition on $K$ and the bracket-preserving property of $\rho$ that $\rho(K)$ is involutive. We set $F:=\rho(K)$. One can now verify that the expression
$$
\nabla_X[e|_N]=[\Cour{k,e}]
$$ 
defines a 
metric, flat $F$-connection $\nabla$ on $E_{quot}=K^{\perp}/K$,
where $X\in \Gamma(F)$, $k$ is any section of $\Gamma_\st{E,K}$  satisfying $\rho(k)|_N=X$, and $e$ is a section of $\Gamma_\st{E,K^{\perp}}$. (The fact that $\nabla$ is well defined follows as in \cite[Lemma~3.1]{Za}, flatness is a consequence of the Jacobi identity (C1) of Courant brackets, while the metric condition follows from (C3).)

One can also check that $E$ is automatically reducible with respect to $(N, K, F, \nabla)$ (as in Def.~\ref{def:reduc}). Indeed, it remains to verify the reducibility condition (R4) (since (R3) holds as observed in Remark~\ref{rem:rhoKF}). Noticing that, for each open $U\subseteq M$,
$$
\Gamma^\st{flat}_\st{E,K^\perp}(U)=\{e\in \Gamma_\st{E,K^\perp}(U)\,|\, 
\Cour{k,e}\in \Gamma_\st{E,K}(U) \text{ for all $k\in\Gamma_\st{E,K}(U)$}\},
$$
condition (R4) follows from properties (C1), (C3) and (C5) of Courant brackets.

So, assuming that $F$ is simple and $\nabla$ has trivial holonomy, we obtain a reduced exact Courant algebroid $E_{red}$ over $\underline{N}$, the leaf space of $N$ by $F$, as a consequence of  Theorem~\ref{cor:courred} and Proposition~ \ref{prop:exactCA}. This construction recovers \cite[Thm.~3.7]{Za} from a slightly different perspective (see \cite[Prop. 7.1]{Gu2} for a special case).
\hfill $\diamond$
\end{ex}

The following is a particular case of the previous example.

\begin{ex}[Reduction by isotropic, involutive subbundles with an action]\label{ex:exactredact}\

\noindent Let $E\to M$ be an exact Courant algebroid, and $K\to N$ be an isotropic, involutive subbundle with $\rho(K^\perp)=TN$; let $(N, K, F, \nabla)$ be the corresponding geometric coisotropic data, as in the previous example.
Assume further that 
\begin{itemize}
\item a (connected) Lie group $G$ acts on $E$ by Courant-algebroid automorphisms,
\item the $G$-action on $E$ is infinitesimally generated by a bracket-preserving map $\psi\colon \g \to \Gamma(E)$, in the sense that the $\g$-action on $E$ by infinitesimal Courant automorphisms is given by
$$
u \mapsto (\mathrm{ad}_{\psi(u)}= \Cour{\psi(u), \cdot}\colon \Gamma(E)\to \Gamma(E)), 
$$
for $u\in \g$ (see Remark~\ref{rem:infCAaut}), 
\item for each $x\in N$, the map $\psi_x\colon \g \to E|_x$, $u\mapsto \psi(u)(x)$ takes values in $K|_x$ and $\rho(\psi_x(\g))=\rho(K|_x)=F|_x$.
\end{itemize}
This last condition and the involutivity of $K$ imply that $K\to N$ is a $G$-invariant subbundle of $E\to M$. Assuming that the $G$-action on $N$ is free and proper, it is a simple verification that the geometric coisotropic data defined by the $G$-action on $K$ (as in Example~\ref{ex:coisodataaction}) coincides with 
$(N, K, F, \nabla)$. Therefore, with this setup, $E$ is reducible with respect to the coisotropic data, $F$ is simple, $\nabla$ has trivial holonomy, and $E_{red} = (K^\perp/K)/G$ is an exact Courant algebroid over $N/G$.

A special case of this construction is described in \cite[$\S$ 3.1]{HKquot} (based on \cite{bcg}), when $K\to N$ and the $G$-action on $E$ are defined by means of an ``extended action'' (see $\S$ \ref{subsec:extactions}). 
\hfill $\diamond$
\end{ex}

\section{Coisotropic reduction of GC and Dirac structures}
\label{sec:redGCDirac}

\subsection{Reduction of GC structures}

Let $\cM$ be a degree 2 symplectic  $\mathbb N$-manifold and $(E, \SP{\cdot,\cdot})$ be the corresponding pseudo-euclidean vector bundle. By a {\em quadratic function} on $\cM$ we mean a section of $(C_\cM)_1\cdot (C_\cM)_1 \subseteq (C_\cM)_2$. With the identification $(C_\cM)_1 =  \Gamma_{E^*}$, quadratic functions are seen as sections of $\Gamma_{\wedge^2 E^*}$,
which are equivalent to skew-symmetric endomorphisms $E\to E$ (i.e., sections of $\mathbb{A}_E$ with vanishing symbol). For simplicity, we will keep the same notation for the quadratic function and the corresponding skew-symmetric endomorphism.

Let $\cN$ be a coisotropic submanifold of $\cM$ defined by $(N,K,F,\nabla)$.

\begin{lemma}\label{lem:Jreducible}
A quadratic function $\J$ on $\cM$ is reducible (i.e., it is a section of $\mathfrak{N}_\cI$) if and only if either of the following equivalent conditions hold:
\begin{itemize}
\item[(a)] $\J(\Gamma^\st{flat}_\st{E,K^\perp})\subseteq \Gamma^\st{flat}_\st{E,K^\perp}$.
\item[(b)] $\J(K)\subseteq K$ and $\nabla \J_{quot} =0$, where
$\J_{quot}$ is the skew-symmetric endomorphism of $E_{quot}= K^\perp/K$
defined by $\J_{quot}([e])=[\J(e)]$, for $e\in K^\perp$.
\end{itemize}
\end{lemma}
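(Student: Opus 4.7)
The plan is to apply Lemma~\ref{lem:NI}, which characterizes $(\mathfrak{N}_\cI)_2$, to the quadratic function $\J$ --- whose symbol as a derivation of $E$ vanishes --- and then to recast the resulting conditions in either form (a) or (b).

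I would view $\J$ as a section $(X,D)$ of $\mathbb{A}_E$ with $X=0$ and $D\colon\Gamma(E)\to\Gamma(E)$ the skew-symmetric endomorphism induced by $\J$. With $X=0$, the three clauses describing $(\mathfrak{N}_\cI)_2$ in Lemma~\ref{lem:NI} simplify: the bracket condition on $X$ is automatic, the membership $(X,D)\in \Gamma^\st{N,K}_{\mathbb{A}_E}$ reduces to $\J(\Gamma_\st{E,K})\subseteq \Gamma_\st{E,K}$, and the remaining clause is exactly condition (a). Hence reducibility of $\J$ is equivalent to (a) together with $\J(K)\subseteq K$.

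The main obstacle is to show that condition (a) by itself already forces $\J(K)\subseteq K$. The relevant input is the inclusion $\Gamma_\st{E,K}\subseteq \Gamma^\st{flat}_\st{E,K^\perp}$: sections of $K$ restrict to sections of $K\subseteq K^\perp$ on $N$ and project to the zero section of $\Equot=K^\perp/K$, which is $\nabla$-flat. Thus (a) immediately yields $\J(k)|_N\in K^\perp$ for every $k\in\Gamma_\st{E,K}$. To upgrade this to $\J(k)|_N\in K$, I would pair against an arbitrary $e\in \Gamma^\st{flat}_\st{E,K^\perp}$; skew-symmetry of $\J$ gives
\[
\SP{\J(k),e}\bigr|_N \;=\; -\SP{k,\J(e)}\bigr|_N \;=\; 0,
\]
since $k|_N\in K$ and $\J(e)|_N\in K^\perp$ are orthogonal. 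Concluding $[\J(k)|_N]=0$ in $\Equot$ from this pairing identity will rely on the non-degeneracy of the induced pairing on $\Equot$ together with the local generation of $\Gamma_{\Equot}$ by $\nabla$-flat sections (which follows from flatness of $\nabla$).

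It remains to establish the equivalence of (a) and (b). Assuming $\J(K)\subseteq K$, skew-symmetry forces $\J(K^\perp)\subseteq K^\perp$, so $\J_{quot}$ is well defined on $\Equot$ and satisfies $[\J(e)|_N]=\J_{quot}([e|_N])$ for $e\in \Gamma_\st{E,K^\perp}$. Under this identification, (a) becomes the statement that $\J_{quot}$ preserves $\nabla$-flat sections of $\Equot$. Since $\J_{quot}$ is $C^\infty_N$-linear and $\nabla$ satisfies a Leibniz rule, the fact that $\Gamma_{\Equot}$ is locally generated by flat sections upgrades this to $[\nabla_Y,\J_{quot}]=0$ for all $Y\in\Gamma(F)$, that is, $\nabla\J_{quot}=0$. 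This would complete the equivalence (a)$\Leftrightarrow$(b) and hence the proof.
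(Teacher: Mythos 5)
Your proposal is correct and follows essentially the same route as the paper: reduce to the description of $(\mathfrak{N}_\cI)_2$ in Lemma~\ref{lem:NI}, observe that (a) already forces $\J(K)\subseteq K$ via skew-symmetry and the fact that $\nabla$-flat sections locally generate, and then translate (a) into $\nabla\J_{quot}=0$ using local flat frames. The only cosmetic difference is that you pair $\J(k)$ directly against flat sections, whereas the paper first notes $\J(K^\perp)\subseteq K^\perp$ and then applies skew-symmetry; the content is identical.
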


Note that, since $\J$ is skew-symmetric, the condition that $\J(K)\subseteq K$ is equivalent to $\J(K^\perp)\subseteq K^\perp$, so $\J_{quot}$ in (b) is well defined. 

\begin{proof}
The fact that the reducibility of $\J$ is equivalent to (a) is an immediate consequence of the description of $(\mathfrak{N}_\cI)_2$ in Lemma~\ref{lem:NI} once one notices that the condition in (a) implies that $\J(K^\perp)\subseteq K^\perp$, and hence $\J(K)\subseteq K$  (so that $\J$ is automatically  a section of $\Gamma^\st{N,K}_{\mathbb{A}_E}$). On the other hand, (a) is equivalent to saying that $\J_{quot}$ preserves flat sections of $K^\perp/K$, which is in turn equivalent to the equality $\J_{quot}\circ \nabla = \nabla \circ \J_{quot}$ (by the flatness of $\nabla$, which ensures the existence of local frames of flat sections), i.e., $\nabla \J_{quot}=0$.
\end{proof}

Let us recall that a \textit{generalized complex (GC) structure} \cite{Gu2}   on a
Courant algebroid $E\to M$ is an endomorphism $\J \colon E \to E$
such that $\J^2=-\mathrm{Id}$, $\J = -\J^*$, and
whose Nijenhuis torsion vanishes,
$$
\Cour{\J(e_1),\J(e_2)}-\J(\Cour{\J(e_1),e_2}) -
\J(\Cour{e_1,\J(e_2)}) + \Cour{e_1,e_2} = 0.
$$

\begin{remark}[Graded-geometric characterization]
Since generalized complex structures $\J: E\to E$ are skew-symmetric, they are particular types of quadratic functions on the corresponding degree 2 symplectic  $\N$-manifold $\cM$; their additional properties can be all characterized in terms of $\{\cdot,\cdot\}$ and the Courant function $\Theta$ by noticing that $\J(e) = \{\J,e\}$ and making use of \eqref{eq:courstr}; see also Remark~ \ref{rem:grab} below.
\hfill $\diamond$
\end{remark}

\begin{thm}\label{thm:redGCS}
Suppose that the Courant algebroid $E$ is reducible with respect to the geometric coisotropic data $(N, K, F, \nabla)$, and let $E_{red}$ be the reduced Courant algebroid.
If a generalized complex structure $\J \colon E \to E$ satisfies 
either of the equivalent conditions in Lemma~\ref{lem:Jreducible}, then it gives rise to a generalized complex structure $\J_{red}$ on $E_{red}$ by
\begin{equation}\label{eq:Jred}
p^\sharp \J_{red} (\underline{e}) = [\J(e)],
\end{equation}
where $\underline{e}$ is a section of $E_{red}$ and $e$ is a section of $K^\perp$ such that $[e]=p^\sharp \underline{e}$.
\end{thm}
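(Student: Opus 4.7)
The plan is to leverage the graded-geometric formulation: $\J$ is a quadratic function on the symplectic degree 2 $\N$-manifold $\cM$ associated with $E$, and the GC conditions can all be encoded as Poisson-bracket identities among $\J$ and the Courant function $\Theta$. Concretely, $\J^2=-\mathrm{Id}$ translates into a degree-0 identity of the form $\{\J,\{\J,e\}\}=-e$ for $e\in (C_\cM)_1$, the skew-symmetry $\J=-\J^*$ is built into $\J$ being a quadratic function, and the Nijenhuis integrability condition translates into $\{\{\J,\Theta\},\J\}=-\Theta$ (this is the graded-geometric reformulation from \cite{grabowski} alluded to in the introduction). All these identities live on $\cM$ and involve only $\J$, $\Theta$, the Poisson bracket, and the subalgebra $(C_\cM)_1$.

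First I would use Lemma~\ref{lem:Jreducible} to conclude that $\J\in\mathfrak{N}_\cI$, so $\J$ descends via the reduction procedure of Thm.~\ref{thm:coisored} to a quadratic function $\J_{red}$ on $\cMred$, i.e., a skew-symmetric endomorphism of $\Ered$. Using the derived-bracket description $\J(e)=\{\J,e\}$ and the fact that $p^\sharp\colon C_{\cMred}\to p_*(C_\cN)_{bas}$ is a Poisson isomorphism (Thm.~\ref{thm:coisored}(b)), together with the definition of $p^\sharp$ in degree 1 as the identification $\Gamma_{\Ered}\cong p_*\Gamma^\st{flat}_{\Equot}$ of \eqref{eq:map2}, one gets that for $\underline e\in\Gamma_{\Ered}$ with $p^\sharp\underline e=[e]$ (for $e\in\Gamma^\st{flat}_\st{E,K^\perp}$),
\[
p^\sharp(\J_{red}(\underline e))=p^\sharp(\{\J_{red},\underline e\})=\{\J,e\}\;\mathrm{mod}\,\cI=[\J(e)|_N],
\]
which recovers formula \eqref{eq:Jred}; moreover condition (b) of Lemma~\ref{lem:Jreducible} guarantees that this value is independent of the chosen representative $e$ of $\underline e$ (since $\J(K)\subseteq K$) and of its extension off $N$ (since $[\J(e)|_N]$ depends only on $[e|_N]$).

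Next I would transfer the three GC-defining identities from $\J$ to $\J_{red}$. Because $\Theta$ is also reducible (by the hypothesis that $E$ is reducible, i.e., by Thm.~\ref{thm:reducible} combined with Thm.~\ref{cor:courred}), and because $p^\sharp$ is a Poisson isomorphism onto $p_*(C_\cN)_{bas}$, every Poisson-bracket identity satisfied by $\J$ and $\Theta$ on $\cM$ restricts to $\mathfrak{N}_\cI$ and descends to the analogous identity for $\J_{red}$ and $\Theta_{red}$ on $\cMred$. Since $\Theta_{red}$ is the Courant function corresponding to the reduced Courant algebroid $\Ered$ (Thm.~\ref{cor:courred}), this means that $\J_{red}$ satisfies the graded-geometric characterization of a GC structure on $\Ered$, i.e., it is a genuine generalized complex structure.

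I expect the only nontrivial technical step to be the bookkeeping of the graded-geometric reformulation of the GC axioms, in particular the precise form of the bracket identity encoding the Nijenhuis integrability; once this translation is in place, the proof is essentially a one-line consequence of Thm.~\ref{thm:coisored} and the fact that reduction is a morphism of Poisson (in fact DGLA) structures. A purely classical alternative would be to verify $\J_{red}^2=-\mathrm{Id}$, skew-symmetry with respect to $\SP{\cdot,\cdot}_{red}$, and vanishing of the Nijenhuis tensor directly from \eqref{eq:Jred} using \eqref{eq:courantred} and the descriptions \eqref{eq:brk1}; this works but is considerably more tedious than the graded approach outlined above.
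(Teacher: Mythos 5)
Your first two steps coincide with the paper's: Lemma~\ref{lem:Jreducible} shows the quadratic function $\J$ is reducible, it descends to a quadratic function $\J_{red}$ on $\cMred$, and the derived-bracket/Poisson-isomorphism bookkeeping yields \eqref{eq:Jred} (with well-definedness coming from $\J(K)\subseteq K$ and $\nabla$-parallelism, as you say). Where you diverge is in transferring the generalized complex conditions. The paper's actual proof does the ``tedious'' classical check that you relegate to a fallback: it verifies the almost-complex and Nijenhuis conditions for $\J_{red}$ directly from \eqref{eq:courantred} and \eqref{eq:Jred}. Your preferred route --- encoding integrability as the bracket identity $\{\{\Theta,\J\},\J\}=-\Theta$ and pushing it through the Poisson isomorphism $p^\sharp$ --- is precisely the argument the paper records in Remark~\ref{rem:grab}, but there the Grabowski characterization \eqref{eq:GCSsuper} is stated only for the standard Courant algebroid $E=TM\oplus T^*M$, whereas Theorem~\ref{thm:redGCS} concerns an arbitrary reducible Courant algebroid. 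To promote your graded argument to the primary proof you would need to first establish, for a general Courant algebroid, that a skew-symmetric $\J$ with $\J^2=-\mathrm{Id}$ is integrable if and only if \eqref{eq:GCSsuper} holds; the paper neither proves nor cites this in that generality, so as written your main line has a gap for general $E$. (By contrast, the descent of $\J^2=-\mathrm{Id}$ is unproblematic: $\{\J,\{\J,e\}\}=-e$ restricted to $e\in\Gamma^{\st{flat}}_{\st{E,K^\perp}}$ stays inside reducible degree-1 functions, and these exhaust $\Gamma_{\Ered}$ after reduction.) Your acknowledged classical alternative is exactly what the paper does and closes the gap; alternatively, restricting the graded argument to the exact/standard case, or supplying the general Grabowski-type equivalence, would also repair it.
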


\begin{proof}
Consider the coisotropic submanifold $\cN$ corresponding to $(N, K, F, \nabla)$, and let $\underline{\cN}$ be its reduction.
By   Lemma \ref{lem:Jreducible}, the condition in the statement says that  the quadratic function $\J$ is reducible, so it defines a function $\J_{red}$ on $\underline{\cN}$, which is easily seen to be quadratic; moreover, the skew-symmetric endomorphisms corresponding to $\J$ and $\J_{red}$ are related as in the statement of this  theorem. To see that $\J_{red}$ is generalized complex, one can make use of  \eqref{eq:courantred} and \eqref{eq:Jred} to show that the almost-complex and Nijenhuis conditions on $\J$ are transferred to $\J_{red}$.
\end{proof}

\begin{remark} One can prove versions of the previous theorem under weaker conditions, e.g., assuming that $\J(K)\cap K^\perp$ has constant rank and is contained in $K$ 
and that $\J$ preserves the flat sections of this bundle, see, e.g., \cite[Prop.~6.1]{Za}.
Graded-geometric interpretations of these results require considering graded submanifolds beyond the coisotropic ones. \hfill $\diamond$
\end{remark}

\begin{remark} \label{rem:grab}
 When $E= TM \oplus T^*M$ is the standard Courant algebroid, it is proven in \cite{grabowski}
that generalized complex structures $\J\colon E\to E$ are equivalent to quadratic functions $\J$ on $\cM$ satisfying
\begin{equation}\label{eq:GCSsuper}
\{\{\Theta,\J\},\J\}=-\Theta.
\end{equation}
In this case, we can give an alternative argument  for the fact that $\J_{red}$ is generalized complex in  Theorem \ref{thm:redGCS}:   condition \eqref{eq:GCSsuper} also  holds for $\Theta_{red}$ and $\J_{red}$, since both the passage from the Lie normalizer $\mathfrak{N}_\cI$ to basic functions on $\cN$ and the identification of the latter with functions on $\underline{\cN}$ preserves Poisson brackets. \hfill $\diamond$
\end{remark}

\begin{ex}[Coisotropic reduction of symplectic structures]\label{ex:coisosympred}
Let $\J$ be the generalized complex structure on the standard Courant algebroid $TM\oplus T^*M$ defined by a symplectic 2-form $\omega$ on $M$, 
\[ \J = \left( \begin{matrix} 
0 & -\omega^{-1} \\ 
\omega & 0 
\end{matrix}\right). \] 
Let $\iota: N\hookrightarrow M$ be a submanifold, and let $F\subseteq TN$ be an involutive subbundle.
As in Example \ref{ex:bott} we obtain geometric coisotropic data by setting  $K= F\oplus \mathrm{Ann}(TN)$ and $\nabla = \nabla^{Bott} \oplus (\nabla^{Bott})^*$. To verify the reducibility of $\J$ with respect to these data, first note
that the isotropic subbundle $K$ is $\J$-invariant if and only if $N$ is a coisotropic submanifold of $(M,\omega)$ and $F=TN^{\omega}$. In this case, the condition  $\nabla \J_{quot}=0$ is automatically satisfied since $F=\mathrm{ker}(\iota^*\omega)$ and the closed 2-form $\iota^*\omega$ on $N$ is invariant under the Lie derivative by vector fields in its kernel. 
Whenever $F$ defines a simple foliation, with quotient map $N\to \underline{N}$, by Theorem \ref{thm:redGCS} we obtain a generalized complex structure $\J_{red}$ on $T\underline{N}\oplus T^*\underline{N}$. As expected, it corresponds to the symplectic form on $\underline{N}$ obtained from the coisotropic reduction of  $N\subseteq (M,\omega)$. 
\hfill $\diamond$
\end{ex}

\begin{ex}[Holomorphic quotients] \label{ex:coisoholred}
Let $J$ be a complex structure on $M$, and let 
\[ \J = \left( \begin{matrix} 
J & 0 \\ 
0 & -J^* 
\end{matrix}\right) \] 
be the corresponding generalized complex structure on $TM\oplus T^*M$. Let $F\subseteq TM$ be an involutive subbundle, and consider the associated geometric coisotropic data with $N=M$, $K=F$ and $\nabla=\nabla^{Bott} \oplus (\nabla^{Bott})^*$. The reducibility of $\J$ in this case amounts to the facts that $JF=F$ and that the induced operator $TM/F \to TM/F$ is parallel with respect to $\nabla^{Bott}$; these conditions are equivalent to saying that $F$ defines a holomorphic foliation in $(M,J)$ (i.e., the leaves of $F$ can be locally identified with fibers of a holomorphic submersion).  Whenever $F$ is simple, the leaf space $\underline{M}$ inherits a complex structure (uniquely determined by the property that the quotient map $M\to \underline{M}$ is holomorphic) whose corresponding generalized complex structure agrees with the reduction of $\J$
given in Theorem \ref{thm:redGCS}. 
\hfill $\diamond$
\end{ex}
 
\begin{ex}\label{ex:redJact}
Consider the setup of Example~\ref{ex:exactredact}, with reduced Courant algebroid $E_{red}=(K^\perp/K)/G$. In this case, any $G$-invariant GC structure $\J$ satisfying $\J(K^\perp)\subseteq K^\perp$ (or, equivalently, $\J(K)\subseteq K$) over $N$ satisfies the condition in Theorem~\ref{thm:redGCS}, and hence can be reduced to a GC structure $\J_{red}$ on $E_{red}$. This extends \cite[Thm.~4.1]{HKquot} (based on \cite[Thm.~5.2]{bcg}). \hfill $\diamond$
\end{ex}

\subsection{Reduction of lagrangian submanifolds and Dirac structures}

A well-known construction in classical symplectic geometry is the reduction of lagrangian submanifolds through coisotropic reduction, see, e.g.,  \cite[Lect.~3]{wein-lecturesonsymplectic}.
Consider a symplectic manifold $M$ along with
a coisotropic submanifold $N$ and a lagrangian submanifold $L$. Assume that $N$ and $L$ intersect cleanly, i.e., $N\cap L$ is a submanifold and $T(N\cap L)= TN \cap TL$. The latter condition can be equivalently expressed in terms of vanishing ideals as $I_{N\cap L} = I_N + I_L$ (see, e.g., \cite[Lemma 5.1]{LiLi}). Suppose also that the null foliation of $N$ is simple, so that its leaf space $\underline{N}$ is a smooth symplectic manifold. Then the projection of $N\cap L$ to $\underline{N}$ is, at least locally, a lagrangian submanifold in $\underline{N}$ (sometimes called a lagrangian ``sub-immersion'').

We will now discuss an analog of this construction for degree $2$ $\mathbb N$-manifolds. As a consequence,  using the description of Dirac structures in terms of lagrangian submanifolds (explained in the end of $\S$ \ref{subsec:reducible}), we will obtain a reduction procedure for Dirac structures.

Let $\cM$ be a degree 2 symplectic  $\N$-manifold,  $\cN$ a  coisotropic submanifold, and $\cL$ a  
lagrangian submanifold, with bodies $N$ and $S$, respectively. 
The corresponding vanishing ideals are denoted by $\cI_{\cN}$ and $\cI_{\cL}$. We assume, for simplicity, that 
$$
N\subseteq S
$$ 
(in particular, one could take $S=M$, which would be enough to treat reduction of ordinary Dirac structures).
Let $E$ be the pseudo-euclidean vector bundle corresponding to $\cM$, so that $\cL$ is described by a  lagrangian subbundle $L\to S$ (as in Corollary \ref{cor:lag}), and $\cN$ corresponds to geometric coisotropic data $(N,K,F, \nabla)$ (as in Theorem \ref{thm:coisotropic}).

\subsubsection{\underline{Clean intersection}}\label{subsubsec:clean}
 
We start by considering the intersection of $\cN$ and $\cL$ in $\cM$. Following the classical case, we say that $\cN$ and $\cL$ {\em intersect cleanly} if the sheaf of ideals 
$\cI_{\cN} + \cI_{\cL}$ is regular  (in the sense of \S \ref{subsec:Nsub}).

We have the following geometric characterization of the clean-intersection condition.

\begin{prop}\label{lem:clean}
The sheaf of ideals $\cI_{\cN} + \cI_{\cL}$ is regular
 if and only if
 \begin{itemize}
     \item[i)] $K\cap L|_N$ has constant rank, and
\item[ii)] $\nabla_Y(\Gamma_{L_{quot}})\subseteq \Gamma_{L_{quot}}$, for any section $Y$ of $\Gamma_F$,
 \end{itemize}
where
$L_{quot}=\frac{(K^\perp\cap L|_N) + K}{ K}$.
\end{prop}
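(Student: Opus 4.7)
The plan is to invoke the classification of regular sheaves of ideals (Lemma~\ref{lem:geomideal} and Prop.~\ref{prop:1-1}). Since $N\subseteq S$, one has $(\cI_\cN)_0 + (\cI_\cL)_0 = I_N + I_S = I_N$, so the body of any submanifold arising from $\cI_\cN + \cI_\cL$ must be $N$. By Lemma~\ref{lem:geomideal}, regularity with body $N$ is then equivalent to the existence of subbundles $K_1 \subseteq E|_N$ and $\widetilde{K}\subseteq \mathbb{A}_E|_N$ satisfying the compatibility $\widetilde{K} \cap \wedge^2 E|_N = K_1\wedge E|_N$, together with the identifications $(\cI_\cN + \cI_\cL)_1 = \Gamma_{\st{E,K_1}}$ and $(\cI_\cN + \cI_\cL)_2 = \Gamma_{\st{\mathbb{A}_E,\widetilde{K}}}$. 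The proof then splits into a degree-$1$ analysis yielding~(i), a degree-$2$ analysis yielding~(ii), and a short orthogonality check for the compatibility.

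For degree~$1$, the sum $\Gamma_{\st{E,K}} + \Gamma_{\st{E,L}}$ is of the form $\Gamma_{\st{E,K_1}}$ for a subbundle $K_1 \subseteq E|_N$ precisely when $K + L|_N$ has constant rank, in which case $K_1 = K + L|_N$. Since $\mathrm{rank}(K + L|_N) = \mathrm{rank}(K) + \mathrm{rank}(L|_N) - \mathrm{rank}(K\cap L|_N)$, this is equivalent to~(i). Granted~(i), the orthogonality $K_1^\perp = K^\perp\cap L|_N$ (using $L^\perp = L$) shows that $(K^\perp\cap L|_N)+K$ has constant rank, so the subbundle $L_{quot}\subseteq\Equot$ appearing in~(ii) is well defined.

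The heart of the argument is degree~$2$. Using the descriptions of $(\cI_\cN)_2$ from~\eqref{eq:I2claim} and $(\cI_\cL)_2 = \Gamma_{\mathbb{A}_E}^{\st{S,L}}$, together with the restriction map~\eqref{eq:restrict}, its factorization through $\mathbb{A}_E|_N/(K_1\wedge E|_N)$ as in~\eqref{eq:composition}, and the exact sequence~\eqref{eq:exact} relating $\Gamma^{\st{K}}_{\mathbb{A}_{E|_N}}$ to $\Gamma_{\mathbb{A}_\Equot}$, the candidate subbundle over $N$ takes the form
\[
\widetilde{K} = (\pi'')^{-1}\bigl(\phi(F) + \pi''(\widetilde{K}_\cL|_N)\bigr),
\]
where $\phi(Y) = \nabla_Y$ encodes the coisotropic data of $\cN$ and $\widetilde{K}_\cL|_N\subseteq \mathbb{A}_E|_N$ consists of derivations with symbol in $TS$ that preserve $\Gamma_{\st{E,L}}$. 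Showing that this is a genuine subbundle of constant rank, and that every local section of $\widetilde{K}$ arises as a sum of sections of $(\cI_\cN)_2$ and $(\cI_\cL)_2$, is the main obstacle.

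The crux is this: the projection~\eqref{eq:maptoquot} sends a section of $(\cI_\cN)_2$ to an element of $\Gamma_{\mathbb{A}_\Equot}$ of the form $(Y,\nabla_Y)$ with $Y\in\Gamma_F$, while a section of $(\cI_\cL)_2$ descends to an endomorphism of $\Equot$ that preserves the subbundle $L_{quot}$. Their sum in $\mathbb{A}_\Equot$ then preserves $\Gamma_{L_{quot}}$ if and only if $\nabla_Y(\Gamma_{L_{quot}})\subseteq \Gamma_{L_{quot}}$ for all $Y\in\Gamma_F$, i.e.\ condition~(ii); this is the algebraic content of the clean-intersection hypothesis. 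Conversely, granted~(ii), the surjectivity statements in~\eqref{eq:restrictK}--\eqref{eq:maptoquot} allow one to lift any local section of $\widetilde{K}$ to an explicit decomposition as a section of $(\cI_\cN)_2$ plus one of $(\cI_\cL)_2$. Finally, the compatibility $\widetilde{K}\cap \wedge^2 E|_N = K_1\wedge E|_N$ reduces to a direct orthogonality check using $K_1^\perp = K^\perp\cap L|_N$ and the characterization of $K_1\wedge E|_N \subseteq \mathfrak{so}(E|_N)$ as skew endomorphisms sending $K_1^\perp$ into $K_1$.
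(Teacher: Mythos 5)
Your overall skeleton --- reducing regularity to the geometric classification of regular sheaves of ideals (Lemma~\ref{lem:geomideal}), extracting condition (i) from the degree-$1$ component, and locating condition (ii) in the degree-$2$ component --- matches the paper's, and your degree-$0$ and degree-$1$ analyses are correct. But the degree-$2$ step, which is where all the work lies, has a genuine gap. Your ``crux'' rests on the assertion that a section of $(\cI_{\cL})_2$ descends to a derivation of $\Equot=K^\perp/K$ preserving $L_{quot}$. This is false in general: such a section is a derivation $(X,D)$ with $X$ tangent to $S$ and $D$ preserving $\Gamma_{\st{E,L}}$; it need not preserve $\Gamma_{\st{E,K}}$, and its symbol need not be tangent to $N$, so it has no image under \eqref{eq:maptoquot} at all. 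Consequently the criterion you propose --- that ``the sum of the images in $\mathbb{A}_{\Equot}$ preserves $\Gamma_{L_{quot}}$'' --- is not well posed, and in any case you never show that it is equivalent to the regularity of $\cI_{\cN}+\cI_{\cL}$ as characterized by Lemma~\ref{lem:geomideal}.

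More importantly, you relegate the compatibility condition $(\tilde{K}+\tilde{L}|_N)\cap\wedge^2E|_N=(K+L|_N)\wedge E|_N$ to ``a direct orthogonality check'' at the end, independent of (ii). In fact this compatibility is precisely where (ii) enters: given (i), it holds exactly when (ii) holds, so it cannot be verified unconditionally. The missing content is the two-step reduction in the paper's proof: first, the compatibility condition is equivalent to the surjectivity of the symbol map $\sigma\colon\tilde{K}\cap\tilde{L}|_N\to F$ (a linear-algebra argument that uses $\tilde{K}\cap\wedge^2E|_N=K\wedge E|_N$ and $\tilde{L}|_N\cap\wedge^2E|_N=L|_N\wedge E|_N$ to trade an element of the \emph{sum} lying in $\wedge^2 E|_N$ for an element of the \emph{intersection} with prescribed symbol); second, this surjectivity is equivalent to $\nabla_Y(\Gamma_{L_{quot}})\subseteq\Gamma_{L_{quot}}$ for all $Y\in\Gamma_F$, via the surjectivity of $\Gamma^{\st{K,L|_N}}_{\mathbb{A}_{E|_N}}\to\Gamma^{\st{L_{quot}}}_{\mathbb{A}_{\Equot}}$ (Prop.~\ref{prop:Lonto}), since a lift of $Y\in\Gamma_F$ to $\tilde{K}\cap\tilde{L}|_N$ exists precisely when $(Y,\nabla_Y)$ preserves $\Gamma_{L_{quot}}$. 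Note also that the constant rank of $\tilde{K}+\tilde{L}|_N$, which you flag as ``the main obstacle'' but do not resolve, follows from the other conditions via the exact sequence $0\to(\tilde{K}+\tilde{L}|_N)\cap\wedge^2E|_N\to\tilde{K}+\tilde{L}|_N\to TS|_N\to 0$. Without these steps neither direction of the ``if and only if'' closes.
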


Note that condition i) ensures that 
$$
L_{quot}\subseteq E_{quot} = K^\perp/K
$$ 
is a smooth lagrangian subbundle.

\begin{proof} 

The sheaf of ideals $\cI_{\cN} + \cI_{\cL}$ is locally generated in
degrees 0, 1 and 2, and it satisfies 
$$
(\cI_{\cN} + \cI_{\cL})_0 =
\fl_N, \;\;\ (\cI_{\cN} + \cI_{\cL})_1 = \Gamma_\st{E,K + L|_N}, \;\; (\cI_{\cN} +
\cI_{\cL})_2= \Gamma_\st{\mathbb{A}_E,\tilde{K}+\tilde{L}|_N},
$$ 
recalling that 
$(\cI_{\cL})_1 = \Gamma_\st{E,L}$,  $(\cI_{\cN})_1=\Gamma_\st{E,K}$,
and the vector bundles
$\tilde{K}\to N$ and $\tilde{L}\to S$ are such that $(\cI_{\cN})_2=\Gamma_\st{\mathbb{A}_E,\tilde{K}}$ and
$(\cI_{\cL})_2=\Gamma_\st{\mathbb{A}_E,\tilde{L}}$. It follows that
$\cI_{\cN} + \cI_{\cL}$ is regular if and only if the following conditions are satisfied: $K + L|_N$ and
$\tilde{K} + \tilde{L}|_N$ have constant rank, and
\begin{equation}\label{eq:regcond}
(\tilde{K}+\tilde{L}|_N)\cap \wedge^2 E|_N = (K + L|_N)\wedge E|_N,\end{equation}
see Lemma \ref{lem:geomideal} (upon using the isomorphism $E\cong E^*$). 
Notice that the condition that $\tilde{K} + \tilde{L}|_N$ has constant rank
is implied by the other two. Indeed the sequence
$$
0\to (\tilde{K}+\tilde{L}|_N)\cap \wedge^2 E|_N \to \tilde{K} + \tilde{L}|_N \to TS|_N\to 0,
$$ 
obtained by restricting \eqref{eq:atiyahseq}, is exact, because  $\tilde{L}$ maps surjectively onto  ${TS}$ under the symbol map (to verify this last claim, recall that $\Gamma_{\tilde{L}}=\Gamma^\st{L}_{\st{\mathbb{A}_{E|_N}}}$ and the surjectivity of \eqref{eq:maptoquot}, shown in Prop. \ref{prop:atiyahonto}). 
If  $K + L|_N$ has constant rank and \eqref{eq:regcond} holds, 
 then the first (nontrivial) term in the previous sequence has constant rank, and the exactness implies that the middle term has constant rank as well.
Hence, to prove the proposition, we may assume that item i) holds and show that, in this case, item ii) is equivalent to  \eqref{eq:regcond}. This will be verified in the following two claims.

\smallskip

\noindent{{\bf Claim:} \emph{Condition \eqref{eq:regcond} holds if and only if the symbol map $\sigma \colon \tilde{K} \cap \tilde{L}|_N\to F$ is onto.}}

\smallskip

To prove the claim, recall (from Prop.~\ref{prop:1-1} and Lemma ~\ref{lem:geomideal})  that 
\begin{equation}\label{eq:tKtL}
    \tilde{K}\cap \wedge^2 E|_N = K\wedge E|_N \;\text{   and   }\;\;
\tilde{L}|_N\cap \wedge^2 E|_N = L|_N\wedge E|_N,
\end{equation}
so we have that
$$
(K+L|_N)\wedge E|_N \subseteq \tilde{K}\cap \wedge^2 E + \tilde{L}|_N\cap
\wedge^2 E \subseteq (\tilde{K} + \tilde{L}|_N)\cap \wedge^2 E.
$$ 
It follows that \eqref{eq:regcond} is equivalent to   the opposite inclusion, namely   
$$
(\tilde{K} + \tilde{L}|_N)\cap
\wedge^2 E\subseteq (K+L|_N)\wedge E|_N.
$$

Suppose that this last condition holds, and let $X \in F$. One can take
$\tilde{k} \in \tilde{K}$ and $\tilde{l}\in \tilde{L}|_N$ with
$\sigma(\tilde{k})=X$ and $\sigma(\tilde{l})=-X$, so that
$\tilde{k}+\tilde{l}\in \wedge^2 E$. Then $\tilde{k}+\tilde{l} =
k_i\wedge e^i + l_i\wedge e^i$, for $k_i\in K$, $l_i\in L$, and
$e^i\in E$. Hence 
$\tilde{k} - k_i\wedge e^i = -\tilde{l} +
l_i\wedge e^i$ lies in $\tilde{K}\cap \tilde{L}|_N$ and has symbol $X$, showing that
the symbol map $\tilde{K}\cap \tilde{L}|_N \to F$ is onto.

Conversely,  suppose now that the symbol map $\tilde{K}\cap \tilde{L}|_N \to F$ is
onto, and let $\tilde{k}+\tilde{l}\in  \wedge^2 E|_N$, with
$\tilde{k}\in \tilde{K}$ and $\tilde{l}\in \tilde{L}|_N$. Then 
$\sigma(\tilde{k})=-\sigma(\tilde{l})=:X$. Let $u \in \tilde{K}\cap
\tilde{L}|_N$ be such that $\sigma(u)=X$. 
Then
$\tilde{k}+\tilde{l}$ is the sum of $\tilde{k}-u
\in \tilde{K}\cap \wedge^2 E|_N$ 
and
$\tilde{l}+u \in \tilde{L}|_N\cap \wedge^2 E|_N$, 
and   \eqref{eq:tKtL} shows implies that 
$\tilde{k}+\tilde{l}\in (K+L|_N)\wedge E|_N$, proving the claim.

\smallskip

\noindent{{\bf Claim:} \emph{The symbol map $\tilde{K} \cap \tilde{L}|_N\to F$ is onto if and only if \begin{equation}\label{nablaLquot}
\nabla_Y(\Gamma_{L_{quot}})\subseteq \Gamma_{L_{quot}}, \;\; \mbox{for any section $Y$ of $\Gamma_F$}.
\end{equation}
}}

\smallskip
 
From  \eqref{eq:I2claim} it follows that $\Gamma_{\widetilde{L}} = \Gamma_{\mathbb{A}_{E|_S}}^{\st{L}}$ and, for each open subset $V\subseteq N$, 
$$
\Gamma_{\widetilde{K}}(V)  = \{  (Y,D)\in
\Gamma_{\mathbb{A}_{E|_N}}^{\st{K}}(V)\,|\, Y\in \Gamma_F(V),\, [D]=\nabla_{Y} \}.
$$
Therefore 
\begin{equation*}\label{eq:gammatktl}
\Gamma_{\tilde{K} \cap \tilde{L}|_N}(V) = \{ (Y,D)\in
\Gamma_{\mathbb{A}_{E|_N}}^{\st{K,L|_N}}(V)\,| \, Y\in \Gamma_F(V),  \,[D]=\nabla_{Y} \}.
\end{equation*}

By Prop.~\ref{prop:Lonto},  the map 
$$
\Gamma_{\mathbb{A}_{E|_N}}^{\st{K,L|_N}}  \rightarrow \Gamma_{\mathbb{A}_{E_{quot}}}^{L_{quot}},\;\; (Y,D)\mapsto (Y,[D]),
$$
is onto, and it is clear that it restricts to a surjective map
$$
\Gamma_{\tilde{K} \cap \tilde{L}|_N} \twoheadrightarrow
\Gamma_{\mathbb{A}_{E_{quot}}}^{L_{quot}}\cap \nabla(\Gamma_F), 
$$
where the sheaf on the right-hand side has sections (over an open $V\subseteq N$) of the form
$(Y, \nabla_{Y}) \in \Gamma_{\mathbb{A}_{E_{quot}}}(V)$, with
$Y\in \Gamma_F(V)$ and $\nabla_{Y}(\Gamma_{L_{quot}})\subseteq \Gamma_{L_{quot}}$. Considering symbol maps we obtain the commutative diagram
$$
\xymatrix{
\Gamma_{\tilde{K} \cap \tilde{L}|_N} \ar[dr] \ar@{->>}[rr]_{}
  && \Gamma_{\mathbb{A}_{E_{quot}}}^{L_{quot}}\cap \nabla(\Gamma_F) \ar[dl] \\
   & \Gamma_F 
}
$$
Now the surjectivity of the horizontal map implies that each symbol map is onto if and only if the other one is, which is the statement in the claim.
\end{proof}

\subsubsection{\underline{Reduction of lagrangian submanifolds}}

Assuming that the coisotropic submanifold $\cN$ and the lagrangian submanifold $\cL$ (whose body contains the body of $\cN$) intersect cleanly, and that $\cN$ is reduced to $\underline{\cN}$, we now describe the reduction of $\cL$ to $\underline{\cN}$.

Denoting by $\iota\colon N \hookrightarrow M$ the inclusion of bodies, we have that $C_{\cN} = \iota^{-1}(C_\cM / \cI_N)$. Hence
$\cI:=\iota^{-1}((\cI_{\cN} + \cI_{\cL}) / \cI_{\cN})$ is a sheaf of ideals in $C_{\cN}$, to be understood as the vanishing ideal of the submanifold $\cN\cap \cL$ in $\cN$. This sheaf is locally generated in
degrees 0, 1 and 2, and we have
$$
\cI_0 = 0,\;\quad \cI_1 = \Gamma_{(K+L|_N) /K}, \; \quad \cI_2 =
\Gamma_{(\tilde{K}+\tilde{L}|_N) /\tilde{K}}.
$$
Recall from $\S$ \ref{subsubsec:clean} that, by the clean-intersection condition, the quotients
$$
\frac{(K+L|_N)}{K} =\frac{L|_N}{K\cap L|_N} \quad \mbox{ and } \quad 
\frac{(\tilde{K}+\tilde{L}|_N)}{\tilde{K}}=\frac{\tilde{L}|_N}{\tilde{K}\cap \tilde{L}|_N}
$$ 
are vector bundles over $N$.

Now consider the subsheaf (of algebras) of the sheaf of basic functions on $\cN$ (see $\S$ \ref{subsec:basic}) given by
$$
\cI_{quot}:= \cI\cap
(C_{\cN})_{bas} \subseteq (C_{\cN})_{bas},
$$
which is locally generated in
degrees 0, 1 and 2, and satisfies
\begin{equation}\label{eq:Ired01}
(\cI_{quot})_0 = 0, \;\; \quad (\cI_{quot})_1 = \Gamma_{(K+L|_N) /K}\cap
\Gamma_{E_{quot}}^\st{flat} = \Gamma^\st{flat}_{L_{quot}},
\end{equation}
and, for each open $V\subseteq N$, $(\cI_{quot})_2(V)$ is given by 
operators $(Y,D) \in \Gamma_{\mathbb{A}_{E|_N}}^{\st{K,L|_N}}(V)$ satisfying
$$
 [Y,\Gamma_F(V)]\subseteq
\Gamma_F(V),\;\;\; [D](\Gamma^\st{flat}_{E_{quot}}(V))
\subseteq \Gamma^\st{flat}_{E_{quot}}(V)
$$ 
modulo    
those such that $Y\in \Gamma_F(V)$, and $[D]=\nabla_{Y}$.
We will give another characterization of $(\cI_{quot})_2$.

Consider the vector bundle $\mathbb{A}^\nabla_{E_{quot}}$ (see \eqref{eq:Anabla}) with its natural flat partial connection.  Let $\Gamma^\st{L_{quot},flat}_{\mathbb{A}^\nabla_{E_{quot}}}$ be the subsheaf of $\Gamma^\st{flat}_{\mathbb{A}^\nabla_{E_{quot}}}$
whose sections $\overline{(Y,\Delta)}$ satisfy the additional property that 
$$
\Delta(\Gamma_{L_{quot}})\subseteq \Gamma_{L_{quot}}.
$$ 
(Note that this last condition is well defined for the class $\overline{(Y,\Delta)}$ by the clean-intersection assumption and  Prop.~\ref{lem:clean} part (ii).)
Then the map \eqref{eq:basic2} giving the identification $((C_\cN)_{bas})_2\cong \Gamma^\st{flat}_{\mathbb{A}^\nabla_{E_{quot}}}$ restricts to an injective map
$$
(\cI_{quot})_2\to \Gamma^\st{L_{quot},flat}_{\mathbb{A}^\nabla_{E_{quot}}}
$$
which is also surjective by \eqref{prop:Lonto}, so 
\begin{equation}\label{eq:Iquot2}
(\cI_{quot})_2\cong \Gamma^\st{L_{quot},flat}_{\mathbb{A}^\nabla_{E_{quot}}}.
\end{equation}

Suppose that $\cN$ is reducible to $\cMred$, with projection $(p,p^\sharp)\colon \cN \to \cMred$, as in $\S$ \ref{subsec:coisored}.
Recall that we have an isomorphism
$p^\sharp \colon C_{\cMred}\to p_* (C_\cN)_{bas}$, and let $\cI_{red}\subseteq C_{\cMred}$ be defined by 
$$
p^\sharp(\cI_{red})=p_*(\cI_{quot}).
$$

Consider the pseudo-euclidean vector bundle $E_{red}\to \Mred$ corresponding to $\cMred$, given by the quotient of $E_{quot}=K^\perp/K$ with respect to $F$ and $\nabla$ (see Theorem~\ref{thm:coisored}). 
Due to the geometric description of the clean-intersection condition in Prop.~\ref{lem:clean}, one may also consider the quotient of the lagrangian vector subbundle $L_{quot}=\frac{(K^\perp\cap L|_N) + K}{K}$ of $E_{quot}$ with respect $F$ and $\nabla$, which is a lagrangian subbundle 
$$
L_{red}\subseteq E_{red}.
$$

\begin{thm}\label{thm:lagred} Suppose that a coisotropic submanifold $\cN$ and a lagrangian submanifold $\cL$ (with body containing the one of $\cN$) intersect cleanly, and that $\cN$ reduces to $\cMred$. Then: 
\begin{itemize}
\item[(a)] $\cI_{red}$ is the vanishing ideal of a lagrangian submanifold $\underline{\cL}$ of $\cMred$, corresponding to the
lagrangian subbundle $L_{red}$ (in the sense of Cor. \ref{cor:lag}).  

\item[(b)] If $\Theta$ is a Courant function on $\cM$ that is reducible for $\cN$ and $\cL$, then $\Theta_{red}$ satisfies $\{\Theta_{red}, \cI_{red}\} \subseteq \cI_{red}$ (i.e., it is reducible for $\underline{\cL}$).\end{itemize}
\end{thm}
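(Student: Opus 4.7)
The approach for both parts is to exploit the Poisson-algebra isomorphism $p^\sharp\colon C_{\cMred}\stackrel{\sim}{\to} p_*(C_\cN)_{bas}$ from Theorem~\ref{thm:coisored}, together with the degree-by-degree identifications \eqref{eq:map2}, \eqref{eq:map3} and \eqref{eq:Iquot2}, to compute $\cI_{red}$ explicitly and compare it with the vanishing ideal of the lagrangian submanifold of $\cMred$ associated via Corollary~\ref{cor:lag} to $L_{red}$.

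For part (a), I would first observe that, by the clean-intersection hypothesis and Proposition~\ref{lem:clean}(ii), the connection $\nabla$ restricts to a flat metric $F$-connection on $L_{quot}\subseteq E_{quot}$, so the quotient $L_{red}$ in the sense of Definition~\ref{def:vbquot} is well defined and, since $L_{quot}$ is lagrangian in $E_{quot}$ and the pseudo-euclidean structure on $E_{red}$ is induced from the one on $E_{quot}$, it is a lagrangian subbundle of $E_{red}$. Next I would match $(\cI_{red})_i$ in each degree: trivially $(\cI_{red})_0=0$; in degree~$1$, the identification $p_1^\sharp$ sends $\Gamma_{L_{red}}$ onto $p_*\Gamma^\st{flat}_{L_{quot}}=p_*(\cI_{quot})_1$ by \eqref{eq:Ired01}; in degree~$2$, the identification $p_2^\sharp$ sends $\Gamma^\st{L_{red}}_{\mathbb{A}_{E_{red}}}$ onto $p_*\Gamma^\st{L_{quot},flat}_{\mathbb{A}^\nabla_{E_{quot}}}=p_*(\cI_{quot})_2$ via \eqref{eq:Iquot2}. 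The key step in the degree-$2$ match is that, under the chain of quotient-Atiyah identifications, a derivation of $E_{red}$ preserving $\Gamma_{L_{red}}$ corresponds to a $\overline{\nabla}$-flat class of derivations of $E_{quot}$ preserving $\Gamma_{L_{quot}}$, which one verifies using the surjectivity result of Proposition~\ref{prop:Lonto} from the appendix. Once these matches are established, Corollary~\ref{cor:lag} implies that the vanishing ideal of the lagrangian submanifold $\underline{\cL}\hookrightarrow\cMred$ determined by $L_{red}$ is locally generated in degrees $0$, $1$ and $2$ by the same data as $\cI_{red}$; hence the two ideals coincide, and in particular $\cI_{red}$ is regular.

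For part (b), let $\alpha$ be a local section of $\cI_{red}$. By definition of $\cI_{red}$ and $\cI_{quot}$, one has $p^\sharp(\alpha)=\iota^\sharp(\tilde h)$ for some local section $\tilde h\in \mathfrak{N}_{\cI_\cN}\cap(\cI_\cN+\cI_\cL)$. Since $p^\sharp$ is a Poisson-algebra isomorphism and $p^\sharp(\Theta_{red})=\iota^\sharp(\Theta)$, the relation $p^\sharp\{\Theta_{red},\alpha\}=\iota^\sharp\{\Theta,\tilde h\}$ reduces the claim to verifying that $\{\Theta,\tilde h\}\in \mathfrak{N}_{\cI_\cN}\cap(\cI_\cN+\cI_\cL)$. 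The inclusion in $\mathfrak{N}_{\cI_\cN}$ is automatic, as $\mathfrak{N}_{\cI_\cN}$ is a Poisson subalgebra and $\Theta\in\mathfrak{N}_{\cI_\cN}$ by reducibility of $\Theta$ for $\cN$. Writing $\tilde h=f+g$ with $f\in\cI_\cN$ and $g\in\cI_\cL$, reducibility of $\Theta$ for $\cN$ gives $\{\Theta,f\}\in\cI_\cN$ and reducibility for $\cL$ gives $\{\Theta,g\}\in\cI_\cL$, so $\{\Theta,\tilde h\}\in \cI_\cN+\cI_\cL$, as required. The only non-routine step in the whole proof is the degree-$2$ identification in part~(a), where the chain of quotient-Atiyah identifications must be tracked carefully alongside the subbundles $L_{quot}$ and $L_{red}$; the remainder is formal bookkeeping with the Poisson isomorphism $p^\sharp$.
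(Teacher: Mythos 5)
Your proposal is correct and follows essentially the same route as the paper: degree-by-degree matching of $\cI_{red}$ with the vanishing ideal of the lagrangian submanifold determined by $L_{red}$ via the identifications \eqref{eq:map2}, \eqref{eq:map3} and \eqref{eq:Iquot2} for part (a), and the Poisson-isomorphism/lifting argument through $\mathfrak{N}_{\cI_\cN}\cap(\cI_\cN+\cI_\cL)$ for part (b). The only cosmetic differences are that you re-derive the well-definedness of $L_{red}$ (which the paper establishes in the preceding discussion) and you spell out the decomposition $\tilde h=f+g$ that the paper leaves implicit.
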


\begin{proof}
We know that $L_{red}\to \Mred$ is a lagrangian subbundle of $E_{red}$, so it gives rise to a lagrangian submanifold $\underline{\cL}$ of $\cMred$ by Cor. \ref{cor:lag}. To prove (a), we must check that its vanishing ideal equals $\cI_{red}$, and it is enough to verify this fact in degrees 0, 1 and 2. From the expressions for $(\cI_{quot})_0$ and $(\cI_{quot})_1$ in \eqref{eq:Ired01}, it is clear that 
$$
(\cI_{red})_0 = 0 = (\cI_{\underline{\cL}})_0, \qquad (\cI_{red})_1 = \Gamma_{L_{red}} = (\cI_{\underline{\cL}})_1.
$$
Verifying the remaining case in degree 2 amounts to checking that
$(\cI_{red})_2$ coincides with $\Gamma^{\st{L_{red}}}_{\mathbb{A}_{E_{red}}}$, i.e., the sheaf of sections of $\mathbb{A}_{E_{red}}$ that preserve $\Gamma_{L_{red}}$.

Recall the map \eqref{eq:map3} that identifies 
$
\Gamma_{\mathbb{A}_{E_{red}}}$ with 
$$
p_*((C_\cN)_{bas})_2 = p_* \Gamma^\st{flat}_{\mathbb{A}^\nabla_{E_{quot}}}.
$$ 
A section $(\underline{Y}, \underline{\Delta})$ of $\Gamma_{\mathbb{A}_{E_{red}}}$ corresponds to $\overline{(Y,\Delta)}$ in $p_*\Gamma^\st{flat}_{\mathbb{A}^\nabla_{E_{quot}}}$ if and only if
$$
\overline{Y}= p_*(Y), \qquad p_1^\sharp \circ \underline{\Delta}\circ (p_1^\sharp)^{-1} = \Delta|_{\Gamma^\st{flat}_{E_{quot}}},
$$
with $p_1^\sharp$ as in \eqref{eq:map2}  (see Prop.~\ref{prop:Ared}).
Recall that $(\cI_{red})_2 \subseteq \Gamma_{\mathbb{A}_{E_{red}}}$ is defined by the condition that it agrees with  $p_*(\cI_{quot})_2= p_* \Gamma^\st{L_{quot},flat}_{\mathbb{A}^\nabla_{E_{quot}}}$ under this identification. We will check that $\Gamma^{\st{L_{red}}}_{\mathbb{A}_{E_{red}}}$ has this property and hence coincides with $(\cI_{red})_2 $.

Note that a section $(\underline{Y}, \underline{\Delta})$ of $\Gamma_{\mathbb{A}_{E_{red}}}$ satisfying $\underline{\Delta}(\Gamma_{L_{red}})\subseteq \Gamma_{L_{red}}$ corres\-ponds to a section $\overline{(Y,\Delta)}$ such that  ${\Delta}(\Gamma^\st{flat}_{L_{quot}})\subseteq \Gamma^\st{flat}_{L_{quot}}$. The fact that flat sections locally generate $\Gamma_{L_{quot}}$
implies that 
$$
{\Delta}(\Gamma^\st{flat}_{L_{quot}})\subseteq \Gamma^\st{flat}_{L_{quot}} \; \iff \; {\Delta}(\Gamma_{L_{quot}})\subseteq \Gamma_{L_{quot}}.
$$
Hence  $\overline{(Y,\Delta)}$ is a section of $\Gamma^\st{L_{quot},flat}_{\mathbb{A}^\nabla_{E_{quot}}}$, proving (a).

To prove (b), recall that $\Theta$ being reducible for $\cN$
says that it is a section of $\mathfrak{N}_{\cI_{\cN}}$, and hence defines a section $\Theta_\cN$ of $(C_\cN)_{bas} = \iota^{-1}(\mathfrak{N}_{\cI_{\cN}}/\cI_{\cN})$. Assuming that $\Theta$ is also reducible for $\cL$, i.e., 
$
\{ \Theta, \cI_{\cL}  \} \subseteq \cI_{\cL},
$
directly implies that 
$$
\{ \Theta_{\cN}, \cI_{quot}  \}\subseteq \cI_{quot},
$$
which proves (b) upon the identification $p^\sharp: C_{\underline{\cN}}\stackrel{\sim}{\to} p_*(C_{\cN})_{bas}$.
\end{proof}

\subsubsection{\underline{Reduction of Dirac structures}}

We now use Theorem \ref{thm:lagred} to obtain a reduction procedure for Dirac structures, phrased in classical geometric terms. Let $E$ be a Courant algebroid over $M$, with anchor $\rho\colon E \to TM$, pseudo-euclidean structure $\SP{\cdot, \cdot}$ and bracket $\Cour{\cdot,\cdot}$.

Let us consider the setup for Courant algebroid reduction of $\S$ \ref{subsec:cred}: geometric coisotropic data $(N, K, F, \nabla)$ with respect to which $E$ is reducible, as in Definition~\ref{def:reduc}.
We also assume that $F$ is simple and $\nabla$ has trivial holonomy, so that we have a reduced Courant algebroid $E_{red}\to \underline{N}$, as in Thm.~\ref{cor:courred}.  Let $L$ be a Dirac structure in $E$ with support on a submanifold $S$ containing $N$ (see Def.~\ref{def:diracsupp}). 

\begin{thm}\label{thm:diracred}
In the setup above, suppose that
\begin{itemize}
\item[(a)] $L|_N\cap K$ has constant rank and 
\item[(b)] the lagrangian subbundle $L_{quot}= \frac{(K^\perp\cap L|_N) + K}{K}$ in $E_{quot}=K^\perp/K$ is $\nabla$-invariant, i.e., $\nabla_Y(\Gamma_{L_{quot}})\subseteq \Gamma_{L_{quot}}$ for any section $Y$ of $F$. 
\end{itemize}
Then the  quotient of $L_{quot}$ with respect to $F$ and $\nabla$ is a Dirac structure  $L_{red}\subset E_{red}$.
\end{thm}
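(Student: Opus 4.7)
The plan is to derive Theorem~\ref{thm:diracred} from the graded-geometric machinery developed in the paper, by passing to the symplectic degree 2 $\N$-manifold $\cM$ associated to $E$ and reducing the statement to Theorem~\ref{thm:lagred}. First I would translate all the data into graded-geometric language: the Courant algebroid $E$ corresponds to a Courant function $\Theta$ on $\cM$; the geometric coisotropic data $(N,K,F,\nabla)$ corresponds via Theorem~\ref{thm:coisotropic} to a coisotropic submanifold $\cN\subset \cM$; the reducibility of $E$ with respect to $(N,K,F,\nabla)$ translates via Theorem~\ref{thm:reducible} to the condition $\{\Theta, \cI_{\cN}\}\subseteq \cI_{\cN}$; and the Dirac structure $L$ with support $S \supseteq N$ corresponds via Corollary~\ref{cor:lag} to a lagrangian submanifold $\cL\subset \cM$ whose vanishing ideal satisfies $\{\Theta, \cI_{\cL}\}\subseteq \cI_{\cL}$ by Corollary~\ref{prop:lagdirac}. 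Under simplicity of $F$ and triviality of holonomy of $\nabla$, the coisotropic reduction $\cN \to \cMred$ exists, and by Theorem~\ref{cor:courred} the Courant function $\Theta$ descends to $\Theta_{red}$ on $\cMred$, corresponding to the reduced Courant algebroid $E_{red}\to \underline{N}$.

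The key step is the verification that $\cN$ and $\cL$ intersect cleanly, which is precisely where hypotheses (a) and (b) of Theorem~\ref{thm:diracred} come into play. Proposition~\ref{lem:clean} characterizes clean intersection by the constancy of rank of $L|_N\cap K$ together with the $\nabla$-invariance of $L_{quot}=\frac{(K^\perp\cap L|_N)+K}{K}$, and these are exactly our assumptions. Once clean intersection is established, I would invoke Theorem~\ref{thm:lagred}(a) to obtain a lagrangian submanifold $\underline{\cL}\subset \cMred$ whose corresponding lagrangian subbundle of $E_{red}$ (in the sense of Corollary~\ref{cor:lag}) is, by construction, the quotient $L_{red}$ of $L_{quot}$ with respect to $F$ and $\nabla$. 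Simultaneously, Theorem~\ref{thm:lagred}(b) applied to $\Theta$, which is reducible both for $\cN$ and for $\cL$, yields $\{\Theta_{red}, \cI_{\underline{\cL}}\}\subseteq \cI_{\underline{\cL}}$. A final application of Corollary~\ref{prop:lagdirac} to the triple $(\cMred, \Theta_{red}, \underline{\cL})$ then translates this into the statement that $L_{red}\subset E_{red}$ is a Dirac structure, completing the argument.

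The proof is essentially a bookkeeping exercise packaging together results already established, so I do not anticipate a substantial technical obstacle beyond what has been handled earlier. The most delicate point is confirming the identification between the lagrangian subbundle of $E_{red}$ underlying $\underline{\cL}$ via Corollary~\ref{cor:lag} and the bundle $L_{red}$ produced by Lemma~\ref{lem:hol} as the quotient of $L_{quot}$ by $F$ and $\nabla$. This compatibility is built into the explicit degree-wise description of the vanishing ideal of $\underline{\cL}$ given in the proof of Theorem~\ref{thm:lagred}(a), where $(\cI_{red})_1 = \Gamma_{L_{red}}$ is matched with the pullback of flat sections of $L_{quot}$, so no new calculation is needed.
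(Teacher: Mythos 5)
Your proposal is correct and follows essentially the same route as the paper: the paper's proof is exactly the translation you describe, namely passing to the graded picture, identifying hypotheses (a) and (b) with the clean-intersection condition of Proposition~\ref{lem:clean}, applying both parts of Theorem~\ref{thm:lagred}, and concluding via Corollary~\ref{prop:lagdirac}. Your closing remark about the compatibility of $L_{red}$ from Lemma~\ref{lem:hol} with the subbundle underlying $\underline{\cL}$ is also precisely the content of part (a) of Theorem~\ref{thm:lagred}, so nothing further is needed.
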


The proof is just a translation of Thm.~\ref{thm:lagred}. The Courant algebroid $E$ corresponds to a  degree 2 symplectic $\mathbb{N}$-manifold $\cM$ equipped with a Courant function $\Theta$, the coisotropic data $(N, K, F, \nabla)$ gives  a  coisotropic submanifold $\cN$ with respect to which $\Theta$ is reducible, and the Dirac structure $L$ corresponds to a lagrangian submanifold $\cL$ (with body $S\supseteq N$) with respect to which $\Theta$ is reducible. The assumptions in the theorem amount to the clean intersection condition, and the fact that $L_{red}$ is a Dirac  structure is equivalent to the property in part (b) of Thm.~\ref{thm:lagred} (by Cor.~\ref{prop:lagdirac}).

\begin{ex}[Reduction of Dirac structures in the standard Courant algebroid]

Consider the standard Courant algebroid $E= TM \oplus T^*M$, let $\iota: N\hookrightarrow M$ be a submanifold and $F\subseteq TN$ be an involutive subbundle defining a simple foliation with quotient map 
$$
p: N \to \underline{N}.
$$
We saw in Example \ref{red:stdca} that, with respect to the geometric coisotropic data given by $N$, $F$, the isotropic subbundle  $K= F\oplus \mathrm{Ann}(TN)$ and $\nabla = \nabla^{Bott} \oplus (\nabla^{Bott})^*$, the Courant algebroid $E$  reduces to $ T\underline{N}\oplus T^*\underline{N}$. For a Dirac structure $L \subset TM\oplus T^*M$ satisfying conditions (a) and (b) of Theorem \ref{thm:diracred}, one can check that the reduced Dirac structure 
$$
L_{red} \subset T\underline{N}\oplus T^*\underline{N} 
$$
coincides with the Dirac structure on $\underline{N}$ resulting from the pullback of $L$ to $N$ followed by its pushforward to $\underline{N}$ (see \cite[\S 5.1]{BuDir} for the definitions); conditions (a) and (b) in Theorem~\ref{thm:diracred} ensure that the result of the composition of these operations is a smooth Dirac structure on $\underline N$.

 The following are two special cases.
\begin{itemize}
\item {\em (Reduction of presymplectic structures)}. Let $\omega$ be a closed 2-form on $M$ and $L=\mathrm{graph}(\omega)$, the graph of the map $TM\to T^*M$ induced by $\omega$ via contraction. Suppose that the subbundle $F\subseteq TN$ satisfies 
\begin{equation}\label{eq:Fker}
F\subseteq \ker(\iota^*\omega).
\end{equation}
This condition turns out to guarantee that (a) and (b) in Theorem~\ref{thm:diracred} hold. Indeed, 
$L|_N\cap K \cong F,$ and hence (a) is satisfied. To verify (b), note that $\iota^*\omega$ induces a map $TN/F\to (TN/F)^*$ defining an element $\omega_{quot}\in \Gamma(\wedge^2 (TN/F)^*)$, in such a way that $L_{quot}=\mathrm{graph}(\omega_{quot})$. Then condition (b), which means that $\nabla^{Bott} \omega_{quot} =0$, amounts to the fact that $\pounds_Y \iota^*\omega =0$ for $Y\in \Gamma(F)$, which holds by \eqref{eq:Fker} and by the closedness of $\omega$.

To describe the reduced Dirac structure on $\underline{N}$, note that $\iota^*\omega$ is a basic form, and 
$$
L_{red}=\mathrm{graph}(\omega_{red}),
$$ 
where $\omega_{red}$ is the unique 2-form on $\underline{N}$ such that $p^*\omega_{red}=\iota^*\omega$. In particular, when $\omega$ is symplectic and $N$ is such that  $TN\cap TN^\omega$ has constant rank, by taking $F=TN\cap TN^\omega=\ker(\iota^*\omega)$ one recovers usual symplectic reduction. 

\smallskip

\item {\em (Reduction of Poisson structures).} Let $\pi \in \Gamma(\wedge^2 TM)$ be a Poisson structure and $L=\mathrm{graph}(\pi)$ be the graph of the map $\pi^\sharp: T^*M\to TM$ given by contraction. With the notation
$$
TN^\pi:= \pi^\sharp(\mathrm{Ann}(TN)) = \mathrm{Ann}((\pi^\sharp)^{-1}(TN)),
$$
assume that $N$ is such that
\begin{itemize}
\item[(i)] $TN\cap TN^\pi$ has constant rank,
\item[(ii)] $TN+TN^\pi$ has constant rank.
\end{itemize}
(These conditions imply that (iii) $TN^\pi$ has constant rank, and 
any two of the conditions (i), (ii) and (iii) imply the third.) Submanifolds $N$ satisfying (ii) above are called  {\em pre-Poisson} submanifolds \cite{CaZa:coiso}, and generalize the notion of coisotropic submanifolds. 

With these assumptions on $N$ and setting 
$$
F= TN\cap TN^\pi,
$$
we will see that (a) and (b) in  Theorem~\ref{thm:diracred} are satisfied, so one can carry out reduction of $L$.

Notice that $L|_N\cap K \cong \mathrm{Ann}(TN)\cap (\pi^{\sharp})^{-1}(TN)=\mathrm{Ann}(TN+TN^\pi)$, so the fact that it has constant rank is equivalent to (ii), showing that (a) holds.
To verify (b), first note that $\pi^\sharp$ restricts to a map $\mathrm{Ann}(TN^\pi)= (\pi^\sharp)^{-1}(TN)\to TN$ and that the natural projection $T^*M|_N\to T^*N$ restricts to a projection $\mathrm{Ann}(TN^\pi)\twoheadrightarrow (TN/F)^*$, $\alpha\mapsto \overline{\alpha}$ (surjectivity can be verified by noticing that the dual map $TN/F\to TM|_N/TN^\pi$ is injective by the very definition of $F$).
We then obtain a well-defined vector-bundle map 
$$
(TN/F)^*\to (TN/F), \qquad \overline{\alpha}\mapsto \overline{(\pi^\sharp(\alpha))},
$$
where $\alpha\in \mathrm{Ann}(TN^\pi)$ and $X\mapsto \overline{X}$ is the projection of $TN$ on $TN/F$, defining an element $\pi_{quot}\in \Gamma(\wedge^2 (TN/F))$ such that $L_{quot} =  \mathrm{graph}(\pi_{quot})$. Condition (b) in  Theorem~\ref{thm:diracred}, which says that  $\nabla^{Bott} \pi_{quot}=0$, amounts to verifying that, for  any 
$Z\in \Gamma(F)$, there is a vector field
$Y$ on $M$ with $Y|_N = Z$ such that  
\begin{equation}\label{eq:(b)}
(\pounds_Y \pi)^\sharp (\alpha) \in F
\end{equation}
for all $\alpha \in \mathrm{Ann}(TN^\pi)$. To prove that this last property holds,
we first note that there exists a local frame for $F$ consisting of the restriction  to $N$ of hamiltonian vector fields on $M$. Indeed, we have a surjective map 
$$
\pi^\sharp: \mathrm{Ann}(TN)\cap \mathrm{Ann}(TN^\pi)=\mathrm{Ann}(TN+TN^\pi) \to F,
$$
and the fact that $TN+TN^\pi$ and $F$ have constant rank ensure that $\mathrm{Ann}(TN+TN^\pi)$ has a local frame given by restrictions of differentials of functions on $M$
(cf. \cite[Lem.~1]{CaFa}). Now supposing that $F$ is spanned by $X_{f_i}|_N$ and writing $Y=a^i X_{f_i}$, we have $\pounds_Y\pi = -X_{a^i}\wedge X_{f_i}$. For $\alpha \in \mathrm{Ann}(TN^\pi)$, it follows that $(\pounds_Y \pi)^\sharp (\alpha)=-\alpha(X_{a^i}) X_{f_i} \in F$, showing that \eqref{eq:(b)} holds.

Since $\pi_{quot}$ is parallel, it gives rise to a Poisson structure $\pi_{red}$ on $\underline{N}$ in such a way that $L_{red}=\mathrm{graph}(\pi_{red})$. This construction matches the reduction of Poisson structures in \cite[Thm.~3]{CaFa}; in particular, when $N$ is coisotropic, $L_{red}$ corresponds to the Poisson structure obtained by coisotropic reduction
(cf. \cite[Ex. 4.2]{Za}). 
\end{itemize}
\hfill $\diamond$
\end{ex}

Building on Example \ref{ex:JSGK}, we see that Thm. \ref{thm:diracred} recovers the following reduction construction  from  \cite{Za}.

\begin{ex} [Reduction of  Dirac structures with support]\label{ex:JSGL}
 Let $E$ be an exact 
Cou\-rant algebroid over $M$, and let $L$ be a Dirac structure supported on a submanifold $N \subseteq M$ such that $\rho(L) = TN$. Suppose that $N$ is equipped with an involutive distribution $F$ that is simple, with leaf space $\underline{N}$. With this setup, we now recall how to canonically obtain an exact Courant algebroid $E_{red}$ over $\underline{N}$ together with a Dirac structure $L_{red}$ therein.

We set $K:=L\cap \rho^{-1}(F) \to N$, which is an isotropic subbundle of $E$ with $\rho(K)=F$, and  moreover satisfies the condition 
$$
\rho(K^{\perp})=TN,
$$
since $K^{\perp}=L+\rho^*(\mathrm{Ann}(F))$.
Then, as explained in Example \ref{ex:JSGK}, one canonically obtains coisotropic data $(N, K, F, \nabla)$ with respect to which $E$ is reducible.
Moreover, it is proven in \cite[Lemma 5.4, Prop. 5.5]{Za} that the $F$-connection $\nabla$ obtained in this case automatically has trivial holonomy. (This relies on the fact that any splitting of $\rho|_L \colon L\to TN$, applied to projectable vector fields on $N$, yields flat sections of $E_{quot}$.) Hence by Thm.~\ref{cor:courred}  we obtain a reduced Courant algebroid $E_{red}\to \underline{N}$, which is exact by Prop.~\ref{prop:exactCA}.

Using that $K\subseteq L \subseteq K^\perp$ and the involutivity of $K$ and $L$, 
one can directly verify  that the two conditions in Thm. \ref{thm:diracred}
are satisfied.
Therefore $L$ reduces to a Dirac structure $L_{red}$ in $E_{red} \to \underline{N}$.
\hfill $\diamond$
\end{ex}

By combining the above example with  Thm. \ref{thm:redGCS} one recovers the ``reduction of branes'' in generalized complex geometry described in \cite[Thm. 7.4]{Za} (it is shown in  part b) of the proof of \cite[Thm. 7.4]{Za} that the hypothesis of Thm. \ref{thm:redGCS} is satisfied).


\section{Momentum maps and hamiltonian reduction}\label{sec:momap}

In this section we discuss symplectic reduction of hamiltonian actions in the context of  degree 2 symplectic $\mathbb{N}$-manifolds.
We recall the main ingredients of the classical procedure \cite{MWpaper}.

Let $M$ be a symplectic manifold, with Poisson bracket $\{\cdot,\cdot\}$, and let $\g$ be a Lie algebra. A $\g$-action on $M$,
$$
\g \to \mathfrak{X}(M), \quad u\mapsto u_M,
$$
is called {\em hamiltonian} if there is a smooth map $\mu\colon M \to \g^*$ satisfying the following properties: its dual map 
$$
\mu^*\colon \g \to C^\infty(M), \quad (\mu^*u) (x) = \SP{\mu(x),u},
$$
is a Lie algebra homomorphism such that
$$
u_M =  X_{\mu^*u} = \{\mu^*u, \cdot\}.
$$
The map $\mu\colon M \to \g^*$ is called a {\em momentum map}. 
The bracket-preserving property of $\mu^*$ is equivalent to the $\g$-equivariance of $\mu$ with respect to the co-adjoint action of $\g$ on $\g^*$.

We will restrict ourselves to the simplest formulation of symplectic reduction, by assuming that $0$ is a regular value of $\mu$, so that $N:=\mu^{-1}(0)$ is a $\g$-invariant submanifold of $M$; letting $G$ be any connected Lie group with Lie algebra $\g$, we also assume that the $\g$-action on $N$ integrates to a $G$-action that is free and proper.  Then $M_{red}:= N/G$ carries a natural symplectic structure. In this setting,  $N$ is a coisotropic submanifold of $M$, and $M_{red}$ agrees with its coisotropic reduction. In particular, $C^\infty(N)^G = C^\infty(N)_{bas}$ is a Poisson algebra, and the reduced symplectic form on $M_{red}$ is characterized by the fact that the identification $q^*\colon C^\infty(M_{red})\stackrel{\sim}{\to} C^\infty(N)^G$ induced by the quotient map $q\colon N \to M_{red}$ is an isomorphism of Poisson algebras. 

We will present an analogue of this construction for $\mathbb{N}$-manifolds of  degree 2 .

\subsection{Differential graded Lie algebras of degree 2}

A \emph{graded Lie algebra} is a (real) graded vector space $\tilde{\mathfrak{g}} = \oplus_{k\in \mathbb{Z}} \mathfrak{g}_k$, equipped with a degree-preserving graded Lie bracket $[\cdot,\cdot]  \colon \tilde{\mathfrak{g}} \otimes \tilde{\mathfrak{g}} \to \tilde{\mathfrak{g}}$.
This means that, for homogeneous elements $x \in \g_k$ and $y \in \g_l$,  
$$
[x,y]=-(-1)^{kl}[y,x]
$$ 
and $[x,\cdot]$ is a degree $k$ derivation of the bracket (graded Jacobi identity).

For a $\N$-manifold $\cM$ of degree $2$, we consider the space of global sections of the sheaf of vector fields $\mathcal{T} \cM$, denoted by $\vect (\cM)$, equipped with its graded Lie bracket. An \emph{action} of a graded Lie algebra $\tilde{\mathfrak{g}}$ on $\cM$ is a graded Lie algebra morphism $ \tilde{\mathfrak{g}} \to \vect (\cM)$.

A \emph{differential graded Lie algebra (DGLA)} is a graded Lie algebra equipped with a coboundary operator $\delta  \colon \tilde{\mathfrak{g}}\to \tilde{\mathfrak{g}}$   that is a degree 1 derivation of the Lie bracket, 
$$
\delta [x,y] = [\delta x, y] + (-1)^k [x, \delta y],
$$
for $x$ of degree $k$.
Such a $\delta$ is referred to as a {\em differential}.

For the purpose of studying hamiltonian actions on symplectic $\N$-manifolds of degree $2$ (see $\S$ \ref{sec:hamred}), we will focus on (differential) graded Lie algebras that are concentrated in degrees $0$, $-1$, and $-2$; we will say that such a (differential) graded Lie algebra is \emph{of degree $2$}. 

For ordinary vector spaces $\mathfrak{h}$, $\mathfrak{a}$, and $\mathfrak{g}$, consider the graded vector space  
\begin{equation}\label{eq:hag}
\tilde{\mathfrak{g}} = \mathfrak{h}[2] \oplus \mathfrak{a}[1] \oplus \mathfrak{g}.
\end{equation}
Recall that the notation means that elements in $\h$ have degree $-2$, and elements in $\mathfrak{a}$ have degree $-1$.

\begin{prop}\label{prop:gla}
	A graded Lie algebra structure on $\tilde{\mathfrak{g}}$ is equivalent to  the following data:
\begin{itemize}
	\item a Lie algebra structure on $\mathfrak{g}$,
	\item a representation $\tau \colon \mathfrak{g} \to \End (\mathfrak{a})$, 
	\item a representation $\lambda \colon \mathfrak{g} \to \End (\mathfrak{h})$, and
	\item a symmetric bilinear map $\varpi \colon \mathfrak{a} \otimes \mathfrak{a} \to \mathfrak{h}$ that is $\mathfrak{g}$-equivariant, i.e.,  $u \cdot\varpi(a_1,a_2)=\varpi(u \cdot a_1,a_2)+\varpi(a_1,u\cdot a_2)$ for all $v\in \g$, $a_1,a_2\in \mathfrak{a}$.
\end{itemize}
\end{prop}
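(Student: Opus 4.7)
The approach is to decompose a graded Lie bracket on $\tilde{\g}=\h[2]\oplus\A[1]\oplus\g$ according to degrees and observe that, because $\tilde{\g}$ is concentrated in degrees $-2,-1,0$, many potential bracket components are automatically zero or take values in the zero subspace. Concretely, the bracket $[\cdot,\cdot]$ is degree-preserving, so the only potentially nonzero components are $[\cdot,\cdot]\colon\g\otimes\g\to\g$, $[\cdot,\cdot]\colon\g\otimes\A\to\A$, $[\cdot,\cdot]\colon\g\otimes\h\to\h$, and $[\cdot,\cdot]\colon\A\otimes\A\to\h$; the brackets $\A\otimes\h$ and $\h\otimes\h$ would have to land in degrees $-3$ and $-4$ respectively, hence vanish. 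I would define $\tau\colon\g\to\End(\A)$, $\lambda\colon\g\to\End(\h)$ as the operators $u\mapsto[u,\cdot]$, and set $\varpi(a_1,a_2):=[a_1,a_2]$.

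The next step is to invoke graded skew-symmetry $[x,y]=-(-1)^{|x||y|}[y,x]$ to extract symmetry/antisymmetry conditions. This forces the bracket on $\g$ to be antisymmetric (giving a Lie bracket once Jacobi is checked), leaves the $\g$-actions unconstrained, and gives $\varpi(a_1,a_2)=-(-1)^{(-1)(-1)}\varpi(a_2,a_1)=\varpi(a_2,a_1)$, so $\varpi$ is indeed symmetric.

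The main work is to translate the graded Jacobi identity into the stated conditions, by running through all degree triples. Triples involving two or more elements of total degree $\leq -2$ other than $(\g,\A,\A)$ automatically hold because the relevant brackets land in zero: e.g. $(\g,\A,\h)$, $(\g,\h,\h)$, $(\A,\A,\A)$, $(\A,\A,\h)$, etc. The remaining nontrivial triples yield: $(\g,\g,\g)$ gives the Jacobi identity for $\g$; $(\g,\g,\A)$ gives $\tau([u_1,u_2])=[\tau(u_1),\tau(u_2)]$, i.e., $\tau$ is a representation; $(\g,\g,\h)$ likewise gives that $\lambda$ is a representation; and $(\g,\A,\A)$ expands to
\[
\lambda(u)\,\varpi(a_1,a_2)=\varpi(\tau(u)a_1,a_2)+\varpi(a_1,\tau(u)a_2),
\]
which is precisely the $\g$-equivariance of $\varpi$. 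I do not anticipate any genuine obstacle: the only place to be careful is the signs $(-1)^{|x||y|}$ in the Jacobi identity and in the verification that brackets of odd elements become symmetric on $\A\otimes\A$.

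Finally, the converse direction is straightforward: given the four pieces of data in the statement, one \emph{defines} a bracket on $\tilde{\g}$ by the formulas above (and zero on $\A\otimes\h$, $\h\otimes\h$), and the calculations just performed show that graded skew-symmetry and graded Jacobi hold. Hence the two sets of data are equivalent.
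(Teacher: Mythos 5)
Your proposal is correct and follows essentially the same route as the paper: define $\tau$, $\lambda$, $\varpi$ via the bracket components and derive the representation and equivariance properties from graded skew-symmetry and the graded Jacobi identity. The paper's proof is just a terser version of your degree-by-degree bookkeeping (which is carried out accurately, including the sign showing $\varpi$ is symmetric).
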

\begin{proof}
     The correspondence is given by 
     $$
     \tau(u)(a) = [u,a], \;\; \lambda(u)(h) = [u,h], \;\; \varpi(a_1,a_2) = [a_1,a_2],
     $$ 
     for $u \in \mathfrak{g}$, $a,a_1,a_2 \in \mathfrak{a}$, and $h \in \mathfrak{h}$. The fact that $\tau$ and $\lambda$ are representations, as well as the $\mathfrak{g}$-equivariance property of $\varpi$, follow from the graded Jacobi identity for the bracket on $\tilde{\mathfrak{g}}$.
\end{proof}

Now suppose that $\tilde{\mathfrak{g}}$ as in \eqref{eq:hag} is equipped with a graded Lie algebra structure as well as a coboundary operator $\delta$, so that we have a $3$-term chain complex 
\[
\mathfrak{h} \tolabel{\delta} \mathfrak{a} \tolabel{\delta} \mathfrak{g}. 
\]
\begin{prop}\label{prop:dgla}
	$(\tilde{\mathfrak{g}}, [\cdot,\cdot], \delta)$ is a DGLA if and only if the following conditions hold:
\begin{itemize}
	\item[(a)] $\delta$ is $\mathfrak{g}$-equivariant with respect to $\lambda$, $\tau$, and the adjoint action of $\mathfrak{g}$ on itself.
	\item[(b)] $\delta \varpi (a_1,a_2) = \tau(\delta a_1)(a_2) + \tau(\delta a_2)(a_1)$ for all $a_1,a_2\in \mathfrak{a}$.
	\item[(c)] $\lambda(\delta a)(h) = \varpi(a,\delta h)$ for all $a\in \mathfrak{a}, h\in \mathfrak{h}$.
\end{itemize}
\end{prop}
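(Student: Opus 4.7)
The plan is to verify the graded Leibniz rule $\delta[x,y]=[\delta x,y]+(-1)^{|x|}[x,\delta y]$ case by case on pairs of homogeneous elements drawn from $\g$, $\A$, $\h$. Since $\delta^2=0$ is already built into the assumption that $\delta$ is a coboundary, and the graded Lie bracket is given, the only remaining DGLA axiom is the derivation property; the three conditions listed in the statement will turn out to be precisely the nontrivial instances of this rule, one for each pair of subspaces where both sides carry genuine content.

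First I would observe that, because $\tilde{\mathfrak{g}}$ is concentrated in degrees $-2,-1,0$, several cases are vacuous for degree reasons: $\delta$ vanishes on $\g$ (its image would lie in degree $+1$), and the brackets $[\A,\h]$ and $[\h,\h]$ vanish (their images would lie in degrees $-3$ and $-4$). Thus the derivation identity is automatic on the pairs $(\g,\g)$ and $(\h,\h)$, while the $(\A,\h)$-case simplifies to a single nontrivial relation to be recorded below.

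Next I would tabulate the four remaining cases, using the identifications from Prop.~\ref{prop:gla}:
\begin{itemize}
\item For $u\in\g$, $a\in\A$: the Leibniz identity reads $\delta(\tau(u)a)=\mathrm{ad}_u(\delta a)$, which is the $\g$-equivariance of $\delta\colon\A\to\g$ in (a).
\item For $u\in\g$, $h\in\h$: it reads $\delta(\lambda(u)h)=\tau(u)(\delta h)$, which is the $\g$-equivariance of $\delta\colon\h\to\A$ in (a).
\item For $a_1,a_2\in\A$: using graded skew-symmetry to rewrite $[a_1,\delta a_2]=-[\delta a_2,a_1]=-\tau(\delta a_2)(a_1)$ and noting the sign $(-1)^{|a_1|}=-1$, one obtains $\delta\varpi(a_1,a_2)=\tau(\delta a_1)(a_2)+\tau(\delta a_2)(a_1)$, which is condition (b).
\item For $a\in\A$, $h\in\h$: since $[a,h]=0$ the left-hand side vanishes, and $(-1)^{|a|}=-1$ gives $0=\lambda(\delta a)(h)-[a,\delta h]=\lambda(\delta a)(h)-\varpi(a,\delta h)$, which is condition (c).
\end{itemize}

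For the converse, the derivation property extends by graded bilinearity from homogeneous pairs to arbitrary ones, so conditions (a)--(c) together with the already-assumed graded skew-symmetry, Jacobi identity, and $\delta^2=0$ furnish the full DGLA structure. The proof is essentially bookkeeping; the only step where a sign calculation is delicate is the $(\A,\A)$ case, where graded skew-symmetry converts the natural ``minus'' in the Leibniz rule into the symmetric ``plus'' appearing on the right-hand side of (b). I would write out that sign check explicitly and leave the remaining verifications as routine.
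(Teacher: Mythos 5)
Your proposal is correct and is exactly the ``direct verification'' that the paper leaves to the reader: you enumerate the homogeneous cases of the graded Leibniz rule, note which are vacuous for degree reasons, and check that the remaining four reduce to (a), (b), (c), with the sign bookkeeping in the $(\A,\A)$ and $(\A,\h)$ cases handled correctly. Nothing further is needed.
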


The proof is a direct verification.


\subsection{Hamiltonian actions and reduction in degree 2}\label{sec:hamred}

We start with some general considerations.
Given a graded vector space
$$
\tilde{\mathfrak{g}} = \mathfrak{h}[2] \oplus \mathfrak{a}[1] \oplus \mathfrak{g},
$$ 
we will regard the graded vector space $\tilde{\g}^*[2]=\mathfrak{g}^*[2] \oplus \mathfrak{a}^*[1] \oplus \mathfrak{h}^*$
as a split $\mathbb{N}$-manifold of  degree 2  determined  by the vector bundles $E_1= \mathfrak{a}^* \times \mathfrak{h}^* \to \mathfrak{h}^* $ and $E_2= \mathfrak{g}^* \times \mathfrak{h}^* \to \mathfrak{h}^*$,  as in Example~\ref{ex:gradedVS}. In particular, given a degree 2 $\mathbb{N}$-manifold $\cM$, any morphism $\tilde{\mu}=(\mu, \mu^\sharp)\colon \cM \to \tilde{\g}^*[2]$ is determined by three maps, 
\begin{equation}\label{eq:mucomponents}
\mu\colon M\to \mathfrak{h}^*, \qquad \varrho\colon \mathfrak{a}\to C(\cM)_1, \qquad \varphi\colon \mathfrak{g}\to C(\cM)_2,
\end{equation}
where $\mu$ is the map between bodies, $\varrho$ and $\varphi$ are defined by the components of $\mu^\sharp$ in degrees 1 and 2, and we denote by $C(\cM)$ the graded algebra of global sections of the sheaf $C_\cM$. 
Let $\mu^*\colon \mathfrak{h}\to C^\infty(M)$ be given by 
$$\mu^*(h) (x) = \SP{\mu(x),h}.
$$
With a slight abuse of notation, we will denote by 
\begin{equation}\label{eq:musharp}
\tilde{\mu}^\sharp\colon \tilde{\g} \to C(\cM)[2]
\end{equation}
the morphism of graded vector spaces defined by $\mu^*$,  $\varrho$ and $\varphi$ in \eqref{eq:mucomponents}.

Suppose now that $\tilde{\mathfrak{g}}$ is a graded Lie algebra of degree 2 and 
$\cM$ is a  degree 2  symplectic $\mathbb{N}$-manifold, so that $C(\cM)[2]$ is a graded Lie algebra with respect to the Poisson bracket.
A $\tilde{\g}$-action on $\cM$, 
$$
\tilde{\g}\to \mathfrak{X}(\cM), \quad \xi \mapsto \xi_\cM,
$$ 
is called {\em hamiltonian} if there is a morphism of  $\mathbb{N}$-manifolds  of degree 2,
$$
\tilde{\mu}\colon \cM \to \tilde{\g}^*[2],
$$
so that the  induced map $\tilde{\mu}^\sharp\colon \tilde{\g} \to C(\cM)[2]$ is  a morphism of graded Lie algebras and determines the action via
\begin{equation}\label{eq:mmapcond}
\xi_\cM = X_{\tilde{\mu}^\sharp \xi} = \{\tilde{\mu}^\sharp \xi, \cdot\}, 
\end{equation}
for all $\xi \in \tilde{\g}$. As in the classical case we refer to $\tilde{\mu}$ as a {\em momentum map}.

Similarly to the classical case, one can perform reduction with respect to a hamiltonian action with suitable regularity assumptions. Suppose that $0$ is a regular value  for the momentum map (in the sense of $\S$ \ref{subsec:tangent}), so that 
$$
\cN= \tilde{\mu}^{-1}(0)
$$ 
is a submanifold of $\cM$ with body $N=\mu^{-1}(0)$ and sheaf of vanishing ideals $\cI$ generated by the image of the maps ${\mu}^*$, $\varrho$ and $\varphi$ in \eqref{eq:mucomponents}. 

\begin{remark}\label{rem:012}
Since the sheaf of vanishing ideals of a submanifold is locally generated in degrees 0, 1 and 2, our assumption that $0$ is a regular value for $\tilde{\mu}$ justifies why $\tilde{\g}$ was taken to be of degree 2.  \hfill $\diamond$
\end{remark}

A direct consequence of $\tilde{\mu}^\sharp$ being bracket preserving is that $\cN$ is coisotropic (see \eqref{eq:coiso}), and condition \eqref{eq:mmapcond} implies that
the vector fields $\xi_{\cM}$, for $\xi \in \tilde{\g}$, preserve $\cI$. 
Hence each $\xi_\cM$ induces a vector field $\xi_{\cN}$ on $\cN$
(see \eqref{eq:rest1}), in such a way that the map $\xi \mapsto \xi_{\cN}$ defines a $\tilde{\g}$-action on $\cN$. Moreover,
the momentum map condition \eqref{eq:mmapcond} implies that the vector fields $\xi_{\cN}$, for $\xi \in \tilde{\g}$, span the null distribution of $\cN$ (see Lemma~\ref{lem:tauIwlocal}). Hence the sheaf of invariant functions on $\cN$, 
defined on each open subset $V\subseteq N$ by
$$
C_\cN^{\tilde{\g}}(V) = \{ f\in C_\cN(V)\;|\; \xi_\cN (f) =0 \;\, \forall\, \xi \in \tilde{\g} \}, 
$$
agrees with the sheaf of basic functions, $C_\cN^{\tilde{\g}} = (C_\cN)_{bas}$, and therefore is a sheaf of Poisson algebras. 

Assuming that the regularity conditions for coisotropic reduction are satisfied (as in Theorem~\ref{thm:coisored}), one obtains a degree 2 symplectic $\N$-manifold $\cM_{red}$, along with a surjective submersion $\cN \to \cM_{red}$ that identifies $C_{\cM_{red}}$ with $C_\cN^{\tilde{\g}}$ as sheaves of Poisson algebras (and this identification uniquely characterizes the symplectic structure on $\cM_{red}$). As in the classical case, one refers to $\cM_{red}$ as the {\em symplectic reduction} of $\cM$ with respect to the momentum map $\tilde{\mu}$.

Suppose now, additionally,  that $\cM$ is equipped with a Courant function $\Theta$. Then $C(\cM)[2]$ is not only a graded Lie algebra but a DGLA, with differential given by $\{\Theta,\cdot\}$.
A natural way to ensure that $\Theta$ is reducible with respect to $\cN= \tilde{\mu}^{-1}(0)$ is assuming that
$\tilde{\g}$ is also a DGLA, with differential $\delta$, and that 
$\tilde{\mu}^\sharp\colon \tilde{\g}\to C(\cM)[2]$  is a morphism of DGLAs, since in that case 
the momentum map satisfies 
\begin{equation}\label{eq:dglacond}
\{\Theta, \tilde{\mu}^\sharp(\xi)\} = \tilde{\mu}^\sharp(\delta \xi), \;\;\;\; \forall \xi \in \tilde{\g}.
\end{equation}
Since this condition implies that $\Theta$ is reducible, it follows, as explained in $\S$~\ref{subsec:reducible}, that $\Theta$ gives rise to a Courant function $\Theta_{red}$ on $\cM_{red}$.

We now give the classical geometric descriptions of the momentum map $\widetilde{\mu}$, of the coisotropic submanifold $\cN = \widetilde{\mu}^{-1}(0)$, and of the symplectic reduced manifold $\cM_{red}$ and Courant function $\Theta_{red}$.

\subsection{Degree 2 hamiltonian actions in classical terms}\label{subsec:deg2ham}

Let $\tilde{\g}=\mathfrak{h}[2]\oplus \mathfrak{a}[1] \oplus \g$ be a graded Lie algebra of degree 2, as in \eqref{eq:hag}, and let $\cM$ be a degree 2 symplectic  $\N$-manifold equipped with a hamiltonian $\tilde{\g}$-action with momentum map $\tilde{\mu}\colon \cM \to \tilde{\g}^*[2]$. 

Suppose that $\cM$ corresponds to the pseudo-euclidean vector bundle $(E, \SP{\cdot,\cdot})$. Then by \eqref{eq:mucomponents} (and \eqref{eq:Mdeg12}) the momentum map $\tilde{\mu}$ is described by maps
\begin{equation}\label{eq:mucomp2}
\mu\colon M\to \mathfrak{h}^*, \qquad \varrho\colon \mathfrak{a}\to \Gamma(E), \qquad \varphi\colon \mathfrak{g}\to \Gamma(\mathbb{A}_E).
\end{equation}
Recall the characterization of the graded Lie algebra structure on $\tilde{\g}$ in Prop.~\ref{prop:gla} as a Lie algebra $\g$, together with representations $\tau$ and $\lambda$ on $\mathfrak{a}$ and $\mathfrak{h}$, respectively, and an equivariant symmetric map $\varpi\colon \mathfrak{a}\otimes \mathfrak{a}\to \mathfrak{h}$.

One can directly verify the next result.

\begin{lemma}\label{lem:muequiv}
The property that $\tilde{\mu}^\sharp$ in \eqref{eq:musharp} preserves graded Lie brackets is equivalent to
\begin{itemize}
\item[(a)] $[\varphi(u),\varphi(v)]=\varphi([u,v])$,
\item[(b)] $\varrho(\tau(u)a)=\varphi(u)(\varrho(a))$,
\item[(c)] $\mu^*(\lambda(u)h)=\pounds_{u_M}(\mu^*h)$,
\item[(d)] $\mu^*(\varpi(a_1,a_2))=\langle \varrho (a_1), \varrho (a_2) \rangle$,
\end{itemize}
for all $u,v\in \mathfrak{g}$, $a, a_1, a_2 \in \mathfrak{a}$, $h\in \mathfrak{h}$, and where
$u_M\in \vect(M)$ is the symbol of the differential operator $\varphi(u)$.
\end{lemma}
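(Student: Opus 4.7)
The plan is a direct verification that the bracket-preservation of $\widetilde{\mu}^\sharp\colon\tilde{\g}\to C(\cM)[2]$ on pairs of homogeneous elements of $\tilde{\g}$ decomposes into exactly the four identities (a)--(d). The two key inputs are the characterization of the graded bracket on $\tilde{\g}$ in Proposition~\ref{prop:gla} in terms of $[\cdot,\cdot]$, $\tau$, $\lambda$, $\varpi$, and the explicit description of the Poisson bracket on $C(\cM)$ in terms of the pseudo-euclidean structure on $E$ from Lemma~\ref{lem:symp}.

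First, I would use degree counting to reduce to the four nontrivial cases. The Poisson bracket has degree $-2$ and $C(\cM)$ is concentrated in non-negative degrees, so on the three component image $C^\infty(M)\oplus\Gamma(E)\oplus\Gamma(\mathbb{A}_E)$ any bracket of total input degree less than $2$ vanishes automatically. Hence the pairs $\mathfrak{h}\otimes\mathfrak{h}$, $\mathfrak{h}\otimes\mathfrak{a}$ produce zero on both sides of the bracket-preservation identity, since on the $\tilde{\g}$ side the corresponding brackets in $\tilde{\g}$ are also forced to vanish by its concentration in degrees $-2,-1,0$. This leaves only $\g\otimes\g$, $\g\otimes\mathfrak{a}$, $\g\otimes\mathfrak{h}$, and $\mathfrak{a}\otimes\mathfrak{a}$.

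Second, I would evaluate each of these four brackets using Lemma~\ref{lem:symp} (and formula \eqref{eq:XD}). Writing $\varphi(u)=(u_M,D_u)$, one has the identifications $\{\varphi(u),\varphi(v)\}=[\varphi(u),\varphi(v)]_{\mathbb{A}_E}$ (Lie algebroid bracket on $\Gamma(\mathbb{A}_E)$); $\{\varphi(u),\varrho(a)\}=D_u(\varrho(a))=\varphi(u)(\varrho(a))$; $\{\varphi(u),f\}=\pounds_{u_M}(f)$ for $f\in C^\infty(M)$; and $\{\varrho(a_1),\varrho(a_2)\}=\langle \varrho(a_1),\varrho(a_2)\rangle$. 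On the other hand, Proposition~\ref{prop:gla} gives $[u,v]_{\tilde{\g}}=[u,v]_\g$, $[u,a]_{\tilde{\g}}=\tau(u)a$, $[u,h]_{\tilde{\g}}=\lambda(u)h$, $[a_1,a_2]_{\tilde{\g}}=\varpi(a_1,a_2)$. Imposing $\widetilde{\mu}^\sharp([\xi,\eta]_{\tilde{\g}})=\{\widetilde{\mu}^\sharp(\xi),\widetilde{\mu}^\sharp(\eta)\}$ in each of the four cases produces exactly identities (a), (b), (c), and (d), respectively.

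No substantive obstacle is expected: the argument is essentially a dictionary lookup between two bracket structures, and the degree bookkeeping has already eliminated the potentially awkward cases. The one point to be careful about is confirming that the sign conventions in the shift $C(\cM)[2]$ are compatible with the symmetry of the $\varpi$ pairing (the Poisson bracket of two degree $1$ functions is symmetric, matching the symmetry of $\varpi\colon \mathfrak{a}\otimes\mathfrak{a}\to\mathfrak{h}$); this follows from property (br2) at $k=l=1$, $q=-2$.
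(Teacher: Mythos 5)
Your verification is correct and is precisely the "direct verification" the paper alludes to without writing out: the degree bookkeeping correctly isolates the four nontrivial pairs, and the dictionary between the Poisson-bracket relations of Lemma~\ref{lem:symp} (commutator on $\Gamma(\mathbb{A}_E)$, derivation action on $\Gamma(E)$, Lie derivative on $C^\infty(M)$, and the pairing $\SP{\cdot,\cdot}$ on degree~1 functions) and the structure maps of Proposition~\ref{prop:gla} yields exactly (a)--(d). The sign check for the symmetry of the bracket on degree~1 functions against the symmetry of $\varpi$ is the right detail to flag, and it works out as you say.
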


By (a) in the previous lemma, $\varphi$ defines an action of $\g$ on $E\to M$ by derivations preserving the pseudo-euclidean metric, and the composition of $\varphi$ with the symbol map,
$$
\g \stackrel{\varphi}{\to} \Gamma(\mathbb{A}_E) \stackrel{\sigma}{\to} \mathfrak{X}(M), \quad u\mapsto u_M,
$$
defines a $\g$-action on $M$. Conditions (b) and (c) say that the maps $\mu\colon M\to \h^*$ and $\varrho\colon \mathfrak{a}\to \Gamma(E)$ in \eqref{eq:mucomp2} are $\g$-equivariant, where $\h^*$ is equipped with the linear $\g$-action given by the dual of the representation of $\g$ on $\h$. Condition (d), which can be written as
\begin{equation}\label{eq:betaeq}
\SP{\varrho(a_1),\varrho(a_2)}(x) = \SP{\mu(x),\varpi(a_1,a_2)}, \qquad x\in M,
\end{equation}
shows how  $\varrho$ is related to the pseudo-euclidean metric on $E$.

The first assumption for reduction is that $0$ is a regular value for the momentum map $\tilde{\mu}$. 
Denote by $\mathfrak{a}_M$ and $\g_M$ the trivial bundles $\mathfrak{a}\times M \to M$ and $\g\times M \to M$, respectively, and keep the same notation $\varrho$ and $\varphi$ for the maps
$$
\mathfrak{a}_M \to E, \, (a,x)\mapsto \varrho(a)|_x, \qquad 
\g_M \to \mathbb{A}_E, \, (u,x) \mapsto \varphi(u)|_x.
$$

We have the following direct consequence of Lemma~\ref{lem:regular}.

\begin{lemma}\label{lem:regval}
The origin $0\in \h^*$ is a regular value of the momentum map $\tilde{\mu}\colon \cM \to \tilde{\g}^*[2]$ if and only if
\begin{itemize}
\item[(a)] $0$ is a regular value of $\mu \colon  M \rightarrow \h^*$,
\item[(b)]   $\varrho \colon   \A_M\to  E$ is fiber-wise injective at every point of $\mu^{-1}(0)$,
\item[(c)] $\sigma\circ \varphi\colon \g_M \to TM$ is fiber-wise injective at every point of $\mu^{-1}(0)$. 
\end{itemize}
\end{lemma}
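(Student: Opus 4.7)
The plan is to apply Corollary~\ref{cor:regular} directly to the morphism $\tilde{\mu}\colon \cM\to \tilde{\g}^*[2]$ and unwind what the surjectivity conditions on the associated $\VB2$ maps amount to in terms of the geometric data $(\mu,\varrho,\varphi)$. First I would write down the $\VB2$ data of $\tilde{\g}^*[2]$: since it is split (Example~\ref{ex:gradedVS}), the body is $\h^*$, and the associated vector bundles are $F_1 = \A^*\times \h^*\to \h^*$ and $F_2 = \g^*\times \h^*\to \h^*$. For $\cM$ one has $E_1=E$ and, by Lemma~\ref{lem:symp} together with the Atiyah sequence \eqref{eq:atiyahseq}, $E_2 = T^*M$. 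The fiber at $c=0\in \h^*$ is simply $F_1|_0=\A^*$ and $F_2|_0=\g^*$.

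Next I would identify the maps $\psi_1\colon E_1|_{\mu^{-1}(0)}\to F_1|_0$ and $\psi_2\colon E_2|_{\mu^{-1}(0)}\to F_2|_0$ covering $\mu$. Recalling that a morphism between $\N$-manifolds in $\VB2$ corresponds dually to the maps on the sheaves of degree $1$ and degree $2$ functions (Lemma~\ref{lem:morphism}), the map $\varrho\colon \A\to \Gamma(E)$ is the adjoint of a bundle map $E\to \A^*\times M$ covering $\id_M$, and pushing this forward along $\mu$ yields $\psi_1$. Under the self-duality $E\cong E^*$ induced by $\SP{\cdot,\cdot}$, fiberwise surjectivity of $\psi_1$ at a point $x\in\mu^{-1}(0)$ is equivalent to fiberwise injectivity of $\varrho_x\colon \A\to E|_x$, which is condition (b). For $\psi_2$, the map $\varphi\colon \g\to \Gamma(\mathbb{A}_E)$ composed with the symbol $\sigma\colon \mathbb{A}_E\to TM$ is the adjoint of a bundle map $T^*M\to \g^*\times M$, and again restricting to $\mu^{-1}(0)$ gives $\psi_2$; fiberwise surjectivity of $\psi_2$ at $x$ is equivalent to fiberwise injectivity of $(\sigma\circ\varphi)_x\colon \g\to T_xM$, which is condition (c). Finally, condition (a) of Corollary~\ref{cor:regular} applied to $\mu$ is precisely condition (a) of the lemma.

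Putting these identifications together, Corollary~\ref{cor:regular} gives the stated equivalence. The main (and really only) subtlety will be bookkeeping the dualities: in particular verifying that the map dual to $\varrho$, when composed with the identification $E\cong E^*$ from the pseudo-euclidean metric, gives precisely the degree $1$ component of $\tilde{\mu}^\sharp$ as described in \eqref{eq:mucomp2}, and similarly that the map dual to $\sigma\circ\varphi$ gives the degree $2$ component modulo $\wedge^2 E^*$ (i.e., the $E_2^*$-component). Once these identifications are pinned down, the statement follows directly from Corollary~\ref{cor:regular} and Proposition~\ref{prop:tmap}.
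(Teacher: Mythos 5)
Your proposal is correct and follows exactly the route the paper takes: the paper presents this lemma as a direct consequence of Corollary~\ref{cor:regular}, and your identification of the $\VB2$ data of $\tilde{\g}^*[2]$ (as $F_1=\A^*\times\h^*$, $F_2=\g^*\times\h^*$), of $E_2^*=TM$ for $\cM$, and of the duals of $\psi_1$, $\psi_2$ with $\varrho$ and $\sigma\circ\varphi$ is precisely the bookkeeping needed. The only point worth noting is that the self-duality $E\cong E^*$ is harmless here since pre- or post-composing with an isomorphism does not affect injectivity, so your dual-map translation of the fiberwise surjectivity conditions is exactly right.
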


Notice that condition (c) states that the   action of $\g$ on $\mu^{-1}(0)$ is locally free.

Assuming that $0 \in \h^*$ is a regular value of $\widetilde{\mu}$, we saw in \S \ref{sec:hamred} that $\cN = \widetilde{\mu}^{-1}(0)$ is a coisotropic submanifold with body $N=\mu^{-1}(0)$. We will now describe the corresponding geometric coisotropic data $(N, K, F, \nabla)$ (as in Thm.~\ref{thm:coisotropic}).

By the equivariance of $\mu\colon M \to \h^*$, the submanifold $N$ is $\g$-invariant, and we denote the restricted action of $\g$ on $N$  by $u \mapsto u_N := u_M |_N$.  As a consequence, for each $u\in \g$, the derivation $\varphi(u) \in \Gamma(\mathbb{A}_E)$ restricts to a derivation 
$$
\varphi(u)|_N \in \Gamma(\mathbb{A}_{E|_N}).
$$ 
Note also that the vector subbundle 
$$
K:=\varrho(\mathfrak{a}_M)|_N \subseteq E|_N
$$ 
is isotropic (by \eqref{eq:betaeq}) and invariant by the derivations $\varphi(u)|_N$
(by part (b) of Lemma~\ref{lem:muequiv}), i.e., $\varphi(u)|_N(K)\subseteq K$. Recall that the induced derivation of $E_{quot}= K^\perp/K$ (see \eqref{eq:maptoquot}) is denoted by $[\varphi(u)|_N]$.

\begin{lemma}\label{lem:momentcoiso}
The geometric coisotropic data $(N, K, F, \nabla)$ corresponding to the coisotropic submanifold $\cN = \widetilde{\mu}^{-1}(0)$ are given as follows:
\begin{itemize}
    \item $N=\mu^{-1}(0)$,
    \item $K= \varrho(\mathfrak{a}_M)|_N$,
    \item $F=\sigma(\varphi(\g_M))|_N = \{ u_N, : u\in \g\}$,
    \item $\nabla_{u_N} = [\varphi(u)|_N]$, for $u\in \g$.
\end{itemize}
 
\end{lemma}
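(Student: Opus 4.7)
The plan is to apply Proposition~\ref{prop:invc} to identify the sheaf of vanishing ideals of $\cN = \widetilde{\mu}^{-1}(0)$ and then extract the quadruple $(N,K,F,\nabla)$ via the formulas \eqref{eq:I1I2}--\eqref{eq:Fnabla} from Theorem~\ref{thm:coisotropic}. The proof is essentially bookkeeping, once one correctly identifies the vanishing ideal of the origin in $\widetilde{\g}^*[2]$.

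First, viewing $\widetilde{\g}^*[2]$ as a split degree 2 $\mathbb{N}$-manifold (Example~\ref{ex:gradedVS}) with body $\mathfrak{h}^*$ and associated bundles $E_1 = \mathfrak{a}^*\times \mathfrak{h}^*$ and $E_2 = \mathfrak{g}^*\times\mathfrak{h}^*$, any choice of linear bases of $\mathfrak{h}$, $\mathfrak{a}$, $\mathfrak{g}$ gives a set of global coordinates $\{h_i, a_\alpha, u_\beta\}$ of degrees $0, 1, 2$ on $\widetilde{\g}^*[2]$, and the vanishing ideal of the origin is the regular sheaf of ideals generated by these coordinates. Pulling back by $\widetilde{\mu}^\sharp$, whose components are listed in \eqref{eq:mucomp2}, one has $\widetilde{\mu}^\sharp(h_i)=\mu^*(h_i)$, $\widetilde{\mu}^\sharp(a_\alpha)=\varrho(a_\alpha)$, $\widetilde{\mu}^\sharp(u_\beta)=\varphi(u_\beta)$. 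Invoking Proposition~\ref{prop:invc} (whose hypotheses are provided by Lemma~\ref{lem:regval}), we conclude that $\cI$ is the regular sheaf of ideals in $C_\cM$ generated in degrees $0$, $1$, $2$ respectively by $\mu^*(\mathfrak{h})$, $\varrho(\mathfrak{a})$, and $\varphi(\mathfrak{g})$.

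From this, the identification of $(N,K,F,\nabla)$ is immediate. The equality $\cI_0 = I_N$ gives $N=\mu^{-1}(0)$. For $K$: $\cI_1$ is spanned over $C^\infty_M$ by elements of the form $f\,\varrho(a) + \mu^*(h)\,e$ with $f \in C^\infty_M$, $h\in\mathfrak{h}$, $e\in\Gamma(E)$; since $\mu^*(\mathfrak{h})|_N = 0$, restriction yields $\iota^*\cI_1 = \Gamma_{\varrho(\mathfrak{a}_M)|_N}$, and Lemma~\ref{lem:regval}(b) ensures that $K:=\varrho(\mathfrak{a}_M)|_N$ is a smooth subbundle of rank $\dim\mathfrak{a}$. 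For $F$: by \eqref{eq:Fnabla}, $\Gamma_F = \sigma(\iota^*\cI_2)$, and the generators of $\cI_2$ are $\varphi(u)$ (with symbol $u_M$), $\mu^*(h)\cdot(X,D)$ (with symbol vanishing on $N$), and the products $\varrho(a_1)\cdot \varrho(a_2) \in \Gamma(\wedge^2 E)\subseteq \Gamma(\mathbb{A}_E)$, which have zero symbol since $\wedge^2 E \subseteq \ker\sigma$. Taking symbols and restricting to $N$ therefore leaves only $\sigma(\varphi(u))|_N = u_N$, so $F = \mathrm{span}\{u_N : u\in\mathfrak{g}\}$, a smooth subbundle of $TN$ by Lemma~\ref{lem:regval}(c) combined with the $\mathfrak{g}$-invariance of $N$ (Lemma~\ref{lem:muequiv}(c)).

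Finally, for the connection, apply the second formula in \eqref{eq:Fnabla} with the distinguished lift $\varphi(u) \in \cI_2$ whose symbol restricts on $N$ to $u_N$: for any section $e$ of $K^\perp$,
\[
\nabla_{u_N}\bigl([e|_N]\bigr) = \bigl[\varphi(u)(e)\big|_N\bigr] = [\varphi(u)|_N]\bigl([e|_N]\bigr),
\]
which is the claimed formula. That $F$ is involutive and $\nabla$ is flat and metric, as well as that $K$ is isotropic, are automatic from the fact that $\cN$ is coisotropic, which in turn follows from $\widetilde{\mu}^\sharp$ being a morphism of graded Lie algebras (cf.~\S\ref{sec:hamred}); no separate verification is needed. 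The only point that in principle requires care is the well-definedness of $\nabla$ independently of the chosen lift of $u_N$ to an element of $\cI_2$, but this is built into the general correspondence of Theorem~\ref{thm:coisotropic}, so I do not anticipate a genuine obstacle in writing out the full proof.
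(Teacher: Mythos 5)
Your argument is correct and takes essentially the same route as the paper: Proposition~\ref{prop:invc} identifies the vanishing ideal of $\widetilde{\mu}^{-1}(0)$ as the sheaf of ideals generated by $\mu^*(\h)$, $\varrho(\A)$ and $\varphi(\g)$, and the quadruple $(N,K,F,\nabla)$ is then read off from \eqref{eq:I1I2} and \eqref{eq:Fnabla} exactly as the paper does. The only cosmetic slip is that the degree-2 part of this ideal also contains $\varrho(a)\cdot e$ for arbitrary $e\in\Gamma(E)$, not merely the products $\varrho(a_1)\varrho(a_2)$; since all such elements lie in $\wedge^2 E=\ker\sigma$, this does not affect your computation of $F$ or of $\nabla$.
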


\begin{proof}
The geometric characterization of submanifolds defined by regular values of maps is given in Prop.~\ref{prop:invcgeom}, from where the descriptions of $N$ and $K$ immediately follow. By \eqref{eq:regvalueKt}, the vector bundle $\widetilde{K}$ is given by $\varphi(\g_M)|_N + K\wedge E|_N$, which corresponds to $F$ and $\nabla$ as in the statement using  \eqref{eq:Fnabla}. 
\end{proof}

Let us now assume that $\widetilde{\g}$ is a DGLA, with differential $\delta$, 
see Prop.~\ref{prop:dgla}, and that $\cM$ is equipped with a Courant function $\Theta$, so that $E$ acquires a Courant algebroid structure with anchor $\rho$ and bracket $\Cour{\cdot,\cdot}$ (see $\S$ \ref{subsec:CA}).

\begin{lemma}\label{lem:dglamap}
The map $\tilde{\mu}^{\sharp}\colon \widetilde{\g}\to C(\cM)[2]$ preserves differentials (as in \eqref{eq:dglacond}, implying that $\Theta$ is reducible) if and only if 
\begin{itemize}
\item[(a)]  $\varrho(\delta h) = \rho^*(d(\mu^*h))$,
\item[(b)]  $\varphi(\delta a) = \llbracket \varrho(a),\cdot\rrbracket$,
\item[(c)] $\varphi(u)(\Cour{e_1,e_2})=\Cour{\varphi(u)(e_1), e_2}+
\Cour{e_1, \varphi(u)(e_2)}$,
\end{itemize}
for $h\in \h$, $a\in \mathfrak{a}$, $u \in \g$ and $e_1, e_2 \in \Gamma(E)$.
\end{lemma}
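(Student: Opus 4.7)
The plan is to split the condition $\{\Theta,\widetilde{\mu}^\sharp(\xi)\} = \widetilde{\mu}^\sharp(\delta \xi)$ according to the degree of the homogeneous element $\xi \in \widetilde{\g}$. Since $\widetilde{\g}$ is concentrated in degrees $-2,-1,0$ and $\delta$ raises degree by one, there are exactly three cases to treat: $\xi = h \in \h$, $\xi = a \in \mathfrak{a}$, and $\xi = u \in \g$ (in which last case $\delta u$ vanishes for degree reasons, so the condition reads $\{\Theta,\varphi(u)\} = 0$). In each case the identifications \eqref{eq:Mdeg12}, namely $(C_\cM)_1 = \Gamma(E)$ and $(C_\cM)_2 = \Gamma(\mathbb{A}_E)$, together with the derived-bracket formulas \eqref{eq:courstr}, will let me translate the graded Poisson equation into the classical identity claimed in (a), (b) or (c).

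For $\xi = h$, the element $\{\Theta,\mu^*h\}$ has degree $1$ and is therefore a section of $E$. I would show that this section is exactly $\rho^*(d(\mu^*h))$ by pairing with an arbitrary $e \in \Gamma(E)$: using graded skew-symmetry and the fact that $\{\mu^*h,e\}$ has degree $-1$ and is therefore zero, the pairing $\SP{\{\Theta,\mu^*h\},e} = \{\{\Theta,\mu^*h\},e\}$ reduces via the graded Jacobi identity to $\{\{\Theta,e\},\mu^*h\} = \rho(e)(\mu^*h)$, which equals $\SP{\rho^*(d(\mu^*h)),e}$. The equation $\{\Theta,\mu^*h\}=\widetilde{\mu}^\sharp(\delta h)=\varrho(\delta h)$ is then exactly (a). For $\xi = a$, the element $\{\Theta,\varrho(a)\}$ has degree $2$ and is therefore a section of $\mathbb{A}_E$; by the derived-bracket formula its action on any $e \in \Gamma(E)$ is $\{\{\Theta,\varrho(a)\},e\} = \Cour{\varrho(a),e}$, so $\{\Theta,\varrho(a)\} = \Cour{\varrho(a),\cdot}$ as a derivation of $E$. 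The equation $\{\Theta,\varrho(a)\}=\varphi(\delta a)$ is then exactly (b).

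The case $\xi = u \in \g$ is the step I expect to require the most care. Here I must show that $\{\Theta,\varphi(u)\} = 0$ is equivalent to (c). The quantity $\{\Theta,\varphi(u)\}$ has degree $3$, and by Corollary~\ref{cor:LI} (or a direct local-coordinate check) it vanishes if and only if its bracket with every local section of degree $1$ is zero. Thus I would compute, for $e_1,e_2 \in \Gamma(E)$,
\[
\{\{\{\Theta,\varphi(u)\},e_1\},e_2\}
\]
by iterated applications of the graded Jacobi identity, rewriting it as
\[
\varphi(u)(\Cour{e_1,e_2}) - \Cour{\varphi(u)(e_1),e_2} - \Cour{e_1,\varphi(u)(e_2)},
\]
using $\{\varphi(u),e_i\} = \varphi(u)(e_i)$ and $\{\{\Theta,e_1\},e_2\} = \Cour{e_1,e_2}$. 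The vanishing of this expression for arbitrary $e_1,e_2$ is precisely condition (c); conversely, by symmetry considerations and Corollary~\ref{cor:LI}, (c) forces $\{\Theta,\varphi(u)\}=0$.

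Assembling the three cases, the morphism $\widetilde{\mu}^\sharp\colon \widetilde{\g}\to C(\cM)[2]$ preserves differentials if and only if (a), (b) and (c) simultaneously hold, which establishes the lemma. The main obstacle is keeping track of signs in the graded Jacobi identity in the last case, but the conclusion follows once the computation is carried out carefully.
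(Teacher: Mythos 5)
Your proof is correct and follows essentially the same route as the paper's: split the condition by the degree of $\xi$, identify the $h$- and $a$-cases with (a) and (b) via the derived-bracket formulas \eqref{eq:courstr}, and reduce $\{\Theta,\varphi(u)\}=0$ to the vanishing of the iterated brackets $\{\{\{\Theta,\varphi(u)\},e_1\},e_2\}$ computed with the graded Jacobi identity. The only caveat is that Corollary~\ref{cor:LI} (linear independence of hamiltonian vector fields) is not quite the right tool for showing that a degree-3 function is determined by its Poisson brackets with degree-1 functions --- the paper instead argues from the coordinate expression \eqref{eq:T} --- but your fallback ``direct local-coordinate check'' is exactly that argument, so the step goes through.
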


\begin{proof}
The conditions for $\widetilde{\mu}^\sharp$ to preserve differentials are
$$
\{\Theta,\mu^*h\}=\varrho(\delta h), \qquad \{\Theta, \varrho(a)\}=\varphi(\delta a), \qquad \{\Theta, \varphi(u)\}=0,
$$
for all $h\in \h$, $a\in \mathfrak{a}$, $u\in \g$. The first two conditions coincide with $(a)$ and $(b)$ since
$$
\{\Theta, f\} =\rho^* df,\qquad \{\Theta, e\}=\Cour{e, \cdot},
$$
for $f\in C^\infty(M)$ and $e\in \Gamma(E)$. The last condition is the vanishing of the degree 3 function $\{\Theta, \varphi(u)\}$, for any $u\in \g$. 
This condition is equivalent to 
$$
\{\{\{\Theta, \varphi(u)\},e_1\}, e_2\}=0 
$$
for all $e_1, e_2\in \Gamma(E)$. Indeed, for any degree $3$ function $S$,  the coordinate expression \eqref{eq:T} shows that the derived brackets $\{\{S,e_1\}, e_2\}$ and $\{\{S,e\}, f\}$  determine   $S$, where   $e_1,e_2$ range over $\Gamma(E)$ and $f$ over $C^{\infty}(M)$; further, the vanishing of the first expression for all $e_1, e_2$ implies the vanishing of the second, by the Leibniz rule.  Now, by the Jacobi identity, note that
\begin{align*} 
\{  \{ \{\Theta,\varphi(u)\},e_1\}, e_2  \}&= \{
\{\Theta,\{\varphi(u),e_1\}\},e_2\} - \{ \{\varphi(u),\{\Theta, e_1
\}\}, e_2\}\\ 
&= \Cour{\varphi(u)(e_1), e_2} - \left(\{\varphi(u),\{ \{\Theta,e_1 \}, e_2\}\} -\{
\{\Theta, e_1\}, \{\varphi(u),e_2\} \}\right)\\ 
&= \Cour{\varphi(u)(e_1), e_2} - \varphi(u)(\Cour{e_1, e_2}) +
\Cour{e_1, \varphi(u)(e_2)}.
\end{align*}
\end{proof}

\subsection{Hamiltonian reduction of Courant, Dirac and GC structures}\label{subsec:hamCAred}

Using hamiltonian symplectic reduction in degree 2 and the results in \S \ref{subsec:deg2ham}, we now formulate \emph{ in classical terms} reduction procedures for Courant, Dirac and GC structures.

We start by recalling the general setup.

\medskip

\noindent{\em {\bf (A)} Objects to be reduced}: We consider a Courant algebroid $E\to M$ with pairing $\SP{\cdot,\cdot}$, anchor map $\rho\colon E\to TM$ and bracket $\Cour{\cdot,\cdot}$ on $\Gamma(E)$. We may equip $E$ with a Dirac structure $L\subset E$, or with a generalized complex structure $\J\colon E\to E$.

\medskip

\noindent{\em {\bf (B)} Objects that act (DGLAs of degree 2)}:
Following Propositions \ref{prop:gla} and \ref{prop:dgla},
we consider a Lie algebra $\g$, along with $\g$-modules $\mathfrak{a}$ and $\h$, and a $\g$-equivariant symmetric bilinear map $\varpi\colon \mathfrak{a} \otimes \mathfrak{a} \to \h$.
Suppose that there are operators
\begin{equation}\label{eq:deltaseg}
\h \stackrel{\delta}{\longrightarrow} \mathfrak{a} \stackrel{\delta}{\longrightarrow} \g, 
\end{equation}
with $\delta^2=0$, and that preserve the $\g$-module structures (where $\g$ is viewed as a $\g$-module with respect to the adjoint representation). Assume moreover that 
\begin{equation}\label{eq:condsbeta}
\delta \varpi(a_1,a_2) = (\delta a_1) \cdot a_2 + (\delta a_2) \cdot a_1, \quad (\delta a) \cdot h = \varpi(a, \delta h),
\end{equation}
for $a$, $a_1$, $a_2 \in \mathfrak{a}$ and $h\in \h$.

\medskip

\noindent{\em {\bf (C)} Infinitesimal hamiltonian actions}:
Following Lemmas~\ref{lem:muequiv} and \ref{lem:dglamap}, 
we suppose that the vector bundle $E$ is equipped with a Lie algebra morphism $\varphi\colon \g \to \Gamma(\mathbb{A}_E)$, that makes $E$ into a $\g$-equivariant vector bundle and induces a $\g$-action on $M$. We further assume that there are $\g$-equivariant maps $\mu\colon M\to \h^*$  and $\varrho\colon \mathfrak{a}\to \Gamma(E)$ such that 
\begin{equation}\label{eq:Kisot}
\mu^*(\varpi(a_1,a_2))=\SP{\varrho(a_1),\varrho(a_2)},
\end{equation}
for $a_1, a_1 \in \mathfrak{a}$,
and that 
\begin{equation}\label{eq:courantcomp}
\begin{split}
& \varrho(\delta h) = \rho^*(d(\mu^*h)),\\
&    (\delta a)\cdot e = \Cour{\varrho(a), e},\\
&    u\cdot \Cour{e_1,e_2} =\Cour{u\cdot e_1, e_2}+
\Cour{e_1, u\cdot e_2},
\end{split}
\end{equation}
for $u\in \g$, $a \in \mathfrak{a}$, $h \in \h$ and $e, e_1, e_2\in \Gamma(E)$. 
(Here we used the notation $\varphi(u)(e)= u \cdot e$.)

\begin{defi}\label{def:infHact}
We say that the maps $\varphi$, $\varrho$ and $\mu$ define
a {\em hamiltonian action} of the degree 2 DGLA $(\g, \A, \h, \delta, \varpi)$ (as in {\bf (B)}) on the Courant algebroid $E\to M$. 
\end{defi}

\begin{remark}\label{rem:extracond}
\begin{itemize}

\item[$(i)$] 
 The first  and second identities in \eqref{eq:courantcomp} 
amount to the commutativity of the following diagram:
$$
 \xymatrix{
 \h \ar[d]_{\mu^*}\ar[r]^{\delta} & \A \ar[d]^{{\varrho}}\ar[r]^{\delta}
 & \g \ar[d]_{\varphi}
 \\
 C^\infty(M) \ar[r]^{\rho^* d} & \Gamma(E)\ar[r]^{\mathrm{ad}}
&\Gamma(\mathbb{A}_E).}
 $$
The third identity states that $\varphi$ takes values in infinitesimal Courant algebroid automorphisms.

\item[$(ii)$] It follows from the first two conditions in \eqref{eq:courantcomp}, the first identity in \eqref{eq:condsbeta}, the equivariance of $\varrho$ and axiom (C5) of Courant algebroids that 
$$
\rho^* d (\mu^*(\varpi(a_1,a_2)))= \rho^* d (\SP{\varrho(a_1),\varrho(a_2)}),
$$
for $a_1, a_1 \in \mathfrak{a}$, which is a weaker version of \eqref{eq:Kisot}.
\end{itemize}
\hfill $\diamond$
\end{remark}

\medskip

\noindent{\em {\bf (D)} Regularity assumptions}: Following  Lemma~\ref{lem:regval}, we assume that $0\in \h^*$ is a regular value for $\mu\colon M\to \h^*$ and that, for each $x\in \mu^{-1}(0)$, the map 
\begin{equation}\label{eq:aK}
\mathfrak{a}\to E|_x, \qquad a \mapsto \varrho(a)|_x, 
\end{equation}
 is injective. 
Note that, by the $\g$-equivariance of $\mu$, the $\g$-action on $E$ restricts to a $\g$-action on
$E|_{\mu^{-1}(0)}$. For a connected Lie group $G$ integrating $\g$, we will assume that this infinitesimal action integrates to a $G$-action on $E|_{\mu^{-1}(0)}$ for which the restricted $G$-action on $\mu^{-1}(0)$ is free and proper.

\medskip
\noindent{\em Some consequences of the setup}:
The images of \eqref{eq:aK} for each $x\in \mu^{-1}(0)$ define a vector subbundle 
    \begin{equation}\label{eq:K}
    K\subseteq E|_{\mu^{-1}(0)}
    \end{equation} 
    that is isotropic (by \eqref{eq:Kisot}) and $G$-invariant (as a consequence of the $\g$-equivariance of $\varrho$ and the fact that $G$ is connected). Note that the geometric coisotropic data $(N, K, F, \nabla)$ defined in Lemma~\ref{lem:momentcoiso} coincides with the one of Example~\ref{ex:coisodataaction}. As seen in Example~\ref{ex:coisoredaction}, in the present setup $F$ is simple and 
    $\nabla$ has trivial holonomy, and the reduction of the pseudo-euclidean vector bundle $E$ is given by $E_{red}=(K^\perp/K)/G$ over $M_{red}  = \mu^{-1}(0)/G$,
$$
\xymatrix{K^{\perp}/K   \ar[r]^{} \ar[d]_{} & E_{red} \ar[d]^{} \\
\mu^{-1}(0) \ar[r]_{} &  M_{red}.}
$$

By Thm.~\ref{thm:coisored}, $E_{red}$ is the pseudo-euclidean vector bundle corresponding to $\cM_{red}$, the degree 2 symplectic  $\N$-manifold obtained by hamiltonian reduction described in $\S$ \ref{sec:hamred}.

\begin{thm}\label{thm:geomhamred} 
 Consider a hamiltonian action of a DGLA of degree 2 (as in {\bf (B)}) on a Courant algebroid $E\to M$ satisfying the regularity conditions in {\bf (D)}.
Then the following conclusions hold:
\begin{itemize}
\item[(a)] The pseudo-euclidean vector bundle $E_{red}\to \mu^{-1}(0)/G$ inherits a Courant algebroid structure (with anchor and bracket defined as in \eqref{eq:courantred});
\item[(b)] If $L\subseteq E$ is a $G$-invariant Dirac structure such that $L|_{\mu^{-1}(0)}\cap K$ has constant rank, then it gives rise to a Dirac structure $L_{red}$ in $E_{red}$ (as in Thm.~\ref{thm:diracred}); 
\item[(c)] If $\J$ is a $G$-invariant GC structure on $E$ such that $\J(K)\subseteq K$, then $E_{red}$ inherits a GC structure $\J_{red}$ (as in Thm.~\ref{thm:redGCS}).
\end{itemize}
\end{thm}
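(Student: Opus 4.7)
The strategy is to translate the given classical data into graded-geometric terms and then apply the reduction theorems established earlier. By Lemmas~\ref{lem:muequiv} and \ref{lem:dglamap}, the hamiltonian action $(\varphi, \varrho, \mu)$ of $(\g, \A, \h, \delta, \varpi)$ on $E$ is equivalent to a DGLA comomentum map $\tilde{\mu}^{\sharp}\colon \tilde{\g}\to C(\cM)[2]$, where $\cM$ is the symplectic degree $2$ $\mathbb N$-manifold corresponding to $E$ and $\tilde{\g}=\h[2]\oplus\A[1]\oplus \g$ is the associated DGLA. By Lemma~\ref{lem:regval}, the regularity assumptions in {\bf (D)} imply that $0\in\tilde\g^*[2]$ is a regular value of the momentum map $\tilde\mu\colon \cM\to \tilde\g^*[2]$, so that $\cN:=\tilde\mu^{-1}(0)$ is a submanifold of $\cM$; it is coisotropic because $\tilde\mu^{\sharp}$ preserves Lie brackets. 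By Lemma~\ref{lem:momentcoiso} the associated geometric coisotropic data is precisely $(N,K,F,\nabla)$ with $N=\mu^{-1}(0)$, $K=\varrho(\A_M)|_N$, $F$ tangent to the $\g$-orbits, and $\nabla_{u_N}=[\varphi(u)|_N]$, matching the setup of Example~\ref{ex:coisodataaction}. By Example~\ref{ex:coisoredaction}, the freeness and properness of the $G$-action ensure $F$ is simple and $\nabla$ has trivial holonomy, yielding the reduction $\cM_{red}$, whose pseudo-euclidean vector bundle is $E_{red}=(K^\perp/K)/G \to \mu^{-1}(0)/G$ by Theorem~\ref{thm:coisored}.

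For part (a), the DGLA-morphism property of $\tilde\mu^{\sharp}$ (equation \eqref{eq:dglacond}) says precisely that $\Theta$ lies in the Lie normalizer of $\cI$, i.e., $\Theta$ is reducible in the sense of Definition~\ref{def:reducfn}. Theorem~\ref{cor:courred} then immediately yields a Courant algebroid structure on $E_{red}$ with anchor and bracket given by the formulas \eqref{eq:courantred}.

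For part (b), a $G$-invariant Dirac structure $L\subseteq E$ corresponds, via Corollary~\ref{cor:lag} and Corollary~\ref{prop:lagdirac}, to a lagrangian submanifold $\cL$ of $\cM$ for which $\Theta$ is reducible. The hypothesis that $L|_N\cap K$ has constant rank is condition (i) of Proposition~\ref{lem:clean}. For condition (ii), note that $\g$-invariance of $L$ (which follows from $G$-invariance and connectedness of $G$) gives $\varphi(u)(\Gamma_\st{E,L})\subseteq \Gamma_\st{E,L}$ for all $u\in\g$; passing to the quotient via \eqref{eq:maptoquot} and using $\nabla_{u_N}=[\varphi(u)|_N]$, this translates exactly to $\nabla_Y\Gamma_{L_{quot}}\subseteq \Gamma_{L_{quot}}$ for $Y\in\Gamma_F$. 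Hence $\cN$ and $\cL$ intersect cleanly, and Theorem~\ref{thm:diracred} produces the reduced Dirac structure $L_{red}\subseteq E_{red}$.

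For part (c), a $G$-invariant GC structure $\J\colon E\to E$ with $\J(K)\subseteq K$ is in particular a quadratic function on $\cM$. The $\g$-invariance condition $\varphi(u)\circ \J=\J\circ\varphi(u)$ means that the induced endomorphism $\J_{quot}$ of $E_{quot}=K^\perp/K$ commutes with every $\nabla_{u_N}=[\varphi(u)|_N]$; since $F$ is spanned by the $u_N$, this is the flatness condition $\nabla \J_{quot}=0$. Together with $\J(K)\subseteq K$, this is exactly condition (b) of Lemma~\ref{lem:Jreducible}, so $\J$ is reducible; Theorem~\ref{thm:redGCS} then yields the reduced GC structure $\J_{red}$ on $E_{red}$. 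The main (and only) obstacle is the bookkeeping that identifies the $G$-invariance hypotheses with the $\nabla$-invariance and $\nabla$-flatness conditions needed to invoke the previously established reduction theorems; this is routine given that $\nabla$ is by construction the partial connection induced by the $\g$-action.
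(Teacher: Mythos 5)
Your proposal is correct and follows essentially the same route as the paper: translate the hamiltonian action into a DGLA comomentum map, identify $\widetilde{\mu}^{-1}(0)$ with the coisotropic submanifold given by $(N,K,F,\nabla)$, and then invoke Theorems~\ref{cor:courred}, \ref{thm:diracred} and \ref{thm:redGCS}. The paper's own proof is a two-line remark deferring to exactly these results; your write-up merely makes explicit the verification of their hypotheses (the clean-intersection conditions of Prop.~\ref{lem:clean} for (b) and the conditions of Lemma~\ref{lem:Jreducible} for (c)), and those verifications are correct.
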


In part (a), the reducibility of the Courant algebroid structure is ensured by \eqref{eq:courantcomp} (following Lemma~\ref{lem:dglamap} and  \S\ref{sec:hamred}). Parts (b) and (c) are special cases of Theorems~\ref{thm:diracred} and \ref{thm:redGCS}, respectively.

\begin{remark}\label{rem:exactCA} Prop.~\ref{prop:exactCA} gives conditions for $E_{red}$ in part (a)  to be an exact Courant algebroid; e.g., it is enough that $\rho(K^\perp) = T(\mu^{-1}(0))$ and that $\rho(K)$ agrees with the distribution tangent to the $G$-orbits on $\mu^{-1}(0)$. \hfill $\diamond$
\end{remark}

\begin{remark}
One can slightly weaken the invariance hypotheses in the previous theorem: 
in (b) it is enough to require the $G$-invariance of $L_{quot}\subset K^\perp/K$, and in (c) the $G$-invariance of the map $\J_{quot}\colon K^\perp/K \to K^\perp/K$, induced by $\J$. \hfill $\diamond$
\end{remark}

We now illustrate the specific roles of $\mathfrak{h},\mathfrak{a}$ and $\mathfrak{g}$ in ${\bf (B)}$ (i.e., the components of the DGLA) in the reduction procedure of a Courant algebroid $E$, described in part (a) of the previous theorem.
We consider situations where precisely one of  $\mathfrak{h},\mathfrak{a}$ or $\mathfrak{g}$ is non-zero.

\begin{ex}\

\begin{itemize}
\item When only $\mathfrak{h}$ is non-zero, it is just a vector space, and  the previous setup consists of a smooth map $\mu\colon M\to \h^*$ such that $\rho(E)\subseteq \mathrm{ker}(d\mu)$. If $0$ is a regular value of $\mu$, then $\mu^{-1}(0)$ is a submanifold with the property that $\rho(E) \subseteq T(\mu^{-1}(0))$. The reduced Courant algebroid in this case is the restriction 
$E|_{\mu^{-1}(0)}$ (see Example~\ref{ex:casescoisored} (i)).
\item If only $\mathfrak{a}$ is nonzero, then it is a vector space, and the previous setup consists of a linear map $\varrho\colon \mathfrak{a}\to \Gamma(E)$  such that $\SP{\varrho(a_1),\varrho(a_2)}=0$ and $\Cour{\varrho(a), \cdot}=0$ for all $a_1$, $a_2$ and $a\in \mathfrak{a}$ (by \eqref{eq:Kisot} and the second equation in \eqref{eq:courantcomp}). Assuming that $\varrho$ is fiberwise injective, its image defines an isotropic subbundle $K\subset E$, and the reduction scheme in this case says that $K^{\perp}/{K}\rightarrow M$ has an induced Courant algebroid structure. 
  \item When only $\mathfrak{g}$ is non-zero, the setup consists of
a Lie algebra morphism $\g \to \Gamma(\mathbb{A}_E)$ acting by infinitesimal Courant algebroid automorphisms (the third equation in \eqref{eq:courantcomp}). Assume that $G$ is a connected Lie group integrating $\g$ and that the infinitesimal action on $M$ integrates to a $G$-action that is free and proper. In this case, the reduced Courant algebroid is $E/G\to M/G$, which is simply the quotient of $E$ by an action of $G$ by Courant algebroid automorphisms.  
\end{itemize}
\hfill $\diamond$
\end{ex}

We will see more concrete examples in $\S$ \ref{subsubsec:reddata} below.

In the full reduction scheme for the Courant algebroid $E\to M$ in Theorem~\ref{thm:geomhamred}, we see that $\h$ is used to cut out the submanifold $\mu^{-1}(0)$, $\mathfrak{a}$ is used to define the vector bundle $K^\perp/K \to \mu^{-1}(0)$, and $\g$ is used to quotient this bundle.


\subsection{The case of exact DGLAs}
\label{subsec:extactions}
We will now consider the reduction setup of the previous subsection
in the special case where the DGLA of degree 2 is {\em exact}, i.e., the complex \eqref{eq:deltaseg} defines a short exact sequence
$$
0 \longrightarrow \h \stackrel{\delta}{\longrightarrow} \mathfrak{a} \stackrel{\delta}{\longrightarrow} \g \longrightarrow 0.
$$
In the context of exact Courant algebroids, we will see how this special setup recovers the reduction schemes of \cite{bcg,HKquot}.

\subsubsection{\underline{Courant algebras}}
We now show that exact DGLAs of degree 2 admit an alternative description in terms of the following objects, introduced in \cite[Definitions~ 2.6 and 2.7]{bcg}.

 \begin{defi}\label{def:Calgebra} A \emph{Courant algebra} over a Lie algebra $\mathfrak{g}$
is a vector space $\mathfrak{a}$ with a bilinear bracket
$\llbracket \cdot, \cdot \rrbracket$ and a map $p: \mathfrak{a}
\to \mathfrak{g}$ such that, for all $a_1, a_2, a_3 \in
\mathfrak{a}$,
\begin{itemize}
\item[(a)] $\llbracket a_1, \llbracket a_2, a_3 \rrbracket \rrbracket =
\llbracket \llbracket a_1,  a_2\rrbracket, a_3  \rrbracket +
\llbracket a_2, \llbracket a_1, a_3 \rrbracket \rrbracket$, that is, $(\A, \Cour{\cdot,\cdot})$ is a Leibniz algebra;
\item[(b)]
$p( \llbracket a_1, a_2 \rrbracket) = [p(a_1), p(a_2)]$.
\end{itemize}
We define a  Courant algebra to be \emph{exact} if $p$ is surjective and
$\mathfrak{h}:=\mathrm{ker}(p)$ is left-central, i.e.\ $\llbracket h, a \rrbracket = 0$ for all $h
\in \mathfrak{h}$, $a \in \mathfrak{a}$.
\end{defi}

Just as in \cite{bcg,HKquot}, we will only be concerned with exact Courant algebras. We note that the previous definition is a mild modification of the original one in \cite[Def,~2.7]{bcg}, where $\h$ was just required to be  abelian (i.e., $\llbracket \h, \h \rrbracket = 0$) rather than left-central. 
The next result indicates  that our slightly stronger condition is actually more natural, and as we will see in $\S$ \ref{subsubsec:reddata} below, not restrictive.

\begin{prop}\label{lem:exact}
There is a canonical one-to-one correspondence between
exact DGLAs of degree 2 and exact Courant algebras.
\end{prop}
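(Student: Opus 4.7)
The plan is to set up mutually inverse constructions and then verify that the axioms on each side correspond, using the classical description of degree-$2$ DGLAs from Propositions~\ref{prop:gla} and~\ref{prop:dgla}.

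Starting from an exact DGLA $\tilde{\g} = \h[2] \oplus \A[1] \oplus \g$, presented by the tuple $(\g, \A, \h, \tau, \lambda, \varpi, \delta)$, I would equip $\A$ with the structure of an exact Courant algebra over $\g$ by setting $p := \delta|_\A \colon \A \to \g$ and $\Cour{a_1, a_2} := \tau(\delta a_1)(a_2)$. The Leibniz identity follows from combining the representation property of $\tau$ with the $\g$-equivariance of $\delta$ given by Prop.~\ref{prop:dgla}(a) (so that $\delta(\tau(\delta a_1) a_2) = [\delta a_1, \delta a_2]$), and the bracket-preserving property of $p$ follows from the same equivariance. Exactness of the Courant algebra is then automatic: surjectivity of $p$ and the identification $\ker p = \delta(\h)$ come from exactness of the DGLA, and left-centrality of $\h$ reduces to $\Cour{\delta h, a} = \tau(\delta^2 h)(a) = 0$.

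Conversely, given an exact Courant algebra $(\A, \Cour{\cdot,\cdot}, p)$ with $\h := \ker p$, I would produce a DGLA of degree~$2$ by declaring $\delta \colon \A \to \g$ to be $p$, $\delta \colon \h \to \A$ to be the inclusion (so $\delta^2 = 0$ and exactness are automatic), $\tau(u)(a) := \Cour{\tilde u, a}$ for any $\tilde u \in p^{-1}(u)$ (well-defined by left-centrality of $\h$), $\lambda$ as the restriction of $\tau$ to $\h \subseteq \A$ (which is preserved since $p(\Cour{\tilde u, h}) = [u, p(h)] = 0$), and $\varpi(a_1, a_2) := \Cour{a_1, a_2} + \Cour{a_2, a_1}$, which visibly lands in $\h$ because $p$ preserves brackets. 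All remaining axioms from Propositions~\ref{prop:gla} and~\ref{prop:dgla} --- that $\tau$ and $\lambda$ are $\g$-representations, the $\g$-equivariance of $\varpi$, and conditions (a)--(c) of Prop.~\ref{prop:dgla} --- are then routine consequences of the Leibniz identity in $\A$ and the left-centrality of $\h$. For example, condition (c) becomes $\Cour{a, h} = \Cour{a, h} + \Cour{h, a}$, which holds by left-centrality.

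Finally, I would check that the two assignments are mutually inverse. Starting with a DGLA and returning, the new $\h$ is $\ker p = \delta(\h_\mathrm{old})$, which is canonically identified with $\h_\mathrm{old}$ via the injection $\delta \colon \h \to \A$ from exactness; under this identification one computes $\tau_\mathrm{new}(u)(a) = \tau(u)(a)$ and $\varpi_\mathrm{new}(a_1, a_2) = \Cour{a_1,a_2} + \Cour{a_2,a_1} = \tau(\delta a_1)(a_2) + \tau(\delta a_2)(a_1) = \delta\varpi(a_1, a_2)$, which matches $\varpi$ up to the identification $\delta(\h) \cong \h$ via condition (b). The reverse composition is an analogous direct check. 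The main obstacle is purely bookkeeping: tracking how the single Leibniz identity on the Courant-algebra side disassembles into the package of equivariance and compatibility conditions on the DGLA side, and verifying that left-centrality of $\h$ is the precise shadow of $\delta^2 = 0$ together with the freedom in the choice of lift $\tilde u \in p^{-1}(u)$.
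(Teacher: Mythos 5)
Your proposal is correct and follows essentially the same route as the paper: the forward direction uses the derived bracket $\Cour{a_1,a_2}=[\delta a_1,a_2]$ with $p=\delta$, and the converse recovers the $\g$-module structure via lifts $\tilde u\in p^{-1}(u)$ (well-defined by left-centrality) and sets $\varpi$ to be the symmetrization of the Courant bracket. The only addition is your explicit check that the two assignments are mutually inverse, which the paper leaves implicit; that check is sound.
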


\begin{proof} 
Let  $\widetilde{\g}=\h[2]\oplus \mathfrak{a}[1] \oplus \g$ be  a DGLA of degree 2, given in terms of the data in ${\bf (B)}$ of $\S \ref{subsec:hamCAred}$, that we assume to be exact. 
We immediately obtain a surjective map $p:=\delta \colon \mathfrak{a}
\to \mathfrak{g}$. Define a bracket on $\mathfrak{a}$ by
 the derived bracket formula
\begin{equation}\label{eq:calg}
\llbracket a_1, a_2 \rrbracket = [\delta a_1, a_2] = \delta a_1\cdot a_2.
\end{equation}
The fact that $\delta \colon \mathfrak{a}
\to \mathfrak{g}$ is a morphism of $\g$-modules implies that 
$$
p(\llbracket a_1, a_2 \rrbracket)= \delta (\delta a_1\cdot a_2)=\delta a_1\cdot \delta a_2 = [p(a_1),p(a_2)].
$$
Using this last condition and that $\mathfrak{a}$ is a $\g$-module,
we see that
\begin{align*}
\Cour{a_1, \Cour{a_2,a_3}} & =  \delta a_1 \cdot (\delta a_2\cdot a_3)= [\delta a_1, \delta a_2]\cdot a_3 + \delta a_2 \cdot (\delta a_1\cdot a_3)\\
& = (\delta (\delta a_1\cdot a_2))\cdot a_3 + \delta a_2 \cdot (\delta a_1\cdot a_3) \\
& = \Cour{\Cour{a_1, a_3},a_3} + \Cour{a_2, \Cour{a_1,a_3}}.
\end{align*}
So $\llbracket \cdot, \cdot \rrbracket$ satisfies both conditions displayed
in Def. \ref{def:Calgebra}. Note  that $\Cour{a_1,\cdot}=0$ whenever $\delta a_1= p(a_1) = 0$, hence $\mathrm{ker}(p)\cong \h$ is left-central. So $p\colon \mathfrak{a}\to \g$ and the bracket \eqref{eq:calg} make $\mathfrak{a}$ into an exact Courant algebra over $\g$.

Conversely, consider an exact Courant algebra $p \colon \mathfrak{a}
\to \mathfrak{g}$, with bracket $\Cour{\cdot,\cdot}$ on $\mathfrak{a}$ and $\h=\mathrm{ker}(p)$. We can now produce the data described in {\bf (B)} of $\S$ \ref{subsec:hamCAred} as follows. Take $\g$ with the given Lie-algebra structure. Then $\mathfrak{a}$ acquires a $\g$-module structure via
$$
u\cdot a := \llbracket \hat{u}, a \rrbracket,
$$
where $\hat{u}\in \mathfrak{a}$ is such that $p(\hat{u})=u$. (This is well-defined since $\h$ is left-central; the two conditions in Def. \ref{def:Calgebra} ensure that  it is a $\g$-module.) Note that $\h$
is a $\g$-submodule of $\mathfrak{a}$. We finally define the symmetric bilinear map $\varpi \colon \mathfrak{a} \otimes \mathfrak{a} \to \mathfrak{h}$ by 
\begin{equation}\label{eq:beta}
\varpi(a_1,a_2):=\llbracket a_1, a_2 \rrbracket+\llbracket a_2, a_1 \rrbracket = p(a_1)\cdot a_2 + p(a_2)\cdot a_1.
\end{equation}
(The fact that $\varpi$ takes values in $\h$ follows from $p\colon \mathfrak{a}\to \h$ being bracket preserving.) Using the first condition in Def.~\ref{def:Calgebra}, one can check that $\varpi$ is $\g$-equivariant. The operators $\delta$ in \eqref{eq:deltaseg} are defined as the inclusion $\h \hookrightarrow \mathfrak{a}$ and projection $p\colon \mathfrak{a}\to \g$. The conditions in \eqref{eq:condsbeta} are both satisfied: the first is just by the definition of $\varpi$ above, and the second follows from $\h$ being left central. By Propositions \ref{prop:gla} and \ref{prop:dgla},  $\widetilde{\g}=\h[2]\oplus \mathfrak{a}[1] \oplus \g$ becomes a DGLA of degree 2, with bracket given by
\begin{equation*}
\begin{split}
&[h_1 + a_1 + u_1, h_2 + a_2 + u_2] = \\
&= 
 \left( \llbracket \hat{u}_1, h_2
\rrbracket + \llbracket a_1, a_2 \rrbracket + \llbracket a_2, a_1
\rrbracket - \llbracket \hat{u}_2, h_1 \rrbracket \right)
+ \left(\llbracket \hat{u}_1, a_2 \rrbracket - \llbracket
\hat{u}_2, a_1 \rrbracket \right) 
+[u_1,u_2],
\end{split}
\end{equation*}
where $\hat{u}_i \in \mathfrak{a}$ are such that $p(\hat{u}_i) = u_i$.
\end{proof}

\subsubsection{\underline{Infinitesimal hamiltonian actions and reduction data}}\label{subsubsec:reddata}
We now see how the data for an infinitesimal hamiltonian action on a Courant algebroid, described in ${\bf (C)}$ of $\S$ \ref{subsec:hamCAred}, can be simplified if we assume the DGLA to be exact. 
 
\begin{prop}\label{prop:exactDGLAact} 
A hamiltonian action of an exact DGLA of degree 2, with corresponding Courant algebra $p\colon \A \to \g$, on a Courant algebroid $E\to M$ is equivalent to:
\begin{itemize}
\item a bracket preserving map $\varrho\colon \A \to \Gamma(E)$, such that $\Cour{\varrho(\h),\cdot}=0$,
\item a $\g$-equivariant map $\mu\colon M \to \h^*$ such that
\begin{itemize}
\item[$(i)$] $\varrho(h)=\rho^*(d (\mu^*h))$, for all $h\in \h$,
\item[$(ii)$] $\mu^*(\Cour{a,a})=\frac{1}{2}\SP{\varrho(a),\varrho(a)}$, for all $a\in \A$.
\end{itemize}
\end{itemize}
\end{prop}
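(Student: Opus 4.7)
The plan is to prove the equivalence by independently constructing each direction, exploiting the bijection of Prop.~\ref{lem:exact} between exact DGLAs of degree 2 and exact Courant algebras. Under this bijection, the $\g$-action on $\A$ is $u\cdot a = \Cour{\hat u, a}$ (for $\hat u\in\A$ with $p(\hat u)=u$), the Courant algebra bracket satisfies $\Cour{a_1,a_2} = p(a_1)\cdot a_2$, and the symmetric form is $\varpi(a_1,a_2) = \Cour{a_1,a_2}+\Cour{a_2,a_1}$, so in particular $\varpi(a,a) = 2\Cour{a,a}$.

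For the forward direction, I would start with a hamiltonian action $(\varphi,\varrho,\mu)$ in the sense of Def.~\ref{def:infHact} and extract the simplified data. The $\g$-equivariance of $\mu$ is already given. Condition $(i)$, $\varrho(h) = \rho^*(d(\mu^*h))$ for $h\in\h$, is the first equation of \eqref{eq:courantcomp} since $\delta\colon\h\to\A$ is the inclusion. Condition $(ii)$ is \eqref{eq:Kisot} on the diagonal, using $\varpi(a,a) = 2\Cour{a,a}$. That $\Cour{\varrho(\h),\cdot}=0$ follows from the second equation of \eqref{eq:courantcomp} combined with $p|_\h=0$. That $\varrho$ preserves brackets is then a short combination of the same equation with condition (b) of Lemma~\ref{lem:muequiv}: $\Cour{\varrho(a_1),\varrho(a_2)} = \varphi(p(a_1))(\varrho(a_2)) = \varrho(p(a_1)\cdot a_2) = \varrho(\Cour{a_1,a_2})$.

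For the backward direction, the key construction is to recover $\varphi\colon\g\to\Gamma(\mathbb{A}_E)$ from $\varrho$ via the derived-bracket formula $\varphi(u) := \Cour{\varrho(\hat u),\cdot}$ for any lift $\hat u\in\A$ of $u\in\g$. Well-definedness follows from $\Cour{\varrho(\h),\cdot}=0$ and the exactness of $p$; that $\varphi(u)\in\Gamma(\mathbb{A}_E)$ and acts by infinitesimal Courant automorphisms is Remark~\ref{rem:infCAaut}, i.e.\ axioms (C1)--(C3). That $\varphi$ is a Lie-algebra morphism uses Jacobi (C1) and the bracket-preservation of $\varrho$: $[\varphi(u_1),\varphi(u_2)]=\Cour{\Cour{\varrho(\hat u_1),\varrho(\hat u_2)},\cdot}=\Cour{\varrho(\Cour{\hat u_1,\hat u_2}),\cdot}=\varphi([u_1,u_2])$, since $p(\Cour{\hat u_1,\hat u_2})=[u_1,u_2]$. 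The conditions of Lemmas~\ref{lem:muequiv} and~\ref{lem:dglamap} are then routine: (a) of Lemma~\ref{lem:muequiv} is the well-definedness argument, (b) is tautological from $\varphi(p(a))(\varrho(a')) = \Cour{\varrho(a),\varrho(a')} = \varrho(\Cour{a,a'})$, (c) is the infinitesimal $\g$-equivariance of $\mu$, and (d) is the polarization of $(ii)$; Lemma~\ref{lem:dglamap}(a) is $(i)$, (b) follows from $\Cour{\varrho(\h),\cdot}=0$ applied to any lift, and (c) is again (C1).

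The main obstacle is purely bookkeeping across the Prop.~\ref{lem:exact} dictionary. The only genuine subtlety to check is that condition $(ii)$ on the diagonal, combined with the symmetry and bilinearity of $\varpi$ and $\SP{\cdot,\cdot}$, suffices to recover the full identity \eqref{eq:Kisot}, which follows by polarization. Conceptually, the whole content of the equivalence is that exactness of the Courant algebra makes $\varphi$ redundant: once we have a bracket-preserving $\varrho\colon\A\to\Gamma(E)$ that trivializes the ideal $\h$, the Courant bracket on $\Gamma(E)$ itself canonically lifts the $\g$-action.
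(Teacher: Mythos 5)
Your proposal is correct and follows essentially the same route as the paper's proof: both hinge on the observation that exactness makes $\varphi$ redundant, recovering it as $\varphi(u)=\Cour{\varrho(\hat u),\cdot}$ so that the third condition in \eqref{eq:courantcomp} becomes automatic by (C1), the $\g$-equivariance of $\varrho$ translates into bracket preservation, and \eqref{eq:Kisot} polarizes to condition $(ii)$. You merely spell out the verifications of Lemmas~\ref{lem:muequiv} and~\ref{lem:dglamap} in more detail than the paper does.
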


Before proving the proposition, we need to explain the equivariance condition on $\mu$. Note that the condition $\Cour{\varrho(\h),\cdot}=0$ implies that $\rho (\varrho(\h))=0$ (by the Leibniz identity (C2) of Courant brackets), so $\rho\circ \varrho$ induces a Lie-algebra map $\g = \A/\h \to \Gamma(TM)$, i.e., a $\g$-action on $M$. The equivariance of $\mu$ is with respect to this action.

\begin{proof}
Assume that \eqref{eq:deltaseg} defines a short exact sequence. We must check that the data in ${\bf (C)}$ of $\S$ \ref{subsec:hamCAred}, given by 
\begin{itemize}
\item a Lie algebra morphism $\varphi\colon \g \to \Gamma(\mathbb{A}_E)$, 
\item $\g$-equivariant maps $\varrho\colon \A \to \Gamma(E)$ and $\mu\colon M \to \h^*$
\end{itemize}
and satisfying conditions \eqref{eq:Kisot} and \eqref{eq:courantcomp}, reduce to the data in this proposition. 

By exactness, the second condition in \eqref{eq:courantcomp} says that $\Cour{\varrho(\h),\cdot}=0$ and that $\varphi$ is completely determined by $\varrho$ via 
\begin{equation}\label{eq:phibrk}
\varphi(u) = \Cour{\varrho(\hat{u}),\cdot},    
\end{equation}
where $\hat{u}\in \A$ satisfies $\delta \hat{u} = p(\hat{u}) = u$. In particular, the third condition in 
\eqref{eq:courantcomp} is automatically satisfied (by axiom (C1) of Courant brackets). 
By \eqref{eq:calg} and \eqref{eq:phibrk}, we see that in terms of the Courant algebra $\A\to \g$, the $\g$-equivariance of $\varrho\colon \A \to \Gamma(E)$ is equivalent to $\varrho$ being bracket preserving. Further, \eqref{eq:Kisot} can be writtten as the second item  of the proposition, while the first item agrees with the first  condition in 
\eqref{eq:courantcomp}. This shows that the maps $\varrho$ and $\mu$  in ${\bf (C)}$ satisfy the conditions stated in the proposition. On the other hand, we also see that starting with the data in this proposition, by setting  $\varphi\colon \g \to \Gamma(\mathbb{A}_E)$ as in \eqref{eq:phibrk} we obtain the data as in ${\bf (C)}$, yielding the claimed equivalence.
\end{proof}

To obtain a further simplification of the setup, we will consider the following concrete Courant algebra, which is the main example in \cite{bcg,HKquot}.

\begin{ex}[Hemisemidirect product]\label{ex:hemisemi}
Given a Lie algebra $\g$ and a $\g$-module $\h$, there is a natural exact Courant algebra structure on $\A= \g\oplus \h$ with $p\colon \A \to \g$ the canonical projection and  bracket defined by
$$
\Cour{(u_1,h_1),(u_2,h_2)}= ([u_1,u_2],u_1\cdot h_2).
$$
The previous bracket is the hemisemidirect product of $\g$ and $\h$, first considered in 
\cite[Example 2.2]{MR1833152}.
The corresponding exact DGLA $\tilde{\g}={\h}[2]\oplus \A[1]\oplus \g$ is determined by the $\g$-module structure on $\A= \g\oplus \h$ given by $u \cdot (u_1,h_1) = ([u,u_1], u\cdot h_1)$ and 
$$
\varpi((u_1,h_1),(u_2,h_2)) =  u_1\cdot h_2 + u_2\cdot h_1.
$$
\hfill{$\diamond$}
\end{ex}

Suppose that the Courant algebroid  $E\to M$ has the additional property that $\rho^* d( C^\infty(M))\subseteq \Gamma(E)$ is left central, i.e.,
\begin{equation}\label{eq:leftcentralCA}
\Cour{\rho^*d (C^\infty(M)),\cdot}=0.
\end{equation}
Let $\g$ be a Lie algebra and $\h$ be a $\g$-module.

\begin{defi}\label{def:reddata}
 A pair $(\tilde{\psi},\mu)$ 
consisting of
\begin{itemize}
\item a bracket preserving map $\tilde{\psi}\colon \g \to \Gamma(E)$  with isotropic image, i.e., $\SP{\tilde{\psi}(u), \tilde{\psi}(v)}=0$ for all $u, v \in \g$,
\item a $\g$-equivariant map $\mu\colon M \to \h^*$, where $M$ is equipped with the action 
$$
\psi= \rho \circ \widetilde{\psi}\colon \g \to \Gamma(TM),
$$
\end{itemize}
is called {\em reduction data} for $E$.
\end{defi}

The previous definition extends the notion of reduction data in \cite[$\S$ 3.2]{HKquot}.

The importance of reduction data is that they give rise to hamiltonian actions on Courant algebroids. Let $\A= \g\oplus \h$ be the Courant algebra of Example~\ref{ex:hemisemi}. 

\begin{prop}\label{prop:reddata}
Let $E$ be a Courant algebroid satisfying \eqref{eq:leftcentralCA} and equipped with
reduction data $(\tilde{\psi},\mu)$. Then  $\varrho\colon \A\to \Gamma(E)$,
$$
\varrho(u,h) := \widetilde{\psi}(u) + \rho^*d (\mu^*h),
$$
and $\mu$ define a hamiltonian action of $\A$ on $E$. 
\end{prop}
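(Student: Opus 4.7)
The plan is to verify that the pair $(\varrho, \mu)$ satisfies the characterization of hamiltonian actions of exact DGLAs given by Prop.~\ref{prop:exactDGLAact}. Concretely, I must check: (i) $\varrho\colon \A\to \Gamma(E)$ is bracket preserving; (ii) $\Cour{\varrho(\h),\cdot}=0$; (iii) $\mu\colon M\to \h^*$ is $\g$-equivariant with $\varrho(h)=\rho^*d(\mu^*h)$ for $h\in \h$; and (iv) $\mu^*(\Cour{a,a})=\tfrac{1}{2}\SP{\varrho(a),\varrho(a)}$ for all $a\in \A$.

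Items (ii) and (iii) are essentially immediate: (ii) follows directly from the hypothesis \eqref{eq:leftcentralCA} once we observe that $\varrho(\h)\subseteq \rho^*d(C^\infty(M))$; the equivariance of $\mu$ is given; the formula $\varrho(0,h) = \rho^*d(\mu^*h)$ is built into the definition of $\varrho$. The main work is in items (i) and (iv). For (i), writing out $\Cour{(u_1,h_1),(u_2,h_2)}=([u_1,u_2], u_1\cdot h_2)$ in the hemisemidirect product, I would expand
$$
\Cour{\varrho(u_1,h_1),\varrho(u_2,h_2)}=\Cour{\tilde\psi(u_1)+\rho^*d(\mu^*h_1),\;\tilde\psi(u_2)+\rho^*d(\mu^*h_2)}
$$
into four terms. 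The two terms involving $\rho^*d(\mu^*h_1)$ on the left vanish by \eqref{eq:leftcentralCA}; the purely $\tilde\psi$ term equals $\tilde\psi([u_1,u_2])$ because $\tilde\psi$ is bracket preserving; and the cross term $\Cour{\tilde\psi(u_1),\rho^*d(\mu^*h_2)}$ will be rewritten as $\rho^*d(\pounds_{\psi(u_1)}\mu^*h_2) = \rho^*d(\mu^*(u_1\cdot h_2))$ using $\g$-equivariance of $\mu$. The output is exactly $\varrho([u_1,u_2], u_1\cdot h_2)$.

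The key identity $\Cour{e,\rho^*df}=\rho^*d(\pounds_{\rho(e)}f)$ is a standard consequence of the Courant algebroid axioms and will be derived once from (C3)--(C5) and the nondegeneracy of $\SP{\cdot,\cdot}$: pairing with an arbitrary $e'$ and using (C3) together with $\SP{\rho^*df,e'}=\pounds_{\rho(e')}f$ and (C4) reduces both sides to $\pounds_{\rho(e')}\pounds_{\rho(e)}f$. For (iv), I observe $\Cour{(u,h),(u,h)}=(0,u\cdot h)\in \h$, so $\mu^*(\Cour{a,a}) = \mu^*(u\cdot h) = \pounds_{\psi(u)}\mu^*h$ by equivariance. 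Expanding $\SP{\varrho(u,h),\varrho(u,h)}$ gives three pieces: $\SP{\tilde\psi(u),\tilde\psi(u)}=0$ by the isotropy hypothesis on $\tilde\psi$; the cross term yields $2\SP{\tilde\psi(u),\rho^*d(\mu^*h)}=2\pounds_{\psi(u)}\mu^*h$; and $\SP{\rho^*d(\mu^*h),\rho^*d(\mu^*h)}=0$ by the general Courant-algebroid identity $\rho\circ \rho^*=0$ (which follows from polarizing (C5), applying $\rho$, and using (C4) together with nondegeneracy of the metric). Dividing by two matches the $\mu^*$ computation.

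The only mildly subtle step is justifying the vanishing $\SP{\rho^*df,\rho^*dg}=0$: either by invoking the general fact $\rho\circ\rho^*=0$ (recalled briefly via a polarization of (C5)) or, more in the spirit of the hypothesis, by noting that \eqref{eq:leftcentralCA} together with the graded-symmetric formula $\Cour{e_1,e_2}+\Cour{e_2,e_1}=\rho^*d\SP{e_1,e_2}$ implies $\rho^*d\SP{\rho^*df,\rho^*dg}=0$ and hence, using \eqref{eq:leftcentralCA} once more, suffices to conclude $\pounds_{\rho(\rho^*dg)}f=0$. This is the only step where some care is needed; the remaining computations are direct applications of the Courant axioms and the definitions, and will be spelled out in order (i), (ii) being immediate, (iii) equivariance, then (i) bracket preservation, then (iv) metric compatibility.
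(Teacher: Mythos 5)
Your proposal is correct and follows essentially the same route as the paper's proof: both reduce the statement to the characterization in Prop.~\ref{prop:exactDGLAact}, kill the $\h$-terms using \eqref{eq:leftcentralCA}, handle the cross term via $\Cour{e,\rho^*df}=\rho^*d(\pounds_{\rho(e)}f)$ together with the equivariance of $\mu$, and verify the momentum condition using the isotropy of $\tilde{\psi}$ and $\rho\circ\rho^*=0$. The only difference is that you spell out the derivation of the auxiliary Courant-algebroid identities from the axioms, which the paper simply cites.
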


\begin{proof}
We must verify that the conditions in Prop. ~\ref{prop:exactDGLAact} hold. It is clear that $\varrho(0,h)=\rho^*d(\mu^*h)$ for all $h\in \h$. By \eqref{eq:leftcentralCA}, it follows that $\Cour{\varrho(\h),\cdot}=0$.
To check that $\varrho$ is bracket preserving, notice that
\begin{align*}
\Cour{\varrho(u_1,h_1),\varrho(u_2,h_2)} &=  \Cour{\tilde{\psi}(u_1)+ \rho^* d(\mu^* h_1), \tilde{\psi}(u_2)+ \rho^* d(\mu^* h_2)}\\
&=\tilde{\psi}([u_1,u_2]) + \rho^* d (\pounds_{{\psi}(u_1)}\mu^*h_2) \\
&= \tilde{\psi}([u_1,u_2]) + \rho^*d(\mu^*(u_1\cdot h_2))\\
&= \varrho(\Cour{(u_1,h_1),(u_2,h_2)}),
\end{align*}
where we used that $\tilde{\psi}$  preserves brackets as well as axiom (C5) of Courant algebroids in the second equality, and the equivariance of $\mu$ in the third equality. It remains to verify that 
$$
\mu^*(\Cour{(u,h),(u,h)})=\frac{1}{2}\SP{\varrho(u,h),\varrho(u,h)}
$$
for all $(u,h)\in \g\oplus \h$. By equivariance, the left-hand side equals
$$
\mu^*(u\cdot h)= \pounds_{\psi(u)}\mu^*h.
$$
On the other hand,  we have
$$
\SP{\varrho(u,h),\varrho(u,h)} = 2\SP{\widetilde{\psi}(u), \rho^*d(\mu^*h)}= 2 \pounds_{\psi(u)}\mu^*h,
$$
where we have used that $\tilde{\psi}$ has isotropic image and that $\rho\circ \rho^*=0$ on Courant algebroids. This completes the proof.
\end{proof}

We will now use the simplified description of hamiltonian actions via reduction data in Prop.~\ref{prop:reddata} to illustrate Theorem~\ref{thm:geomhamred}.

We will fix the following set-up for Examples~\ref{ex:hamcour}, \ref{ex:hamdirac} and \ref{ex:hamGCS} below:
\begin{itemize}
\item $G$ is a (connected) Lie group acting on a manifold $M$, and $\psi: \g\to \Gamma(TM)$ is the corresponding infinitesimal action;
\item $\h$ is a $G$-module and $\mu: M\to \h^*$ is a $G$-equivariant map;
\item $E=TM\oplus T^*M$ is the standard Courant algebroid.
\end{itemize}
Note that condition \eqref{eq:leftcentralCA} holds for $E$, and that
$(\psi,\mu)$ is reduction data (as in Def.~\ref{def:reddata}). By the previous proposition, we have a hamiltonian action of the (hemisemidirect product) Courant algebra $\g\oplus \h$ on $E$ with 
$$
\varrho(u,h)=\psi(u) + d(\mu^*h).
$$
Suppose that $0$ is a regular value of $\mu$ and the $G$-action on $\mu^{-1}(0)$ is free and proper, and let $M_{red}=\mu^{-1}(0)/G$.

\begin{ex}[Hamiltonian reduction of the standard Courant algebroid]\label{ex:hamcour}
To carry out Courant reduction as in Theorem~\ref{thm:geomhamred} (a), we note that the induced $G$-action on $E=TM\oplus T^*M$ (defined by \eqref{eq:phibrk}) coincides with the natural $G$-action by tangent and cotangent lifts, and $K$ (defined by the image of $\varrho$) is $F\oplus \mathrm{Ann}(TN)$, where $N=\mu^{-1}(0)$ and $F$ is the tangent distribution to the $G$-orbits on $N$. Similarly to Example~\ref{red:stdca}, in this case we have
$$
E_{red} = TM_{red}\oplus T^*M_{red},
$$
the standard Courant algebroid on $M_{red}$.
\hfill $\diamond$
\end{ex}

\begin{ex}[Hamiltonian reduction of Dirac structures]\label{ex:hamdirac}
Assume that $\h=\g$, and let $L\subset E$ be a Dirac structure satisfying the following ``momentum map condition'' (see \cite[Def.~2.15]{BrahicFernandesDiracRed}):
$$
\psi(u) + d(\mu^*u) \in \Gamma(L), \qquad \forall u\in \g.
$$
When $L$ is defined by a Poisson structure $\pi$ (resp. closed 2-form $\omega$), one recovers the familiar condition $\psi(u)=\pi^\sharp(d(\mu^*u))$ (resp. $i_{\psi(u)}\omega= d(\mu^*u)$).

With this condition, the fact that $L$ is involutive with respect to the Courant bracket ensures that it is $\g$-invariant (and hence $G$-invariant since $G$ is connected). To carry out the reduction in Theorem~\ref{thm:geomhamred} (b) and obtain a Dirac structure $L_{red}$ on $M_{red}$, we need the additional assumption that $L|_{\mu^{-1}(0)}\cap K$ has constant rank; here $K$ is as in Example \ref{ex:hamcour}. (Notice that, when $L$ is given by a Poisson structure or closed 2-form, this condition follows from the momentum map condition, so it is automatically satisfied.)

We can re-formulate this constant-rank condition as follows.

\smallskip

\noindent {\bf Claim:} $L|_{\mu^{-1}(0)}\cap K$ has constant  rank if and only if 
$(d_x\mu)(\mathrm{pr}_{TM}(L))\subseteq T_0\g^*$ has constant dimension for all $x\in \mu^{-1}(0)$.

\smallskip

To verify the claim, recalling that $N=\mu^{-1}(0)$, note that for $u\in \g$ and $\xi\in\mathrm{Ann}(TN)$ we have: $(\psi(u),\xi)\in L|_{N}\cap K$ if and only if $\xi-d(\mu^*u)\in \mathrm{Ann}(TN+\mathrm{pr}_{TM}L|_N)$ (as one sees using the momentum map condition and the equality $L\cap T^*M=\mathrm{Ann}(\mathrm{pr}_{TM}(L))$). This shows that $L|_{N}\cap K$ has constant rank if and only if $TN+\mathrm{pr}_{TM}(L)|_N$ has constant rank, which is is equivalent to $(d_x\mu)(\mathrm{pr}_{TM}(L)|_N)\subseteq T_0\g^*$ having constant dimension for all $x\in N$.

By the claim, the constant-rank condition holds, for instance, when the restriction of $\mu$ to any presymplectic leaf of $L$ intersecting $N$  has $0$ as a regular value, i.e. when
the momentum map $\mu$ is \emph{regular at $0\in \g^*$} in the sense of \cite[Def. 2.18]{BrahicFernandesDiracRed}. In this case, the reduced Dirac structure $L_{red}$ of Theorem~\ref{thm:geomhamred} (b) coincides with the Dirac structure on $\mu^{-1}(0)/G$ constructed in \cite[Thm. 2.21]{BrahicFernandesDiracRed}.
\hfill $\diamond$
\end{ex}

\begin{ex}[Hamiltonian reduction of GCS]\label{ex:hamGCS}
Suppose that $M$ is equipped with a $G$-invariant generalized complex structure $\J$.
Theorem~\ref{thm:geomhamred} (c) asserts that $\J$ reduces to $\J_{red}$ on $M_{red}=\mu^{-1}(0)/G$ provided $\J(K)=K$. We illustrate this result in two cases, parallel to Examples~\ref{ex:coisosympred} and \ref{ex:coisoholred}.
\begin{itemize}
\item Let $\h=\g$, and suppose that $\J$ corresponds to a symplectic form $\omega$ on $M$,
\[ 
\J = \left( \begin{matrix} 
0 & -\omega^{-1} \\ 
\omega & 0 
\end{matrix}\right). 
\] 
In case $\mu:M\to \g^*$ is a momentum map for the $G$-action on $(M,\omega)$, i.e.,
$$
i_{\psi(u)}\omega = d (\mu^*u), \qquad \forall u\in \g,
$$
we have that $\J \varrho(u,v)  = \varrho(-v,u)$, and hence $\J(K)=K$. The reduced GCS $\J_{red}$ is the one corresponding to the reduced symplectic form on $M_{red}$ obtained by Marsden-Weinstein reduction. 
\item Let $\h=\{0\}$, and suppose that $\J$ corresponds to a complex structure $J$ on $M$,
\[ 
\J = \left( \begin{matrix} 
J & 0 \\ 
0 & -J^*
\end{matrix}\right). 
\] 
Assuming that $\g$  is a complex Lie algebra, with complex structure denoted by $I$, and that the action is holomorphic, we have that 
$$
\J\varrho(u) = J\psi(u) = \psi(I u) = \varrho(I u),
$$
so $\J(K)=K$. The reduced GCS $\J_{red}$ on $M_{red}=M/G$ corresponds to the complex structure obtained by holomorphic quotient.
\end{itemize}
\hfill $\diamond$
\end{ex}

A more general class of Courant algebroids satisfying  \eqref{eq:leftcentralCA}  can be obtained from Lie algebroids, as explained in Example~\ref{ex:LAdouble}. Recall that for a Lie algebroid $A\to M$, with anchor $\rho_A$, bracket $[\cdot,\cdot]_A$, and differential $d_A$, and any $d_A$-closed element $\chi\in \Gamma(\wedge^3 A^*)$, we have a Courant algebroid structure on the pseudo-euclidean vector bundle $E = A\oplus A^*$ with anchor $\rho(a,\xi) = \rho_A(a)$ and bracket
$$
\Cour{(a_1,\xi_1),(a_2,\xi_2)} = ([a_1,a_1], \pounds_{a_1}\xi_2 - i_{a_2}d_A \xi_1 + i_{a_2}i_{a_1}\chi).
$$
It is a direct verification that 
\eqref{eq:leftcentralCA} holds for such Courant algebroids.
Particular examples are given by doubles of Lie algebroids (when $\chi=0$) and by
exact Courant algebroids (when $A=TM)$, see Example~\ref{ex:standardCA}; in this last setting, the previous proposition recovers \cite[Prop.~2.3]{HKquot}.

\begin{ex}\label{ex:reddatasuper}
When $E=A\oplus A^*$ is the double Courant algebroid of a Lie algebroid $A\to M$, 
reduction data for $E$ amount to
\begin{itemize}
\item[(1)] a Lie algebra map $\eta\colon \g\to \Gamma(A)$,
\item[(2)] a linear map $\nu\colon \g \to \Gamma(A^*)$ such that, for $u, v \in \g$,
\begin{itemize}
\item[(a)] $\nu([u,v])=\pounds_{\eta(u)}\nu(v) - i_{\eta(v)}d_A \nu (u)$,  
\item[(b)] $i_{\eta(u)}\nu(v) = - i_{\eta(v)}\nu(u)$,
\end{itemize}
\item[(3)] a $\g$-equivariant map $\mu\colon M \to \h^*$, where $M$ carries the action
$\rho_A\circ \eta\colon \g \to \Gamma(TM)$.
\end{itemize}
Here $\eta$ and $\nu$ are components of a map $\tilde{\psi}\colon \g \to \Gamma(E)$, $\tilde{\psi}(u)= \eta(u) + \nu(u)$, which is bracket preserving with isotropic image, by (a) and (b) in (2) above. According to Prop.~\ref{prop:reddata}, $\eta$, $\nu$ and $\mu$ yield a hamiltonian action of the hemisemidirect product Courant algebra $\A= \g\oplus \h$ on $E$.

A very special case is when $\nu=0$ and $\h=0$, so that
the only non-trivial map is the  Lie algebra map $\eta\colon \g\to \Gamma(A)$. This defines a (inner) $\g$-action on $A$ by $u \mapsto [u, \cdot]$ lifting the $\g$-action on $M$. Suppose that the $\g$-action on $A$ integrates to a global action of a connected Lie group $G$, and that the $G$-action on $M$ is free and proper. The map $\g\times M \to A$, $(u,x)\mapsto \eta(u)|_x$ is injective (by freeness of the action) and its image, denoted by $\eta(\g)\subseteq A$, is a $G$-invariant subbundle. Then  the quotient of $A/
\eta(\g)$ by the $G$-action defines a reduced Lie algebroid $A_{red}$ over $M/G$, as considered, e.g., in \cite[Thm.~3.6]{MOP} (see also \cite[$\S$ 2.2]{BIL} and references therein). 
In this case, the hamiltonian reduction of the Courant algebroid $E=A\oplus A^*$  (as described in \S \ref{subsec:hamCAred}) is
$E_{red}= A_{red}\oplus A_{red}^*$, the double of the reduced Lie algebroid $A_{red}$.
\hfill $\diamond$
\end{ex}

 The next remark describes the graded hamiltonian action (in the sense of $\S$ \ref{sec:hamred}) corresponding to the previous example.  It is a refinement of a  cotangent lifted action on $T^*[2]A[1]$.

 \begin{remark}[Graded cotangent bundle reduction]\label{rem:short8.6}
We will be concerned with the graded  counterpart of the following construction in ordinary symplectic geometry.
Let $N$ be a  manifold and $\kappa\colon T^*N \to N$ its cotangent bundle, endowed with its canonical symplectic structure. We regard $C^\infty(N)$ as an abelian Lie subalgebra of $C^\infty(T^*N)$ via $\kappa^*$. By identifying vector fields on $N$ with linear functions on $T^*N$, we view 
$\mathfrak{X}(N) \subseteq C^\infty(T^*N)$
as a Lie subalgebra. Hence any Lie algebra map $ \g \to \mathfrak{X}(N)$ may be seen as a Lie algebra map into $C^\infty(T^*N)$, we see that any $\g$-action on $N$ automatically defines a hamiltonian $\g$-action on $T^*N$, known as its {\em cotangent lift}. A natural way
to obtain more general hamiltonian actions on $T^*N$ is by combining cotangent lifts with ``fiber translations''. Consider
\begin{itemize}
\item[(i)] a Lie algebra map  $\eta\colon \g \to \mathfrak{X}(N) \subseteq C^\infty(T^*N)$;
\item[(ii)] 
a linear map $\nu\colon \g \to C^\infty(N)$ satisfying the cocycle condition 
\begin{equation*}
\nu([u_1,u_2]) =  \pounds_{\eta (u_1)}(\nu(u_2))-  \pounds_{\eta (u_2)}(\nu(u_1)), \;\;\;\; \forall u_1,u_2 \in \g
\end{equation*}
(this condition is saying that $\eta + \kappa^*\nu \colon \g \to C^\infty(T^*N)$ is a Lie algebra map);
\item[(iii)]  a $\g$-module $\h$ and a $\g$-equivariant map $\mu\colon N \to \h^*$, with dual map $\mu^*:\h \to C^\infty(N)$.
\end{itemize}
 Let $\g\ltimes \h$ denote the semidirect product Lie algebra.
Then 
 \begin{equation}\label{eq:musemidirect}
 \tilde{\mu}\colon \g\ltimes \h\to C^{\infty}(T^*N), \;\;(u,w)\to  \big(\eta (u)+ \kappa^*(\nu(u))\big)+\kappa^*(\mu^*w)
 \end{equation}
is a Lie algebra map, defining a hamiltonian $\g \ltimes \h$-action on $T^*N$.
We now consider the analogous construction in the graded context.

Let $A[1]$ be an $\mathbb{N}$-manifold of degree 1 and $\cM=T^*[2]A[1]$. Assume that $A[1]$ is equipped with a $Q$-structure, see Remark \ref{rem:Qstructures}, so that $A\to M$ is a Lie algebroid. As seen in Example~\ref{ex:LAdouble}, the  Courant function on $\cM$ defined by $Q$ via  \eqref{eq:fiberlin} corresponds to the Courant algebroid $A\oplus A^*$, the double of $A$.

 Let $\g$ be a Lie algebra 
and consider the DGLA $\g[1]\oplus \g$ with differential given by the identity, and graded Lie bracket given by the one of $\g$ and its adjoint representation. 
For an element $u\in \g$, we write $u[1]$ for the same element in $\g[1]$ (with this notation, the differential is given by $u[1]\mapsto u$).
Any DGLA morphism $\g[1]\oplus \g\to \mathfrak{X}(A[1])$ (where the differential in $\mathfrak{X}(A[1])$ is $[Q,\cdot]$) is determined by 
its restriction $\eta\colon \g[1]\to \mathfrak{X}(A[1])_{-1}=\Gamma(A)$ via
$$
(w,u)\mapsto \eta(w)+[Q,\eta(u[1])].
$$
Since the natural embedding $\mathfrak{X}(A[1])\hookrightarrow C(\cM)[2]$ in  \eqref{eq:fiberlin} is a DGLA morphism, we conclude that  a Lie algebra map $\eta\colon \g\to \Gamma(A)$ yields a DGLA morphism  
$$
\overline{\eta}\colon \g[1]\oplus \g \to C(\cM)[2],
$$ 
thereby defining a hamiltonian $\g[1]\oplus \g$-action on $\cM$ (the ``cotangent lift'' of the given $\g[1]\oplus \g$-action on $A[1]$). 

For the analogue of (ii) above, we consider a map of complexes $\g[1]\oplus \g \to C(A[1])[2]$ (the latter is endowed with the differential $\pounds_Q$); as before, such a map is determined by its restriction $\nu\colon \g[1] \to C(A[1])_1=\Gamma(A^*)$ via
$$
(w,u) \mapsto \nu(w) + \pounds_Q \nu(u[1]).
$$
Since the embedding 
$\kappa^\sharp\colon C(A[1])\to C(\cM)$ is a  chain map   (recall that $\kappa^\sharp$ was defined in Example~\ref{ex:cotangent}), we see that any linear map $\nu\colon \g \to \Gamma(A^*)$ yields a map of complexes 
$$
\overline{\nu}\colon \g[1]\oplus \g \to C(\cM)[2].
$$
The analogue of the cocycle condition in (ii) (making $\overline{\eta} +\overline{\nu}\colon \g[1]\oplus \g \to C(\cM)[2]$ into a DGLA morphism) is
\begin{align}
 \label{eq:nueq1}
\nu([u,w]) &= \pounds_{[Q,\eta(u[1])]} \nu(w) - \pounds_{\eta(w)}(\pounds_Q\nu(u[1])),\\
  \label{eq:nueq2}
 \pounds_{\eta (w_1)} \nu (w_2)&=- \pounds_{\eta(w_2)} \nu(w_1),
 \end{align}
for all $w, w_1, w_2\in \g[1]$, and $u\in \g$.

For the analogue of (iii) above, let $\h$ be a $\g$-module. The  complex  $\h[2]\oplus \h[1]$, with the identity map as differential, then carries a natural $\g[1]\oplus \g$-module structure. A chain map $\h[2]\oplus \h[1]\to C(A[1])[2]$ is determined by its restriction to $\h[2]$, which is a linear map $\mu^*\colon \h \to C^\infty(M)$. We assume  that the chain map is $\g[1]\oplus \g$-equivariant, which amounts to the
$\g$-equivariance of  $\mu\colon M\to \h^*$, the map dual to $\mu^*$ (here $M$ carries the action given by $\rho_A\circ \eta$). The semidirect product  $\tilde{\g}=(\g[1]\oplus \g)\ltimes (\h[2]\oplus \h[1])$  can be checked to agree with the (exact) DGLA of Example~\ref{ex:hemisemi}, corresponding to the hemisemidirect product of $\g$ and $\h$. 
In analogy with \eqref{eq:musemidirect},  the following map can be shown to be  a  morphism of DGLAs: 
 \begin{align*}
\tilde{\mu}\colon \h[2]\oplus (\h[1]\oplus \g[1])\oplus \g
 &\to C(\cM)[2]\\ 
 (h_a,h_b,w,u) &\mapsto  (\overline{\eta}+\overline{\nu})(w,u) +  \kappa^\sharp(\mu^*h_a)+ \kappa^\sharp(\pounds_Q({\mu^*(h_b[1])})).
 \end{align*} 
 Hence the map $\tilde{\mu}$ satisfies \eqref{eq:dglacond} and
 yields a hamiltonian $\tilde{\g}$-action on $\cM=T^*[2]A[1]$ for which the Courant function $Q$ is reducible with respect to $\tilde{\mu}^{-1}(0)$ (see $\S$ \ref{sec:hamred}). It is a direct verification that \eqref{eq:nueq1} and \eqref{eq:nueq2} translate into equations (a) and (b) in (2) of Example~\ref{ex:reddatasuper}, and this hamiltonian $\tilde{\g}$-action on $T^*[2]A[1]$ is precisely the one corresponding to the hamiltonian action on the Courant algebroid $A\oplus A^*$ described in that example.
\hfill $\diamond$
 \end{remark}

\subsubsection{\underline{Extended actions on exact Courant algebroids}}
We now explain how our general reduction setup relates to the framework of Courant reduction in \cite{bcg}, which is formulated in terms of 
 a suitable notion of action of an exact Courant algebra on an {\em exact} Courant algebroid. 
 
 Let $E$ be an exact Courant algebroid.
 Recall that, in this case, the anchor $\rho\colon E\to TM$ defines an inclusion 
$$
\rho^*\colon \Omega^1(M) \hookrightarrow \Gamma(E).
$$ 
The next definition is found in \cite[$\S$ 2.2 and 2.3]{bcg} and \cite[$\S$ 2.3 and 2.4]{HKquot}.

\begin{defi}\label{Def:extaction}
 An {\em extended action} of  an exact Courant algebra $ \mathfrak{a}\to
 \mathfrak{g}$ on an exact Courant algebroid $E\to M$ is given by bracket-preserving maps $\Psi$ and $\psi$  making the following diagram commute, 
 $$
 \xymatrix{
 \A \ar[d]^{\Psi}\ar[r] & \g \ar[d]^{{\psi}}\\
 \Gamma(E) \ar[r] &  \Gamma(TM), }
 $$
and such that 
$\Psi(\h)\subseteq \Omega^1_{cl}(M)$, where $\h=\mathrm{ker}(\A\to \g)$. A  \emph{momentum map} for the extended action is a  $\mathfrak{g}$-equivariant map $\mu \colon  M \rightarrow \h^*$  such that 
 $\Psi(h)=\rho^*(d (\mu^*h))$, for all $h\in \h$.
 \end{defi}

Note that, in the previous definition, $\psi$ is completely determined by $\Psi$ via $\psi(u) = \rho (\Psi(\hat{u}))$, where $\hat{u} \in \A$ is any lift of $u\in \g$ (this is well defined since $\rho(\Psi(\h))=0$). So an extended action with momentum map is simply given by 
\begin{itemize}
\item a bracket-preserving map $\Psi\colon \A\to \Gamma(E)$, and
\item a $\g$-equivariant map $\mu\colon M\to \h^*$ such that
$\Psi(h)=\rho^*(d (\mu^*h))$, for all $h\in \h$.
\end{itemize}

By comparison with Prop.~\ref{prop:exactDGLAact}, we immediately obtain

\begin{prop}\label{prop:bcg} Hamiltonian actions of an exact DGLA of degree 2, with corresponding Courant algebra $\A\to \g$, on an exact Courant algebroid $E$ are equivalent to extended actions $\Psi\colon \A\to \Gamma(E)$ with momentum map 
$\mu\colon M\to \h^*$ satisfying the additional property that
\begin{equation}\label{eq:extracond}
\mu^*(\Cour{a,a})=\frac{1}{2}\SP{\Psi(a),\Psi(a)}, \qquad \forall a\in \A.
\end{equation}
\end{prop}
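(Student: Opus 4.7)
The plan is to deduce the equivalence as a direct comparison between the classical-geometric characterization of hamiltonian actions of exact DGLAs given in Proposition~\ref{prop:exactDGLAact} and the definition of extended actions with momentum maps (Definition~\ref{Def:extaction}). The key observation that makes the two notions match is that every exact Courant algebroid automatically satisfies the left-centrality property \eqref{eq:leftcentralCA}, as noted just after Proposition~\ref{prop:reddata}; indeed, in the twisted standard model $(TM\oplus T^*M, \Theta_\chi)$ one has $\Cour{(0, df),(Y,\beta)} = (0,-i_Y d(df))=0$ for every $f \in C^\infty(M)$, and this is independent of the lagrangian splitting used to identify $E$ with such a model.

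Given this, I would carry out the two directions of the equivalence as follows. Starting from a hamiltonian action as in Proposition~\ref{prop:exactDGLAact}, given by $(\varrho,\mu)$, set $\Psi:=\varrho$: then $\Psi$ is bracket-preserving, $\mu$ is $\g$-equivariant, and $\Psi(h) = \rho^*(d(\mu^*h))$ for $h\in \h$, so in particular $\Psi(\h)\subseteq \rho^*(\Omega^1_{cl}(M))$. Hence $(\Psi,\mu)$ is an extended action with momentum map in the sense of Definition~\ref{Def:extaction}, and condition \eqref{eq:extracond} is just item (ii) of Proposition~\ref{prop:exactDGLAact}. Conversely, given an extended action $\Psi\colon \A\to \Gamma(E)$ with momentum map $\mu$ satisfying \eqref{eq:extracond}, I would set $\varrho:=\Psi$: then $\varrho$ is bracket-preserving, $\mu$ is $\g$-equivariant with $\varrho(h)=\rho^*(d(\mu^*h))$ for $h\in \h$, and the identity \eqref{eq:extracond} yields condition (ii) of Proposition~\ref{prop:exactDGLAact}. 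The only remaining condition is $\Cour{\varrho(\h),\cdot}=0$, and this follows immediately from the identity $\varrho(h)=\rho^*(d(\mu^*h))$ combined with the observation above that \eqref{eq:leftcentralCA} holds for exact Courant algebroids.

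There is essentially no obstacle in this argument; the content of the proposition is really that the two sets of data are the same once the extra condition \eqref{eq:extracond} is imposed on extended actions. The only subtlety worth highlighting is the asymmetry between the two frameworks: in the hamiltonian-action picture the left-centrality $\Cour{\varrho(\h),\cdot}=0$ must be required as part of the axioms (since Proposition~\ref{prop:exactDGLAact} is stated for general Courant algebroids satisfying \eqref{eq:leftcentralCA}), while in the extended-action picture for an exact $E$ this condition is built in through the requirement $\Psi(h)=\rho^*(d(\mu^*h))$ and the universal property of exact Courant algebroids. This is precisely what makes the equivalence bijective rather than merely a one-sided comparison.
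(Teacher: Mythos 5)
Your proposal is correct and matches the paper's own argument: the paper likewise obtains the equivalence by direct comparison with Proposition~\ref{prop:exactDGLAact}, setting $\Psi=\varrho$ and observing that $\Cour{\varrho(\h),\cdot}=0$ is automatic because $\varrho(\h)\subseteq \rho^*d(C^\infty(M))$ and exact Courant algebroids satisfy \eqref{eq:leftcentralCA}. Your explicit verification of left-centrality in the twisted standard model is a fine way to justify that last point.
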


With the notation of Prop.~\ref{prop:exactDGLAact}, we just set
$\Psi=\varrho$, noticing that the condition $\Cour{\Psi(\h),\cdot}=0$ is automatic since $\Psi(\h)\subseteq \rho^* d (C^\infty(M))$ and the Courant algebroid $E$ is assumed to be exact. 

\begin{remark}\label{rem:bcg}
For an extended action $\Psi\colon \A\to \Gamma(E)$ with momentum map 
$\mu\colon M\to \h^*$, let $K\subseteq E|_{\mu^{-1}(0)}$ be as in \eqref{eq:K}:
for each $x\in \mu^{-1}(0)$, $K|_x$ is the image of the map $a\mapsto \Psi(a)|_x$. Then condition \eqref{eq:extracond} admits an alternative formulation in terms of $K$ being isotropic. Indeed, it is immediate that \eqref{eq:extracond} implies that $K$ is isotropic. On the other hand, assuming that $K$ is isotropic, \eqref{eq:extracond} holds at all $x\in \mu^{-1}(0)$. But, by Remark \ref{rem:extracond}, part $(ii)$, and the fact that the anchor $\rho$ is surjective (since $E$ is exact), we have that 
$$
d(\mu^*(\Cour{a,a}))=\frac{1}{2} d\SP{\Psi(a),\Psi(a)}, \qquad \forall a\in \A,
$$
which implies that condition \eqref{eq:extracond} holds at every connected component of $M$ that contains a point in $\mu^{-1}(0)$. \hfill{$\diamond$}
\end{remark}

Although \eqref{eq:extracond} is not part of the original definition of momentum maps for extended actions in \cite[$\S$ 2.4]{HKquot}, the associated reduction procedure (see \cite[$\S$ 3.2]{HKquot}) includes the extra assumption that $K\subseteq E|_{\mu^{-1}(0)}$ be isotropic.  As explained in the previous remark, when considering Courant reduction at the value $0\in \h^*$, it makes no difference which of the two assumptions is made.

In conclusion, the Courant reductions for extended actions with momentum maps of \cite{bcg,HKquot}  are special cases of the setup in $\S$ \ref{subsec:hamCAred},  namely the cases in which both the DGLA and the Courant algebroid are exact. When restricted to this context, the reduction of Courant algebroids, Dirac and GC structures in Theorem~\ref{thm:geomhamred} 
coincide with the reduction schemes described in \cite[$\S$ 3 and $\S$ 4]{HKquot}, following \cite{bcg}. (In this special setup, note that  $\rho(K)$ agrees with the $\g$-orbits on $\mu^{-1}(0)$, and $\rho(K^\perp) = T\mu^{-1}(0)$, so reduced Courant algebroids are again exact, see Remark~\ref{rem:exactCA}.)

Back to the graded-geometric perspective, the reduction of exact Courant algebroids in \cite{HKquot} is nothing but usual symplectic reduction for cotangent bundles $T^*[2]T[1]M$, equipped with a suitable Courant function, with respect to a hamiltonian action of an exact DGLA of degree 2 (cf. Remark \ref{rem:short8.6}).

\begin{remark}\label{rem:nonisotropic}
More general reduction setups, not requiring that $K$ is isotropic, can be found in \cite{bcg}. For their graded geometric descriptions, one must consider symplectic reduction by submanifolds which are not necessarily coisotropic, such as arbitrary momentum levels in the case of hamiltonian actions. \hfill $\diamond$
\end{remark}


\appendix
\section{Atiyah algebroids and quotients}\label{app:A}

We will consider two types of quotient constructions for (pseudo-euclidean) vector bundles and discuss their effects on the corresponding Atiyah algebroids.

\subsection{Pseudo-euclidean reduction}
Let $E\to N$ be a pseudo-euclidean vector bundle, and $K\to N$ be an isotropic subbundle. Then $E_{quot}:=K^\perp/K$ is also a pseudo-euclidean vector bundle over $N$. 
We will collect in this appendix some results relating the Atiyah algebroids $\mathbb{A}_E$ and $\mathbb{A}_{E_{quot}}$. (Comparing to  the setup and notation in the paper, the pseudo-euclidean vector bundle $E$ in this appendix is to be thought of as the restriction of a pseudo-euclidean vector bundle on a manifold $M$ to a submanifold $N\subseteq M$.)

We will also consider a lagrangian subbundle $L\subset E$. Assuming that $L\cap K$ has constant rank (which is equivalent to $L\cap K^\perp$ having constant rank), then $L_{quot} = (L\cap K^\perp + K)/K$ is a lagrangian subbundle of $E_{quot}$.

We will make use of the following facts from linear algebra.

\begin{lemma} \label{lem:linalgebra}
 Let $E\to N$ be a pseudo-euclidean vector bundle, and $K\to N$ be an isotropic subbundle.
\begin{itemize}
    \item[(i)]  There is an identification
    \begin{equation*}\label{eq:decomp}
    E \cong R \oplus (K \oplus K^*)
    \end{equation*}
    that preserves $K$, where the right-hand side is the direct sum of the pseudo-euclidean vector bundles $R$ and $(K\oplus K^*)$. 
    \item[(ii)] Given a lagrangian subbundle $L\subseteq E$ such that $L\cap K$ has constant rank, one can choose the identification in (i) such that $L= (L\cap R) \oplus (L\cap K) \oplus (L\cap K^*)$. 
\end{itemize}

\end{lemma}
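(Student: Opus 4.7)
The strategy is to split $E$ in two orthogonal stages: first $E = R \oplus R^\perp$ with $R \subset K^\perp$ complementary to $K$, and then $R^\perp = K \oplus K'$ with $K'$ a lagrangian complement of $K$ that the pairing identifies with $K^*$. For part (ii), the idea is to make each choice compatible with the lagrangian $L$.

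For part (i), I would first choose a subbundle $R \subset K^\perp$ complementary to $K$; this exists by a partition-of-unity argument on the affine bundle of such complements (fibres modeled on $\mathrm{Hom}(K^\perp/K, K)$). The pairing restricts non-degenerately to $R$ because the radical of the pairing on $K^\perp$ is exactly $K$. Hence $R^\perp \subset E$ is a pseudo-euclidean subbundle of rank $2\,\mathrm{rk}(K)$ with $E = R \oplus R^\perp$ orthogonally, and $K \subset R^\perp$ (since $\langle K, R\rangle=0$). A rank count shows $K$ is lagrangian in $R^\perp$. A second partition-of-unity argument on the affine bundle of lagrangian complements of $K$ in $R^\perp$ (fibres modeled on $\wedge^2 K^*$) yields a lagrangian subbundle $K'$ with $R^\perp = K \oplus K'$. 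The pairing then gives the isomorphism $K' \cong K^*$, $k' \mapsto \langle k', \cdot\rangle|_K$, producing the decomposition $E \cong R \oplus (K \oplus K^*)$, which evidently preserves $K$.

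For part (ii), I would refine the choice of $R$ first. Observe that $L \cap K^\perp = (L+K)^\perp$ (using $L^\perp = L$) has constant rank since $L \cap K$ does. I would then choose a subbundle $R' \subset L \cap K^\perp$ complementary to $L \cap K$ and extend $R'$ to a complement $R$ of $K$ in $K^\perp$; this is possible because $R' \cap K = R' \cap (L\cap K) = 0$. A direct check gives $L \cap R = R'$: any $x \in L \cap R \subset L \cap K^\perp = R' \oplus (L\cap K)$ writes as $x = r' + \ell$, forcing $\ell \in R \cap K = 0$. A rank count then shows $L \cap R$ is lagrangian in $R$. A short calculation using $L^\perp = L$ gives $\pi_R(L)^{\perp_R} = L \cap R$, where $\pi_R\colon E \to R$ is the orthogonal projection; since $L \cap R$ is lagrangian in $R$, this forces $\pi_R(L) = L \cap R$, and therefore $L = (L\cap R) \oplus (L \cap R^\perp)$.

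It remains to choose $K'$ inside $R^\perp$ so that $L \cap R^\perp = (L \cap K) \oplus (L \cap K')$. Here $L \cap R^\perp$ is a lagrangian subbundle of $R^\perp$ meeting $K$ in the constant-rank subbundle $L \cap K$. Pointwise, an adapted hyperbolic basis of $R^\perp$ exhibits a lagrangian complement of $K$ with the required splitting property; globally, writing lagrangian complements of $K$ as perturbations of a fixed reference complement by an element of $\wedge^2 K^*$, the compatibility condition with $L \cap R^\perp$ reduces to a linear equation on this parametrizing section, and hence cuts out a non-empty affine subbundle inside the bundle of lagrangian complements. A partition of unity then produces a global $K'$ with the required property, and combining with part (i) yields the decomposition $L = (L\cap R) \oplus (L \cap K) \oplus (L \cap K^*)$. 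The main obstacle is this last step: one has to verify carefully that the compatibility condition with $L \cap R^\perp$ indeed defines a non-empty affine subbundle of the bundle of lagrangian complements, so that the partition-of-unity gluing applies.
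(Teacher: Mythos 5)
Your proof is correct, but it takes a genuinely different route from the paper's, most visibly in part (ii). The paper builds the splitting from the outside in: it first produces an \emph{isotropic} complement $T$ of $K^\perp$ in $E$ and sets $R=(T\oplus K)^\perp$, so that $R^\perp=K\oplus T$ with $T\cong K^*$; for (ii) it uses the fact that such a $T$ can be constructed \emph{canonically} from an arbitrary complement $\tilde T$ of $K^\perp$ (the symmetric analogue of \cite[Prop.~8.2]{AnaSymplectic}), so that starting from $\tilde T$ with $I\subseteq\tilde T\subseteq I^\perp$, where $I$ is a complement of $L\cap K^\perp$ in $L$, one gets $L=(L\cap K^\perp)\oplus(L\cap T)$ and then $L\cap K^\perp=(L\cap R)\oplus(L\cap K)$ for free. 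You build the splitting from the inside out --- first $R\supseteq R'$, then a lagrangian complement $K'$ of $K$ in $R^\perp$ --- and your derivation of $\pi_R(L)=L\cap R$ from $\pi_R(L)^{\perp_R}=L\cap R$ is a clean substitute for the paper's appeal to canonicity. The step you flag does go through: writing $\Lambda=L\cap R^\perp$ and $P=L\cap K$, one has $\pi_{K^*}(\Lambda)=\mathrm{Ann}(P)$, the induced map $\bar\beta\colon\mathrm{Ann}(P)\to K/P$ is well defined, and a lagrangian complement $K'_A$ (graphed over a reference complement by $A$) satisfies $\Lambda=(\Lambda\cap K)\oplus(\Lambda\cap K'_A)$ exactly when $A|_{\mathrm{Ann}(P)}\equiv\bar\beta \bmod P$; this is a non-empty affine condition whose solution set is a torsor over $(L\cap K)\wedge K$, so partitions of unity apply. (One bookkeeping point: complements of $K$ graphed over a reference complement $K'\cong K^*$ are parametrized by antisymmetric maps $K^*\to K$, i.e.\ by $\wedge^2K$ rather than $\wedge^2K^*$; this does not affect the argument.) The paper's route is shorter once the canonical construction is granted; yours is self-contained and makes the affine structure of all the choices explicit.
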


\begin{proof}
One can always find a subbundle $T$ that is an isotropic complement for $K^\perp$ in $E$. Then $R:= (T\oplus K)^\perp \subset K^\perp$ is a complement for $K$ in $K^\perp$, $K^\perp=K \oplus R$, and the
pairing $\SP{\cdot,\cdot}$ on $E$ is non-degenerate on $R$. Hence
the pairing on $R^\perp= K\oplus T$ is also non-degenerate, and $K$ and $T$ are lagrangian subbundles therein, so there is a natural identification $R^\perp= K \oplus K^*$. It follows that $E= R\oplus R^\perp = R \oplus (K \oplus K^*)$, proving (i).

Note that an isotropic complement $T$ can be canonically constructed from any choice of complement $\tilde{T}$ of $K^{\perp}$ in $E$ following a procedure  analogous to  the one  described in \cite[Proposition 8.2]{AnaSymplectic} in symplectic linear algebra. Fix now a lagrangian subbundle $L$ in $E$, and take a subbundle $I$ such that $(L\cap K^{\perp})\oplus  I=L$. Then starting from a complement  $\tilde{T}$ of $K^{\perp}$ so that $I\subset \tilde{T} \subset I^{\perp}$,  we obtain from the aforementioned procedure an isotropic complement $T$ of $K^{\perp}$ in $E$  such that $L=(L\cap K^{\perp})\oplus(L\cap T)$. 
Moreover, for $R=(T\oplus K)^{\perp}$ (which, as we saw, satisfies $K^{\perp}=R\oplus K$),  we have that $L
\cap K^{\perp}=(L\cap R)\oplus (L\cap K)$, proving that (ii) holds. \end{proof}

In particular, in the decomposition of $E$  in (i) of the previous lemma, the factor $R$ is isomorphic to the quotient $E_{quot} = K^{\perp}/K$ as pseudo-euclidean bundles. Moreover, if (ii) holds, then $L\cap R$ is isomorphic to the lagrangian subbundle $L_{quot}$ in $E_{quot}$.

Consider the natural projection map $\Gamma_{K^\perp} \to \Gamma_{E_{quot}}$, denoted by $e\mapsto [e]$, and let $\Gamma^\st{K}_{\mathbb{A}_E}$ be the subsheaf of $\Gamma_{\mathbb{A}_E}$ given by metric-preserving derivations $(Y, D)$ of $E$ that satisfy $D(\Gamma_K)\subseteq \Gamma_K$ (and hence $D(\Gamma_{K^\perp})\subseteq \Gamma_{K^\perp}$ since $D$ preserves the metric).

\begin{prop}\label{prop:atiyahonto}
The map
\begin{equation}\label{eq:atiyahKmap}
\Gamma_{\mathbb{A}_E}^\st{K} \to \Gamma_{\mathbb{A}_{E_{quot}}}, \quad (Y, D) \mapsto (Y, [D]),
\end{equation}
where $[D]([e]) = [D e]$, for $e$ a local section of $\Gamma_{K^\perp}$, is onto.
\end{prop}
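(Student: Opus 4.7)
The plan is to establish surjectivity locally and then globalize via a partition-of-unity argument, using the pseudo-euclidean decomposition in Lemma~\ref{lem:linalgebra}(i). Given a section $(Y,\bar D)\in \Gamma_{\mathbb{A}_{E_{quot}}}(V)$ over an open $V\subseteq N$, I would first pass to an open cover $\{U_\alpha\}$ of $V$ on which $E$ splits as a direct sum $R_\alpha \oplus (K\oplus K^*)|_{U_\alpha}$ of pseudo-euclidean bundles, with $R_\alpha$ identified with $E_{quot}|_{U_\alpha}$ as pseudo-euclidean bundles.

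On each $U_\alpha$, I would produce a local lift $D_\alpha$ of $\bar D$ by declaring that $D_\alpha$ acts as $\bar D$ on $R_\alpha$ (via $R_\alpha \cong E_{quot}|_{U_\alpha}$), that $D_\alpha|_K = \nabla_Y$ for some auxiliary linear connection $\nabla$ on $K|_{U_\alpha}$, and that $D_\alpha|_{K^*} = -(\nabla_Y)^*$, so that the $K$--$K^*$ duality pairing is preserved. Then $D_\alpha$ has symbol $Y$ and preserves $K$ by construction; it preserves the pseudo-euclidean metric because $R_\alpha$ is orthogonal to $K\oplus K^*$, the restriction of $D_\alpha$ to $R_\alpha$ is metric-preserving by hypothesis on $\bar D$, and the pairing of $K$ with $K^*$ is preserved by the above formulas (noting that $K$ and $K^*$ are each isotropic). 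Finally, $[D_\alpha]=\bar D$, because every section of $K^\perp|_{U_\alpha}$ is a sum of a section of $R_\alpha$ and one of $K$, and the latter is killed by the projection $K^\perp \to E_{quot}$.

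To globalize, I would take a partition of unity $\{\rho_\alpha\}$ subordinate to $\{U_\alpha\}$ and set $D := \sum_\alpha \rho_\alpha D_\alpha$. The key observation, which I expect to be the main conceptual point of the argument, is that the set of lifts of $(Y,\bar D)$ forms an affine space: any two such lifts differ by a $C^\infty(N)$-linear, metric-preserving endomorphism of $E$ that preserves $K$ and induces the zero map on $E_{quot}$. Consequently, the convex combination $D$ inherits the Leibniz rule (with symbol $\sum_\alpha \rho_\alpha Y = Y$), the metric-preserving identity $\pounds_Y\langle e,e'\rangle = \langle De,e'\rangle + \langle e,De'\rangle$, the condition $D(\Gamma_K)\subseteq \Gamma_K$, and the equality $[D]=\bar D$, since each of these properties is affine in $D$ for fixed symbol $Y$. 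This yields a global lift in $\Gamma_{\mathbb{A}_E}^{\st{K}}(V)$ of $(Y,\bar D)$, establishing the surjectivity of \eqref{eq:atiyahKmap}.
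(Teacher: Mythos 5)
Your construction is correct and rests on the same key idea as the paper's proof: split $E$ as $R\oplus(K\oplus K^*)$ via Lemma~\ref{lem:linalgebra}(i), let the lift act as the given derivation on $R\cong E_{quot}$ and as an auxiliary connection (together with its dual) on $K\oplus K^*$. Where you differ is in carrying this out over an open cover and then gluing with a partition of unity, using the (correct) observation that the lifts of a fixed $(Y,\bar{D})$ form an affine space modeled on $\Gamma_{K\wedge E}$; the paper avoids this step entirely because Lemma~\ref{lem:linalgebra}(i) already furnishes a \emph{global} isotropic complement of $K^\perp$ and hence a global splitting $E\cong R\oplus(K\oplus K^*)$ over all of $N$, so that a single choice of connection $\nabla$ on $K\to N$ produces the lift $D=\Delta\oplus\nabla_Y\oplus\nabla^*_Y$ in one stroke. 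Your gluing argument is sound, but it is machinery you do not need here. One small correction: on $K^*$ the lift should be the dual connection $\nabla^*_Y$, characterized by $\pounds_Y\big(\xi(k)\big)=(\nabla^*_Y\xi)(k)+\xi(\nabla_Y k)$; with the minus sign as written, $-(\nabla_Y)^*$ would have symbol $-Y$ (so the three summands would not assemble into a derivation of $E$ with symbol $Y$) and would not preserve the $K$--$K^*$ pairing. Since your stated requirement — preservation of the duality pairing — uniquely determines the correct operator, this is only a notational slip rather than a gap.
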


\begin{proof}
We use the decomposition of $E$ in Lemma~\ref{lem:linalgebra}, part (i), so that $R\cong E_{quot}$ and 
$$
\Gamma_{\mathbb{A}_{\Equot}}\cong \Gamma_{\mathbb{A}_{R}}.
$$
Fix $(Y,\Delta)\in \Gamma_{\mathbb{A}_{R}}(V)$, where $V\subseteq N$ is open. Let
$\nabla$ be any connection on the vector bundle $K\to N$, 
with dual connection $\nabla^*$ on $K^*\to N$.
Then $
\nabla_{Y} \colon \Gamma_K(V)\to \Gamma_K(V)$ and  $\nabla^*_Y :
\Gamma_{K^*}(V)\to \Gamma_{K^*}(V)$ are derivations of $K$ and $K^*$, respectively, and
$$
(Y, \nabla_Y\oplus \nabla^*_Y) \in \Gamma_{\mathbb{A}_{K\oplus K^*}}(V).
$$
Letting $D:=\Delta\oplus \nabla_Y \oplus \nabla^*_Y$, it follows that
$(Y,D)\in \Gamma^\st{K}_{\mathbb{A}_{E}}(V)$ is such that
$[D]=\Delta$.
\end{proof}

For an open subset $V\subseteq N$, note that $(Y,D) \in
\Gamma_{\mathbb{A}_E}^\st{K}(V)$ is in the kernel of \eqref{eq:atiyahKmap} if and only
if $Y=0$ and $D(\Gamma_{K^\perp}(V))\subseteq \Gamma_K(V)$. In
other words, 
the kernel of \eqref{eq:atiyahKmap} is 
$$
\{T \in \Gamma_{\wedge^2 E}(V)\;|\; 
T(\Gamma_{K^\perp}(V))\subseteq \Gamma_K(V) \} = \Gamma_{K\wedge E}(V).
$$
So we have an exact sequence
\begin{equation}\label{eq:exactatiyahquot}
0 \rightarrow  \Gamma_{K\wedge E} \rightarrow \Gamma_{\mathbb{A}_E}^\st{K} \rightarrow \Gamma_{\mathbb{A}_{E_{quot}}} \rightarrow 0.  
\end{equation}

Given a lagrangian subbundle $L\subset E$ such that $L\cap K$ has constant rank, let $\Gamma_{\mathbb{A}_E}^\st{K,L}$ be the subsheaf of $\Gamma_{\mathbb{A}_E}$ defined by metric-preserving derivations $(Y,D)$ of $E$ satisfying $D(\Gamma_K)\subseteq \Gamma_K$ and $D(\Gamma_L)\subseteq \Gamma_L$. Let $\Gamma_{\mathbb{A}_{E_{quot}}}^\st{L_{quot}}$ be the subsheaf of $\Gamma_{\mathbb{A}_{E_{quot}}}$ given by metric-preserving derivations $(Y, \Delta)$ of $E_{quot}$ such that $\Delta(\Gamma_{L_{quot}})\subseteq \Gamma_{L_{quot}}$. 

\begin{prop}\label{prop:Lonto}
The map \eqref{eq:atiyahKmap} restricts to a map
\begin{equation}\label{eq:atiyahKLmap}
    \Gamma_{\mathbb{A}_E}^{\st{K,L}} \to \Gamma_{\mathbb{A}_{E_{quot}}}^{\st{L_{quot}}}
\end{equation}
that is onto.
\end{prop}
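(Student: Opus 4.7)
The plan is to closely mirror the proof of Proposition~\ref{prop:atiyahonto}, but using the refined decomposition provided by Lemma~\ref{lem:linalgebra}(ii) so that the lift can be made to preserve $L$ as well as $K$.

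First I would check that the map \eqref{eq:atiyahKmap} does restrict to \eqref{eq:atiyahKLmap}. If $(Y,D)\in \Gamma_{\mathbb{A}_E}^{\st{K,L}}(V)$, then metric-compatibility together with $D(\Gamma_K)\subseteq \Gamma_K$ forces $D(\Gamma_{K^\perp})\subseteq \Gamma_{K^\perp}$, and combined with $D(\Gamma_L)\subseteq \Gamma_L$ this yields $D(\Gamma_{L\cap K^\perp})\subseteq \Gamma_{L\cap K^\perp}$. Passing to the quotient, $[D]$ preserves $\Gamma_{(L\cap K^\perp+K)/K}=\Gamma_{L_{quot}}$, so the image lies in $\Gamma_{\mathbb{A}_{E_{quot}}}^{\st{L_{quot}}}$.

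For surjectivity, apply Lemma~\ref{lem:linalgebra}(ii) to obtain an orthogonal decomposition $E\cong R\oplus (K\oplus K^*)$ with $L=(L\cap R)\oplus (L\cap K)\oplus (L\cap K^*)$, under which $E_{quot}\cong R$ as pseudo-euclidean bundles and $L_{quot}\cong L\cap R$. The key algebraic observation here is that $L\cap K^*=\mathrm{Ann}(L\cap K)\subset K^*$: this follows because $L\cap R$ is lagrangian in $R$ forces $(L\cap K)\oplus (L\cap K^*)$ to be lagrangian in the hyperbolic bundle $K\oplus K^*$, and together with the isotropy conditions this uniquely determines $L\cap K^*$ as the annihilator of $L\cap K$. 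I expect verifying this clean ``matched pair'' structure of $L\cap K$ and $L\cap K^*$ to be the main (though short) technical point.

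Given $(Y,\Delta)\in \Gamma_{\mathbb{A}_{E_{quot}}}^{\st{L_{quot}}}(V)$, I view $\Delta$ as a metric-preserving derivation of $R|_V$ that preserves $\Gamma_{L\cap R}(V)$. I then choose any linear connection $\nabla$ on $K$ which preserves the subbundle $L\cap K$; such a $\nabla$ exists globally by picking a complement $W$ to $L\cap K$ in $K$, choosing arbitrary connections on $L\cap K$ and $W$, and taking their direct sum. The dual connection $\nabla^*$ on $K^*$ then automatically preserves $\mathrm{Ann}(L\cap K)=L\cap K^*$. Setting
\[
D \;:=\; \Delta\oplus \nabla_Y\oplus \nabla^*_Y
\]
on $R\oplus K\oplus K^*$, one checks as in the proof of Proposition~\ref{prop:atiyahonto} that $(Y,D)\in \Gamma_{\mathbb{A}_E}^{\st{K}}(V)$; additionally, $D$ preserves each summand of $L=(L\cap R)\oplus(L\cap K)\oplus(L\cap K^*)$, so $(Y,D)\in \Gamma_{\mathbb{A}_E}^{\st{K,L}}(V)$, and by construction $[D]=\Delta$, completing the surjectivity argument.
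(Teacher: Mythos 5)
Your proof is correct and follows essentially the same route as the paper: surjectivity via the refined splitting of Lemma~\ref{lem:linalgebra}(ii), a connection on $K$ preserving $L\cap K$ whose dual then preserves $L\cap K^*=\mathrm{Ann}(L\cap K)$, and the lift $D=\Delta\oplus\nabla_Y\oplus\nabla^*_Y$. The only additions are the (straightforward) verification that the map restricts and a spelled-out rank argument for $L\cap K^*=\mathrm{Ann}(L\cap K)$, which the paper asserts parenthetically.
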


\begin{proof}
We write $E$ as in Lemma~\ref{lem:linalgebra}, part (i), in such a way that $L$ can be written as in part (ii). 
The identification $R\cong E_{quot}$ is such that $L\cap R \cong L_{quot}$. Given an open subset $V \subseteq N$, take $(Y, \Delta) \in \Gamma_{\mathbb{A}_{E_{quot}}}^{L_{quot}}(V)$,  which can be seen as an element in $\Gamma_{\mathbb{A}_R}(V)$ preserving $\Gamma_{L\cap R}(V)$.
Let $\nabla$ be a connection on $K$ preserving the subbundle $L\cap K$. Then the dual connection $\nabla^*$ on $K^*$ automatically preserves $L\cap K^*$ (since $L\cap K^* \subseteq K^*$ coincides with the annihilator of $L\cap K\subseteq K$).
Letting $D=(\Delta\oplus \nabla_Y \oplus \nabla^*_Y)$, we have that $(Y,D) \in \Gamma^\st{K,L}_{\mathbb{A}_E}$ and $[D]=\Delta$.
\end{proof}

\subsection{Quotients of vector bundles}\label{subsec:atiyahquot}
We will consider the setup of $\S$ \ref{sec:coisoreduction}.
Let $A\to N$ be a vector bundle, let $F$ be a simple   distribution on $N$ with leaf space $\underline{N}$ and projection $p\colon N \to \underline{N}$. Suppose that $A$ is equipped with a flat $F$-connection $\nabla$ with trivial holonomy. As recalled in Lemma~\ref{lem:hol}, we have a quotient vector bundle $\underline{A} \to \underline{N}$, whose sheaf of sections $\Gamma_{\underline{A}}$ is naturally identified with $p_*\Gamma^\st{flat}_A$.  

We denote by $\mathbb{A}_A$ the Atiyah algebroid of $A$, i.e., the vector bundle whose sections are the derivations of $A$. In this subsection we explain how the Atiyah algebroids of $A$ and $\underline{A}$ are related, see Proposition \ref{prop:Ared}.

\begin{remark}
The whole discussion that follows carries over to the case where $A$ has an additional pseudo-euclidean structure, $\nabla$ is metric preserving (so that $\underline{A}$ is also pseudo-euclidean), and $\mathbb{A}_A$ and $\mathbb{A}_{\underline{A}}$ are defined
in terms of metric-preserving derivations. \hfill $\diamond$
\end{remark}

Recall that the $F$-connection $\nabla$ can be thought of as a vector-bundle map $\nabla\colon F \to \mathbb{A}_A$ such that $\sigma\circ \nabla = \mathrm{Id}_F$, where $\sigma\colon \mathbb{A}_A\to TN$ is the symbol map. Consider the vector bundle
$$
\mathbb{A}^\nabla_A\colon = \mathbb{A}_A/\nabla(F).
$$
Then $\nabla$ induces  a natural flat $F$-connection $\overline{\nabla}$ on $\mathbb{A}^\nabla_A\to N$ via 
\begin{equation}\label{eq:nablainduced}
\overline{\nabla}_Z(\overline{(Y,\Delta)})= \overline{([Z,Y],[\nabla_Z, \Delta])}, 
\end{equation}
where $Z$ is a section of $F$, and we denote the class of a section $(Y,\Delta)$ of $\mathbb{A}_A$  in $\mathbb{A}^\nabla_A$ by $\overline{(Y,\Delta)}$. The connection $\overline{\nabla}$ is well defined due to the flatness of $\nabla$. Note also that the symbol map induces a map $
\overline{\sigma}\colon {\mathbb{A}^\nabla_A} \to {TN}/{F}$ which relates the connection $\overline{\nabla}$ with the usual Bott connection $\nabla^{Bott}$ on $TN/F$ via $\overline{\sigma} \circ \overline{\nabla}_Z = \nabla^{Bott}_Z \circ \overline{\sigma}$.

\begin{lemma}\label{lem:flatYDelta}
A section $\overline{(Y,\Delta)}$ of $\Gamma_{{{\mathbb{A}^\nabla_A}}}$ is flat if and only if
\begin{itemize}
\item[(a)] $Y$ satisfies $[Y,\Gamma_F]\subseteq \Gamma_F$, and 
\item[(b)] $\Delta$ satisfies $\Delta(\Gamma^\st{flat}_{A})\subseteq \Gamma^\st{flat}_{A}$.
\end{itemize}
\end{lemma}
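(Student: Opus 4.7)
The plan is to unpack the definition of flatness in $\mathbb{A}_A^\nabla$ and translate it into concrete conditions on $(Y,\Delta)$. By \eqref{eq:nablainduced}, the flatness of $\overline{(Y,\Delta)}$ means that $([Z,Y],[\nabla_Z,\Delta])\in \nabla(\Gamma_F)$ for every section $Z$ of $F$. Since $\nabla(\Gamma_F)$ consists of sections of the form $(W,\nabla_W)$ with $W\in \Gamma_F$, this is equivalent to the conjunction of
\begin{itemize}
\item[(i)] $[Z,Y]\in \Gamma_F$ for all $Z\in \Gamma_F$ (i.e., condition (a)); and
\item[(ii)] $[\nabla_Z,\Delta]=\nabla_{[Z,Y]}$ for all $Z\in \Gamma_F$.
\end{itemize}
So the task reduces to showing that, assuming (i), condition (ii) is equivalent to condition (b).

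For the direction (ii) $\Rightarrow$ (b), let $a\in \Gamma^\st{flat}_A$ and $Z\in \Gamma_F$. Then $\nabla_Z a=0$, so $\nabla_Z(\Delta(a))=[\nabla_Z,\Delta](a)=\nabla_{[Z,Y]}(a)=0$, since $[Z,Y]\in \Gamma_F$ by (i) and $a$ is flat. Hence $\Delta(a)\in \Gamma^\st{flat}_A$, which is (b).

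For the converse, consider the operator
\[
P_Z:=[\nabla_Z,\Delta]-\nabla_{[Z,Y]}\colon \Gamma_A\to \Gamma_A.
\]
Its symbol is $[Z,Y]-[Z,Y]=0$, so $P_Z$ is $C^\infty_N$-linear, hence a section of $\mathrm{End}(A)$. For $a\in \Gamma^\st{flat}_A$, we have $\nabla_Z a=0$ and, by (b), $\Delta(a)\in \Gamma^\st{flat}_A$, so $\nabla_Z(\Delta(a))=0$; combined with $\nabla_{[Z,Y]}(a)=0$, this gives $P_Z(a)=0$ for every flat section $a$. The key input now is that, by the flatness of $\nabla$, local frames of $A$ consisting of $\nabla$-flat sections exist; consequently $\Gamma^\st{flat}_A$ locally generates $\Gamma_A$ as a $C^\infty_N$-module. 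Since $P_Z$ is $C^\infty_N$-linear and vanishes on a local generating set, it must vanish identically, yielding (ii). This completes the equivalence.

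The main step is the last one, where one must recognize that the difference $[\nabla_Z,\Delta]-\nabla_{[Z,Y]}$ is tensorial (zero symbol) and use the flatness of $\nabla$ to promote the vanishing on flat sections to the vanishing as an operator; no genuine obstacle arises, as this is a routine consequence of the preceding setup.
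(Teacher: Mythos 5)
Your proof is correct, and it follows exactly the argument the paper intends but leaves implicit (the lemma is stated without proof; the key step — that $[\nabla_Z,\Delta]-\nabla_{[Z,Y]}$ is $C^\infty_N$-linear and vanishes on $\nabla$-flat sections, which locally generate $\Gamma_A$ by flatness of $\nabla$ — is the same device the paper uses in Remark~\ref{rem:flat} and in the proof of Lemma~\ref{lem:Jreducible}). Nothing is missing.
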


Note that the first condition in the lemma just says that $Y$ defines a $\nabla^{Bott}$-flat section of ${TN/F}$.

We will give an alternative description of the sheaf of flat sections of ${\mathbb{A}^\nabla_A}$.
Note that $\Gamma_{TN/F}^\st{flat}$ is a sheaf of $(C_N^\infty)_{bas}$-modules and that there is a natural action of $\Gamma_{TN/F}^\st{flat}$ on $(C_N^\infty)_{bas}$ by Lie derivatives.
We will consider another sheaf of $(C_N^\infty)_{bas}$-modules on $N$, denoted by $\mathcal{A}$. Its sections are given, on each open subset $V\subseteq N$, by pairs $(\xi, \delta)$, where $\xi \in \Gamma_{TN/F}^\st{flat}(V)$ and $\delta\colon \Gamma^\st{flat}_{A}(V)\to \Gamma^\st{flat}_{A}(V)$ satisfy
\begin{equation*}\label{eq:brk20}
\delta(f e) = \pounds_\xi(f) e + f\delta(e),
\end{equation*}
for $f\in (C_N^\infty)_{bas}(V)$ and $e\in \Gamma^\st{flat}_{A}(V)$.

\begin{lemma}\label{lem:flatcharac}
For each open $V\subseteq N$, the map
\begin{equation}\label{eq:A}
\Gamma^\st{flat}_{{\mathbb{A}^\nabla_A}}(V) \to \mathcal{A}(V), \quad \overline{(Y,\Delta) }\mapsto (\overline{Y}, \overline{\Delta}),
\end{equation}
where $\overline{Y}$ is the projection of $Y$ to $\Gamma_{TM/F}(V)$ and $\overline{\Delta}$ is the restriction of $\Delta$ to $\Gamma^\st{flat}_{A}(V)$, is an isomorphism.
\end{lemma}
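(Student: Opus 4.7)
The plan is to verify that the map is well-defined, then establish injectivity, and finally construct a preimage for surjectivity via local flat frames.

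For well-definedness, recall that a section of $\mathbb{A}^\nabla_A$ is locally of the form $\overline{(Y,\Delta)}$ where $(Y,\Delta) \sim (Y+Z,\Delta+\nabla_Z)$ for $Z \in \Gamma_F$. Under this equivalence, the image is unchanged: the projection $\overline{Y+Z}=\overline{Y}$ in $TN/F$, and $\nabla_Z$ vanishes on $\Gamma^\st{flat}_A$, so $\overline{\Delta+\nabla_Z}=\overline{\Delta}$ on flat sections. By Lemma~\ref{lem:flatYDelta}, $\overline{Y}$ is $\nabla^{Bott}$-flat and $\overline{\Delta}$ maps $\Gamma^\st{flat}_A$ to itself. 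For the Leibniz rule over basic functions, note that if $\overline{Y}$ is flat, then $Y$ preserves $(C_N^\infty)_{bas}$ (since for $f$ basic and $Z \in \Gamma_F$, $Z(Y(f))=[Z,Y](f)+Y(Z(f))=0$), and then $\overline{\Delta}(fe)= Y(f)e + f\overline{\Delta}(e)=\pounds_\xi(f)e+f\overline{\Delta}(e)$ as required.

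For injectivity, suppose $\overline{(Y,\Delta)}$ is flat and maps to zero. Then $Y \in \Gamma_F(V)$, so $\nabla_Y \in \Gamma_{\mathbb{A}_A}(V)$ is defined and has symbol $Y$. The difference $\Delta - \nabla_Y$ is thus $C^\infty_N$-linear and hence a section of $\mathrm{End}(A)$. By assumption $\overline{\Delta}=0$, i.e., $\Delta$ vanishes on $\Gamma^\st{flat}_A$; and $\nabla_Y$ also vanishes on $\Gamma^\st{flat}_A$. Because $\nabla$ has trivial holonomy, flat sections locally span $\Gamma_A$, so $\Delta - \nabla_Y = 0$ pointwise. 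Hence $(Y,\Delta) = \nabla_Y \in \nabla(F)$, giving $\overline{(Y,\Delta)}=0$.

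For surjectivity, fix $(\xi,\delta) \in \mathcal{A}(V)$ and cover $V$ by opens $U$ small enough that (i) $A|_U$ admits a flat frame $\{e_1,\ldots,e_r\}$ (available by trivial holonomy) and (ii) $\xi|_U$ lifts to a projectable vector field $Y \in \mathfrak{X}(U)$ (via a partition-of-unity argument, since $\xi$ is a section of $TN/F$). Writing $\delta(e_i) = g_i^j e_j$ with $g_i^j \in (C_N^\infty)_{bas}(U)$, define $\Delta \in \Gamma_{\mathbb{A}_A}(U)$ by
\[
\Delta(f^i e_i) := Y(f^i) e_i + f^i g_i^j e_j,
\]
which is manifestly a derivation with symbol $Y$. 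If $f^i \in (C_N^\infty)_{bas}$ then $Y(f^i) \in (C_N^\infty)_{bas}$ by the observation above, so $\Delta$ preserves $\Gamma^\st{flat}_A$, and by construction $\overline{\Delta}=\delta$ on the frame (hence everywhere by $(C_N^\infty)_{bas}$-linearity on flat sections). By Lemma~\ref{lem:flatYDelta}, $\overline{(Y,\Delta)} \in \Gamma^\st{flat}_{\mathbb{A}^\nabla_A}(U)$ and maps to $(\xi|_U,\delta|_U)$. Local preimages over different $U$'s must agree on overlaps by the injectivity established above, so they glue to a global preimage on $V$.

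The main obstacle is surjectivity, specifically constructing the derivation $\Delta$ from the data $(\xi,\delta)$ defined only on flat sections and basic functions; trivial holonomy of $\nabla$ is the essential ingredient that lets us use local flat frames to extend $\delta$ by Leibniz, and the injectivity step is what allows the local extensions to patch canonically.
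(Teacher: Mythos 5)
Your proof is correct and follows essentially the same route as the paper's: well-definedness via Lemma~\ref{lem:flatYDelta}, and surjectivity by extending $\delta$ through local flat frames using the Leibniz rule, with the projectable lift $Y$ of $\xi$ fixing the symbol. The only cosmetic difference is that you build the preimage locally and glue via injectivity, whereas the paper takes a single global lift $Y$ on $V$ and notes the extension is consistent because transition matrices between flat frames consist of basic functions; both are fine, and your more explicit injectivity argument (comparing $\Delta$ with $\nabla_Y$ on a flat frame) correctly fills in what the paper calls ``clearly injective.''
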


\begin{proof}
The map \eqref{eq:A} is well defined by Lemma \ref{lem:flatYDelta}, and clearly injective.
 It remains to show that it is  surjective. For a given section $(\xi,\delta)\in  \mathcal{A}(V)$, one can pick $Y\in \Gamma_{TN}(V)$ such that $[Y, \Gamma_F(V)]\subseteq \Gamma_F(V)$ and $\xi=\overline{Y}$. Using the fact that $A$ admits local frames of $\nabla$-flat sections, one can directly check that the choice of $Y$ uniquely determines an extension of $\delta$ to an element $(Y,\Delta)$ in $\Gamma_{\mathbb{A}_{A}}(V)$. (This extension is constructed using local flat frames applying the Leibniz rule, and is well defined since the transition matrix of two flat frames consists of basic functions; uniquess is immediate.)
\end{proof}
 
Since $\underline{A}$ is the quotient of $A$ with respect to $F$ and $\nabla$, the pullback of sections gives us an isomorphism
\begin{equation}\label{eq:pbmap}
 p_1^\sharp \colon \Gamma_{\underline{A}}  \stackrel{\sim}{\to}  p_*
\Gamma_{A}^\st{flat}.
\end{equation}

\begin{prop}\label{prop:Ared}
There is a natural identification $$\Gamma_{\mathbb{A}_{\underline{A}}} \cong p_*\Gamma^\st{flat}_{{\mathbb{A}^\nabla_A}}$$ defined as follows: a section 
$(\underline{Y},\underline{\Delta})$ of $\Gamma_{\mathbb{A}_{\underline{A}}}$ corresponds to a section $\overline{(Y, \Delta)}$ of $p_*\Gamma^\st{flat}_{{\mathbb{A}^\nabla_A}}$ if and only if $\underline{Y}=p_*Y$ and $p_1^\sharp \circ \underline{\Delta} = \Delta\circ p_1^\sharp$.
\end{prop}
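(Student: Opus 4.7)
The proof proposal combines two ingredients already in place: the characterization of flat sections of $\mathbb{A}^\nabla_A$ given in Lemma~\ref{lem:flatcharac}, and the standard identifications of projectable objects on $N$ with objects on $\underline{N}$. My plan is to first reduce the statement, via Lemma~\ref{lem:flatcharac}, to constructing a natural isomorphism $\Gamma_{\mathbb{A}_{\underline{A}}} \cong p_*\mathcal{A}$, where $\mathcal{A}$ is the sheaf of pairs $(\xi,\delta)$ introduced just before that lemma. Since both sides are sheaves on $\underline{N}$, it will be enough to produce mutually inverse maps on sections over an open $\underline{V}\subseteq\underline{N}$, and we always set $V:=p^{-1}(\underline{V})$, which is $F$-saturated.

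In one direction, start with a section $(\underline{Y},\underline{\Delta})\in \Gamma_{\mathbb{A}_{\underline{A}}}(\underline{V})$. The symbol $\underline{Y}\in \mathfrak X(\underline{V})$ pulls back to a unique projectable class $\xi\in \Gamma^\st{flat}_{TN/F}(V)$ (via the standard isomorphism $\Gamma_{T\underline{N}}\cong p_*\Gamma^\st{flat}_{TN/F}$, which is the version of \eqref{eq:pbmap} for $A=TN/F$ with the Bott connection, cf.\ Example~\ref{ex:tanquot}). Transporting $\underline{\Delta}$ through the isomorphism $p_1^\sharp$ of \eqref{eq:pbmap} yields an $\mathbb R$-linear map $\delta:\Gamma_A^\st{flat}(V)\to \Gamma_A^\st{flat}(V)$ defined by $p_1^\sharp\circ\underline{\Delta}=\delta\circ p_1^\sharp$. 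The Leibniz identity for $\underline{\Delta}$ with respect to $f\in C^\infty(\underline{V})$ translates, using the identification \eqref{eq:basicid} $p^*\colon C^\infty_{\underline{N}}\xrightarrow{\sim} p_*(C^\infty_N)_{bas}$ and the compatibility $p^*(\pounds_{\underline{Y}}f)=\pounds_Y(p^*f)$ for any projectable $Y$ representing $\xi$, into the Leibniz identity $\delta(fe)=\pounds_\xi(f)\,e+f\,\delta(e)$ required by $\mathcal{A}$. Hence $(\xi,\delta)\in\mathcal{A}(V)=(p_*\mathcal{A})(\underline{V})$.

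In the opposite direction, given $(\xi,\delta)\in(p_*\mathcal{A})(\underline{V})$, the flat section $\xi$ descends to a unique $\underline{Y}\in\mathfrak{X}(\underline{V})$, and $\delta$ transfers through $p_1^\sharp$ to a unique $\mathbb R$-linear endomorphism $\underline{\Delta}$ of $\Gamma_{\underline{A}}(\underline{V})$; the Leibniz rule in $\mathcal{A}$ becomes the Leibniz rule defining a derivation of $\underline{A}$ with symbol $\underline{Y}$, again via \eqref{eq:basicid}. These two constructions are mutually inverse by construction, and they are manifestly compatible with restrictions to smaller open subsets of $\underline{N}$, giving the desired sheaf isomorphism.

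Composing this isomorphism with the one from Lemma~\ref{lem:flatcharac} gives $\Gamma_{\mathbb{A}_{\underline{A}}}\cong p_*\Gamma^\st{flat}_{\mathbb{A}^\nabla_A}$. Unwinding the explicit formulas: a class $\overline{(Y,\Delta)}$ in $p_*\Gamma^\st{flat}_{\mathbb{A}^\nabla_A}(\underline{V})$ maps under Lemma~\ref{lem:flatcharac} to $(\overline{Y},\overline{\Delta})\in\mathcal{A}(V)$ and thus to the derivation of $\underline{A}$ whose symbol satisfies $\underline{Y}=p_*Y$ and whose action on sections obeys $p_1^\sharp\circ\underline{\Delta}=\Delta\circ p_1^\sharp$ on $\Gamma^\st{flat}_A(V)$, exactly as claimed. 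The only genuinely non-routine step is the well-posedness of $\delta$ (equivalently, of $\underline{\Delta}$) in the construction above; this requires that $\Delta$ preserve flat sections, which is precisely condition (b) of Lemma~\ref{lem:flatYDelta}, so no additional argument beyond that lemma is needed.
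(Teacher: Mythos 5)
Your argument is correct and follows essentially the same route as the paper's proof: reduce via Lemma~\ref{lem:flatcharac} to an isomorphism $\Gamma_{\mathbb{A}_{\underline{A}}}\cong p_*\mathcal{A}$, built from the identification of projectable vector fields with vector fields on $\underline{N}$ together with transport of the operator part through $p_1^\sharp$. Your write-up just spells out the Leibniz-rule verifications that the paper leaves implicit.
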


\begin{proof}
We have an identification
\begin{equation}\label{eq:projvf}
p_*\Gamma_{TN/F}^\st{flat}\stackrel{\sim}{\to} \Gamma_{T\Mred}
\end{equation} 
through the natural projection map, taking a section $\xi=\overline{Y}$ to $\underline{Y}= p_*(Y)$, that satisfies the relation
\begin{equation*}
p^*(\pounds_{\underline{Y}}f) = \pounds_Y (p^*f),    
\end{equation*}
for any section $f$ of $C^\infty_{\Mred}$. From the isomorphisms \eqref{eq:pbmap} and \eqref{eq:projvf} we obtain an isomorphism
$$
p_*\mathcal{A} \stackrel{\sim}{\to} \Gamma_{\mathbb{A}_{\underline{A}}},
$$
given on sections by $(\overline{Y},\delta)\mapsto (p_*(Y),\underline{\Delta})$, where $\underline{\Delta}$ and $\delta$ are related by $p_1^\sharp \circ \underline{\Delta} = \delta\circ p_1^\sharp$. Composing this last isomorphism with the one in Lemma~\ref{lem:flatcharac}, we obtain the identification in the statement.
\end{proof}

With the previous proposition, one can in fact verify that the quotient of ${\mathbb{A}^\nabla_A}$ with respect to $F$ and $\overline{\nabla}$ is $\mathbb{A}_{\underline{A}}$.

\bibliographystyle{halpha}

\def\polhk#1{\setbox0=\hbox{#1}{\ooalign{\hidewidth
  \lower1.5ex\hbox{`}\hidewidth\crcr\unhbox0}}} \def\cprime{$'$}

\end{document}